\documentclass[11pt]{amsart}
\usepackage{mathrsfs}
\usepackage{geometry}
\usepackage{graphicx}
\usepackage{amssymb}
\usepackage{epstopdf}
\usepackage{amsmath,amscd}
\usepackage{amsthm}
\usepackage{enumerate}
\usepackage{url,verbatim}
\usepackage{subfig}
\usepackage{enumitem}

\RequirePackage[colorlinks,citecolor=blue,urlcolor=blue,breaklinks=true]{hyperref}
\theoremstyle{plain}
\textwidth=430pt
\textheight=620pt
\calclayout

\newtheorem{theorem}{Theorem}
\newtheorem{definition}[theorem]{Definition}
\newtheorem{lemma}[theorem]{Lemma}
\newtheorem{proposition}[theorem]{Proposition}
\newtheorem{corollary}[theorem]{Corollary}
\newtheorem{example}[theorem]{Example}
\newtheorem{assumption}[theorem]{Assumption}
\newtheorem{remark}[theorem]{Remark}

\newcommand\RR{{\mathbb R}}
\newcommand\ZZ{{\mathbb Z}}

\newcommand\YY{{\mathbb {Y}}}

\renewcommand\a{\alpha}

\renewcommand\ell{l}


\newcommand\CC{\mathbb{C}}

\newcommand\bA{\mathbf{A}}
\newcommand\bB{\mathbf{B}}
\newcommand\MP{\mathbb{MP}}

\newcounter{mycount}

\numberwithin{equation}{section}
\numberwithin{theorem}{section}
\numberwithin{figure}{section}

\title[Doubly Free-Boundary Rail-Yard Dimers]
{Doubly Free-Boundary Rail-Yard Dimers and Annular Gaussian Fluctuations}

\author[Z. Li]{Zhongyang Li}
\address{Department of Mathematics,
University of Connecticut,
Storrs, Connecticut 06269-3009, USA}
\email{zhongyang.li@uconn.edu}
\urladdr{\url{https://mathzhongyangli.wordpress.com}}

\subjclass[2020]{Primary 60K35; Secondary 82B20, 05E05, 60B20}
\keywords{rail-yard graphs, dimer coverings, free boundary, Schur processes, annular covariance, Gaussian fluctuations, limit shape, sampling algorithms}
\date{}

\begin{document}

\begin{abstract}
We study rail-yard dimer measures with free boundary conditions at both the left and the
right boundary.  The double free-boundary geometry produces an infinite family of reflected
Cauchy factors in the partition function and in the exact contour formulas for height
Laplace observables.  These factors are absent from the empty-boundary model and survive
in both the deterministic and second-order asymptotics.

For admissible piecewise periodic weights satisfying the explicit zero--pole
interlacing assumptions stated below, we prove a Laplace-transform law of
large numbers. In the natural moment variable
\[
        x=e^{-n\beta\kappa},
\]
the transform convergence gives a  limit shape of the rescaled height function.  The associated 
frozen-boundary satisfies the following system of equations
\[
        S_\chi(w)^\beta=e^{-n\beta\kappa},
        \qquad
        \frac{d}{dw}\log S_\chi(w)=0,
        \qquad
        S_\chi(w):=\mathcal G_\chi(w)\prod_{r\ge1}\mathcal F_{u,v,r}(w).
\]

The main second-order result is a Gaussian fluctuation theorem for centered height
Laplace observables.  The covariance is the annular reflected-image
kernel
\[
        \mathsf K_{LL}(z,w)
        =
        \partial_z\partial_w
        \log\frac{\Theta_{\mathfrak q}(z/w)}
                  {\Theta_{\mathfrak q}(u^2zw)},
        \qquad \mathfrak q=(uv)^2,
\]
with specified contour interpretation.  Thus the two free
boundaries do not merely alter the deterministic limit shape: they replace the usual Gaussian free field
half-plane image structure by an annular prime-function covariance on the Laplace-test
class.

As a final exact-solvability consequence, we construct an exact growth-diagram
sampler for the finite reflected truncations of the doubly free-boundary model
and prove that the resulting \(K\)-truncated laws converge, as \(K\to\infty\),
in total variation to the full doubly free-boundary Gibbs measure on every fixed
rail-yard graph with finitely many columns.
\end{abstract}

\maketitle

\section{Introduction}

\subsection{Two free boundaries and the reflected-product regime}

Random planar dimer models provide a basic setting in which deterministic limit
shapes coexist with Gaussian height fluctuations.  In many simply connected
integrable models, the second-order field is governed by a Green kernel, or by
an image form of such a kernel in a spectral coordinate; see, for instance,
\cite{Ken01,KOS,KO07}.  This paper studies a different boundary mechanism.  We
consider rail-yard dimers with free boundary conditions at both the left and the
right boundary.  The two free endpoints repeatedly reflect the Cauchy factors in
the Schur-process description.  This reflection mechanism produces infinite
products in the exact formulas and, at the level of fluctuations, an annular
rather than simply connected covariance kernel.

Rail-yard graphs were introduced in \cite{bbccr}.  Their perfect matchings are
encoded by interlacing sequences of partitions, and the resulting measures are
closely related to Schur processes and their variants.  This class includes many
standard tiling and dimer models, such as Aztec-type models, steep tilings,
tower and pyramid-type examples, and contracting lattice examples; see, for
example,
\cite{EKLP92a,EKLP92b,KJ05,bk,BY09,BCC17,BF15,BL17,ZL18,ZL20,Li182,ZL202}.
Steep tilings \cite{BCC17} form an important free-boundary subclass in this
circle of models.  Exact partition functions and generating functions for such
models were obtained in earlier works, and the empty-boundary rail-yard
asymptotic analysis was developed in \cite{LV21}.

The two-free-boundary case considered here is not a formal repetition of the
empty-boundary or one-free-boundary theories.  In the doubly free-boundary
model, both endpoint partitions are summed over.  At the exact level this is
naturally expressed through the free-boundary Schur process of \cite{BBNV}, and
the partition function already displays the repeated reflection mechanism.  The
asymptotic applications developed there, however, concern one-free-boundary
regimes.  The present paper treats the full two-free-boundary rail-yard
asymptotic regime: the limit shape, the regular frozen boundary, the Gaussian
height fluctuations, and an exact sampler for finite reflected truncations.

The main new analytic object is the spectral product
\[
   S_\chi(w)=G_\chi(w)\prod_{r\ge1}F_{u,v,r}(w).
\]
The finite factor \(G_\chi\) is the contribution of the unreflected rail-yard
word, while the infinite product records the successive reflections at the two
free boundaries.  This product is absent from the empty-boundary model and
collapses in the one-free-boundary specializations.  In the two-free-boundary
case it survives in the partition function, in the contour formulas for height
Laplace observables, in the limit shape, in the frozen-boundary equation, and in
the fluctuation kernel.

\subsection{Exact formulas, limit shape, and frozen boundary}

The first part of the paper is exact.  We identify the doubly free-boundary
rail-yard measure with a Schur-process-type measure with two free endpoints and
derive contour formulas for height Laplace observables.  The two-boundary
Cauchy identities generate the reflected products above, and these products are
the source of the new asymptotic behavior.

For admissible piecewise periodic weights, the one-point contour formula gives a
Laplace-transform law of large numbers for the rescaled height function; see
Theorem~\ref{l61}.  We then work in the natural moment variable
\[
   x=e^{-n\beta\kappa}
\]
and use a compact moment-determinacy argument to upgrade transform convergence
to a weak slope-measure limit shape; see Theorem~\ref{thm:weak-limit-shape-beta}.
The limiting height profile is recovered from this slope measure.

The frozen-boundary analysis is one of the main places where the doubly free-boundary
case differs from the finite rational-function setting.  For a finite truncation
of \(S_\chi\), the spectral equation is rational and one can use a finite
zero--pole picture.  In the infinite product limit, however, the real axis
contains infinitely many zeros and poles, and \(0\) is an accumulation point.
Thus the usual finite root-count argument does not apply directly.

We develop a finite-truncation, zero--pole interlacing, and no-escape framework
to pass from the rational truncations to the infinite reflected product.  The
interlacing hypotheses separate inward and outward zero--pole families, provide
the admissible contour and logarithmic branch, and prevent roots relevant to the
frozen-boundary problem from escaping to \(0\) or \(\infty\).  Under the stated
regularity and nonexceptional endpoint assumptions, regular frozen-boundary
points are therefore described by the real double-root system
\[
   S_\chi(w)=e^{-n\kappa},
   \qquad
   \frac{d}{dw}\log S_\chi(w)=0,
\]
in the case \(\beta=1\); see Theorem~\ref{p412}.  Equivalently, the regular
interface is obtained as the locus where the equation
\(S_\chi(w)=e^{-n\kappa}\) develops a multiple real root.

\subsection{Annular Gaussian fluctuations}

The main second-order result is the Gaussian fluctuation theorem for centered
height Laplace observables; see Theorem~\ref{t77}.  The covariance is not the
usual simply connected Green kernel and is not merely a half-plane image
correction.  It is an annular reflected-image covariance.  In the notation of
Definition~\ref{def:KLL},
\[
   \mathsf K_{LL}(z,w)
   =
   \partial_z\partial_w
   \log
   \frac{\Theta_{\mathfrak q}(z/w)}
        {\Theta_{\mathfrak q}(u^2zw)},
   \qquad
   \mathfrak q=(uv)^2 .
\]
The numerator records the direct annular images, while the denominator records
the images created by the two free boundaries.  Thus the two free boundaries do
not merely modify the deterministic limit shape.  They change the second-order
field itself, replacing the simply connected image structure by an annular
prime-function covariance on the Laplace-test class.

\subsection{Sampling}

\subsection{Sampling}

We also prove an exact sampling result.  Its role is different from that of the
asymptotic theorems, but it is another consequence of the same reflected Cauchy
structure.  The SchurSample algorithm of \cite{bbbccv14} samples ordinary Schur
processes with both endpoint partitions fixed to \(\emptyset\).  Its
SymmetricSchurSample extension samples the right-free, equivalently symmetric,
one-sided free-boundary process.  The double-sided free-boundary process is
natural in the same formalism, but it lies outside the one-sided sampling
construction developed in \cite{bbbccv14}.

The doubly free-boundary rail-yard measure is governed by an infinite reflected
product, so it is not sampled directly by the finite SchurSample growth diagram
without first truncating the reflected product.  We construct instead a folded
reflected growth-diagram sampler for finite reflected truncations.  For every
truncation level \(K\), the algorithm samples exactly the \(K\)-truncated doubly
free-boundary law.  We then identify the limiting terminal-boundary weights and
normalizing constants and prove that these truncated laws converge in total
variation to the full doubly free-boundary Gibbs measure on every fixed
rail-yard graph with finitely many columns; see
Theorem~\ref{thm:truncated-sampler}.

\subsection{Organization of the paper}

Section~\ref{sect:bk} introduces rail-yard graphs, free-boundary dimer states,
the partition-sequence encoding, height functions, and the standing assumptions.
Section~\ref{sect:mh} derives the contour moment formulas for height Laplace
observables from the Schur/Macdonald-process representation and the
two-boundary Cauchy identities.

Section~\ref{sect:as} performs the asymptotic analysis of the moment formulas.
It proves the one-point asymptotics, the Gaussian cumulant expansion, and the
identification of the limiting covariance with the annular prime-function
derivative kernel.  Section~\ref{sect:fb} proves the Laplace-transform law of
large numbers, upgrades it to a weak slope-measure limit shape in the natural
moment variable \(x=e^{-n\beta\kappa}\), and derives the regular
frozen-boundary equations.

Section~\ref{sect:sampler} constructs the truncated growth-diagram sampler,
proves exactness for every fixed truncation level, and proves total-variation
convergence of the truncated laws to the full doubly free-boundary law.
Section~\ref{sect:ex} works out an explicit example.  Appendix~A collects the
background on Macdonald polynomials and the technical identities used in the
proofs.

\section{Background}\label{sect:bk}

In this section we introduce rail-yard graphs, dimer coverings (perfect matchings with free boundary),
and the associated height function. We also summarize the standing assumptions on the parameters and
state the main results of the paper.

\subsection{Weighted rail-yard graphs}

Fix integers $l,r\in\ZZ$ with $l\le r$. We write
\[
[l..r]:=[l,r]\cap\ZZ=\{l,l+1,\dots,r\},
\qquad
[m]:=\{1,2,\dots,m\}\ \ (m\in\ZZ_{>0}).
\]
Let $\underline a=(a_i)_{i\in[l..r]}\in\{L,R\}^{[l..r]}$ be an $LR$-word and
$\underline b=(b_i)_{i\in[l..r]}\in\{+,-\}^{[l..r]}$ a sign word.

\begin{definition}[Rail-yard graph]\label{def:ryg}
The \emph{rail-yard graph} $RYG(l,r,\underline a,\underline b)$ is the bipartite graph embedded in the
plane with vertex set
\[
V=[2l-1..2r+1]\times\Bigl(\ZZ+\tfrac12\Bigr).
\]
A vertex is called \emph{even} (resp.\ \emph{odd}) if its $x$-coordinate is an even (resp.\ odd) integer.
For each $i\in[l..r]$ and each $y\in\ZZ+\tfrac12$, the even vertex $(2i,y)$ is connected to three odd
vertices: two horizontal neighbors $(2i-1,y)$ and $(2i+1,y)$, and one diagonal neighbor determined by
$(a_i,b_i)$ as follows. Define
\[
\varepsilon_i:=\begin{cases}-1,& a_i=L,\\ +1,& a_i=R,\end{cases}
\qquad
\sigma_i:=\begin{cases}+1,& b_i=+,\\ -1,& b_i=-.\end{cases}
\]
Then the diagonal edge from $(2i,y)$ ends at $(2i+\varepsilon_i,\,y+\sigma_i)$.
\end{definition}

See Figure~\ref{fig:rye} for an example.

\begin{figure}
\centering
\includegraphics[width=.8\textwidth]{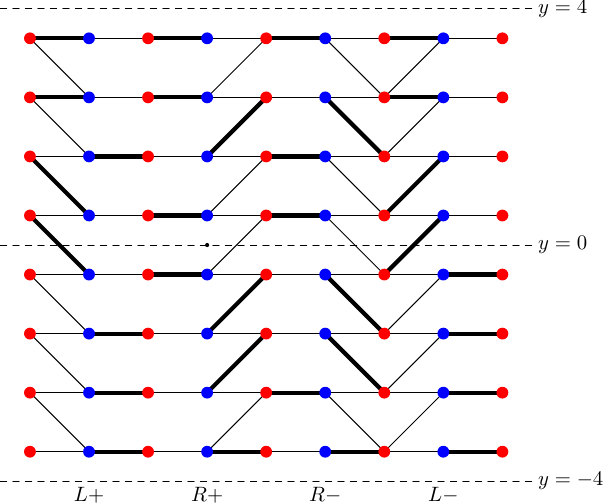}
\caption{A rail-yard graph with $LR$-word $\underline a=(L,R,R,L)$ and sign word $\underline b=(+,+,-,-)$.
Odd vertices are shown in red and even vertices in blue; thick edges depict one pure dimer covering.
Above the line $y=4$, only horizontal edges with an odd vertex on the left are present in the covering;
below the line $y=-4$, only horizontal edges with an even vertex on the left are present.
The corresponding sequence of partitions (from left to right) is
$\emptyset\prec(2,0,\ldots)\prec'(3,1,1,\ldots)\succ'(2,0,\ldots)\succ\emptyset$.}
\label{fig:rye}
\end{figure}

The \emph{left boundary} (resp.\ \emph{right boundary}) of $RYG(l,r,\underline a,\underline b)$ is the set of
odd vertices with $x$-coordinate $2l-1$ (resp.\ $2r+1$). Vertices not on the boundary are called \emph{inner}.
A face is called \emph{inner} if all its incident vertices are inner.

\paragraph{Edge weights.}
Fix parameters $\underline x=(x_l,\dots,x_r)$ (typically $x_i>0$). We assign weights as follows:
all horizontal edges have weight $1$, and each diagonal edge incident to the even column $x=2i$
has weight $x_i$.

\subsection{Dimer coverings and free-boundary states}

\begin{definition}[Dimer coverings]\label{def:dimer}
A \emph{dimer covering} of $RYG(l,r,\underline a,\underline b)$ is a subset $M$ of edges such that
\begin{enumerate}
\item every inner vertex is incident to exactly one edge of $M$;
\item every boundary vertex is incident to at most one edge of $M$;
\item $M$ contains only finitely many diagonal edges; and
\item there exists a positive integer $N>0$ such that, when $y<-N$ (resp.\ $y>N$), only horizontal edges with an even vertex on the left (resp.\ on the right) are present in $M$.
\end{enumerate}
A dimer covering $M$ is called \emph{pure} if, in addition,
\begin{itemize}
\item for $y>0$, each left boundary vertex $(2l-1,y)$ is matched to the interior, i.e.\ it is incident in $M$ to exactly one edge whose other endpoint has $x$-coordinate $2l$, while for $y<0$ it is unmatched;
\item for $y<0$, each right boundary vertex $(2r+1,y)$ is matched to the interior, i.e.\ it is incident in $M$ to exactly one edge whose other endpoint has $x$-coordinate $2r$, while for $y>0$ it is unmatched.
\end{itemize}
\end{definition}

\begin{definition}[Free-boundary dimer states]\label{def:free-boundary-state}
A \emph{free-boundary dimer state} of $RYG(l,r,\underline a,\underline b)$ is a dimer covering in the sense of Definition~\ref{def:dimer}, together with a particle--hole assignment on every unmatched boundary odd vertex.  At a matched odd vertex the assignment is forced by the selected edge: a present edge on the right gives a hole, while a present edge on the left gives a particle.  Each unmatched left boundary odd vertex gives a particle; each unmatched right boundary odd vertex gives a hole.

We require the resulting Maya diagram on every odd column, including the two boundary columns $x=2l-1$ and $x=2r+1$, to be eventually holes above and eventually particles below.  Thus for each free boundary dimer state $M$,  every odd column $2m-1$ determines a partition (a weakly decreasing sequence of nonnegative integers with only finitely many
nonzero entries),
\begin{align*}
\lambda^{(M,m)}:=(\lambda^{(M,m)}_1\geq \lambda^{(M,m)}_2\geq\ldots),
\end{align*}
where $\lambda^{(M,m)}_i$ is the number of holes below the $i$th highest particle.

\end{definition}

See Figure~\ref{fig:rye} for an example of a pure free-boundary dimer state.

For a free-boundary dimer state $M$ on the rail-yard graph $RYG(l,r,\underline a,\underline b)$, define the associated height function $h_M$ on faces of $RYG(l,r,\underline a,\underline b)$ as follows.
We first define a preliminary height function $\bar h_M$ on faces of $RYG(l,r,\underline a,\underline b)$.
Recall that there exists a positive integer $N>0$ such that, when $y<-N$ (resp.\ $y>N$), only horizontal edges with an even vertex on the left (resp.\ on the right) are present in $M$.
Fix a face $f_0$ of $RYG(l,r,\underline a,\underline b)$ such that the midpoint of $f_0$ lies on the horizontal line $y=-N$, and define
\[
\bar h_M(f_0)=0.
\]
For any two adjacent faces $f_1$ and $f_2$ sharing an edge,
\begin{itemize}
\item if moving from $f_1$ to $f_2$ crosses a horizontal edge with an odd vertex on the left, then $\bar h_M(f_2)-\bar h_M(f_1)=1$ if the edge is present in $M$, and $\bar h_M(f_2)-\bar h_M(f_1)=-1$ if it is absent from $M$;
\item if moving from $f_1$ to $f_2$ crosses a diagonal edge with an odd vertex on the left, then $\bar h_M(f_2)-\bar h_M(f_1)=2$ if the edge is present in $M$, and $\bar h_M(f_2)-\bar h_M(f_1)=0$ if it is absent from $M$.
\end{itemize}

Let $\bar h_0$ be the preliminary height function associated to the dimer configuration $M_0$ satisfying
\begin{itemize}
\item no diagonal edge is present; and
\item each present edge is horizontal with an even vertex on the left.
\end{itemize}
The height function $h_M$ associated to $M$ is then defined by
\begin{equation}
 h_M=\bar h_M-\bar h_0.
\label{dhm}
\end{equation}

Let $m\in[l..r]$. Let $x=2m-\frac12$ be a vertical line. Note that all the horizontal and diagonal edges of $RYG(l,r,\underline a,\underline b)$ crossed by $x=2m-\frac12$ have odd vertices on the left. Then for each point $\left(2m-\frac12,y\right)$ lying in a face of $RYG(l,r,\underline a,\underline b)$, we have
\begin{equation}
 h_M\left(2m-\frac12,y\right)=2\left[N_{h,M}\left(2m-\frac12,y\right)+N_{d,M}\left(2m-\frac12,y\right)\right],
\label{hm1}
\end{equation}
where $N_{h,M}\left(2m-\frac12,y\right)$ is the total number of horizontal edges present in $M$ crossed by $x=2m-\frac12$ below $y$, and $N_{d,M}\left(2m-\frac12,y\right)$ is the total number of diagonal edges present in $M$ crossed by $x=2m-\frac12$ below $y$.
From the definition of a dimer covering, both $N_{h,M}\left(2m-\frac12,y\right)$ and $N_{d,M}\left(2m-\frac12,y\right)$ are finite for each finite $y$.
To see why \eqref{hm1} is true, note that when $y\le -N$, $h_M\left(2m-\frac12,y\right)=0$.
For $y>-N$, moving from $\left(2m-\frac12,-N\right)$ to $\left(2m-\frac12,y\right)$ vertically, no edges present in $M_0$ are crossed.
Moreover, each time a horizontal or diagonal edge present in $M$ is crossed, $h_M=\bar h_M-\bar h_0$ increases by $2$; each time a horizontal or diagonal edge absent from $M$ is crossed, $h_M=\bar h_M-\bar h_0$ does not change.

Note also that $x=2m+\frac12$ is a vertical line such that all the horizontal and diagonal edges of $RYG(l,r,\underline a,\underline b)$ crossed by $x=2m+\frac12$ have even vertices on the left. Then for each point $\left(2m+\frac12,y\right)$ lying in a face of $RYG(l,r,\underline a,\underline b)$, we have
\begin{equation}
 h_M\left(2m+\frac12,y\right)=2\left[J_{h,M}\left(2m+\frac12,y\right)-N_{d,M}\left(2m+\frac12,y\right)\right],
\label{hm2}
\end{equation}
where $J_{h,M}\left(2m+\frac12,y\right)$ is the total number of horizontal edges absent from $M$ crossed by $x=2m+\frac12$ below $y$, and $N_{d,M}\left(2m+\frac12,y\right)$ is the total number of diagonal edges present in $M$ crossed by $x=2m+\frac12$ below $y$.
From the definition of a dimer covering, both $J_{h,M}\left(2m+\frac12,y\right)$ and $N_{d,M}\left(2m+\frac12,y\right)$ are finite for each finite $y$.
To see why \eqref{hm2} is true, note that when $y\le -N$, $h_M\left(2m+\frac12,y\right)=0$.
For $y>-N$, moving from $\left(2m+\frac12,-N\right)$ to $\left(2m+\frac12,y\right)$ vertically, no diagonal edges present in $M_0$ are crossed and no horizontal edges absent from $M_0$ are crossed.
Moreover, each time a diagonal edge present in $M$ is crossed, $h_M=\bar h_M-\bar h_0$ decreases by $2$; each time a horizontal edge absent from $M$ is crossed, $h_M=\bar h_M-\bar h_0$ increases by $2$; each time a horizontal edge present in $M$ or a diagonal edge absent from $M$ is crossed, $h_M=\bar h_M-\bar h_0$ does not change.

\bigskip

\paragraph{Column-coordinate convention for asymptotic observables.}
The asymptotic theorems below use the column index as the horizontal macroscopic
coordinate, rather than the embedded Euclidean coordinate $2i\pm\frac12$.  For a column
$m\in[l..r]$ we set
\begin{equation}
        h_M^{\mathrm{col}}(m,y):=
        \begin{cases}
        h_M\!\left(2m-\frac12,y\right),& a_m=L,\\[1mm]
        h_M\!\left(2m+\frac12,y\right),& a_m=R.
        \end{cases}
\label{eq:column-height-convention}
\end{equation}
Thus, when $\epsilon i^{(\epsilon)}\to\chi$, expressions formerly written as
$h_M(\chi/\epsilon,y)$ should be read as
$h_M^{\mathrm{col}}(i^{(\epsilon)},y)$.  This convention removes the harmless but
potentially confusing factor of two between the embedded graph coordinate and the column
coordinate used in the scaling assumptions.  

\subsection{Partitions}

A partition is a weakly decreasing sequence $\lambda=(\lambda_i)_{i\ge 1}$ of non-negative integers such that $\lambda_i=0$ for all sufficiently large $i$.
Let $\mathbb{Y}$ be the set of all partitions.
The size and length of $\lambda$ are
\[
|\lambda|=\sum_{i\ge 1}\lambda_i,
\qquad
l(\lambda)=\max\{i:\lambda_i>0\}.
\]
For partitions $\lambda,\mu$, we write $\mu\subseteq\lambda$ if $\mu_i\le \lambda_i$ for all $i\ge 1$.

We identify a partition \(\lambda=(\lambda_1,\lambda_2,\ldots)\) with its
Young diagram
\[
        Y(\lambda)
        :=
        \{(i,j)\in\mathbb Z_{>0}^2:\ 1\le j\le \lambda_i\},
\]
drawn in English convention, so that the \(i\)-th row contains
\(\lambda_i\) boxes and all rows are left-justified.  If
\(\mu_i\le \lambda_i\) for all \(i\), we write \(\mu\subset\lambda\) and
define the skew Young diagram
\[
        \lambda/\mu:=Y(\lambda)\setminus Y(\mu).
\]

We say that $\lambda$ and $\mu$ interlace, and write $\lambda\succ\mu$ (or $\mu\prec\lambda$), if
\[
\lambda_i\ge \mu_i\ge \lambda_{i+1},\qquad \forall i\ge 1.
\]
In terms of Young diagrams, $\lambda/\mu$ is then a horizontal strip.
The conjugate partition $\lambda'$ is defined by
\[
\lambda_i'=\big|\{j\ge 1:\lambda_j\ge i\}\big|,
\qquad i\ge 1,
\]
and we write $\mu\prec'\lambda$ if $\mu'\prec\lambda'$.

\begin{definition}\label{dss}
Let \(\lambda,\mu\) be partitions.  Let \(h_r\) denote the complete
homogeneous symmetric function of degree \(r\), with the conventions
\[
        h_0=1,
        \qquad
        h_r=0 \quad \text{for } r<0.
\]
Equivalently, in variables \(\mathbf x=(x_1,x_2,\ldots)\),
\[
        \sum_{r\ge0} h_r(\mathbf x) z^r
        =
        \prod_{i\ge1}\frac{1}{1-x_i z}.
\]

If \(\mu\not\subseteq\lambda\), set \(s_{\lambda/\mu}=0\).  If
\(\mu\subseteq\lambda\), define the skew Schur function by the Jacobi--Trudi
formula
\[
        s_{\lambda/\mu}
        =
        \det\bigl(
        h_{\lambda_i-\mu_j-i+j}
        \bigr)_{i,j=1}^{\ell(\lambda)},
\]
where \(\mu_j\) is understood to be \(0\) for \(j>\ell(\mu)\).  We also write
\[
        s_\lambda=s_{\lambda/\emptyset}.
\]

After applying a specialization \(\rho\) (see Definition \ref{dsp1}) of the algebra of symmetric
functions, the same formula defines \(s_{\lambda/\mu}(\rho)\) with
\(h_r\) replaced by \(h_r(\rho)\).
\end{definition}

Fix $N\ge 1$. Let $\mu=(\mu^{(0)},\ldots,\mu^{(2N)})\in\mathbb{Y}^{2N+1}$ satisfy
\begin{equation}
 \mu^{(0)}\subseteq\mu^{(1)}\supseteq\mu^{(2)}\subseteq\cdots\subseteq\mu^{(2N-1)}\supseteq\mu^{(2N)}.
\label{sm}
\end{equation}
Let $\rho_1,\ldots,\rho_{2N}$ be specializations as in Definition~\ref{dsp1}, and $u,v\in (0,1)$. Define
\begin{equation}
 \mathbb{P}(\mu):=\frac{u^{|\mu^{(0)}|}v^{|\mu^{(2N)}|}\prod_{k=0}^{N-1}s_{\mu^{(2k+1)}/\mu^{(2k)}}(\rho_{2k+1})s_{\mu^{(2k+1)}/\mu^{(2k+2)}}(\rho_{2k+2})}{Z},
\label{ppt}
\end{equation}
where the partition function $Z$ is the sum of the numerator over all sequences satisfying \eqref{sm}.

The following proposition is proved in Section~5.3 of~\cite{BCC17}; see also Proposition~2.1 of~\cite{BBNV}.

\begin{proposition}\label{p23}
The partition function $Z$ can be computed as
\[
 Z=\prod_{1\le k\le l\le N}H\bigl(\rho_{2k-1};\rho_{2l}\bigr)
 \prod_{n\ge 1}
 \frac{\tilde{H}\bigl([u^{n-1}v^n\rho^{(o)}\bigr)\,\tilde{H}\bigl(u^nv^{n-1}\rho^{(e)}\bigr)\,H\bigl(u^{2n}\rho^{(o)};v^{2n}\rho^{(e)}\bigr)}{1-u^nv^n}.
\]
Here
\[
\rho^{(o)}=\bigcup_{i=1}^{N}\rho_{2i-1},
\qquad
\rho^{(e)}=\bigcup_{i=1}^{N}\rho_{2i}.
\]
See Definition~\ref{dsp1}, in particular \eqref{dru} \eqref{dh1} and \eqref{dht}, for the definitions of \(H\), \(\widetilde{H}\), the union \(\rho_1\cup\rho_2\) of specializations, and the notation \(c\rho\).
\end{proposition}

\subsection{Dimer coverings and sequences of partitions}

Let $G=RYG(l,r,\underline a,\underline b)$. 
For each free-boundary dimer state, see Definition~\ref{def:free-boundary-state} for the corresponding particle-hole configuration.
We define the \emph{charge} (sometimes also called the \emph{central charge}) $c^{(M,m)}$ on the odd column $x=2m-1$ by
\begin{equation}
 c^{(M,m)}=\#\{\text{particles on }x=2m-1\text{ with }y>0\}-\#\{\text{holes on }x=2m-1\text{ with }y<0\}.
\label{dcg}
\end{equation}
Under Definition~\ref{def:free-boundary-state}, this charge is finite and is determined by the boundary data (i.e., it is constant along the column evolution; see Lemma~\ref{le32}).

Definition \ref{def:free-boundary-state} associates a partition $\lambda^{(M,m)}$ to the particle--hole configuration on each odd column.
It is straightforward to verify the following lemma.

\begin{lemma}\label{lm24}
Let $G=RYG(l,r,\underline a,\underline b)$ be a rail-yard graph, and let $(\lambda^{(M,l)},\lambda^{(M,l+1)},\ldots,\lambda^{(M,r+1)})$ be the sequence of partitions corresponding to a free-boundary dimer state $M$ on $G$. Then for $i\in[l..r]$:
\begin{enumerate}
\item If $(a_i,b_i)=(L,-)$, then $\lambda^{(M,i+1)}\prec\lambda^{(M,i)}$;
\item If $(a_i,b_i)=(L,+)$, then $\lambda^{(M,i+1)}\succ\lambda^{(M,i)}$;
\item If $(a_i,b_i)=(R,-)$, then $[\lambda^{(M,i+1)}]'\prec[\lambda^{(M,i)}]'$;
\item If $(a_i,b_i)=(R,+)$, then $[\lambda^{(M,i+1)}]'\succ[\lambda^{(M,i)}]'$.
\end{enumerate}
\end{lemma}

\begin{proof}
See Section~3 of~\cite{LV21}.
\end{proof}

For a free-boundary dimer state $M$, let
\[
\lambda^{(l)}(M):=\lambda^{(M,l)},
\qquad
\lambda^{(r+1)}(M):=\lambda^{(M,r+1)}
\]
be the induced left and right boundary partitions. Define the weight of $M$ by
\[
w(M):=u^{|\lambda^{(l)}(M)|}v^{|\lambda^{(r+1)}(M)|}\prod_{i=l}^{r}x_i^{d_i(M)},
\]
where $d_i(M)$ is the total number of present diagonal edges of $M$ incident to an even
vertex with abscissa $2i$.

For fixed partitions $\lambda^{(l)},\lambda^{(r+1)}$, let
\[
Z_{\lambda^{(l)},\lambda^{(r+1)}}(G,\underline{x})
\]
be the sum of $w(M)$ over all free-boundary dimer states $M$ satisfying
\[
\lambda^{(l)}(M)=\lambda^{(l)},
\qquad
\lambda^{(r+1)}(M)=\lambda^{(r+1)}.
\]

In the doubly free-boundary case, we identify free-boundary dimer states that differ by a common
vertical translation. As will be shown in Lemma~\ref{le32}, each such translation class
contains a unique representative for which
\[
c^{(M,m)}=0,\qquad \forall m\in[l..r+1].
\]
Let $\mathcal{M}_{f,f}(G)$ denote the set of these canonical free-boundary dimer states. The
free-boundary partition function is
\[
Z_{f,f}(G,\underline{x})
:=
\sum_{M\in\mathcal{M}_{f,f}(G)} w(M)
=
\sum_{\lambda^{(l)},\lambda^{(r+1)}\in\YY}
Z_{\lambda^{(l)},\lambda^{(r+1)}}(G,\underline{x}),
\]
and the induced probability measure is
\begin{equation}
\Pr(M)=\frac{w(M)}{Z_{f,f}(G,\underline{x})}.
\label{ppd}
\end{equation}
In particular, pure free-boundary dimer states correspond to
\[
\lambda^{(l)}=\lambda^{(r+1)}=\emptyset.
\]

\begin{lemma}\label{l25}
For each pair of sequences $\underline a,\underline b$, for dimer configurations on $RYG(l,r,\underline a,\underline b)$ with boundary conditions in one of the following cases:
\begin{enumerate}[label=(\alph*)]
\item left boundary partition arbitrary and right boundary partition $\emptyset$;
\item both boundary partitions equal to $\emptyset$ (pure dimer coverings);
\item both boundary partitions arbitrary,
\end{enumerate}
there exist an integer $N$ and specializations $\rho_1,\ldots,\rho_{2N}$ (and parameters $u,v$) such that the corresponding dimer measure \eqref{ppd} with the specified boundary condition coincides
with the corresponding Schur-process-type measure \eqref{ppt}.
\end{lemma}

\begin{proof}
We construct \(\rho_1,\ldots,\rho_{2N}\) recursively. Let
\(\rho^{\varnothing}\) denote the empty-alphabet specialization, defined by
\[
p_n(\rho^{\varnothing})=0,\qquad n\ge 1.
\]
Equivalently, \(H(\rho^{\varnothing};z)=1\), and hence
\(s_{\lambda/\mu}(\rho^{\varnothing})=\mathbf 1_{\lambda=\mu}\).

Initialize \(i=l\) and \(j=1\). We say that \(j\) is compatible with the sign
\(b_i\) if either \(j\) is odd and \(b_i=+\), or \(j\) is even and \(b_i=-\).
For each \(i\in[l..r]\), if \(j\) is not compatible with \(b_i\), set
\[
\rho_j=\rho^{\varnothing}
\]
and replace \(j\) by \(j+1\). Now \(j\) is compatible with \(b_i\). We then set
the nontrivial specialization corresponding to column \(i\) by
\[
p_n(\rho_j)=
\begin{cases}
x_i^n, & a_i=L,\\
(-1)^{n+1}x_i^n, & a_i=R,
\end{cases}
\qquad n\ge 1.
\]
Then replace \(i\) by \(i+1\) and \(j\) by \(j+1\), and continue.

After the last column and when $i=r+1$, choose \(2N\) to be the smallest even integer satisfying
\(2N\ge j-1\), and set any remaining
\(\rho_j,\ldots,\rho_{2N}\) equal to \(\rho^{\varnothing}\). The inserted
empty-alphabet specializations contribute only identity skew-Schur factors, so
they do not change the measure; they only adjust the parity of the Schur-process
sequence. Finally, in case~\emph{(a)} we set \(v=0\), while in case~\emph{(b)}
we set \(u=v=0\). These choices force the corresponding endpoint partitions to
be empty in \eqref{ppt}, and the claimed identification follows.
\end{proof}

\begin{proposition}\label{p26}
Let $RYG(l,r,\underline a,\underline b)$ be a rail-yard graph, and let $x_i$ denote the weight of the diagonal edges incident with the vertical line $x=2i$.
\begin{enumerate}
\item The partition function for free-boundary dimer states on $RYG(l,r,\underline a,\underline b)$ with free boundary conditions on both the left boundary and the right boundary is
\begin{align}
 Z_{f,f}
 &=\prod_{\substack{l\le i<j\le r\\ b_i=+,\,b_j=-}}z_{ij}
 \Biggl[\prod_{k\ge 1}\frac{1}{\prod_{\substack{i\in[l..r]\\ b_i=+}}\bigl(1-u^{k-1}v^kx_i\bigr)\prod_{\substack{i\in[l..r]\\ b_i=-}}\bigl(1-u^kv^{k-1}x_i\bigr)}\Biggr]
 \label{ffp}\\
 &\quad\times\Biggl[\prod_{k\ge 1}\frac{\prod_{\substack{l\le i,j\le r\\ b_i=+,\,b_j=-\\ a_i\neq a_j}}\bigl(1+u^{2k}v^{2k}x_ix_j\bigr)}{(1-u^kv^k)\prod_{\substack{l\le i,j\le r\\ b_i=+,\,b_j=-\\ a_i=a_j}}\bigl(1-u^{2k}v^{2k}x_ix_j\bigr)}\Biggr]
 \notag\\
 &\quad\times\Biggl[\prod_{k\ge 1}\frac{\prod_{\substack{l\le i<j\le r\\ b_i=b_j=+,\,a_i\neq a_j}}\bigl(1+u^{2k-2}v^{2k}x_ix_j\bigr)\prod_{\substack{l\le i<j\le r\\ b_i=b_j=-,\,a_i\neq a_j}}\bigl(1+u^{2k}v^{2k-2}x_ix_j\bigr)}{\prod_{\substack{l\le i<j\le r\\ b_i=b_j=+,\,a_i=a_j}}\bigl(1-u^{2k-2}v^{2k}x_ix_j\bigr)\prod_{\substack{l\le i<j\le r\\ b_i=b_j=-,\,a_i=a_j}}\bigl(1-u^{2k}v^{2k-2}x_ix_j\bigr)}\Biggr].
 \notag
\end{align}
Here
\begin{equation}
 z_{ij}=\begin{cases}
 1+x_ix_j, & \text{if } a_i\neq a_j,\\[2pt]
 \dfrac{1}{1-x_ix_j}, & \text{if } a_i=a_j.
 \end{cases}
\label{dzij}
\end{equation}

\item The partition function for free-boundary dimer states on $RYG(l,r,\underline a,\underline b)$ with the free boundary condition on the left boundary and the empty partition on the right boundary is
\[
 Z_{f,\emptyset}=\prod_{\substack{l\le i<j\le r\\ b_i=+,\,b_j=-}}z_{ij}\Biggl[\frac{1}{\prod_{\substack{i\in[l..r]\\ b_i=-}}(1-ux_i)}\Biggr]\Biggl[\frac{\prod_{\substack{l\le i<j\le r\\ b_i=b_j=-,\,a_i\neq a_j}}(1+u^2x_ix_j)}{\prod_{\substack{l\le i<j\le r\\ b_i=b_j=-,\,a_i=a_j}}(1-u^2x_ix_j)}\Biggr].
\]
\end{enumerate}
\end{proposition}

\begin{proof}
Part~(1) follows from Proposition~\ref{p23} together with Lemma~\ref{l25}.
For part~(2), we specialize \eqref{ffp} at $v=0$. In the first bracket, all factors with $b_i=+$ become $1$ (since they contain a positive power of $v$), while for $b_i=-$ only the term $k=1$ survives, giving $\prod_{b_i=-}(1-ux_i)^{-1}$.
In the second bracket all factors become $1$ because they contain a positive power of $v$.
In the third bracket only the term $k=1$ in the $b_i=b_j=-$ part survives, yielding
\[
 \frac{\prod_{b_i=b_j=-,\,a_i\neq a_j}(1+u^2x_ix_j)}{\prod_{b_i=b_j=-,\,a_i=a_j}(1-u^2x_ix_j)}.
\]
This proves the stated formula for $Z_{f,\emptyset}$.
\end{proof}

Now we discuss the assumptions used to obtain the asymptotic results.

\begin{assumption}\label{ap5}
Let $\{RYG(l^{(\epsilon)},r^{(\epsilon)},\underline a^{(\epsilon)},\underline b^{(\epsilon)})\}_{\epsilon>0}$ be a sequence of rail-yard graphs. For each $\epsilon>0$, denote by $x_i^{(\epsilon)}$ the weight of the diagonal edges incident with the vertical line $x=2i$. Fix an integer period $n\ge 1$.
\begin{enumerate}
\item \label{ap41} \textbf{Piecewise periodicity of the graph.}
Fix $m\in\ZZ_{>0}$ and real numbers $V_0<V_1<\cdots<V_m$. We say that the sequence is piecewise $n$-periodic with transition points $V_0,\dots,V_m$ as $\epsilon\to 0$ if:
\begin{enumerate}
\item For each $\epsilon>0$, there exist integers $v_0^{(\epsilon)}<v_1^{(\epsilon)}<\cdots<v_m^{(\epsilon)}$ with
\[
 l^{(\epsilon)}=v_0^{(\epsilon)},\qquad r^{(\epsilon)}=v_m^{(\epsilon)},\qquad v_p^{(\epsilon)}\in n\ZZ\ \ (0\le p\le m),
\]
such that $\lim_{\epsilon\to 0}\epsilon v_p^{(\epsilon)}=V_p$ for all $p\in\{0,1,\dots,m\}$.

\item The sequence $\underline a^{(\epsilon)}$ is $n$-periodic on $[l^{(\epsilon)}..r^{(\epsilon)}]$ and does not depend on $\epsilon$. More precisely, there exist $a_1,\dots,a_n\in\{L,R\}$ such that for every integer $i$,
\[
 a_i^{(\epsilon)}=a_{i_{\equiv_n}},
\]
where $i_{\equiv_n}\in[n]:=\{1,2,\dots,n\}$ denotes the unique representative of $i$ modulo $n$ in $[n]$.

\item For each $p\in[m]:=\{1,2,\dots,m\}$, the sequence $\underline b^{(\epsilon)}$ is $n$-periodic on the open interval $(v_{p-1}^{(\epsilon)},v_p^{(\epsilon)})$ and does not depend on $\epsilon$ (but may depend on $p$). More precisely, there exist $b_{p,1},\dots,b_{p,n}\in\{+,-\}$ such that for all integers $i$ with $v_{p-1}^{(\epsilon)}<i<v_p^{(\epsilon)}$,
\[
 b_i^{(\epsilon)}=b_{p,i_{\equiv_n}}.
\]
The values of $b_i^{(\epsilon)}$ at the finitely many transition indices $i=v_p^{(\epsilon)}$ can be chosen arbitrarily; they do not affect the scaling limits.
\end{enumerate}

\item \label{ap43} \textbf{Periodicity of weights.}
The weights $x_i^{(\epsilon)}$ are periodic $q$-volume weights. Precisely,
\[
 x_i^{(\epsilon)}=\begin{cases}
 e^{-\epsilon(i-i_{\equiv_n})}\tau_k, & b_i^{(\epsilon)}=+,\\[2pt]
 e^{\epsilon(i-i_{\equiv_n})}\tau_k^{-1}, & b_i^{(\epsilon)}=-,
 \end{cases}
 \qquad \text{where } k=i_{\equiv_n}\in[n],
\]
and $\tau_1,\dots,\tau_n$ are positive constants independent of $\epsilon$.

\item \label{ap52} \textbf{Limit regime.}
\begin{enumerate}
\item Throughout the asymptotic results we take
\begin{equation}
        t=t^{(\epsilon)}=e^{-n\beta\epsilon},
\label{eq:exact-t-scaling}
\end{equation}
where $\beta>0$ is independent of $\epsilon$.  This exact normalization is used only to
identify the height Laplace kernel $t^{ky}$ with $e^{-n\beta k\eta}$ after the change of
variables $\eta=\epsilon y$; the one-point moment asymptotics themselves remain valid under
the weaker condition $-\log t/(n\epsilon)\to\beta$.

\item Let $s\in\ZZ_{>0}$. For each $d\in[s]:=\{1,\dots,s\}$, assume that $i_d^{(\epsilon)}\in[l^{(\epsilon)}..r^{(\epsilon)}]$ satisfies
\[
 \lim_{\epsilon\to 0}\epsilon i_d^{(\epsilon)}=\chi_d,
 \qquad \chi_1\le \chi_2\le \cdots\le \chi_s,
\]
and that the residue class modulo $n$ is fixed in $\epsilon$, i.e.\ there exist $i_1^*,\dots,i_s^*\in[n]$ such that
\[
 (i_d^{(\epsilon)})_{\equiv_n}=i_d^*.
\]
When $s=1$ we drop the index and write $\lim_{\epsilon\to 0}\epsilon i^{(\epsilon)}=\chi$ and $(i^{(\epsilon)})_{\equiv_n}=i^*$.
\end{enumerate}
\end{enumerate}
\end{assumption}

\begin{assumption}[Admissibility of weights]\label{ass:admissible}
For every finite graph considered below the parameters are chosen so that the free-boundary
partition function in Proposition~\ref{p26} is finite and all factors appearing in denominators
are nonzero. 

In the scaling regime of Assumption~\ref{ap5} we assume this admissibility uniformly in
$\epsilon$: there is a constant $\delta>0$ such that every denominator of the form
\begin{align*}
&1-x_i^{(\epsilon)}x_j^{(\epsilon)},\quad
1-u^{k-1}v^kx_i^{(\epsilon)},\quad
1-u^kv^{k-1}x_i^{(\epsilon)},\\
&1-u^{2k}v^{2k}x_i^{(\epsilon)}x_j^{(\epsilon)},\quad
1-u^{2k-2}v^{2k}x_i^{(\epsilon)}x_j^{(\epsilon)},\quad
1-u^{2k}v^{2k-2}x_i^{(\epsilon)}x_j^{(\epsilon)}
\end{align*}
which occurs in Proposition~\ref{p26} has absolute value at least $\delta$.  In particular,
whenever the finite factor $z_{ij}=(1-x_ix_j)^{-1}$ occurs, we assume
$x_i^{(\epsilon)}x_j^{(\epsilon)}\le 1-\delta$.  Every geometric parameter used in Section~\ref{sect:sampler} is at most
\(1-\delta\), and every Bernoulli parameter of the form \(\xi/(1+\xi)\) is finite
and positive.
\end{assumption}

\subsection{Main Results}

We state the main results in a form that separates the exact formulas, asymptotic theorems, frozen-boundary criterion, and sampling statement.
The first theorem is a law of large numbers for Laplace-transform observables.  The following theorem upgrades it, in the natural moment variable, to a weak slope-measure limit shape.  This is the analogue, for the present doubly free-boundary contour formulas, of the moment-determinacy step in Ahn's analysis of Macdonald plane partitions~\cite{Ah18}.

\begin{theorem}[Laplace-transform law of large numbers]\label{l61}
For each $\epsilon>0$, let $M=M^{(\epsilon)}$ be a random free-boundary dimer state on the rail-yard
graph $RYG(l^{(\epsilon)},r^{(\epsilon)},\underline a^{(\epsilon)},\underline b^{(\epsilon)})$ with
probability distribution given by \eqref{ppd}.  Let $h_M$ be the height function associated to
$M$ as defined in \eqref{dhm}.  Suppose Assumptions~\ref{ap5}, \ref{ass:admissible},
\ref{ap64}, and~\ref{ap65} hold at \(\chi\).  Fix
$\chi\in(V_0,V_m)\setminus\{V_p\}_{p=0}^m$ and choose columns $i^{(\epsilon)}$ with
$\epsilon i^{(\epsilon)}\to\chi$, fixed residue class modulo $n$, and
$a_{i^{(\epsilon)}}^{(\epsilon)}=L$.  For each $k\in\mathbb Z_{>0}$ put
$\alpha=\beta k$ and define
\[
\mathcal L_{\beta k}^{(\epsilon)}(\chi)
:=
\int_{-\infty}^{\infty}e^{-n\beta k\kappa}\,
\epsilon h_M^{\mathrm{col}}\!\left(i^{(\epsilon)},\frac{\kappa}{\epsilon}\right)d\kappa .
\]
Set
\[
        S_\chi(w):=\mathcal G_\chi(w)\prod_{r\ge1}\mathcal F_{u,v,r}(w).
\]
Then $\mathcal L_{\beta k}^{(\epsilon)}(\chi)$ converges in probability to the non-random limit
\begin{align}
\mathcal L_\alpha(\chi)
&=
\frac{1}{n^2\alpha^2\pi\mathbf{i}}
\oint_{\mathcal C}[S_\chi(w)]^{\alpha}\frac{dw}{w},
\label{lph}
\end{align}
where the contour $\mathcal C$ satisfies the conditions of Proposition~\ref{p57} and is
branch-admissible in the sense of Definition~\ref{def:branch-admissible}, and
$\mathcal{G}_{\chi}$ and $\mathcal{F}_{u,v,r}$ are defined by \eqref{dgc} and \eqref{dfuvk}.
Here $\alpha=\beta k$ and the power is taken with the branch-admissible logarithm of the
single product $S_\chi$, normalized to be real on the positive real axis when that axis lies in
the chosen component.  
\end{theorem}

\begin{theorem}[Limit shape in the natural Laplace scale]\label{thm:weak-limit-shape-beta}
Assume the hypotheses of Theorem~\ref{l61}.  For a marked $L$-column
$i^{(\epsilon)}$ with $\epsilon i^{(\epsilon)}\to\chi$, set
\begin{align}
        H_\epsilon(\kappa):=
        \epsilon h_M^{\mathrm{col}}\!\left(i^{(\epsilon)},\frac{\kappa}{\epsilon}\right).\label{dhe}
\end{align}
Let $dH_\epsilon$ be the distributional slope measure in the $\kappa$-direction and push it
forward by the natural moment variable
\[
        x=e^{-n\beta\kappa}.
\]
Thus, for Borel sets $B\subset[0,\infty)$, define
\begin{equation}\label{dnu-eps}
\nu_\chi^{(\epsilon)}(B)
:=
\int_{\{\kappa:\,e^{-n\beta\kappa}\in B\}}
        n\beta e^{-n\beta\kappa}\,dH_\epsilon(\kappa).
\end{equation}
Then $\nu_\chi^{(\epsilon)}$ converges weakly in probability to a deterministic compactly
supported finite measure $\nu_\chi$ on $[0,\infty)$.  The limiting measure is independent of
the fixed residue class of $i^{(\epsilon)}$ modulo $n$.  Let \(S_\chi\) be defined in \eqref{dsc}.
Let \(\mathcal C_\chi\), \(U_\chi\), and \(L_\chi\) be supplied by
Corollary~\ref{cor:automatic-branch}, and define
\begin{equation}\label{eq:Tchi}
T_\chi(w)
:=
\exp\!\bigl(\beta L_\chi(w)\bigr),
\qquad w\in U_\chi.
\end{equation}  The moments of \(\nu_\chi\) are
\begin{equation}\label{nu-moments}
\int_0^\infty x^{k-1}\nu_\chi(dx)
=
\frac{1}{k\pi\mathbf i}
\oint_{\mathcal C_\chi}
T_\chi(w)^k\frac{dw}{w},
\qquad k\in\mathbb Z_{>0}.
\end{equation}
The macroscopic height profile in the column coordinate is recovered by
\begin{equation}\label{def-H-from-nu}
\mathcal H(\chi,\kappa)
:=
\frac{1}{n\beta}
\int_{(e^{-n\beta\kappa},\infty)}\frac{1}{x}\,\nu_\chi(dx).
\end{equation}
The function
$\mathcal H(\chi,\cdot)$ is locally absolutely continuous and
\[
        0\le \partial_\kappa\mathcal H(\chi,\kappa)\le 2
\]
for a.e. $\kappa$.  At Lebesgue points of the density of $\nu_\chi$, if
$x=e^{-n\beta\kappa}$ and $d\nu_\chi(x)=f_\chi(x)\,dx$, then
\begin{equation}\label{slope-from-nu-density}
        \partial_\kappa\mathcal H(\chi,\kappa)=f_\chi(e^{-n\beta\kappa}).
\end{equation}
\end{theorem}

\begin{definition}\label{df29}
Let \(\mathcal H\) be the deterministic limit shape obtained from
Theorem~\ref{thm:weak-limit-shape-beta}.  A point
\((\chi,\kappa)\), with
\(\chi\in (V_0,V_m)\setminus\{V_p\}_{p=0}^m\), is called a regular slope point if,
writing \(x=e^{-n\beta\kappa}\), \(x\) is a Lebesgue point of the density
\(f_\chi=d\nu_\chi/dx\), and the Stieltjes boundary value satisfies
\[
\partial_\kappa \mathcal H(\chi,\kappa)
=
f_\chi(x)
=
-\frac{1}{\pi}
\lim_{\delta\downarrow0}\Im \operatorname{St}_{\nu_\chi}(x+i\delta).
\]  The liquid region is
\[
 \mathcal{L}:=\left\{(\chi,\kappa):
 \frac{\partial\mathcal{H}}{\partial\kappa}(\chi,\kappa)\in(0,2)\right\},
\]
and the frozen region is
\[
 \left\{(\chi,\kappa):
 \frac{\partial\mathcal{H}}{\partial\kappa}(\chi,\kappa)\in\{0,2\}\right\}.
\]
The frozen boundary is the interface between the liquid and frozen regular slope
points. 
\end{definition}

The root-count assertions in the next theorem are understood through the finite truncations
$S_{\chi,K}$ and the no-escape/Hurwitz passage in Section~\ref{sect:fb}; we do not use a
polynomial root count directly for the infinite product.

\begin{theorem}[Regular frozen-boundary equation]
\label{p412}
Assume \(\beta=1\) and
Assumptions~\ref{ap5}, \ref{ass:admissible},
\ref{ap64}, and \ref{ap65}. Let \(H\) be the
limit shape supplied by Theorem~\ref{thm:weak-limit-shape-beta}.
Let
\[
p_0=(\chi_0,\kappa_0)
\]
be a point on the frozen boundary of Definition~\ref{df29}, where
\[
\chi_0\in
(V_0,V_m)\setminus\{V_0,\ldots,V_m\},
\qquad
x_0:=e^{-n\kappa_0}.
\]
Let \(S_\chi\) be the spectral product defined in \eqref{dsc}, and
let \(S_{\chi,K}\) and \(C_{\chi,K}\) be defined by
\eqref{dsck}.

 Suppose
further that there exists a neighborhood \(U_0\) of \(p_0\) such
that, for every regular slope point
\[
p=(\chi,\kappa)\in U_0
\]
in the sense of Definition~\ref{df29}, writing
\[
x=e^{-n\kappa},
\]
one has
\[
S_\chi(0)\neq x,
\]
and there exists \(K(p)\geq1\) such that, for all \(K\geq K(p)\),
\[
D_{1,K}(\chi)\neq\varnothing,
\qquad
C_{\chi,K}\neq x.
\]
Here \(D_{1,K}(\chi)\) denotes the inward pole set defined in
Assumption~\ref{ap62}; under Assumptions~\ref{ap64}--\ref{ap65},
Assumption~\ref{ap62} is available by Lemma~\ref{l64}.

Then there exists \(w_0\in\mathbb R\setminus\{0\}\) such that
\begin{equation}
\label{fb}
S_{\chi_0}(w_0)=e^{-n\kappa_0},
\qquad
\frac{S_{\chi_0}'(w_0)}{S_{\chi_0}(w_0)}=0.
\end{equation}
Equivalently, \(w_0\) is a multiple real root of
\[
S_{\chi_0}(w)=e^{-n\kappa_0}.
\]
\end{theorem}

\begin{theorem}\label{t77}
Let $\{RYG(l^{(\epsilon)},r^{(\epsilon)},\underline a^{(\epsilon)},\underline b^{(\epsilon)})\}_{\epsilon>0}$
be a sequence of rail-yard graphs. Suppose Assumptions~\ref{ap5}, \ref{ass:admissible},
\ref{ap64}, and~\ref{ap65} hold at each marked location.
For each $\epsilon>0$, let $M=M^{(\epsilon)}$ be the canonical free-boundary dimer state sampled from the probability measure
\eqref{ppd}. Fix finitely many $\chi_d\in(V_0,V_m)\setminus\{V_p\}_{p=0}^{m}$ and
positive integers $g_d$.  Choose columns $i_d^{(\epsilon)}$ with
\[
\epsilon i_d^{(\epsilon)}\to\chi_d,
\qquad
(i_d^{(\epsilon)})_{\equiv_n}\text{ fixed},
\qquad
 a_{i_d^{(\epsilon)}}^{(\epsilon)}=L.
\]
Define
\[
\mathcal X_{g_d}^{(\epsilon)}(\chi_d)
:=
\int_{-\infty}^{\infty}
\left(
 h_M^{\mathrm{col}}\!\left(i_d^{(\epsilon)},\frac{\eta}{\epsilon}\right)
 -
 \mathbb E\left[h_M^{\mathrm{col}}\!\left(i_d^{(\epsilon)},\frac{\eta}{\epsilon}\right)\right]
\right)e^{-n\beta g_d\eta}\,d\eta .
\]
Then the vector of these observables converges in distribution to a centered Gaussian
vector.  Its covariance is
\begin{align}
&\mathrm{Cov}\bigl[\mathcal X_{g_d}(\chi_d),\mathcal X_{g_h}(\chi_h)\bigr]
\notag\\
&\quad=
\frac{4}{n^2\beta^2g_dg_h(2\pi\mathbf i)^2}
\oint_{\mathcal C_d}\oint_{\mathcal C_h}
\Phi_{\chi_d,g_d}(z)\,
\Phi_{\chi_h,g_h}(w)\,
\mathsf K_{LL}(z,w)\,dz\,dw,
\label{eq:main-height-cov}
\end{align}
where $\Phi_{\chi,g}$ and the $L$-chart kernel $\mathsf K_{LL}$ are defined in
Definition~\ref{def:KLL}. The contours and logarithms are supplied by
Corollary~\ref{cor:automatic-branch}, and the contours are chosen
pairwise \(LL\)-admissibly as in that corollary. 
\end{theorem}

\section{Moment Formula for Height Function}\label{sect:mh}

In this section we compute moments of the height function for free-boundary dimer states on rail-yard
graphs by evaluating suitable observables in Macdonald processes with dual specialization;
see Lemma~\ref{l31}. Our approach builds on the integral representation for the Laplace
transform of the height function in \cite{GZ16}, which was used to obtain height-function
moments for perfect matchings on the hexagonal lattice with empty boundary conditions
in \cite{Ah18}, and, via dual specialization, for rail-yard graphs with empty boundary
conditions in \cite{LV21}. In the doubly free-boundary case, however, additional infinite
products appear, and several auxiliary combinatorial identities are needed; see
Lemmas~\ref{211}--\ref{l218}.

\subsection{Height function and Laplace transform}

For a partition $\lambda$, write $l(\lambda)$ for the number of its nonzero parts.

\begin{lemma}\label{l29}
Let $M$ be a free-boundary dimer state of $RYG(l,r,\underline a,\underline b)$, and let
$x=2m-\frac12$. Then
\begin{equation}\label{lts}
\int_{-\infty}^{\infty}h_M(x,y)t^{k(y-c^{(M,m)})}\,dy
=
\frac{2}{(k\log t)^2}
\left[
t^{-k l(\lambda^{(M,m)})}
+
(1-t^{-k})\sum_{i=1}^{l(\lambda^{(M,m)})}
t^{k(\lambda_i^{(M,m)}-i+1)}
\right].
\end{equation}
Here $c^{(M,m)}$ is the central charge defined in \eqref{dcg}.
\end{lemma}

\begin{proof}
Following the same argument as in Section~2.4 of \cite{LV21}, we obtain
\[
\int_{-\infty}^{\infty}h_M(x,y)t^{ky}\,dy
=
\frac{2t^{k c^{(M,m)}}}{(k\log t)^2}
\left[
t^{-k l(\lambda^{(M,m)})}
+
(1-t^{-k})\sum_{i=1}^{l(\lambda^{(M,m)})}
t^{k(\lambda_i^{(M,m)}-i+1)}
\right].
\]
Replacing $y$ by $y-c^{(M,m)}$ gives \eqref{lts}.
\end{proof}

The next lemma shows that the charge is independent of the odd column.

\begin{lemma}\label{le32}
Let $M$ be a free-boundary dimer state of $RYG(l,r,\underline a,\underline b)$. Then
$c^{(M,m)}$ is independent of $m\in[l..r+1]$.
\end{lemma}

\begin{proof}
It suffices to prove that
\begin{equation}\label{ace}
c^{(M,m)}=c^{(M,m+1)},\qquad \forall\,m\in[l..r].
\end{equation}

Fix $m\in[l..r]$. Since $M$ contains only finitely many diagonal edges, by applying
a common vertical translation to the whole configuration we may assume that every present
diagonal edge incident to the even column $x=2m$ lies in the upper half-plane. This
translation does not affect the argument below.

Let $\mathcal D_{M,m}$ be the set of present diagonal edges in $M$ incident to an even
vertex with abscissa $2m$. Since $\mathcal D_{M,m}$ is finite, it can be written as a disjoint
union of maximal consecutive blocks
\[
\mathcal D_{M,m}
=
\mathcal D_{M,m}^{(1)}\sqcup\cdots\sqcup \mathcal D_{M,m}^{(s)},
\]
where for each $i\in[s]$ there exist integers $p_{i,-}\le p_{i,+}$ such that
$\mathcal D_{M,m}^{(i)}$ consists exactly of the present diagonal edges adjacent to even
vertices
\[
\left(2m,p+\frac12\right),
\qquad p\in[p_{i,-}..p_{i,+}],
\]
and
\[
p_{1,-}\le p_{1,+}<p_{2,-}\le p_{2,+}<\cdots<p_{s,-}\le p_{s,+},
\qquad
p_{i,+}+1<p_{i+1,-}\quad (i\in[s-1]).
\]

For $j\in\ZZ$, define odd vertices on the neighboring columns by
\[
v_1^{(j)}:=\left(2m-1,j+\frac12\right),
\qquad
v_2^{(j)}:=\left(2m+1,j+\frac12\right).
\]
For each block $i\in[s]$, define
\[
q_{i,-}=
\begin{cases}
p_{i,-}, & b_m=+,\\
p_{i,-}-1, & b_m=-,
\end{cases}
\qquad
q_{i,+}=
\begin{cases}
p_{i,+}+1, & b_m=+,\\
p_{i,+}, & b_m=-.
\end{cases}
\]

A direct local inspection of the four possibilities
$(a_m,b_m)\in\{L,R\}\times\{+,-\}$ shows the following.
\begin{enumerate}
\item If $j\notin\{q_{i,-},q_{i,+}\}_{i\in[s]}$, then either both $v_1^{(j)}$ and $v_2^{(j)}$
are particles, or both are holes.
\item If $b_m=+$, then for each $i\in[s]$:
at height $q_{i,-}$ the pair $(v_1^{(q_{i,-})},v_2^{(q_{i,-})})$ is
(particle, hole), while at height $q_{i,+}$ it is (hole, particle).
\item If $b_m=-$, then for each $i\in[s]$:
at height $q_{i,-}$ the pair is (hole, particle), while at height $q_{i,+}$ it is
(particle, hole).
\end{enumerate}

For \(j\in\mathbb Z\), let
\[
\Delta_j:=
\mathbf 1_{\{v_1^{(j)}\text{ is a particle}\}}
-
\mathbf 1_{\{v_2^{(j)}\text{ is a particle}\}}.
\]
Equivalently, \(\Delta_j\) is the contribution at height \(j+\frac12\) to
\(c^{(M,m)}-c^{(M,m+1)}\).  By~(1), all heights other than \(q_{i,-}+\frac12\) and
\(q_{i,+}+\frac12\), \(i\in[s]\), contribute \(0\) to
\(c^{(M,m)}-c^{(M,m+1)}\).  By~(2) and~(3), the two heights associated
with each block contribute \(+1\) and \(-1\), in some order.  Hence
\(c^{(M,m)}=c^{(M,m+1)}\).
\end{proof}

From now on, in the doubly free-boundary setting we work with free-boundary dimer states modulo
a common vertical translation. Since a common vertical translation does not change the induced sequence of
partitions or the Boltzmann weight, and an upward translation by
\(k\in\mathbb Z\) changes every column charge by \(k\), Lemma~\ref{le32}
implies that each translation class contains a unique representative satisfying
\[
c^{(M,m)}=0,\qquad m\in[l..r+1].
\]
We always choose this canonical representative. For this representative, Lemma~\ref{l29}
reduces to
\begin{equation}\label{lhg}
\int_{-\infty}^{\infty}h_M(x,y)t^{ky}\,dy
=
\frac{2}{(k\log t)^2}
\left[
t^{-k l(\lambda^{(M,m)})}
+
(1-t^{-k})\sum_{i=1}^{l(\lambda^{(M,m)})}
t^{k(\lambda_i^{(M,m)}-i+1)}
\right].
\end{equation}

\subsection{Macdonald processes}

By Lemma~\ref{l25}, to study the double free-boundary Gibbs measure on free-boundary dimer states
it suffices to study the induced probability measure on the corresponding sequence of
partitions. We now recall the relevant generalized Macdonald process.

\begin{definition}\label{df22}
Let
\[
\bA=(A^{(l)},A^{(l+1)},\ldots,A^{(r)},A^{(r+1)}),
\qquad
\bB=(B^{(l)},B^{(l+1)},\ldots,B^{(r)},B^{(r+1)}),
\]
where each $A^{(i)}$ and $B^{(j)}$ is a countable set of variables, and let $u,v$ be two
parameters. Let $\mathcal P=\{\mathcal L,\mathcal R\}$ be a partition of $[l..r]$, i.e.
\[
\mathcal L\cup\mathcal R=[l..r],
\qquad
\mathcal L\cap\mathcal R=\emptyset.
\]
Define a formal probability measure on sequences of $(r-l+2)$ partitions
\[
\bigl(\lambda^{(l)},\lambda^{(l+1)},\ldots,\lambda^{(r)},\lambda^{(r+1)}\bigr)
\]
by
\begin{align}\label{pmd}
&\MP_{\bA,\bB,\mathcal P;q,t}(\lambda^{(l)},\ldots,\lambda^{(r+1)})
\propto
u^{|\lambda^{(l)}|}v^{|\lambda^{(r+1)}|}\\
&\times\left[\prod_{i\in\mathcal L}
\Psi_{\lambda^{(i)},\lambda^{(i+1)}}(A^{(i)},B^{(i+1)};q,t)\right]
\left[\prod_{j\in\mathcal R}
\Phi_{[\lambda^{(j)}]',[\lambda^{(j+1)}]'}(A^{(j)},B^{(j+1)};q,t)\right],\notag
\end{align}
where
\[
\Psi_{\lambda,\mu}(A,B;q,t)
=
\sum_{\nu\in\YY}P_{\lambda/\nu}(A;q,t)\,Q_{\mu/\nu}(B;q,t),
\]
and
\[
\Phi_{\lambda,\mu}(A,B;q,t)
=
\sum_{\nu\in\YY}Q_{\lambda/\nu}(A;t,q)\,P_{\mu/\nu}(B;t,q).
\]
\end{definition}

See Section~\ref{sc:dmp} for the definitions of Macdonald polynomials
$P_\lambda$, $Q_\lambda$, $P_{\lambda/\mu}$, and $Q_{\lambda/\mu}$.

\begin{remark}
In terms of the Macdonald scalar product \eqref{dsp}, we also have
\[
\Psi_{\lambda,\mu}(A,B;q,t)
=
\langle P_\lambda(A,Y;q,t),Q_\mu(Y,B;q,t)\rangle_Y,
\]
and
\[
\Phi_{\lambda,\mu}(A,B;q,t)
=
\langle Q_\lambda(A,Y;t,q),P_\mu(Y,B;t,q)\rangle_Y,
\]
where $Y$ is a countable set of variables.
\end{remark}

\begin{lemma}\label{l23}
Let $M$ be sampled from the double free-boundary Gibbs measure \eqref{ppd} on
$RYG(l,r,\underline a,\underline b)$. Then the induced sequence of partitions
\[
\bigl(\lambda^{(M,l)},\lambda^{(M,l+1)},\ldots,\lambda^{(M,r)},\lambda^{(M,r+1)}\bigr)
\]
forms a generalized Macdonald process of the form \eqref{pmd} with
\begin{enumerate}
\item $\mathcal L=\{i\in[l..r]:a_i=L\}$ and
$\mathcal R=\{i\in[l..r]:a_i=R\}$;
\item for each $i\in[l..r]$,
\[
A^{(i)}=\{x_i\},\ B^{(i+1)}=\{0\}\quad\text{if }b_i=-,
\qquad
A^{(i)}=\{0\},\ B^{(i+1)}=\{x_i\}\quad\text{if }b_i=+;
\]
\item $q=t$.
\end{enumerate}
\end{lemma}

\begin{proof}
This is exactly the specialization of Lemma~\ref{l25}(c) to the Schur case $q=t$, using
the standard fact that Macdonald polynomials reduce to Schur polynomials when $q=t$.
\end{proof}

For $k\in\ZZ_{>0}$ and $\lambda\in\YY$, define
\begin{equation}\label{dgm}
\gamma_k(\lambda;q,t)
=
(1-t^{-k})\sum_{i=1}^{l(\lambda)}q^{k\lambda_i}t^{k(-i+1)}+t^{-k l(\lambda)}.
\end{equation}
By \eqref{lhg}, for the canonical representative of a doubly free-boundary dimer state,
\[
\int_{-\infty}^{\infty}h_M(x,y)t^{ky}\,dy
=
\frac{2}{(k\log t)^2}\gamma_k(\lambda^{(M,m)};t,t).
\]

Define
\begin{align}
\Pi_{L,L}(X,Y)&=\Pi(X,Y;q,t),\notag\\
\Pi_{R,R}(X,Y)&=\Pi(X,Y;t,q),\label{PiLR}\\
\Pi_{L,R}(X,Y)&=\Pi_{R,L}(X,Y)=\Pi'(X,Y),\notag
\end{align}
where $\Pi$ and $\Pi'$ are defined by \eqref{defPPprime}. We also write
\[
L'=R,\qquad R'=L.
\]

Recall that the power sums $p_k$ are defined in \eqref{dpn}. For a countable set of
variables $X=(x_1,x_2,\ldots)$, write
\[
X^{-1}:=(x_1^{-1},x_2^{-1},\ldots).
\]

\begin{lemma}\label{l210}
For any countable set of variables $X$ and any $c\in\{L,R\}$, define specializations by
\[
\rho_{X,1,c}(p_k):=
(-1)^{\sigma(c)}p_k\!\bigl[(-1)^{\sigma(c)}X\bigr],
\]
and
\[
\rho_{X,t,2,c}(p_k):=
(-1)^{\sigma(c)}
\bigl(1-t^{(2\sigma(c)-1)k}\bigr)
p_k\!\bigl((-1)^{\sigma(c)}t^{1-2\sigma(c)}X^{-1}\bigr),
\]
where
\[
\sigma(c)=
\begin{cases}
0,& c=L,\\
1,& c=R.
\end{cases}
\]
Then:
\begin{align*}
\rho_{X,t,2,c}
&=
[t^{\,1-2\sigma(c)}]\rho_{X^{-1},1,c}\cup
((-1)*\rho_{X^{-1},1,c}),\\[1mm]
\Pi(X,Y;t,t)
&=
H(\rho_{X,1,c};\rho_{Y,1,c}),\\[1mm]
\Pi'(X,Y)
&=
H(\rho_{X,1,c};\rho_{Y,1,c'}),\\[1mm]
\left(L\bigl(W,(-1)^{\delta_{c,d}-1}A;\omega(t,t,c)\bigr)\right)^{(-1)^{\delta_{c,d}-1}}
&=
H(\rho_{W,t,2,c};\rho_{A,1,d}),\\[1mm]
\frac{\Pi_{c,d}(B,W)}{\Pi_{c,d}(B,\xi(t,t,c)W)}
&=
H((-1)*\rho_{W^{-1},t^{-1},2,c};\rho_{B,1,d}),
\end{align*}
where $\delta_{c,d}$ is the Kronecker delta and
\begin{equation}\label{dox}
\omega(q,t,c)=
\begin{cases}
(q,t),& c=L,\\[1mm]
\left(\dfrac{1}{t},\dfrac{1}{q}\right),& c=R,
\end{cases}
\qquad
\xi(q,t,c)=
\begin{cases}
q^{-1},& c=L,\\
t,& c=R.
\end{cases}
\end{equation}
\end{lemma}

\begin{proof}
All identities follow directly from Lemma~\ref{la6}.
\end{proof}

\begin{lemma}\label{l41}
For every partition $\lambda$ and every $k\ge1$,
\[
\gamma_k(\lambda';t,q)
=
\gamma_k\!\left(\lambda;\frac{1}{q},\frac{1}{t}\right).
\]
\end{lemma}

\begin{proof}
The proof is the same as that of Lemma~4.1 in \cite{LV21}.
\end{proof}

We now state the operator formula for the observables \eqref{dgm}.

\begin{lemma}\label{l31}
Let $l_i$ be nonnegative integers for $i\in[l+1..r]$, and let
$W^{(i)}$ be a set of variables with $|W^{(i)}|=l_i$. Define
\[
\rho_A:=\bigcup_{i=l}^{r}\rho_{A^{(i)},1,a_i},
\qquad
\rho_B:=\bigcup_{i=l+1}^{r+1}\rho_{B^{(i)},1,a_{i-1}},
\]
and
\[
\rho_W:=\bigcup_{i=l+1}^{r}\rho_{W^{(i)},t,2,a_i},
\qquad
\rho_W^\circ:=\bigcup_{i=l+1}^{r}(-1)*\rho_{[W^{(i)}]^{-1},t^{-1},2,a_i}.
\]
Then
\begin{align*}
&\left.
\mathbb E_{\MP_{\bA,\bB,\mathcal P;q,t}}
\left[\prod_{i=l+1}^{r}\gamma_{l_i}(\lambda^{(i)};q,t)\right]
\right|_{q=t}\\
&=
\oint\cdots\oint
\left[\prod_{i=l+1}^{r}D(W^{(i)};\omega(t,t,a_i))\right]
\left[\prod_{l+1\le i\le j\le r}
H(\rho_{W^{(i)},t,2,a_i};\rho_{A^{(j)},1,a_j})\right]\\
&\qquad\times
\left[\prod_{l+1\le i<j\le r}
H((-1)*\rho_{[W^{(j)}]^{-1},t^{-1},2,a_j};\rho_{B^{(i)},1,a_{i-1}})\right]
\left[\prod_{l+1\le i<j\le r}T_{a_i,a_j}(W^{(i)},W^{(j)})\right]\\
&\qquad\times
\prod_{k\ge1}
\widetilde{H}([u^{k-1}v^k]\rho_W)\,
\widetilde{H}([u^k v^{k-1}]\rho_W^\circ)\,
H(u^{2k}\rho_A;v^{2k}\rho_W)\,
H(u^{2k}\rho_A;v^{2k-2}\rho_W^\circ)\\
&\qquad\times
H(u^{2k}\rho_B;v^{2k}\rho_W^\circ)\,
H(u^{2k-2}\rho_B;v^{2k}\rho_W)\,
H(u^{2k}\rho_W;v^{2k}\rho_W^\circ),
\end{align*}
where
\[
T_{c,d}(Z,W):=
\begin{cases}
\displaystyle
\prod_{z_i\in Z}\prod_{w_j\in W}
\frac{(1-w_jz_i^{-1})^2}{(1-t^{-1}w_jz_i^{-1})(1-tw_jz_i^{-1})},
& c=d,\\[4mm]
\displaystyle
\prod_{z_i\in Z}\prod_{w_j\in W}
\frac{(1+tw_jz_i^{-1})^2}{(1+t^2w_jz_i^{-1})(1+w_jz_i^{-1})},
& c=L,\ d=R,\\[4mm]
\displaystyle
\prod_{z_i\in Z}\prod_{w_j\in W}
\frac{(1+t^{-1}w_jz_i^{-1})^2}{(1+t^{-2}w_jz_i^{-1})(1+w_jz_i^{-1})},
& c=R,\ d=L.
\end{cases}
\]
The multiple contour integral is over all variables in the sets $W^{(i)}$, and the contour
for each variable $w_s^{(i)}\in W^{(i)}$ is positively oriented, encloses $0$ and all poles of
the integrand involving $w_s^{(i)}$, and is contained in the domain bounded by
$t\mathcal C_{i',s'}$ whenever $(i,s)<(i',s')$ in lexicographic order.
\end{lemma}

We first record four auxiliary identities.

\begin{lemma}\label{211}
Let $\{d_k\}_{k\ge1}$ and $\{s_k\}_{k\ge1}$ be sequences in a graded algebra $\mathcal A$,
and let $\{u_k\}_{k\ge1}$ be a sequence in a graded algebra $\mathcal B$, such that
\[
\lim_{k\to\infty}\mathrm{ldeg}(d_k)=\infty,
\qquad
\lim_{k\to\infty}\mathrm{ldeg}(s_k)=\infty,
\qquad
\lim_{k\to\infty}\mathrm{ldeg}(u_k)=\infty,
\]
where $\mathrm{ldeg}$ is defined in \eqref{dldeg}. Then
\[
\left\langle
\exp\left(\sum_{k=1}^{\infty}\frac{d_kp_k(Y)+s_k[p_k(Y)]^2}{k}\right),
\exp\left(\sum_{k=1}^{\infty}\frac{u_kp_k(Y)}{k}\right)
\right\rangle_{Y;q=t}
=
\exp\left(\sum_{k=1}^{\infty}\frac{d_ku_k+s_ku_k^2}{k}\right).
\]
\end{lemma}

\begin{proof}
When $s_k=0$, this is Proposition~2.3 of \cite{bcgs13}. For general $s_k$, expand the
first exponential into monomials in the power sums and use the definition of the Macdonald
scalar product \eqref{dsp}. More precisely, if
\[
f(Y)=
c_{(k_1,l_1),\ldots,(k_m,l_m)}
[p_{k_1}(Y)]^{l_1}\cdots [p_{k_m}(Y)]^{l_m},
\qquad
k_1<\cdots<k_m,
\]
is a monomial appearing in the first exponential, then in
\[
R(Y):=
\exp\left(\sum_{k=1}^{\infty}\frac{u_kp_k(Y)}{k}\right)
=
\sum_{n=0}^{\infty}\frac{1}{n!}
\left(\sum_{k=1}^{\infty}\frac{u_kp_k(Y)}{k}\right)^n
\]
the same monomial appears with coefficient
\[
\frac{u_{k_1}^{l_1}\cdots u_{k_m}^{l_m}}{l_1!\cdots l_m!\,k_1^{l_1}\cdots k_m^{l_m}}.
\]
Hence
\[
\langle f(Y),R(Y)\rangle_{Y;q=t}
=
c_{(k_1,l_1),\ldots,(k_m,l_m)}\,u_{k_1}^{l_1}\cdots u_{k_m}^{l_m},
\]
which is exactly what one obtains by replacing each $p_{k_i}(Y)$ by $u_{k_i}$.
Summing over all monomials proves the claim.
\end{proof}

\begin{lemma}\label{l214}
Let $\rho_1,\rho_2,\rho_3$ be specializations such that $\rho_1$ and $\rho_3$ are
independent of $Y$.
\begin{enumerate}
\item If $\rho_2(p_k)=p_k(Y)$ for all $k\ge1$, then
\[
\left\langle
\widetilde{H}(\rho_1\cup\rho_2),
\exp\left(\sum_{k=1}^{\infty}\frac{\rho_3(p_k)p_k(Y)}{k}\right)
\right\rangle_{Y;q=t}
=
\widetilde{H}(\rho_1\cup\rho_3).
\]
\item If $\rho_2(p_k)=(-1)^{k+1}p_k(Y)$ for all $k\ge1$, then
\[
\left\langle
\widetilde{H}(\rho_1\cup\rho_2),
\exp\left(\sum_{k=1}^{\infty}\frac{(-1)^{k+1}\rho_3(p_k)p_k(Y)}{k}\right)
\right\rangle_{Y;q=t}
=
\widetilde{H}(\rho_1\cup\rho_3).
\]
\end{enumerate}
\end{lemma}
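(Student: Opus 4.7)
The plan is to use the factored form
\[
\widetilde{H}(\rho_1\cup\rho_2) \;=\; \widetilde{H}(\rho_1)\,\widetilde{H}(\rho_2)\,H(\rho_1;\rho_2),
\]
which follows from the Littlewood-type product expression $\widetilde{H}(\sigma)=\prod_i(1-x_i)^{-1}\prod_{i<j}(1-x_ix_j)^{-1}$ (Example 4 of Chapter I.5 in \cite{IGM15}), and then apply Lemma \ref{211} to evaluate the scalar product. The underlying power-sum expansion reads $\log\widetilde{H}(\sigma) = \sum_k \sigma(p_k)/k + \sum_k (\sigma(p_k)^2 - \sigma(p_{2k}))/(2k)$, which is the reason Lemma \ref{211} (handling a quadratic $p_k(Y)^2$ term) is needed rather than the simpler Cauchy reproducing identity used in \cite{bcgs13}.

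For Part (1), with $\rho_2(p_k)=p_k(Y)$ the factor $\widetilde{H}(\rho_2)$ is an exponential of linear and quadratic expressions in $p_k(Y)$ together with a $p_{2k}(Y)$ contribution, while $H(\rho_1;\rho_2) = \exp(\sum_k \rho_1(p_k)p_k(Y)/k)$ is linear in $p_k(Y)$. Combining them I would rewrite
\[
\widetilde{H}(\rho_1\cup\rho_2) \;=\; \widetilde{H}(\rho_1)\cdot\exp\!\left(\sum_{k\geq 1}\frac{d_k\, p_k(Y) + \tfrac12\, p_k(Y)^2}{k}\right),
\]
where $d_k$ equals $1+\rho_1(p_k)$ for odd $k$ and $\rho_1(p_k)$ for even $k$, the parity arising from folding $-\sum_k p_{2k}(Y)/(2k)$ into the linear sum via the reindexing $m=2k$. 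Applying Lemma \ref{211} with $s_k=1/2$ and $u_k=\rho_3(p_k)$ then produces $\widetilde{H}(\rho_1)\cdot\exp(\sum_k (d_k\rho_3(p_k) + \tfrac12\rho_3(p_k)^2)/k)$. Unfolding the parity split, that is, restoring the $-\rho_3(p_{2k})/(2k)$ term, identifies this expression with $\widetilde{H}(\rho_3)\,H(\rho_1;\rho_3)$, and hence the full product with $\widetilde{H}(\rho_1\cup\rho_3)$.

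For Part (2) I would follow the same template, noting that the substitution $\rho_2(p_k)=(-1)^{k+1}p_k(Y)$ inserts a $(-1)^{k+1}$ in each linear-in-$p_k(Y)$ term and flips the sign of the $p_{2k}(Y)$ contribution (since $(-1)^{2k+1}=-1$), while leaving the quadratic $p_k(Y)^2$ term unchanged (since $((-1)^{k+1})^2=1$). Pairing against the companion factor with $u_k = (-1)^{k+1}\rho_3(p_k)$ in Lemma \ref{211}, each cross term $d_k u_k$ picks up a factor $(-1)^{k+1}\cdot(-1)^{k+1}=1$ and $u_k^2=\rho_3(p_k)^2$, so every sign twist cancels. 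The same parity-split bookkeeping as in Part (1) then recovers $\widetilde{H}(\rho_1\cup\rho_3)$.

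The main obstacle will be the bookkeeping around the $p_{2k}$ contribution: one must track the odd/even parity split carefully in two directions — both in assembling the $d_k$'s before applying Lemma \ref{211} and in reassembling $\widetilde{H}(\rho_1\cup\rho_3)$ afterwards — and in Part (2) one must verify that all sign twists cancel upon pairing. No new analytic ingredient is needed beyond Lemma \ref{211}, whose low-degree growth hypothesis $\mathrm{ldeg}\to\infty$ is inherited from the formal power-series structure of $\widetilde{H}$ together with the assumption that $\rho_1,\rho_3$ are independent of $Y$.
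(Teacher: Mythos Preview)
Your proposal is correct and follows essentially the same route as the paper's own (one-line) proof: expand $\widetilde{H}(\rho_1\cup\rho_2)$ in power sums so that the $Y$-dependent factor takes the form $\exp\bigl(\sum_k(d_k\,p_k(Y)+\tfrac12\,p_k(Y)^2)/k\bigr)$, then apply Lemma~\ref{211}. The only cosmetic difference is that the paper's definition~(\ref{dht}) already presents $\log\widetilde{H}$ as $\sum_n p_{2n-1}/(2n-1)+p_n^2/(2n)$, so your parity-folding detour through the equivalent form $\sum_k p_k/k+\sum_k(p_k^2-p_{2k})/(2k)$ is unnecessary if one starts from~(\ref{dht}) directly.
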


\begin{proof}
This follows immediately from Definition~\ref{dht} of $\widetilde H$ and
Lemma~\ref{211}.
\end{proof}

\begin{lemma}\label{l217}
Let $\rho$ be a specialization. Then
\[
\sum_{\lambda\in\YY}s_{\lambda/\mu}(\rho)\,u^{|\lambda|}
=
\widetilde{H}(u\rho)\sum_{\xi\in\YY}s_{\mu/\xi}(u^2\rho)\,u^{|\xi|}.
\]
\end{lemma}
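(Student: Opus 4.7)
The plan is to absorb the powers of $u$ into the specialization and reduce the lemma to a classical skew Littlewood identity. Using the homogeneity $s_{\nu/\kappa}(c\sigma)=c^{|\nu|-|\kappa|}s_{\nu/\kappa}(\sigma)$ of skew Schur functions together with $|\lambda/\mu|=|\lambda|-|\mu|$, one gets
\begin{align*}
s_{\lambda/\mu}(\rho)\,u^{|\lambda|}=u^{|\mu|}s_{\lambda/\mu}(u\rho),\qquad s_{\mu/\xi}(u^2\rho)\,u^{|\xi|}=u^{|\mu|}s_{\mu/\xi}(u\rho).
\end{align*}
Setting $\sigma=u\rho$ and cancelling the common $u^{|\mu|}$, the assertion reduces to the identity
\begin{align*}
\sum_{\lambda\in\YY}s_{\lambda/\mu}(\sigma)=\widetilde{H}(\sigma)\sum_{\xi\in\YY}s_{\mu/\xi}(\sigma).
\end{align*}

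To prove this, I would apply the classical Littlewood identity $\widetilde{H}(Z)=\sum_\lambda s_\lambda(Z)$ with $Z=(X,Y)$, the disjoint union of two independent families of variables. The branching rule $s_\lambda(X,Y)=\sum_\mu s_\mu(X)s_{\lambda/\mu}(Y)$ turns the left side into $\sum_\mu s_\mu(X)\sum_\lambda s_{\lambda/\mu}(Y)$. On the other hand, $\widetilde{H}(X,Y)=\widetilde{H}(X)\widetilde{H}(Y)\,H(X;Y)$, where $H(X;Y)=\prod_{i,j}(1-x_iy_j)^{-1}=\sum_\nu s_\nu(X)s_\nu(Y)$ is the Cauchy kernel; expanding $\widetilde{H}(X)=\sum_\alpha s_\alpha(X)$ and using $\sum_\nu c^\mu_{\alpha\nu}s_\nu(Y)=s_{\mu/\alpha}(Y)$ recasts the right side as
\begin{align*}
\widetilde{H}(Y)\sum_\mu s_\mu(X)\sum_\alpha s_{\mu/\alpha}(Y).
\end{align*}
Extracting the coefficient of $s_\mu(X)$ in the basis $\{s_\mu(X)\}_{\mu\in\YY}$ on both sides yields exactly the reduced identity with $Y=\sigma$.

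Since the resulting equality is a universal identity in the power sums $p_k(\sigma)$, it persists under any formal specialization, in particular $\sigma=u\rho$; multiplying through by $u^{|\mu|}$ then recovers the statement of the lemma. The only delicate point is that all rearrangements must be interpreted in the algebra of symmetric functions, or equivalently as formal power series in the $X$-variables with coefficients in the symmetric functions in $Y$; once this formal viewpoint is adopted, the coefficient extraction and the use of the Littlewood, Cauchy and branching identities are routine.
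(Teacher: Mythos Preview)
Your proof is correct and follows essentially the same route as the paper: both reduce, via the homogeneity $s_{\lambda/\mu}(u\rho)=u^{|\lambda|-|\mu|}s_{\lambda/\mu}(\rho)$ and multiplication by $u^{|\mu|}$, to the classical skew Littlewood identity $\sum_\lambda s_{\lambda/\mu}(\sigma)=\widetilde H(\sigma)\sum_\xi s_{\mu/\xi}(\sigma)$. The only difference is that the paper cites this identity from Macdonald (Example I.5.23(a)(3)), whereas you supply a self-contained derivation by expanding $\widetilde H(X,Y)$ in two ways and extracting the coefficient of $s_\mu(X)$.
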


\begin{proof}
From Example~I.5.23(a)(3) in \cite{IGM15}, for a countable set of variables $X$ we have
\begin{equation}\label{ig1}
\sum_{\lambda\in\YY}s_{\lambda/\mu}(X)
=
\frac{1}{\prod_{x_i\in X}(1-x_i)\prod_{x_i,x_j\in X,\ i<j}(1-x_ix_j)}
\sum_{\xi\in\YY}s_{\mu/\xi}(X).
\end{equation}
Replace each $p_k(X)$ by $u^k\rho(p_k)$. Since skew Schur functions are homogeneous,
\eqref{ig1} becomes
\[
\sum_{\lambda\in\YY}u^{|\lambda|-|\mu|}s_{\lambda/\mu}(\rho)
=
\widetilde{H}(u\rho)\sum_{\xi\in\YY}u^{|\mu|-|\xi|}s_{\mu/\xi}(\rho).
\]
Multiplying both sides by $u^{|\mu|}$ gives
\[
\sum_{\lambda\in\YY}u^{|\lambda|}s_{\lambda/\mu}(\rho)
=
\widetilde{H}(u\rho)\sum_{\xi\in\YY}u^{2|\mu|-|\xi|}s_{\mu/\xi}(\rho)
=
\widetilde{H}(u\rho)\sum_{\xi\in\YY}u^{|\xi|}s_{\mu/\xi}(u^2\rho),
\]
as claimed.
\end{proof}

\begin{lemma}\label{l218}
Let $\rho_1,\rho_2$ be specializations. Let $Y$ be a countable set of variables, and let
$\rho$ be the specialization defined by
\[
\rho(p_k)=p_k(Y),\qquad k\ge1.
\]
Then for $u,v\in(0,1)$,
\[
\left\langle
\widetilde{H}(u[\rho_1\cup\rho]),
\widetilde{H}(v[\rho_2\cup\rho])
\right\rangle_{Y;q=t}
=
\prod_{n\ge1}
\frac{
\widetilde{H}([u^{n-1}v^n]\rho_2)\,
\widetilde{H}([u^nv^{n-1}]\rho_1)\,
H([u^{2n}]\rho_1;[v^{2n}]\rho_2)
}{1-u^nv^n}.
\]
\end{lemma}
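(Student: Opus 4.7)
Since $q=t$, the Macdonald scalar product reduces to the Hall inner product, for which the Schur functions $\{s_\tau(Y)\}$ are an orthonormal basis. So the first step is to compute the coefficient of each $s_\tau(Y)$ in both sides. Writing $\widetilde{H}(u[\rho_1\cup\rho]) = \sum_\lambda u^{|\lambda|}s_\lambda(\rho_1,Y)$ via the Littlewood identity $\widetilde{H}(X)=\sum_\lambda s_\lambda(X)$ and using the branching rule $s_\lambda(\rho_1,Y)=\sum_\mu s_\mu(\rho_1) s_{\lambda/\mu}(Y)$, the coefficient of $s_\tau(Y)$ equals $\sum_\lambda u^{|\lambda|} s_{\lambda/\tau}(\rho_1)$; by Lemma \ref{l217} this collapses to $\widetilde{H}(u\rho_1)\sum_\xi u^{|\xi|} s_{\tau/\xi}(u^2\rho_1)$, and similarly for the $v$-side with $\rho_2$. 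Multiplying the two coefficients, summing over $\tau$, and applying the skew Cauchy identity $\sum_\tau s_{\tau/\xi}(X) s_{\tau/\eta}(Y)=H(X;Y)\sum_\omega s_{\xi/\omega}(Y) s_{\eta/\omega}(X)$ with $X=u^2\rho_1$, $Y=v^2\rho_2$ yields $\widetilde{H}(u\rho_1)\widetilde{H}(v\rho_2)H(u^2\rho_1;v^2\rho_2)$, which is exactly the $n=1$ factor of the desired infinite product, multiplied by a residual sum $\mathcal{R}_1 = \sum_{\xi,\eta,\omega}u^{|\xi|}v^{|\eta|} s_{\xi/\omega}(v^2\rho_2) s_{\eta/\omega}(u^2\rho_1)$.

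The main step is to iterate this pair of moves on $\mathcal{R}_1$. Applying Lemma \ref{l217} to each inner sum of $\mathcal{R}_1$ produces $\widetilde{H}([uv^2]\rho_2)$ and $\widetilde{H}([u^2v]\rho_1)$ and rescales the $s$-arguments to $u^2v^2\rho_\bullet$; a second application of skew Cauchy then yields $H(u^2v^2\rho_2;u^2v^2\rho_1)=H([u^4]\rho_1;[v^4]\rho_2)$ together with a new residual $\mathcal{R}_2$ of the same shape as $\mathcal{R}_1$ but with the roles of $\rho_1,\rho_2$ exchanged and the arguments further scaled by $u^2v^2$. These three factors are precisely the $n=2$ term of the product. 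Inductively, the $n$th iteration extracts $\widetilde{H}([u^{n-1}v^n]\rho_2)\widetilde{H}([u^nv^{n-1}]\rho_1)H([u^{2n}]\rho_1;[v^{2n}]\rho_2)$; the alternating $u/v$-shifts encode the ``bouncing'' reflection between the two free boundaries.

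As $N\to\infty$, the specializations inside $\mathcal{R}_N$ are scaled by powers of $uv$ whose total degree grows with $N$, so $s_{\gamma/\phi}(u^{a}v^{b}\rho)=(u^av^b)^{|\gamma|-|\phi|}s_{\gamma/\phi}(\rho)\to \delta_{\gamma,\phi}$ and similarly for the other factor; only the ``ground state'' with the two outer partitions equal to the middle one survives, contributing
\[
\sum_\phi (uv)^{|\phi|}=\prod_{n\geq 1}\frac{1}{1-u^nv^n},
\]
which supplies the denominators in the claimed product. The main obstacle is justifying the formal convergence of the infinite iteration: the $n$th residual $\mathcal{R}_n$ differs from its ground-state limit by terms of total degree at least $2n$ in $u,v$, so at each fixed degree only finitely many iterations matter, and the right-hand side is a well-defined formal power series in $u,v$ with coefficients that are polynomial expressions in $\rho_1(p_k),\rho_2(p_k)$.
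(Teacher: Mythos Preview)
Your proposal is correct and follows essentially the same approach as the paper: expand both $\widetilde H$'s via the Schur expansion, use the orthonormality of $\{s_\tau(Y)\}$ for the Hall inner product, then iterate Lemma \ref{l217} and the skew Cauchy identity to peel off the factors $\widetilde H([u^{n-1}v^n]\rho_2)\widetilde H([u^nv^{n-1}]\rho_1)H([u^{2n}]\rho_1;[v^{2n}]\rho_2)$ one level at a time, with the residual tending to $\sum_\phi (uv)^{|\phi|}=\prod_{n\ge1}(1-u^nv^n)^{-1}$.

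The only organizational difference is that you apply Lemma \ref{l217} symmetrically to both the $u$- and $v$-sides and then skew Cauchy once per iteration, whereas the paper works asymmetrically: it applies Lemma \ref{l217} on one side, uses the commutation $\sum_\nu s_{\nu/\xi}(A)s_{\mu/\nu}(B)=\sum_\nu s_{\nu/\xi}(B)s_{\mu/\nu}(A)$ (both equal $s_{\mu/\xi}(A,B)$), applies Lemma \ref{l217} on the other side, and only then invokes skew Cauchy. Your symmetric bookkeeping is arguably cleaner and makes the recursion $S(\rho_1,\rho_2)=\widetilde H(u\rho_1)\widetilde H(v\rho_2)H(u^2\rho_1;v^2\rho_2)\,S(v^2\rho_2,u^2\rho_1)$ transparent, but the two arguments use the same ingredients and are interchangeable.
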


\begin{proof}
Using the Cauchy identity for Schur functions, we obtain
\begin{align*}
&\left\langle
\widetilde{H}(u[\rho_1\cup\rho]),
\widetilde{H}(v[\rho_2\cup\rho])
\right\rangle_{Y;q=t}\\
&=
\sum_{\lambda,\mu\in\YY}
\left\langle
u^{|\lambda|}s_\lambda([\rho_1\cup\rho]),
v^{|\mu|}s_\mu([\rho_2\cup\rho])
\right\rangle_{Y;q=t}\\
&=
\sum_{\lambda,\mu,\nu\in\YY}
u^{|\lambda|}v^{|\mu|}s_{\lambda/\nu}(\rho_1)s_{\mu/\nu}(\rho_2).
\end{align*}
Apply Lemma~\ref{l217} to the sum over $\lambda$:
\[
=
\widetilde{H}(u\rho_1)
\sum_{\nu,\xi,\mu\in\YY}
u^{|\xi|}v^{|\mu|}
s_{\nu/\xi}(u^2\rho_1)s_{\mu/\nu}(\rho_2).
\]
Now use the skew Cauchy identity (Example~I.5.26(1) of \cite{IGM15}) to interchange
the two skew Schur functions, obtaining
\[
=
\widetilde{H}(u\rho_1)\widetilde{H}(v\rho_2)
\widetilde{H}([u^2v]\rho_1)\widetilde{H}([uv^2]\rho_2)
H([u^2v^2]\rho_1;\rho_2)\,S_1,
\]
where
\[
S_1
=
\sum_{\nu,\xi,\tau\in\YY}
u^{|\xi|}v^{|\tau|}
s_{\nu/\xi}([u^2v^2]\rho_1)\,
s_{\nu/\tau}([u^2v^2]\rho_2).
\]
Repeating the same argument with
\[
\rho_1^{(1)}=[u^2v^2]\rho_1,
\qquad
\rho_2^{(1)}=[u^2v^2]\rho_2,
\]
and then iterating $n$ times, we obtain
\begin{align*}
&\left\langle
\widetilde{H}(u[\rho_1\cup\rho]),
\widetilde{H}(v[\rho_2\cup\rho])
\right\rangle_{Y;q=t}\\
&=
\left[\prod_{i=1}^{2n}
\widetilde{H}([u^iv^{i-1}]\rho_1)\,
\widetilde{H}([v^iu^{i-1}]\rho_2)\right]
\left[\prod_{j=1}^{n}
H([u^{2j}v^{2j}]\rho_1;\rho_2)\right]
S_n,
\end{align*}
where
\[
S_n
=
\sum_{\nu,\xi,\tau\in\YY}
u^{|\xi|}v^{|\tau|}
s_{\nu/\xi}([u^{2n}v^{2n}]\rho_1)\,
s_{\nu/\tau}([u^{2n}v^{2n}]\rho_2).
\]
Since $u,v\in(0,1)$,
\[
\lim_{n\to\infty}S_n
=
\sum_{\nu\in\YY}u^{|\nu|}v^{|\nu|}
=
\prod_{n\ge1}\frac{1}{1-u^nv^n}.
\]
Collecting the factors gives the desired identity.
\end{proof}

\begin{proof}[Proof of Lemma~\ref{l31}]
By Lemma~\ref{l41},
\[
\gamma_{l_i}([\lambda^{(i)}]';t,q)
=
\gamma_{l_i}\!\left(\lambda^{(i)};\frac{1}{q},\frac{1}{t}\right).
\]
Using Definition~\ref{df22} and the scalar-product formulas in Remark~3.4, the
expectation in Lemma~\ref{l31} can therefore be written as a normalized nested scalar
product.

For $i\in[l+1..r]$, define local kernels by
\[
\mathbf E_i=
\begin{cases}
\displaystyle
\sum_{\lambda^{(i)}\in\YY}
\gamma_{l_i}(\lambda^{(i)};q,t)\,
P_{\lambda^{(i)}}(A^{(i)},Y^{(i)};q,t)\,
Q_{\lambda^{(i)}}(Y^{(i-1)},B^{(i)};q,t),
&
\substack{(i-1,i)\in\mathcal L\times\mathcal L},
\\[4mm]
\displaystyle
\sum_{\lambda^{(i)}\in\YY}
\gamma_{l_i}\!\left([\lambda^{(i)}]';\frac1t,\frac1q\right)\,
Q_{\lambda^{(i)}}(Y^{(i-1)},B^{(i)};q,t)\,
Q_{[\lambda^{(i)}]'}(A^{(i)},Y^{(i)};t,q),
&
\substack{(i-1,i)\in\mathcal L\times\mathcal R},
\\[4mm]
\displaystyle
\sum_{\lambda^{(i)}\in\YY}
\gamma_{l_i}(\lambda^{(i)};q,t)\,
P_{[\lambda^{(i)}]'}(Y^{(i-1)},B^{(i)};t,q)\,
P_{\lambda^{(i)}}(A^{(i)},Y^{(i)};q,t),
&
\substack{(i-1,i)\in\mathcal R\times\mathcal L},
\\[4mm]
\displaystyle
\sum_{\lambda^{(i)}\in\YY}
\gamma_{l_i}\!\left([\lambda^{(i)}]';\frac1t,\frac1q\right)\,
P_{[\lambda^{(i)}]'}(Y^{(i-1)},B^{(i)};t,q)\,
Q_{[\lambda^{(i)}]'}(A^{(i)},Y^{(i)};t,q),
&
\substack{(i-1,i)\in\mathcal R\times\mathcal R}.
\end{cases}
\]
Define boundary terms by
\[
\mathbf E_l=
\begin{cases}
\displaystyle
\sum_{\lambda^{(l)}\in\YY}u^{|\lambda^{(l)}|}
P_{\lambda^{(l)}}(A^{(l)},Y^{(l)};q,t),
& l\in\mathcal L,\\[3mm]
\displaystyle
\sum_{\lambda^{(l)}\in\YY}u^{|\lambda^{(l)}|}
Q_{[\lambda^{(l)}]'}(A^{(l)},Y^{(l)};t,q),
& l\in\mathcal R,
\end{cases}
\]
and
\[
\mathbf E_{r+1}=
\begin{cases}
\displaystyle
\sum_{\lambda^{(r+1)}\in\YY}v^{|\lambda^{(r+1)}|}
Q_{\lambda^{(r+1)}}(Y^{(r)},B^{(r+1)};q,t),
& r\in\mathcal L,\\[3mm]
\displaystyle
\sum_{\lambda^{(r+1)}\in\YY}v^{|\lambda^{(r+1)}|}
P_{[\lambda^{(r+1)}]'}(Y^{(r)},B^{(r+1)};t,q),
& r\in\mathcal R.
\end{cases}
\]
Then
\[
\left.
\mathbb E_{\MP_{\bA,\bB,\mathcal P;q,t}}
\left[\prod_{i=l+1}^{r}\gamma_{l_i}(\lambda^{(i)};q,t)\right]
\right|_{q=t}
=
\left.
\frac{1}{Z}
\bigl\langle
\mathbf E_l,\langle \mathbf E_{l+1},\ldots,\langle \mathbf E_r,\mathbf E_{r+1}\rangle_{Y^{(r)}}\cdots\rangle_{Y^{(l+1)}}
\bigr\rangle_{Y^{(l)}}
\right|_{q=t},
\]
where $Z$ is the normalization constant of \eqref{pmd}.

For each $i\in[l+1..r]$, the kernel $\mathbf E_i$ can be rewritten as
\[
\mathbf E_i=
\begin{cases}
D_{-l_i,(A^{(i)},Y^{(i)});q,t}\,
\Pi_{a_{i-1},a_i}\bigl((A^{(i)},Y^{(i)}),(Y^{(i-1)},B^{(i)})\bigr),
& a_i=L,\\[2mm]
D_{-l_i,(A^{(i)},Y^{(i)});1/t,1/q}\,
\Pi_{a_{i-1},a_i}\bigl((A^{(i)},Y^{(i)}),(Y^{(i-1)},B^{(i)})\bigr),
& a_i=R.
\end{cases}
\]
Applying Proposition~\ref{pa2} at $q=t$, and then interchanging contour integrals with the
nested scalar products, we arrive at a multiple contour integral of the form
\[
\frac{1}{Z}\oint\cdots\oint
\left[\prod_{i=l+1}^{r}D(W^{(i)};\omega(t,t,a_i))\right]
\Bigl\langle F_l,\langle F_{l+1},\ldots,\langle F_r,F_{r+1}\rangle_{Y^{(r)}}\cdots\rangle_{Y^{(l+1)}}\Bigr\rangle_{Y^{(l)}},
\]
where
\[
F_l=\widetilde{H}(\rho_{uA^{(l)},1,a_l}\cup\rho_{uY^{(l)},1,a_l}),
\qquad
F_{r+1}=\widetilde{H}(\rho_{vB^{(r+1)},1,a_r}\cup\rho_{vY^{(r)},1,a_r}),
\]
and, for $i\in[l+1..r]$,
\begin{align*}
F_i
&=
H(\rho_{Y^{(i-1)},1,a_{i-1}};\rho_{A^{(i)},1,a_i})\,
H(\rho_{Y^{(i-1)},1,a_{i-1}};\rho_{Y^{(i)},1,a_i})\,
H(\rho_{Y^{(i)},1,a_i};\rho_{B^{(i)},1,a_{i-1}})\\
&\qquad\times
H(\rho_{W^{(i)},t,2,a_i};\rho_{Y^{(i)},1,a_i})\,
H((-1)*\rho_{[W^{(i)}]^{-1},t^{-1},2,a_i};\rho_{Y^{(i-1)},1,a_{i-1}}).
\end{align*}
Here we used Lemma~\ref{l210} to rewrite every occurrence of $\Pi$, $\Pi'$, and $L$ in
terms of the $H$-kernel.

Now contract the scalar products recursively from right to left. At each step,
Lemma~\ref{211} evaluates the quadratic-exponential part of the scalar product, and
Lemma~\ref{l214} evaluates the $\widetilde H$-terms. This produces:
\begin{enumerate}
\item all cross-interaction factors
\[
\prod_{l+1\le i<j\le r}T_{a_i,a_j}(W^{(i)},W^{(j)}),
\]
\item all one-body factors
\[
\prod_{l+1\le i\le j\le r}
H(\rho_{W^{(i)},t,2,a_i};\rho_{A^{(j)},1,a_j}),
\qquad
\prod_{l+1\le i<j\le r}
H((-1)*\rho_{[W^{(j)}]^{-1},t^{-1},2,a_j};\rho_{B^{(i)},1,a_{i-1}}),
\]
\item and the remaining boundary contraction
\[
\left\langle
\widetilde H(u[\rho_1\cup\rho]),
\widetilde H(v[\rho_2\cup\rho])
\right\rangle_{Y;q=t},
\]
to which Lemma~\ref{l218} applies, producing exactly the infinite product in the
statement.
\end{enumerate}

Finally, the normalization $Z$ equals the partition function in Proposition~\ref{p23},
and the factors coming from $Z$ cancel against the corresponding denominator produced
by Lemma~\ref{l218}. The remaining factors are precisely those displayed in the statement
of Lemma~\ref{l31}.
\end{proof}

Combining Lemmas~\ref{l23}, \ref{l29}, \ref{le32}, and \ref{l31} gives the moment formula
for the Laplace transform of the height function under the double free-boundary Gibbs
measure.

\section{Asymptotics}\label{sect:as}

In this section, we analyze the scaling limit of the moment formula obtained in
Section~\ref{sect:mh}. We first rewrite the multi-point observable formula in a form
adapted to the periodic $q$-volume weights, then establish the one-point asymptotics,
and finally prove Gaussian fluctuations by the method of moments.

\begin{lemma}\label{l331}
Let $i_1\le i_2\le \cdots \le i_m\in[l+1..r]$, and let $l_1,\ldots,l_m\in\ZZ_{>0}$.
For each occurrence $i_s$, let $W^{(i_s)}$ be a set of variables with
$|W^{(i_s)}|=l_s$. Define
\[
\rho_{W_I}:=\bigcup_{s=1}^{m}\rho_{W^{(i_s)},t,2,a_{i_s}},
\qquad
\rho_{W_I}^{\circ}:=\bigcup_{s=1}^{m}(-1)*\rho_{[W^{(i_s)}]^{-1},t^{-1},2,a_{i_s}}.
\]
Then
\begin{align*}
&\left.\mathbb{E}_{\Pr}\left[\prod_{s=1}^{m}\gamma_{l_s}(\lambda^{(M,i_s)};q,t)\right]\right|_{q=t}\\
&=
\oint\cdots\oint
\Biggl[\prod_{s=1}^{m}D(W^{(i_s)};\omega(t,t,a_{i_s}))\Biggr]
\Biggl[\prod_{s=1}^{m}\prod_{\substack{j\in[i_s..r]\\ b_j=-}}
H(\rho_{W^{(i_s)},t,2,a_{i_s}};\rho_{\{x_j\},1,a_j})\Biggr]\\
&\qquad\times
\Biggl[\prod_{s=1}^{m}\prod_{\substack{j\in[l..i_s-1]\\ b_j=+}}
H((-1)*\rho_{[W^{(i_s)}]^{-1},t^{-1},2,a_{i_s}};\rho_{\{x_j\},1,a_j})\Biggr]
\Biggl[\prod_{1\le u<v\le m}
T_{a_{i_u},a_{i_v}}(W^{(i_u)},W^{(i_v)})\Biggr]\\
&\qquad\times
\prod_{k\ge1}
\widetilde{H}([u^{k-1}v^k]\rho_{W_I})
\widetilde{H}([u^{k}v^{k-1}]\rho_{W_I}^{\circ})
H(u^{2k}\rho_A;v^{2k}\rho_{W_I})
H(u^{2k}\rho_A;v^{2k-2}\rho_{W_I}^{\circ})\\
&\qquad\times
\prod_{k\ge1}
H(u^{2k}\rho_B;v^{2k}\rho_{W_I}^{\circ})
H(u^{2k-2}\rho_B;v^{2k}\rho_{W_I})
H(u^{2k}\rho_{W_I};v^{2k}\rho_{W_I}^{\circ}).
\end{align*}
Here the expectation is with respect to the free-boundary dimer measure \eqref{ppd},
and the contours are as in Lemma~\ref{l31}.
\end{lemma}

\begin{proof}
First assume that $i_1,\ldots,i_m$ are pairwise distinct and that
$\{i_1,\ldots,i_m\}=[l+1..r]$. Then the formula is exactly Lemmas~\ref{l23}
and \ref{l31} specialized to the Schur case $q=t$.

If some columns in $[l+1..r]$ are not observed, we set the corresponding exponent
to zero and use $\gamma_0(\lambda;q,t)=1$.

If an index appears several times, we apply the Macdonald integral operator several
times at the same column, with nested contours for the different copies. This is the
same standard extension as in Corollary~3.11 of~\cite{bcgs13}. The resulting formula
is precisely the one displayed above.
\end{proof}

Set
\[
I_L:=\{i\in[l..r]:a_i=L\},
\qquad
I_R:=\{i\in[l..r]:a_i=R\}.
\]

\begin{lemma}\label{lLeftMoments}
Under the assumptions of Lemma~\ref{l331}, suppose in addition that
$\{i_1,\ldots,i_m\}\subseteq I_L$, i.e. $a_{i_s}=L$ for every $s\in[m]$.
Define
\begin{align}
G_{1,>}(W,x,t)&:=\prod_{w\in W}\frac{w-x}{w-tx},
&
G_{1,<}(W,x,t)&:=\prod_{w\in W}\frac{t-wx}{t(1-wx)},
\label{dg1}\\
G_{0,>}(W,x,t)&:=\prod_{w\in W}\frac{w+tx}{w+x},
&
G_{0,<}(W,x,t)&:=\prod_{w\in W}\frac{t(1+wx)}{t+wx},
\label{dg2}
\end{align}
and, for $i\in[l+1..r]$,
\begin{align*}
G_{1,>i}(W)&:=\prod_{\substack{j\in[i..r]\\ b_j=-,\ a_j=L}}G_{1,>}(W,x_j,t),
&
G_{1,<i}(W)&:=\prod_{\substack{j\in[l..i-1]\\ b_j=+,\ a_j=L}}G_{1,<}(W,x_j,t),\\
G_{0,>i}(W)&:=\prod_{\substack{j\in[i..r]\\ b_j=-,\ a_j=R}}G_{0,>}(W,x_j,t),
&
G_{0,<i}(W)&:=\prod_{\substack{j\in[l..i-1]\\ b_j=+,\ a_j=R}}G_{0,<}(W,x_j,t).
\end{align*}
Also define
\begin{equation}
T_{L,L}(Z,W):=
\prod_{z\in Z}\prod_{w\in W}
\frac{(z-w)^2}{(z-t^{-1}w)(z-tw)}.
\label{tll}
\end{equation}
Then
\begin{align*}
&\mathbb{E}_{\Pr}\left[\prod_{s=1}^{m}\gamma_{l_s}(\lambda^{(M,i_s)};t,t)\right]\\
&=
\oint\cdots\oint
\Biggl[
\prod_{s=1}^{m}
D(W^{(i_s)};t,t)\,
G_{1,>i_s}(W^{(i_s)})\,
G_{1,<i_s}(W^{(i_s)})\,
G_{0,>i_s}(W^{(i_s)})\,
G_{0,<i_s}(W^{(i_s)})
\Biggr]\\
&\qquad\times
\Biggl[\prod_{1\le s<g\le m}T_{L,L}(W^{(i_s)},W^{(i_g)})\Biggr]\\
&\qquad\times
\prod_{k\ge1}
\widetilde{H}([u^{k-1}v^k]\rho_{W_I})\,
\widetilde{H}([u^{k}v^{k-1}]\rho_{W_I}^{\circ})\\
&\qquad\times
\prod_{k\ge1}\prod_{s=1}^{m}
\Biggl[
\prod_{\substack{j\in[l..r]\\ b_j=-,\ a_j=L}}
G_{1,>}(W^{(i_s)},u^{2k}v^{2k}x_j,t)\,
G_{1,<}(W^{(i_s)},u^{2k}v^{2k-2}x_j,t)
\Biggr]\\
&\qquad\times
\prod_{k\ge1}\prod_{s=1}^{m}
\Biggl[
\prod_{\substack{j\in[l..r]\\ b_j=-,\ a_j=R}}
G_{0,>}(W^{(i_s)},u^{2k}v^{2k}x_j,t)\,
G_{0,<}(W^{(i_s)},u^{2k}v^{2k-2}x_j,t)
\Biggr]\\
&\qquad\times
\prod_{k\ge1}\prod_{s=1}^{m}
\Biggl[
\prod_{\substack{j\in[l..r]\\ b_j=+,\ a_j=L}}
G_{1,<}(W^{(i_s)},u^{2k}v^{2k}x_j,t)\,
G_{1,>}(W^{(i_s)},u^{2k-2}v^{2k}x_j,t)
\Biggr]\\
&\qquad\times
\prod_{k\ge1}\prod_{s=1}^{m}
\Biggl[
\prod_{\substack{j\in[l..r]\\ b_j=+,\ a_j=R}}
G_{0,<}(W^{(i_s)},u^{2k}v^{2k}x_j,t)\,
G_{0,>}(W^{(i_s)},u^{2k-2}v^{2k}x_j,t)
\Biggr]\\
&\qquad\times
\prod_{k\ge1}\prod_{s,g=1}^{m}
T_{L,L}(W^{(i_s)},u^{2k}v^{2k}W^{(i_g)}).
\end{align*}
Moreover,
\begin{align}
\widetilde{H}([u^{k-1}v^k]\rho_{W_I})
&=
\prod_{s=1}^{m}\Biggl[
\prod_{w\in W^{(i_s)}}
\frac{1-u^{2k-2}v^{2k}tw^{-2}}
{(1-u^{k-1}v^ktw^{-1})(1+u^{k-1}v^kw^{-1})}
\Biggr]
\label{th1}\\
&\qquad\times
\prod_{1\le s<g\le m}
T_{L,L}(W^{(i_s)},u^{2k-2}v^{2k}t[W^{(i_g)}]^{-1})
\notag\\
&\qquad\times
\prod_{s=1}^{m}\Biggl[
\prod_{\substack{w_j,w_i\in W^{(i_s)}\\ j<i}}
\frac{(1-u^{2k-2}v^{2k}tw_j^{-1}w_i^{-1})^2}
{(1-u^{2k-2}v^{2k}t^2w_j^{-1}w_i^{-1})(1-u^{2k-2}v^{2k}w_j^{-1}w_i^{-1})}
\Biggr],
\notag
\end{align}
and
\begin{align}
\widetilde{H}([u^{k}v^{k-1}]\rho_{W_I}^{\circ})
&=
\prod_{s=1}^{m}\Biggl[
\prod_{w\in W^{(i_s)}}
\frac{1-u^{2k}v^{2k-2}t^{-1}w^{2}}
{(1-u^{k}v^{k-1}w)(1+u^{k}v^{k-1}t^{-1}w)}
\Biggr]
\label{th2}\\
&\qquad\times
\prod_{1\le s<g\le m}
T_{L,L}([W^{(i_s)}]^{-1},u^{2k}v^{2k-2}t^{-1}W^{(i_g)})
\notag\\
&\qquad\times
\prod_{s=1}^{m}\Biggl[
\prod_{\substack{w_j,w_i\in W^{(i_s)}\\ j<i}}
\frac{(1-u^{2k}v^{2k-2}t^{-1}w_jw_i)^2}
{(1-u^{2k}v^{2k-2}t^{-2}w_jw_i)(1-u^{2k}v^{2k-2}w_jw_i)}
\Biggr].
\notag
\end{align}
\end{lemma}

\begin{proof}
Write \(W_s:=W^{(i_s)}\).  Since \(a_{i_s}=L\) for every \(s\in[m]\),
Lemma~\ref{l210} gives
\begin{align*}
H(\rho_{W_s,t,2,L};\rho_{\{x_j\},1,L})
&=G_{1,>}(W_s,x_j,t),\\
H(\rho_{W_s,t,2,L};\rho_{\{x_j\},1,R})
&=G_{0,>}(W_s,x_j,t),\\
H((-1)*\rho_{W_s^{-1},t^{-1},2,L};\rho_{\{x_j\},1,L})
&=G_{1,<}(W_s,x_j,t),\\
H((-1)*\rho_{W_s^{-1},t^{-1},2,L};\rho_{\{x_j\},1,R})
&=G_{0,<}(W_s,x_j,t),
\end{align*}
and
\[
T_{a_{i_s},a_{i_g}}(W_s,W_g)=T_{L,L}(W_s,W_g).
\]
Thus the finite one-body and cross-interaction factors in
Lemma~\ref{l331} have the stated form.

We next evaluate the two \(\widetilde H\)-factors.  Set
\[
A_s:=\rho_{W_s^{-1},1,L},
\qquad
B_s:=(-1)*A_s,
\qquad
C_s:=\rho_{W_s,1,L}.
\]
By Lemma~\ref{l210},
\[
\rho_{W_I}
=
\bigcup_{s=1}^m\bigl([t]A_s\cup B_s\bigr),
\]
whereas
\[
\rho_{W_I}^{\circ}
=
\bigcup_{s=1}^m
\left(
(-1)*([t^{-1}]C_s)\cup C_s
\right).
\]
We use the identity
\[
\widetilde H\left(\bigcup_{s=1}^m\eta_s\right)
=
\prod_{s=1}^m\widetilde H(\eta_s)
\prod_{1\le s<g\le m}H(\eta_s;\eta_g),
\]
which follows from Definition~\ref{dsp1}.

Fix \(k\ge1\) and put
\[
\alpha:=u^{k-1}v^k.
\]
For
\[
\eta_s:=[\alpha t]A_s\cup[\alpha]B_s,
\]
a direct evaluation from the definitions of \(H\) and \(\widetilde H\) gives
\begin{align*}
\widetilde H(\eta_s)
&=
\prod_{w\in W_s}
\frac{1-\alpha^2tw^{-2}}
     {(1-\alpha tw^{-1})(1+\alpha w^{-1})}\\
&\quad\times
\prod_{\substack{w_j,w_i\in W_s\\ j<i}}
\frac{(1-\alpha^2tw_j^{-1}w_i^{-1})^2}
     {(1-\alpha^2t^2w_j^{-1}w_i^{-1})
      (1-\alpha^2w_j^{-1}w_i^{-1})},
\end{align*}
while, for \(s<g\),
\[
H(\eta_s;\eta_g)
=
T_{L,L}\bigl(W_s,\alpha^2t\,W_g^{-1}\bigr).
\]
Since
\[
[\alpha]\rho_{W_I}=\bigcup_{s=1}^m\eta_s,
\]
these identities give \eqref{th1}.

Similarly, put
\[
\delta:=u^kv^{k-1},
\qquad
\eta_s^{\circ}
:=
(-1)*([\delta t^{-1}]C_s)\cup[\delta]C_s.
\]
Then
\begin{align*}
\widetilde H(\eta_s^{\circ})
&=
\prod_{w\in W_s}
\frac{1-\delta^2t^{-1}w^2}
     {(1-\delta w)(1+\delta t^{-1}w)}\\
&\quad\times
\prod_{\substack{w_j,w_i\in W_s\\ j<i}}
\frac{(1-\delta^2t^{-1}w_jw_i)^2}
     {(1-\delta^2t^{-2}w_jw_i)
      (1-\delta^2w_jw_i)},
\end{align*}
and, for \(s<g\),
\[
H(\eta_s^{\circ};\eta_g^{\circ})
=
T_{L,L}\bigl(W_s^{-1},\delta^2t^{-1}W_g\bigr).
\]
Since
\[
[\delta]\rho_{W_I}^{\circ}
=
\bigcup_{s=1}^m\eta_s^{\circ},
\]
this proves \eqref{th2}.

Finally, \(H\) is multiplicative in each argument with respect to unions.
Applying the four identities in the first display, with the corresponding
\(u,v\)-scalings, to the four factors in Lemma~\ref{l331} involving
\(\rho_A\) or \(\rho_B\) gives the four displayed scaled one-body products.
The last interaction factor satisfies
\[
H([u^{2k}]\rho_{W_I};[v^{2k}]\rho_{W_I}^{\circ})
=
\prod_{s,g=1}^m
T_{L,L}\bigl(W_s,u^{2k}v^{2k}W_g\bigr).
\]
This accounts for every factor in Lemma~\ref{l331} and proves the result.
\end{proof}

Let $\epsilon>0$ be small. Under Assumption~\ref{ap5}, for each
$p\in[m]$, $j\in[n]$, and $i\in[l^{(\epsilon)}..r^{(\epsilon)}]$, define
\begin{align*}
I_{j,p,>i,1}^{(\epsilon)}
&:=
\left\{
u\in[v_{p-1}^{(\epsilon)}+1..v_p^{(\epsilon)}]\cap(n\ZZ+j)\cap[i..r^{(\epsilon)}]:
b_u^{(\epsilon)}=-,\ a_u^{(\epsilon)}=a_i^{(\epsilon)}
\right\},\\
I_{j,p,<i,1}^{(\epsilon)}
&:=
\left\{
u\in[v_{p-1}^{(\epsilon)}+1..v_p^{(\epsilon)}]\cap(n\ZZ+j)\cap[l^{(\epsilon)}..i-1]:
b_u^{(\epsilon)}=+,\ a_u^{(\epsilon)}=a_i^{(\epsilon)}
\right\},\\
I_{j,p,>i,0}^{(\epsilon)}
&:=
\left\{
u\in[v_{p-1}^{(\epsilon)}+1..v_p^{(\epsilon)}]\cap(n\ZZ+j)\cap[i..r^{(\epsilon)}]:
b_u^{(\epsilon)}=-,\ a_u^{(\epsilon)}\neq a_i^{(\epsilon)}
\right\},\\
I_{j,p,<i,0}^{(\epsilon)}
&:=
\left\{
u\in[v_{p-1}^{(\epsilon)}+1..v_p^{(\epsilon)}]\cap(n\ZZ+j)\cap[l^{(\epsilon)}..i-1]:
b_u^{(\epsilon)}=+,\ a_u^{(\epsilon)}\neq a_i^{(\epsilon)}
\right\}.
\end{align*}

\begin{lemma}\label{aps5}
Let $j\in[n]$, $p\in[m]$ be fixed. Let $\chi\in\RR$, and let
$\{i^{(\epsilon)}\in\ZZ\}_{\epsilon>0}$ satisfy
\begin{equation}
\lim_{\epsilon\to0}\epsilon\,i^{(\epsilon)}=\chi.
\label{dci}
\end{equation}
Suppose Assumption~\ref{ap5} holds.
\begin{enumerate}
\item Either for all $\epsilon>0$ with $\chi<V_p$ one has
$I_{j,p,>i^{(\epsilon)},1}^{(\epsilon)}=\emptyset$, or for none of them.
\item Either for all $\epsilon>0$ with $\chi>V_{p-1}$ one has
$I_{j,p,<i^{(\epsilon)},1}^{(\epsilon)}=\emptyset$, or for none of them.
\item Either for all $\epsilon>0$ with $\chi<V_p$ one has
$I_{j,p,>i^{(\epsilon)},0}^{(\epsilon)}=\emptyset$, or for none of them.
\item Either for all $\epsilon>0$ with $\chi>V_{p-1}$ one has
$I_{j,p,<i^{(\epsilon)},0}^{(\epsilon)}=\emptyset$, or for none of them.
\end{enumerate}
\end{lemma}

We use $\mathcal{E}_{j,p,>,1}$ to denote the event that
$I_{j,p,>i^{(\epsilon)},1}^{(\epsilon)}\neq\emptyset$ for all sufficiently small $\epsilon>0$
with $\chi<V_p$, and similarly define
$\mathcal{E}_{j,p,<,1}$, $\mathcal{E}_{j,p,>,0}$, and $\mathcal{E}_{j,p,<,0}$.

\begin{lemma}\label{l55}
Suppose Assumption~\ref{ap5} and \eqref{dci} hold. Define
\begin{align*}
G_{1,>i}^{(\epsilon)}(W)
&:=
\prod_{\substack{j\in[l^{(\epsilon)}..r^{(\epsilon)}],\,j\ge i\\
b_j^{(\epsilon)}=-,\ a_j^{(\epsilon)}=a_i^{(\epsilon)}}}
G_{1,>}(W,x_j^{(\epsilon)},t),\\
G_{1,<i}^{(\epsilon)}(W)
&:=
\prod_{\substack{j\in[l^{(\epsilon)}..r^{(\epsilon)}],\,j<i\\
b_j^{(\epsilon)}=+,\ a_j^{(\epsilon)}=a_i^{(\epsilon)}}}
G_{1,<}(W,x_j^{(\epsilon)},t),\\
G_{0,>i}^{(\epsilon)}(W)
&:=
\prod_{\substack{j\in[l^{(\epsilon)}..r^{(\epsilon)}],\,j\ge i\\
b_j^{(\epsilon)}=-,\ a_j^{(\epsilon)}\neq a_i^{(\epsilon)}}}
G_{0,>}(W,x_j^{(\epsilon)},t),\\
G_{0,<i}^{(\epsilon)}(W)
&:=
\prod_{\substack{j\in[l^{(\epsilon)}..r^{(\epsilon)}],\,j<i\\
b_j^{(\epsilon)}=+,\ a_j^{(\epsilon)}\neq a_i^{(\epsilon)}}}
G_{0,<}(W,x_j^{(\epsilon)},t).
\end{align*}
Also define
\begin{align}
\mathcal{G}_{1,>\chi}(w)
&:=
\prod_{\substack{p\in[m]\\ V_p>\chi}}
\prod_{j=1}^{n}
\left(
\frac{1-(w\tau_j)^{-1}e^{V_p}}
{1-e^{\max\{V_{p-1},\chi\}}(w\tau_j)^{-1}}
\right)^{\mathbf{1}_{\mathcal{E}_{j,p,>,1}}},
\label{dgs1}\\
\mathcal{G}_{1,<\chi}(w)
&:=
\prod_{\substack{p\in[m]\\ V_{p-1}<\chi}}
\prod_{j=1}^{n}
\left(
\frac{1-we^{-V_{p-1}}\tau_j}
{1-e^{-\min\{V_p,\chi\}}w\tau_j}
\right)^{\mathbf{1}_{\mathcal{E}_{j,p,<,1}}},
\label{dgs2}\\
\mathcal{G}_{0,>\chi}(w)
&:=
\prod_{\substack{p\in[m]\\ V_p>\chi}}
\prod_{j=1}^{n}
\left(
\frac{1+e^{\max\{V_{p-1},\chi\}}(w\tau_j)^{-1}}
{1+(w\tau_j)^{-1}e^{V_p}}
\right)^{\mathbf{1}_{\mathcal{E}_{j,p,>,0}}},
\label{dgs3}\\
\mathcal{G}_{0,<\chi}(w)
&:=
\prod_{\substack{p\in[m]\\ V_{p-1}<\chi}}
\prod_{j=1}^{n}
\left(
\frac{1+e^{-\min\{V_p,\chi\}}w\tau_j}
{1+we^{-V_{p-1}}\tau_j}
\right)^{\mathbf{1}_{\mathcal{E}_{j,p,<,0}}}.
\label{dgs4}
\end{align}
Then
\begin{align*}
\lim_{\epsilon\to0}G_{1,>i^{(\epsilon)}}^{(\epsilon)}(W)
&=
\left[\prod_{w_s\in W}\mathcal{G}_{1,>\chi}(w_s)\right]^{\beta},
&
\lim_{\epsilon\to0} G_{1,<i^{(\epsilon)}}^{(\epsilon)}(W)
&=
\left[\prod_{w_s\in W}\mathcal{G}_{1,<\chi}(w_s)\right]^{\beta},\\
\lim_{\epsilon\to0} G_{0,>i^{(\epsilon)}}^{(\epsilon)}(W)
&=
\left[\prod_{w_s\in W}\mathcal{G}_{0,>\chi}(w_s)\right]^{\beta},
&
\lim_{\epsilon\to0}G_{0,<i^{(\epsilon)}}^{(\epsilon)}(W)
&=
\left[\prod_{w_s\in W}\mathcal{G}_{0,<\chi}(w_s)\right]^{\beta}.
\end{align*}
Here the logarithmic branches are chosen so that, when $z$ approaches the positive real axis,
the imaginary part of $\log z$ tends to $0$.
\end{lemma}

\begin{proof}
This is the same computation as Lemma~5.6 of~\cite{LV21}.
\end{proof}

\begin{definition}\label{def:R-sets}
For later use, we record the corresponding zero and pole sets. Define
\begin{align*}
\mathcal{R}_{\chi,1,1}
&:=
\left\{
e^{\max\{V_{p-1},\chi\}}\tau_j^{-1}:
p\in[m],\ V_p>\chi,\ \mathbf{1}_{\mathcal E_{j,p,>,1}}=1
\right\},\\
\mathcal{R}_{\chi,1,2}
&:=
\left\{
e^{V_p}\tau_j^{-1}:
p\in[m],\ V_p>\chi,\ \mathbf{1}_{\mathcal E_{j,p,>,1}}=1
\right\},\\
\mathcal{R}_{\chi,2,1}
&:=
\left\{
e^{\min\{V_p,\chi\}}\tau_j^{-1}:
p\in[m],\ V_{p-1}<\chi,\ \mathbf{1}_{\mathcal E_{j,p,<,1}}=1
\right\},\\
\mathcal{R}_{\chi,2,2}
&:=
\left\{
e^{V_{p-1}}\tau_j^{-1}:
p\in[m],\ V_{p-1}<\chi,\ \mathbf{1}_{\mathcal E_{j,p,<,1}}=1
\right\},\\
\mathcal{R}_{\chi,3,1}
&:=
\left\{
-e^{V_p}\tau_j^{-1}:
p\in[m],\ V_p>\chi,\ \mathbf{1}_{\mathcal E_{j,p,>,0}}=1
\right\},\\
\mathcal{R}_{\chi,3,2}
&:=
\left\{
-e^{\max\{V_{p-1},\chi\}}\tau_j^{-1}:
p\in[m],\ V_p>\chi,\ \mathbf{1}_{\mathcal E_{j,p,>,0}}=1
\right\},\\
\mathcal{R}_{\chi,4,1}
&:=
\left\{
-e^{V_{p-1}}\tau_j^{-1}:
p\in[m],\ V_{p-1}<\chi,\ \mathbf{1}_{\mathcal E_{j,p,<,0}}=1
\right\},\\
\mathcal{R}_{\chi,4,2}
&:=
\left\{
-e^{\min\{V_p,\chi\}}\tau_j^{-1}:
p\in[m],\ V_{p-1}<\chi,\ \mathbf{1}_{\mathcal E_{j,p,<,0}}=1
\right\}.
\end{align*}
Thus, for $\chi\notin\{V_p\}_{p=0}^{m}$, the zeros of $\mathcal{G}_{1,>\chi}$ are
$\mathcal{R}_{\chi,1,2}\setminus \mathcal{R}_{\chi,1,1}$ and its poles are
$\mathcal{R}_{\chi,1,1}\setminus \mathcal{R}_{\chi,1,2}$; similarly for
$\mathcal{G}_{1,<\chi}$, $\mathcal{G}_{0,>\chi}$, and $\mathcal{G}_{0,<\chi}$.

For $k_1,k_2\ge1$, define the scaled zero/pole sets
\begin{align*}
\mathcal{R}_{5,1,k_1,k_2}
&:=
\left\{
u^{2k_1}v^{2k_2}e^{V_{p-1}}\tau_j^{-1}:
p\in[m],\ \mathbf{1}_{\mathcal E_{j,p,>,1}}=1
\right\},\\
\mathcal{R}_{5,2,k_1,k_2}
&:=
\left\{
u^{2k_1}v^{2k_2}e^{V_p}\tau_j^{-1}:
p\in[m],\ \mathbf{1}_{\mathcal E_{j,p,>,1}}=1
\right\},\\
\mathcal{R}_{6,1,k_1,k_2}
&:=
\left\{
u^{-2k_1}v^{-2k_2}e^{V_p}\tau_j^{-1}:
p\in[m],\ \mathbf{1}_{\mathcal E_{j,p,<,1}}=1
\right\},\\
\mathcal{R}_{6,2,k_1,k_2}
&:=
\left\{
u^{-2k_1}v^{-2k_2}e^{V_{p-1}}\tau_j^{-1}:
p\in[m],\ \mathbf{1}_{\mathcal E_{j,p,<,1}}=1
\right\},\\
\mathcal{R}_{7,1,k_1,k_2}
&:=
\left\{
-u^{2k_1}v^{2k_2}e^{V_p}\tau_j^{-1}:
p\in[m],\ \mathbf{1}_{\mathcal E_{j,p,>,0}}=1
\right\},\\
\mathcal{R}_{7,2,k_1,k_2}
&:=
\left\{
-u^{2k_1}v^{2k_2}e^{V_{p-1}}\tau_j^{-1}:
p\in[m],\ \mathbf{1}_{\mathcal E_{j,p,>,0}}=1
\right\},\\
\mathcal{R}_{8,1,k_1,k_2}
&:=
\left\{
-u^{-2k_1}v^{-2k_2}e^{V_{p-1}}\tau_j^{-1}:
p\in[m],\ \mathbf{1}_{\mathcal E_{j,p,<,0}}=1
\right\},\\
\mathcal{R}_{8,2,k_1,k_2}
&:=
\left\{
-u^{-2k_1}v^{-2k_2}e^{V_{p}}\tau_j^{-1}:
p\in[m],\ \mathbf{1}_{\mathcal E_{j,p,<,0}}=1
\right\}.
\end{align*}
\end{definition}

For later use, set
\begin{equation}\label{dgc}
\mathcal G_{\chi}(w)
:=
\mathcal G_{1,>\chi}(w)
\mathcal G_{1,<\chi}(w)
\mathcal G_{0,>\chi}(w)
\mathcal G_{0,<\chi}(w),
\end{equation}
and, for \(k\ge1\),
\begin{align}
\mathcal F_{u,v,k}(w)
&:=
\mathcal G_{1,>V_0}(u^{-2k}v^{-2k}w)\,
\mathcal G_{1,<V_m}(u^{2k}v^{2k}w)\,
\mathcal G_{0,>V_0}(u^{-2k}v^{-2k}w)\,
\mathcal G_{0,<V_m}(u^{2k}v^{2k}w)
\nonumber\\
&\quad\times
\mathcal G_{1,>V_0}(u^{2-2k}v^{-2k}w)\,
\mathcal G_{1,<V_m}(u^{2k}v^{2k-2}w)\,
\mathcal G_{0,>V_0}(u^{2-2k}v^{-2k}w)\,
\mathcal G_{0,<V_m}(u^{2k}v^{2k-2}w).
\label{dfuvk}
\end{align}
Finally, write
\begin{equation}\label{dsc}
S_\chi(w)
:=
\mathcal G_\chi(w)\prod_{k\ge1}\mathcal F_{u,v,k}(w).
\end{equation}

\begin{definition}[Branch-admissible contour systems]
\label{def:branch-admissible}
Fix \(\chi\), and let \(\mathscr K\) denote the union of the contours under
consideration and of the regions swept out by all contour deformations used
below.  The contour system is called \emph{branch-admissible} if there exists
an open neighbourhood
\[
U\subset\mathbb C^*:=\mathbb C\setminus\{0\}
\]
of \(\mathscr K\) such that \(S_\chi\) is holomorphic and non-vanishing on
\(U\) and
\[
\frac{1}{2\pi i}\oint_\gamma
\frac{S_\chi'(w)}{S_\chi(w)}\,dw=0
\]
for every closed curve \(\gamma\subset U\).

Equivalently, \(S_\chi\) admits a holomorphic logarithm \(L_\chi\) on \(U\).
For \(\alpha>0\), we define
\[
[S_\chi(w)]^\alpha
:=
\exp\!\bigl(\alpha L_\chi(w)\bigr).
\]
If a connected component of \(U\) contains a point
\(w_0>0\) with \(S_\chi(w_0)>0\), we normalize the logarithm by
\[
L_\chi(w_0)=\ln S_\chi(w_0).
\]
where \(\ln\) denotes the ordinary real logarithm.  Consequently,
\[
L_\chi(x)=\ln S_\chi(x)\in\RR
\]
on the connected component of
\[
\{x\in U\cap(0,\infty):S_\chi(x)>0\}
\]
containing \(w_0\).
\end{definition}

\noindent\textbf{One-point contours.}
For
\[
\chi\in(V_0,V_m)\setminus\{V_p\}_{p=0}^m,
\]
define the required pole set
\[
\begin{aligned}
D_\chi:={}&
\bigl(R_{\chi,1,1}\setminus R_{\chi,1,2}\bigr)
\cup
\bigl(R_{\chi,3,1}\setminus R_{\chi,3,2}\bigr)
\\
&\cup
\bigcup_{k\ge1}
\Bigl[
\bigl(R_{5,1,k,k}\setminus R_{5,2,k,k}\bigr)
\cup
\bigl(R_{7,1,k,k}\setminus R_{7,2,k,k}\bigr)
\\
&\hspace{27mm}\cup
\bigl(R_{5,1,k-1,k}\setminus R_{5,2,k-1,k}\bigr)
\cup
\bigl(R_{7,1,k-1,k}\setminus R_{7,2,k-1,k}\bigr)
\Bigr].
\end{aligned}
\]
A \emph{one-point contour at \(\chi\)} is a positively oriented contour
\(\mathcal C\), possibly a finite union of pairwise disjoint simple closed
curves, that encloses \(0\) and every point of \(D_\chi\), and no other pole
of \(S_\chi\).

\begin{proposition}\label{p57}
Suppose Assumptions~\ref{ap5}, \ref{ass:admissible},
\ref{ap64}, and~\ref{ap65} hold at \(\chi\).  Let
\(i^{(\epsilon)}\) satisfy \eqref{dci}, and assume that
\[
a^{(\epsilon)}_{i^{(\epsilon)}}=L
\]
for all \(\epsilon>0\).  Let
\(\mathcal C_\chi\), \(U_\chi\), and \(L_\chi\) be supplied by
Corollary~\ref{cor:automatic-branch}.  Let \(\Pr^{(\epsilon)}\) be the
corresponding probability measure.  Then, for every
\(g\in\mathbb Z_{>0}\),
\[
\lim_{\epsilon\to0}
\mathbb E_{\Pr^{(\epsilon)}}\!
\left[
\gamma_g\!\left(
\lambda^{(M,i^{(\epsilon)})};t,t
\right)
\right]
=
\frac{1}{2\pi\mathrm i}
\oint_{\mathcal C_\chi}
\exp\!\bigl(g\beta L_\chi(w)\bigr)\frac{dw}{w}.
\]
\end{proposition}

\begin{proof}
Apply Lemma~\ref{lLeftMoments} with $m=1$ and $l_1=g$. Let
$W=(w_1,\ldots,w_g)$ and let $\mathcal C_1,\ldots,\mathcal C_g$ be nested contours
satisfying the contour conditions of Lemma~\ref{l31}. Choose them as sufficiently close nested copies of \(C_\chi\)
contained in \(U_\chi\), so that every annulus swept out during
the contour collapse is also contained in \(U_\chi\).

The contours may be chosen at positive distance from the compact accumulation
sets of the prelimit poles.  The explicit pole formulas then imply that, for
all sufficiently small \(\epsilon>0\), no pole crosses a contour and the
prescribed nesting and pole-enclosing conditions are unchanged.  Thus the same
contours remain admissible for the prelimit integrand.

Let $\Theta_k^{(\epsilon)}(W)$ denote the $k$-th factor in the infinite product of
Lemma~\ref{lLeftMoments} in the one-point case. By Lemma~\ref{l55},
\[
\mathcal{G}_{1,>i^{(\epsilon)}}^{(\epsilon)}(W)\,
\mathcal{G}_{1,<i^{(\epsilon)}}^{(\epsilon)}(W)\,
\mathcal{G}_{0,>i^{(\epsilon)}}^{(\epsilon)}(W)\,
\mathcal{G}_{0,<i^{(\epsilon)}}^{(\epsilon)}(W)
\longrightarrow
\prod_{s=1}^{g}[\mathcal{G}_{\chi}(w_s)]^{\beta}
\]
uniformly on $\mathcal C_1\times\cdots\times\mathcal C_g$.

For each fixed \(k\ge1\), Lemma~\ref{l55}, applied to the
\(u,v\)-scaled one-body products displayed in
Lemma~\ref{lLeftMoments}, shows that their product converges uniformly on
\(C_1\times\cdots\times C_g\) to
\[
\prod_{s=1}^g
\bigl[\mathcal F_{u,v,k}(w_s)\bigr]^\beta,
\]
where \(\mathcal F_{u,v,k}\) is defined in \eqref{dfuvk}.
The remaining factors in $\Theta_k^{(\epsilon)}(W)$ are
$\widetilde H([u^{k-1}v^k]\rho_W)$,
$\widetilde H([u^kv^{k-1}]\rho_W^{\circ})$, and
$T_{L,L}(W,u^{2k}v^{2k}W)$; by \eqref{th1}, \eqref{th2}, and the fact that
$u,v\in(0,1)$, these converge uniformly to $1$.

Let \(q_0:=uv\in(0,1)\).  Since the contours lie in a fixed compact
annulus, both \(w\) and \(w^{-1}\) are uniformly bounded on the product
contour.  The explicit formulas \eqref{th1}, \eqref{th2}, and
\eqref{tll}, together with the uniform separation of the contours from
their denominator loci, show that the logarithmic contribution of the
remaining factors is \(O(q_0^k)\), uniformly for sufficiently small
\(\epsilon\).

For the scaled one-body products, each local logarithmic contribution is
\(O((1-t)q_0^{2k})\).  Since the number of columns is
\(O(\epsilon^{-1})\) and \(1-t=O(\epsilon)\), their total contribution is
\(O(q_0^{2k})\).  Consequently, there exist \(K_0\ge1\), \(C>0\), and
\(r\in(q_0,1)\) such that
\[
\sup_{W\in\mathcal C_1\times\cdots\times\mathcal C_g}
\left|\mathrm{Log}\Theta_k^{(\epsilon)}(W)\right|
\le C r^k,
\qquad k\ge K_0,
\]
uniformly for all sufficiently small \(\epsilon>0\). 
Here \(\mathrm{Log}\) denotes the holomorphic logarithm on
\(D(1,\frac12)\) normalized by \(\mathrm{Log} 1=0\); equivalently, it is the
restriction of the principal logarithm to this disk.
The Weierstrass
\(M\)-test therefore gives uniform convergence of the corresponding
infinite products.  Together with the fixed-\(k\) convergence proved
above, the same tail estimate yields
\[
\prod_{k\ge1}\Theta_k^{(\epsilon)}(W)
\longrightarrow
\prod_{s=1}^g\prod_{k\ge1}
[\mathcal F_{u,v,k}(w_s)]^\beta
\]
uniformly on the product contour. Therefore dominated convergence applies to the
multiple contour integral.

Since the contours are strictly nested, they are uniformly separated from the
diagonals \(w_a=w_b\).  Hence, by the definition \eqref{ddf} of
\(D(W;t,t)\),
\[
D(W;t,t)
\longrightarrow
\frac{1}{(2\pi\mathrm i)^g}
\frac{\displaystyle\sum_{s=1}^g w_s^{-1}}
     {(w_2-w_1)\cdots(w_g-w_{g-1})}
\,dw_1\cdots dw_g
\]
uniformly on
\(\mathcal C_1\times\cdots\times\mathcal C_g\).
Combining this with the preceding uniform limits and applying dominated
convergence, we obtain the limiting \(g\)-fold integral
\[
\frac{1}{(2\pi\mathrm i)^g}
\oint_{\mathcal C_1}\cdots\oint_{\mathcal C_g}
\frac{\displaystyle\sum_{s=1}^g w_s^{-1}}
     {(w_2-w_1)\cdots(w_g-w_{g-1})}
\prod_{s=1}^g f_\chi(w_s)\,
dw_1\cdots dw_g,
\]
where
\[
f_\chi(w)
:=
\exp\bigl(\beta L_\chi(w)\bigr)
=
[S_\chi(w)]^\beta
\]
on the annular domain \(U_\chi\) supplied by
Corollary~\ref{cor:automatic-branch}.

For \(g=1\), this is exactly the asserted formula.
Suppose \(g\ge2\).  Although \(f_\chi\) need not be globally meromorphic
when \(\beta\notin\mathbb Z\), branch-admissibility ensures that
\[
f_\chi(w)=\exp\bigl(\beta L_\chi(w)\bigr)
\]
is holomorphic on an open neighbourhood of the nested contours and of all
annuli swept out during their collapse.  The proof of Lemma~\ref{lb2} uses
only contour deformations through these annuli, Cauchy's theorem for the
one-body factor, and residues at the Cauchy poles
\(w_{j+1}=w_j\).  Hence the same argument applies on this branch domain,
without crossing a branch cut.

Applying this argument with
\[
k=g,\qquad d=1,\qquad f=f_\chi,\qquad
g_1(w)=\frac{1}{w},
\]
and observing that the only pole of \(g_1\) is \(0\), which is enclosed by
every contour, we obtain, after deforming the remaining contour to
\(\mathcal C\) within the same branch domain,
\[
\frac{1}{2\pi\mathrm i}
\oint_{\mathcal C}
f_\chi(w)^g\,\frac{dw}{w}
=
\frac{1}{2\pi\mathrm i}
\oint_{\mathcal C}
[S_\chi(w)]^{g\beta}\,\frac{dw}{w},
\]
where
\[
f_\chi(w)^g
=
\exp\bigl(g\beta L_\chi(w)\bigr).
\]
This proves the asserted formula.
\end{proof}

\begin{remark}
When \(u=v=0\), the formula of Proposition~\ref{p57}
reduces to the one-point formula for pure free-boundary dimer
states obtained in Proposition 5.7 of~\cite{LV21}.
When \(v=0\), it yields the corresponding one-point asymptotics
for free left boundary and empty right boundary.
\end{remark}

\begin{definition}[The $L$-chart annular prime-function covariance kernel]\label{def:KLL}
Set
\[
        \mathfrak q:=u^2v^2,
        \qquad
        S_\chi(z):=\mathcal G_\chi(z)\prod_{r\ge1}\mathcal F_{u,v,r}(z),
        \qquad
        \Phi_{\chi,g}(z):=[S_\chi(z)]^{g\beta}.
\]
The power \([S_\chi(z)]^{g\beta}\) is defined using the holomorphic
logarithm \(L_\chi\) associated with the chosen branch-admissible contour
system, as in Definition~\ref{def:branch-admissible}.
For $r\ge1$ put
\[
        c_r:=\mathfrak q^r,
        \qquad
        a_r:=u^{2r-2}v^{2r}=v^2\mathfrak q^{r-1},
        \qquad
        b_r:=u^{2r}v^{2r-2}=u^2\mathfrak q^{r-1}.
\]
Define
\begin{align}
\mathsf K_{LL}(z,w)
&:=\frac{1}{(z-w)^2}
\notag\\
&\quad+
\sum_{r\ge1}
\left[
\frac{c_r}{(z-c_rw)^2}
+
\frac{c_r}{(w-c_rz)^2}
+
\frac{a_r}{(zw-a_r)^2}
+
\frac{b_r}{(1-b_rzw)^2}
\right].
\label{eq:KLL-series}
\end{align}
A pair of contours $(\mathcal C_z,\mathcal C_w)$ is called $LL$-admissible if the two
contours satisfy the contour requirements of Proposition~\ref{p57} for the corresponding
one-point integrands and, in addition, none of
\[
        z=w,
        \qquad z=c_rw,
        \qquad w=c_rz,
        \qquad zw=a_r,
        \qquad b_rzw=1,
        \qquad r\ge1,
\]
occurs with $z\in\mathcal C_z$ and $w\in\mathcal C_w$.
\end{definition}

\begin{lemma}[Theta form of the $L$-chart kernel]\label{lem:KLL-theta}
Let
\[
        \Theta_{\mathfrak q}(x):=(x;\mathfrak q)_\infty(\mathfrak q/x;\mathfrak q)_\infty.
\]
Then, on every $LL$-admissible product contour,
\begin{equation}
        \mathsf K_{LL}(z,w)
        =
        \partial_z\partial_w
        \log\frac{\Theta_{\mathfrak q}(z/w)}
                  {\Theta_{\mathfrak q}(u^2zw)} .
\label{eq:KLL-theta}
\end{equation}
\end{lemma}

\begin{proof}
Using the product definition of $\Theta_{\mathfrak q}$ gives
\[
\partial_z\partial_w\log\Theta_{\mathfrak q}(z/w)
=
\frac1{(z-w)^2}
+
\sum_{r\ge1}\left[
\frac{\mathfrak q^r}{(z-\mathfrak q^rw)^2}
+
\frac{\mathfrak q^r}{(w-\mathfrak q^rz)^2}
\right],
\]
and
\[
-\partial_z\partial_w\log\Theta_{\mathfrak q}(u^2zw)
=
\sum_{r\ge1}\left[
\frac{v^2\mathfrak q^{r-1}}{(zw-v^2\mathfrak q^{r-1})^2}
+
\frac{u^2\mathfrak q^{r-1}}{(1-u^2\mathfrak q^{r-1}zw)^2}
\right].
\]
Adding the two identities proves the claim.
\end{proof}

\begin{theorem}[$L$-chart central limit theorem]\label{t58}
Suppose Assumptions~\ref{ap5}, \ref{ass:admissible},
\ref{ap64}, and~\ref{ap65} hold at each marked location. Let $s\in\ZZ_{>0}$, let
$g_1,\ldots,g_s\in\ZZ_{>0}$, and let
$i_1^{(\epsilon)},\ldots,i_s^{(\epsilon)}\in[l^{(\epsilon)}..r^{(\epsilon)}]$ satisfy
\[
\lim_{\epsilon\to0}\epsilon\,i_d^{(\epsilon)}=\chi_d,
\qquad
\chi_1\le \chi_2\le \cdots\le \chi_s.
\]
Assume that
\begin{equation}
        a_{i_1^{(\epsilon)}}^{(\epsilon)}=
        a_{i_2^{(\epsilon)}}^{(\epsilon)}=
        \cdots=
        a_{i_s^{(\epsilon)}}^{(\epsilon)}=L.
\label{ael}
\end{equation}
Let $\Pr^{(\epsilon)}$ be the corresponding probability measure, and define
\[
Q_{g_d}^{(\epsilon)}(\epsilon i_d^{(\epsilon)})
:=
\frac1\epsilon
\left(
\gamma_{g_d}(\lambda^{(M,i_d^{(\epsilon)})};t,t)
-
\mathbb E_{\Pr^{(\epsilon)}}
\gamma_{g_d}(\lambda^{(M,i_d^{(\epsilon)})};t,t)
\right).
\]
Then the vector of $Q$-observables converges in distribution to a centered Gaussian vector.
For $d,h\in[s]$, its covariance is
\begin{align}
&\mathrm{Cov}\bigl[Q_{g_d}(\chi_d),Q_{g_h}(\chi_h)\bigr]
\notag\\
&\quad=
\frac{n^2\beta^2g_dg_h}{(2\pi\mathbf i)^2}
\oint_{\mathcal C_d}\oint_{\mathcal C_h}
\Phi_{\chi_d,g_d}(z)\,
\Phi_{\chi_h,g_h}(w)\,
\mathsf K_{LL}(z,w)\,dz\,dw,
\label{eq:Q-cov}
\end{align}
The contours and logarithms are supplied by
Corollary~\ref{cor:automatic-branch}, and the contours are chosen
pairwise \(LL\)-admissibly as in that corollary.
\end{theorem}

\begin{lemma}[Mixed-factor decomposition]
\label{lem:mixed-factor-decomposition}
Fix \(d,h\in[s]\).  For \(a\in\{d,h\}\), set
\[
\Gamma_a^{(\epsilon)}
:=
\gamma_{g_a}\!\left(
\lambda^{(M,i_a^{(\epsilon)})};t,t
\right).
\]
Introduce two ordered sets of integration variables
\[
Z=(z_1,\ldots,z_{g_d}),
\qquad
W=(w_1,\ldots,w_{g_h}).
\]
Let
\[
\mathcal C_{d,1},\ldots,\mathcal C_{d,g_d},
\qquad
\mathcal C_{h,1},\ldots,\mathcal C_{h,g_h}
\]
be positively oriented contours satisfying the pole-enclosing and
lexicographic nesting conditions of Lemma~\ref{l31} for the two marked
blocks.  When the two marked columns coincide, distinct nested contour
copies are used.  The contours are chosen inside the corresponding
one-point contour neighbourhoods so that the two blocks are
\(LL\)-admissible.

Write
\[
dZ:=dz_1\cdots dz_{g_d},
\qquad
dW:=dw_1\cdots dw_{g_h}.
\]
For \(a\in\{d,h\}\), let
\(\mathcal I_a^{(\epsilon)}\) denote the scalar one-point integrand obtained
from Lemma~\ref{lLeftMoments} by taking
\[
m=1,\qquad i_1=i_a^{(\epsilon)},\qquad l_1=g_a,
\]
after removing the factor \((2\pi\mathbf i)^{-g_a}\) and the integration
differentials.  Thus
\[
\mathbb E_{\Pr^{(\epsilon)}}[\Gamma_a^{(\epsilon)}]
=
\frac{1}{(2\pi\mathbf i)^{g_a}}
\oint_{\mathcal C_{a,1}}\cdots
\oint_{\mathcal C_{a,g_a}}
\mathcal I_a^{(\epsilon)}(Z_a)\,dZ_a,
\]
where \(Z_d=Z\) and \(Z_h=W\).

Then
\[
\begin{aligned}
\mathbb E_{\Pr^{(\epsilon)}}[
\Gamma_d^{(\epsilon)}\Gamma_h^{(\epsilon)}]
&=
\frac{1}{(2\pi\mathbf i)^{g_d+g_h}}
\oint_{\mathcal C_{d,1}}\cdots
\oint_{\mathcal C_{d,g_d}}
\oint_{\mathcal C_{h,1}}\cdots
\oint_{\mathcal C_{h,g_h}}  \\
&\qquad
\mathcal I_d^{(\epsilon)}(Z)
\mathcal I_h^{(\epsilon)}(W)
\Lambda^{(\epsilon)}(Z,W)\,dZ\,dW .
\end{aligned}
\] The mixed interaction factor is
\[
        \Lambda^{(\epsilon)}(Z,W)=T_{L,L}(Z,W)\Xi^{(\epsilon)}(Z,W),
\]
with
\begin{align}
\Xi^{(\epsilon)}(Z,W)
&:=
\prod_{r\ge1}
T_{L,L}(Z,\mathfrak q^r W)
T_{L,L}(W,\mathfrak q^r Z)
\notag\\
&\quad\times
\prod_{r\ge1}
T_{L,L}(Z,u^{2r-2}v^{2r}t\,W^{-1})
T_{L,L}(Z^{-1},u^{2r}v^{2r-2}t^{-1}W).
\label{eq:Xi-eps-correct}
\end{align}
Moreover, the product of the two one-point moments is the same double-block integral with
$\Lambda^{(\epsilon)}(Z,W)$ replaced by $1$.
\end{lemma}

\begin{proof}
Insert the two marked columns in Lemma~\ref{lLeftMoments}.  The factors whose variables
all belong to the same marked block are precisely the self-interaction factors already present
in the corresponding one-point formula, and are absorbed into
$\mathcal I_d^{(\epsilon)}$ and $\mathcal I_h^{(\epsilon)}$.  The factor
$T_{L,L}(Z,W)$ comes from the finite product over ordered marked columns.  The factors
$T_{L,L}(Z,\mathfrak q^rW)$ and $T_{L,L}(W,\mathfrak q^rZ)$ come from the infinite product
$\prod_{r\ge1}T_{L,L}(W^{(i_s)},u^{2r}v^{2r}W^{(i_g)})$ after separating the $Z$--$W$
cross terms from the $Z$--$Z$ and $W$--$W$ self terms.  Finally, the two displayed factors
with $W^{-1}$ and $Z^{-1}$ are the cross terms in the two $\widetilde H$ factors, using
\eqref{th1} and \eqref{th2}.  This accounts for every factor in Lemma~\ref{lLeftMoments}.
\end{proof}

\begin{lemma}[Uniform expansion of $L$-chart pair interactions]\label{lem:uniform-pair-expansion}
For every $LL$-admissible product contour, uniformly in $Z$ and $W$ on that contour,
\begin{equation}
\frac{\Lambda^{(\epsilon)}(Z,W)-1}{\epsilon^2}
\longrightarrow
n^2\beta^2\sum_{z\in Z}\sum_{w\in W}zw\,\mathsf K_{LL}(z,w).
\label{eq:uniform-Lambda-expansion}
\end{equation}
In fact,
\[
\Lambda^{(\epsilon)}(Z,W)
=
1+\epsilon^2n^2\beta^2
\sum_{z\in Z}\sum_{w\in W}
zw\,\mathsf K_{LL}(z,w)
+O(\epsilon^3),
\]
uniformly for \(Z\) and \(W\) on the chosen \(LL\)-admissible product
contour.
\end{lemma}

\begin{proof}
Put \(c:=n\beta\), so that \(t=e^{-c\epsilon}\).  Let
\(\mathscr C_Z\) and \(\mathscr C_W\) denote the unions of the finitely many
contours carrying the variables in \(Z\) and \(W\), respectively, and set
\[
m_Z:=\min_{z\in\mathscr C_Z}|z|,
\qquad
M_Z:=\max_{z\in\mathscr C_Z}|z|,
\qquad
m_W:=\min_{w\in\mathscr C_W}|w|,
\qquad
M_W:=\max_{w\in\mathscr C_W}|w|.
\]
These constants are finite and \(m_Z,m_W>0\), since the contours are compact
subsets of \(\mathbb C^*=\mathbb{C}\setminus\{0\}\).

Recall that
\[
\mathfrak q=u^2v^2,\qquad
a_r=v^2\mathfrak q^{r-1},\qquad
b_r=u^2\mathfrak q^{r-1}.
\]
Choose \(R_0\) so large that, for every \(r\ge R_0\),
\[
\mathfrak q^rM_W\le\frac{m_Z}{4},
\qquad
\mathfrak q^rM_Z\le\frac{m_W}{4},
\qquad
a_r\le\frac{m_Zm_W}{4},
\qquad
b_rM_ZM_W\le\frac14.
\]
The reverse triangle inequality then gives uniform positive lower bounds for
\[
|z-\mathfrak q^rw|,
\qquad
|w-\mathfrak q^rz|,
\qquad
|zw-a_r|,
\qquad
|1-b_rzw|,
\qquad r\ge R_0.
\]
For the finitely many indices \(1\le r<R_0\), and for the direct diagonal
\(z=w\), the corresponding lower bounds follow from \(LL\)-admissibility and
compactness.  Since \(t\to1\), all \(t\)-dependent denominators occurring in
\eqref{eq:Xi-eps-correct} are therefore uniformly bounded away from zero for
all sufficiently small \(\epsilon>0\).

For \(\alpha\in\{-1,0,1\}\), define
\[
R_{\alpha}^{(\epsilon)}(x,y)
:=
\frac{(x-t^\alpha y)^2}
     {(x-t^{\alpha-1}y)(x-t^{\alpha+1}y)}.
\]
Since \(t=e^{-c\epsilon}\), the exact identity
\[
R_{\alpha}^{(\epsilon)}(x,y)-1
=
\frac{x t^\alpha y\,(t+t^{-1}-2)}
     {(x-t^{\alpha-1}y)(x-t^{\alpha+1}y)}
\]
gives
\[
R_{\alpha}^{(\epsilon)}(x,y)
=
1+c^2\epsilon^2\frac{xy}{(x-y)^2}
+O\!\left(\epsilon^3|xy|\right),
\]
uniformly for \(\alpha\in\{-1,0,1\}\) and for \((x,y)\) in compact sets
away from the diagonal \(x=y\).

Applying this expansion to the scalar factors in
\(\Lambda^{(\epsilon)}(Z,W)\), we obtain, uniformly for
\(z\in\mathscr C_Z\) and \(w\in\mathscr C_W\),
\begin{align*}
T_{L,L}(z,w)
&=
1+c^2\epsilon^2\frac{zw}{(z-w)^2}
+O(\epsilon^3),\\
T_{L,L}(z,\mathfrak q^rw)
&=
1+c^2\epsilon^2
\frac{\mathfrak q^rzw}{(z-\mathfrak q^rw)^2}
+O(\epsilon^3\mathfrak q^r),\\
T_{L,L}(w,\mathfrak q^rz)
&=
1+c^2\epsilon^2
\frac{\mathfrak q^rzw}{(w-\mathfrak q^rz)^2}
+O(\epsilon^3\mathfrak q^r),\\
T_{L,L}(z,a_rt\,w^{-1})
&=
1+c^2\epsilon^2
\frac{a_rzw}{(zw-a_r)^2}
+O(\epsilon^3a_r),\\
T_{L,L}(z^{-1},b_rt^{-1}w)
&=
1+c^2\epsilon^2
\frac{b_rzw}{(1-b_rzw)^2}
+O(\epsilon^3b_r).
\end{align*}

For \(z\in Z\) and \(w\in W\), set
\[
\begin{aligned}
B^{(\epsilon)}(z,w)
&:=
T_{L,L}(z,w)
\prod_{r\ge1}
T_{L,L}(z,\mathfrak q^r w)\,
T_{L,L}(w,\mathfrak q^r z)\\
&\qquad\qquad\times
T_{L,L}(z,a_rt\,w^{-1})\,
T_{L,L}(z^{-1},b_rt^{-1}w).
\end{aligned}
\]
Then
\[
\Lambda^{(\epsilon)}(Z,W)
=
\prod_{z\in Z}\prod_{w\in W}
B^{(\epsilon)}(z,w).
\]

Since
\[
a_r,b_r=O(\mathfrak q^r),
\qquad
\sum_{r\ge1}\mathfrak q^r<\infty,
\]
the second-order coefficients and the third-order remainders in the five
expansions above are absolutely and uniformly summable over \(r\).  Hence,
uniformly for \(z\) and \(w\) on the chosen contours,
\[
\begin{aligned}
B^{(\epsilon)}(z,w)
&=
1+c^2\epsilon^2
\Biggl[
\frac{zw}{(z-w)^2}\\
&\qquad\quad+
\sum_{r\ge1}
\left(
\frac{\mathfrak q^rzw}{(z-\mathfrak q^rw)^2}
+
\frac{\mathfrak q^rzw}{(w-\mathfrak q^rz)^2}
+
\frac{a_rzw}{(zw-a_r)^2}
+
\frac{b_rzw}{(1-b_rzw)^2}
\right)
\Biggr]
+O(\epsilon^3)\\
&=
1+c^2\epsilon^2zw\,\mathsf K_{LL}(z,w)
+O(\epsilon^3).
\end{aligned}
\]
Here the products of two or more second-order terms contribute only
\(O(\epsilon^4)\).

Finally, since \(Z\) and \(W\) contain only finitely many variables,
\[
\begin{aligned}
\Lambda^{(\epsilon)}(Z,W)
&=
\prod_{z\in Z}\prod_{w\in W}
\left(
1+c^2\epsilon^2zw\,\mathsf K_{LL}(z,w)
+O(\epsilon^3)
\right)\\
&=
1+c^2\epsilon^2
\sum_{z\in Z}\sum_{w\in W}
zw\,\mathsf K_{LL}(z,w)
+O(\epsilon^3).
\end{aligned}
\]
Since \(c=n\beta\), this is the claimed expansion.

\end{proof}

\begin{lemma}[Wick formula for centered moments]
\label{lem:wick-moments}
Under the assumptions of Theorem~\ref{t58}, fix \(r\ge1\) and
\(d_1,\ldots,d_r\in[s]\), with repetitions allowed.  For \(a\in[r]\), write
\[
g_a:=g_{d_a},
\qquad
\chi_a:=\chi_{d_a},
\qquad
i_a^{(\epsilon)}:=i_{d_a}^{(\epsilon)},
\]
For brevity, set
\[
Q_a^{(\epsilon)}
:=
Q_{g_a}^{(\epsilon)}
\!\left(\epsilon i_a^{(\epsilon)}\right),
\qquad a\in[r],
\]
where \(Q_g^{(\epsilon)}\) is defined in
Theorem~\ref{t58}.
When an index is repeated, distinct admissible contour copies are used.

For \(a,b\in[r]\), define
\begin{equation}
\Sigma_{ab}
:=
\frac{n^2\beta^2g_ag_b}{(2\pi\mathbf i)^2}
\oint_{\mathcal C_a}\oint_{\mathcal C_b}
\Phi_{\chi_a,g_a}(z)\,
\Phi_{\chi_b,g_b}(w)\,
\mathsf K_{LL}(z,w)\,dz\,dw .
\label{eq:Sigma-ab}
\end{equation}
Then
\begin{equation}
\lim_{\epsilon\to0}
\mathbb E_{\Pr^{(\epsilon)}}
\left[
\prod_{a=1}^r Q_a^{(\epsilon)}
\right]
=
\sum_{\pi\in\mathcal P_2([r])}
\prod_{\{a,b\}\in\pi}\Sigma_{ab},
\label{eq:wick-moments}
\end{equation}
where \(\mathcal P_2([r])\) denotes the set of pairings of \([r]\), and is
empty when \(r\) is odd.  In particular, the limiting covariance is
\eqref{eq:Q-cov}.
\end{lemma}

\begin{proof}
For each \(a\in[r]\), let
\[
\Gamma_a^{(\epsilon)}
:=
\gamma_{g_a}\!\left(
\lambda^{(M,i_a^{(\epsilon)})};t,t
\right),
\qquad
\overline{\Gamma}_a^{(\epsilon)}
:=
\Gamma_a^{(\epsilon)}
-
\mathbb E_{\Pr^{(\epsilon)}}\Gamma_a^{(\epsilon)}.
\]
Thus
\[
Q_a^{(\epsilon)}
=
\epsilon^{-1}\overline{\Gamma}_a^{(\epsilon)}.
\]

Let
\[
Z_a=(z_{a,1},\ldots,z_{a,g_a}),
\qquad
dZ_a:=dz_{a,1}\cdots dz_{a,g_a}.
\]
Write the one-point formula for the \(a\)-th block as
\[
\mathbb E_{\Pr^{(\epsilon)}}\Gamma_a^{(\epsilon)}
=
\frac{1}{(2\pi\mathbf i)^{g_a}}
\oint_{\mathcal C_{a,1}}\cdots\oint_{\mathcal C_{a,g_a}}
\mathcal I_a^{(\epsilon)}(Z_a)\,dZ_a,
\]
and abbreviate
\[
d\mu_a^{(\epsilon)}(Z_a)
:=
\frac{1}{(2\pi\mathbf i)^{g_a}}
\mathcal I_a^{(\epsilon)}(Z_a)\,dZ_a,
\]
with the contours understood.
Applying Lemma~\ref{lLeftMoments} to the blocks indexed by \(S\), group the
factors according to the blocks on which they depend.  The factors involving
only \(Z_a\) form the one-point contour form
\(d\mu_a^{(\epsilon)}(Z_a)\), while the factors involving two distinct
blocks \(Z_a\) and \(Z_b\) form the mixed factor
\(\Lambda_{ab}^{(\epsilon)}(Z_a,Z_b)\) identified in
Lemma~\ref{lem:mixed-factor-decomposition}.  The explicit formulas
\eqref{th1} and \eqref{th2} show that no factors involving three or more
distinct blocks occur.  Hence
\[
\mathbb E_{\Pr^{(\epsilon)}}
\left[
\prod_{a\in S}\Gamma_a^{(\epsilon)}
\right]
=
\int
\prod_{a\in S}d\mu_a^{(\epsilon)}(Z_a)
\prod_{\substack{a<b\\a,b\in S}}
\Lambda_{ab}^{(\epsilon)}(Z_a,Z_b).
\]

Expanding the centered product and inserting the one-point integrals for
the indices outside \(S\), we obtain
\begin{align}
\mathbb E_{\Pr^{(\epsilon)}}
\left[
\prod_{a=1}^r\overline{\Gamma}_a^{(\epsilon)}
\right]
&=
\int
\prod_{a=1}^r d\mu_a^{(\epsilon)}(Z_a)
\sum_{S\subseteq[r]}(-1)^{r-|S|}
\prod_{\substack{a<b\\a,b\in S}}
\Lambda_{ab}^{(\epsilon)}(Z_a,Z_b).
\label{eq:centered-subset-expansion}
\end{align}

Put
\[
\Delta_{ab}^{(\epsilon)}
:=
\Lambda_{ab}^{(\epsilon)}-1.
\]
For a simple graph \(G=([r],E(G))\), an edge \(\{a,b\}\) records the
choice of the factor \(\Delta_{ab}^{(\epsilon)}\).  Thus, for fixed
\(S\subseteq[r]\),
\[
\prod_{\substack{a<b\\a,b\in S}}
\Lambda_{ab}^{(\epsilon)}
=
\sum_{\substack{G\text{ simple on }[r]\\V_+(G)\subseteq S}}
\prod_{\{a,b\}\in E(G)}
\Delta_{ab}^{(\epsilon)},
\]
where \(V_+(G)\) denotes the set of non-isolated vertices of \(G\).
Consequently, the coefficient of a fixed graph \(G\) in
\eqref{eq:centered-subset-expansion} is
\[
\sum_{S\supseteq V_+(G)}(-1)^{r-|S|}
=
\begin{cases}
1,&V_+(G)=[r],\\
0,&V_+(G)\ne[r].
\end{cases}
\]
Hence centering removes precisely the graphs with an isolated vertex.
Consequently, \eqref{eq:centered-subset-expansion} becomes the exact identity
\begin{equation}
\mathbb E_{\Pr^{(\epsilon)}}
\left[
\prod_{a=1}^r\overline{\Gamma}_a^{(\epsilon)}
\right]
=
\int
\prod_{a=1}^r d\mu_a^{(\epsilon)}(Z_a)
\sum_{\substack{G\text{ simple on }[r]\\
                 G\text{ has no isolated vertices}}}
\prod_{\{a,b\}\in E(G)}
\Delta_{ab}^{(\epsilon)}(Z_a,Z_b).
\label{eq:centered-graph-expansion}
\end{equation}

Lemma~\ref{lem:uniform-pair-expansion} gives, uniformly on the chosen
product contours,
\begin{equation}
\Delta_{ab}^{(\epsilon)}(Z_a,Z_b)
=
\epsilon^2 A_{ab}(Z_a,Z_b)
+
O(\epsilon^3),
\label{eq:Delta-pair-expansion}
\end{equation}
where
\[
A_{ab}(Z_a,Z_b)
:=
n^2\beta^2
\sum_{z\in Z_a}\sum_{w\in Z_b}
zw\,\mathsf K_{LL}(z,w).
\]
The proof of Proposition~\ref{p57} also gives uniform convergence, and hence
uniform boundedness, of the one-point contour densities
\(d\mu_a^{(\epsilon)}\) on the fixed contours.

If \(G\) has \(e(G)\) edges, its contribution to
\[
\mathbb E_{\Pr^{(\epsilon)}}
\left[
\prod_{a=1}^r Q_a^{(\epsilon)}
\right]
=
\epsilon^{-r}
\mathbb E_{\Pr^{(\epsilon)}}
\left[
\prod_{a=1}^r\overline{\Gamma}_a^{(\epsilon)}
\right]
\]
is therefore
\[
O\!\left(\epsilon^{2e(G)-r}\right).
\]
A graph on \(r\) vertices without isolated vertices has at least
\(\lceil r/2\rceil\) edges.  Hence every contribution vanishes when \(r\)
is odd.

Now let \(r=2m\).  The only terms that may survive have \(e(G)=m\).
A graph with \(2m\) vertices, \(m\) edges, and no isolated vertices is
necessarily a perfect matching.  Thus, for
\(\pi\in\mathcal P_2([r])\), uniform convergence in
\eqref{eq:Delta-pair-expansion} and dominated convergence give
\begin{align}
&\lim_{\epsilon\to0}
\epsilon^{-r}
\int
\prod_{a=1}^r d\mu_a^{(\epsilon)}(Z_a)
\prod_{\{a,b\}\in\pi}
\Delta_{ab}^{(\epsilon)}(Z_a,Z_b)
\notag\\
&\qquad=
\int
\prod_{a=1}^r d\mu_a(Z_a)
\prod_{\{a,b\}\in\pi}
A_{ab}(Z_a,Z_b)
\notag\\
&\qquad=
\prod_{\{a,b\}\in\pi}
\left[
\int
d\mu_a(Z_a)\,d\mu_b(Z_b)\,
A_{ab}(Z_a,Z_b)
\right].
\label{eq:matching-factorization}
\end{align}
The last equality holds because the pairs in \(\pi\) are disjoint.

It remains to evaluate the two-block integral associated with a pair
\(\{a,b\}\).  Write
\[
Z_a=(z_1,\ldots,z_{g_a}),
\qquad
Z_b=(w_1,\ldots,w_{g_b}),
\]
and set
\[
f_a(z):=[S_{\chi_a}(z)]^\beta,
\qquad
f_b(w):=[S_{\chi_b}(w)]^\beta.
\]
The limiting one-point contour form for the \(a\)-th block is
\[
d\mu_a(Z_a)
=
\frac{1}{(2\pi\mathbf i)^{g_a}}
\frac{\displaystyle\sum_{i=1}^{g_a}z_i^{-1}}
     {(z_2-z_1)\cdots(z_{g_a}-z_{g_a-1})}
\prod_{i=1}^{g_a}f_a(z_i)\,dZ_a,
\]
and analogously for \(d\mu_b(Z_b)\).

For fixed \(Z_b\), define
\[
H_{Z_b}(z)
:=
\sum_{w\in Z_b}w\,\mathsf K_{LL}(z,w).
\]
The \(Z_a\)-integral appearing in
\[
\int d\mu_a(Z_a)\,d\mu_b(Z_b)\,
A_{ab}(Z_a,Z_b)
\]
therefore contains
\[
\frac{1}{(2\pi\mathbf i)^{g_a}}
\oint\cdots\oint
\frac{
 \left(\sum_{i=1}^{g_a}z_i^{-1}\right)
 \left(\sum_{i=1}^{g_a}z_iH_{Z_b}(z_i)\right)}
 {(z_2-z_1)\cdots(z_{g_a}-z_{g_a-1})}
\prod_{i=1}^{g_a}f_a(z_i)\,dZ_a .
\]
Applying Lemma~\ref{lb2} with
\[
k=g_a,\qquad d=2,\qquad
g_1(z)=z^{-1},\qquad
g_2(z)=zH_{Z_b}(z),
\]
gives
\[
\frac{g_a}{2\pi\mathbf i}
\oint_{\mathcal C_a}
f_a(z)^{g_a}H_{Z_b}(z)\,dz.
\]

We next apply Lemma~\ref{lb2} to the \(Z_b\)-block with
\[
k=g_b,\qquad d=2,\qquad
g_1(w)=w^{-1},\qquad
g_2(w)=w\,\mathsf K_{LL}(z,w).
\]
This yields
\[
\int d\mu_a(Z_a)\,d\mu_b(Z_b)\,
A_{ab}(Z_a,Z_b)
=
\frac{n^2\beta^2g_ag_b}{(2\pi\mathbf i)^2}
\oint_{\mathcal C_a}\oint_{\mathcal C_b}
f_a(z)^{g_a}f_b(w)^{g_b}
\mathsf K_{LL}(z,w)\,dz\,dw.
\]
Since
\[
f_a(z)^{g_a}=\Phi_{\chi_a,g_a}(z),
\qquad
f_b(w)^{g_b}=\Phi_{\chi_b,g_b}(w),
\]
the last expression is \(\Sigma_{ab}\).
\end{proof}

\begin{proof}[Proof of Theorem~\ref{t58}]
Let \(\Sigma=(\Sigma_{dh})_{d,h\in[s]}\) be the matrix defined by
\eqref{eq:Q-cov}.  Lemma~\ref{lem:wick-moments} shows that all mixed
moments converge to the Wick moments of the centered Gaussian distribution
with covariance matrix \(\Sigma\).  Since every multivariate Gaussian
distribution, possibly degenerate, is moment-determinate, the multivariate
method of moments yields the claimed convergence in distribution.
\end{proof}

\begin{proof}[Proof of Theorem~\ref{t77}]
For \(g\in\mathbb Z_{>0}\) and a marked \(L\)-column
\(i^{(\epsilon)}\) with
\[
\epsilon i^{(\epsilon)}\longrightarrow \chi,
\]
set
\[
X_g^{(\epsilon)}(\chi)
:=
\int_{-\infty}^{\infty}
\left(
h^{\mathrm{col}}_{M^{(\epsilon)}}
\left(
i^{(\epsilon)},\frac{\eta}{\epsilon}
\right)
-
\mathbb E
h^{\mathrm{col}}_{M^{(\epsilon)}}
\left(
i^{(\epsilon)},\frac{\eta}{\epsilon}
\right)
\right)
e^{-n\beta g\eta}\,d\eta .
\]
By Lemma~\ref{l29} and by the zero-charge convention following
Lemma~\ref{le32},
\[
\int_{-\infty}^{\infty}
h_{M^{(\epsilon)}}(x,y)t^{gy}\,dy
=
\frac{2}{(g\log t)^2}
\gamma_g\!\left(\lambda^{(M^{(\epsilon)},i^{(\epsilon)})};t,t\right).
\]
After the change of variables
\[
\eta=\epsilon y
\]
and using the exact scaling
\[
t=e^{-n\beta\epsilon},
\]
the Laplace kernels agree:
\[
t^{gy}=e^{-n\beta g\eta}.
\]
Therefore, after centering,
\begin{equation}
X_g^{(\epsilon)}(\chi)
=
\frac{2}{n^2\beta^2g^2}
Q_g^{(\epsilon)}\!\left(\epsilon i^{(\epsilon)}\right).
\label{eq:height-Q-normalization}
\end{equation}

Apply this identity to the finitely many marked points
\[
(\chi_d,g_d),
\qquad d\in[s].
\]
Theorem~\ref{t58} gives convergence of the vector
\[
\left(
Q_{g_d}^{(\epsilon)}
\left(\epsilon i_d^{(\epsilon)}\right)
\right)_{d\in[s]}
\]
to a centered Gaussian vector with covariance
\[
\operatorname{Cov}
\left(
Q_{g_d}(\chi_d),
Q_{g_h}(\chi_h)
\right)
=
\frac{n^2\beta^2g_dg_h}{(2\pi i)^2}
\int_{C_d}\int_{C_h}
\Phi_{\chi_d,g_d}(z)
\Phi_{\chi_h,g_h}(w)
K_{LL}(z,w)\,dz\,dw.
\]
Multiplying this covariance by the two normalization factors in
\eqref{eq:height-Q-normalization} gives
\[
\begin{aligned}
\operatorname{Cov}
\left(
X_{g_d}(\chi_d),
X_{g_h}(\chi_h)
\right)
&=
\frac{2}{n^2\beta^2g_d^2}
\frac{2}{n^2\beta^2g_h^2}
\frac{n^2\beta^2g_dg_h}{(2\pi i)^2}
\\
&\qquad\times
\int_{C_d}\int_{C_h}
\Phi_{\chi_d,g_d}(z)
\Phi_{\chi_h,g_h}(w)
K_{LL}(z,w)\,dz\,dw
\\
&=
\frac{4}{n^2\beta^2g_dg_h(2\pi i)^2}
\int_{C_d}\int_{C_h}
\Phi_{\chi_d,g_d}(z)
\Phi_{\chi_h,g_h}(w)
K_{LL}(z,w)\,dz\,dw.
\end{aligned}
\]
This is exactly \eqref{eq:main-height-cov}. Hence the vector of
height Laplace observables converges to the centered Gaussian
vector stated in Theorem~\ref{t77}.

Finally, Lemma~\ref{lem:KLL-theta} gives
\[
K_{LL}(z,w)
=
\partial_z\partial_w
\log
\frac{\Theta_q(z/w)}
     {\Theta_q(u^2zw)}.
\]
Thus, in the spectral coordinate \(Z=uz\), the covariance is the
pullback of the annular prime-function derivative covariance on
the \(L\)-chart Laplace-test class. This proves
Theorem~\ref{t77}.
\end{proof}

\section{Limit Shape and Frozen Boundary}\label{sect:fb}

In this section, we first prove the integral formula for the Laplace transform of the
rescaled height function stated in Theorem~\ref{l61}.  We then upgrade the transform
convergence to a weak slope-measure limit shape by working in the natural moment variable
$x=e^{-n\beta\kappa}$.  This removes the former need for a separate Laplace-extension
hypothesis.  The subsequent frozen-boundary equation is stated in
Theorem~\ref{p412}.  

\subsection{Proof of the limit-shape formula}

\begin{proof}[Proof of Theorem \ref{l61}]
Fix \(k\in\mathbb Z_{>0}\) and set \(\alpha:=\beta k\).  Write
\[
\lambda_\epsilon
:=
\lambda^{(M^{(\epsilon)},i^{(\epsilon)})},
\qquad
\Gamma_\epsilon
:=
\gamma_k(\lambda_\epsilon;t,t).
\]
By the column-coordinate convention
\eqref{eq:column-height-convention} and
\(a_{i^{(\epsilon)}}^{(\epsilon)}=L\),
\[
h_{M^{(\epsilon)}}^{\mathrm{col}}
\bigl(i^{(\epsilon)},y\bigr)
=
h_{M^{(\epsilon)}}\left(
2i^{(\epsilon)}-\frac12,y
\right).
\]
Moreover, the canonical representative has zero charge by the convention
following Lemma~\ref{le32}.  Hence Lemma~\ref{l29}, followed by the change of
variables \(\kappa=\epsilon y\), gives
\begin{align*}
\mathcal{L}_{\beta k}^{(\epsilon)}(\chi)
&=
\epsilon^2
\int_{-\infty}^{\infty}
h_{M^{(\epsilon)}}^{\mathrm{col}}
\bigl(i^{(\epsilon)},y\bigr)
e^{-n\beta k\epsilon y}\,dy=
\epsilon^2
\int_{-\infty}^{\infty}
h_{M^{(\epsilon)}}^{\mathrm{col}}
\bigl(i^{(\epsilon)},y\bigr)t^{ky}\,dy\\
&=
\frac{2\epsilon^2}{(k\log t)^2}\,
\Gamma_\epsilon
=
\frac{2}{n^2\alpha^2}\,\Gamma_\epsilon,
\end{align*}
where we used the exact scaling
\(t=e^{-n\beta\epsilon}\) from
\eqref{eq:exact-t-scaling}.

Proposition~\ref{p57} therefore yields
\[
\lim_{\epsilon\to0}
\mathbb E_{\Pr^{(\epsilon)}}
\left[\mathcal{L}_{\beta k}^{(\epsilon)}(\chi)\right]
=
\frac{2}{n^2\alpha^2}
\frac{1}{2\pi\mathbf i}
\oint_{\mathcal C}
[S_\chi(w)]^{\beta k}\frac{dw}{w}
=
\frac{1}{n^2\alpha^2\pi\mathbf i}
\oint_{\mathcal C}
[S_\chi(w)]^\alpha\frac{dw}{w}.
\]

It remains to control the fluctuations.  Applying
Lemma~\ref{lem:wick-moments} with \(r=2\) and with the same marked block
repeated, using two distinct admissible contour copies, gives
\[
\mathbb E_{\Pr^{(\epsilon)}}\left[
\left(
\frac{\Gamma_\epsilon-
\mathbb E_{\Pr^{(\epsilon)}}\Gamma_\epsilon}{\epsilon}
\right)^2
\right]
\longrightarrow \Sigma
\]
for some finite constant \(\Sigma\).  Consequently,
\[
\operatorname{Var}_{\Pr^{(\epsilon)}}(\Gamma_\epsilon)
=O(\epsilon^2),
\]
and hence
\[
\operatorname{Var}_{\Pr^{(\epsilon)}}\left(
L_{\beta k}^{(\epsilon)}(\chi)
\right)
=
\left(\frac{2}{n^2\alpha^2}\right)^2
O(\epsilon^2)
\longrightarrow0.
\]
Thus \(L_{\beta k}^{(\epsilon)}(\chi)\) converges in probability to the
limit in \eqref{lph}.
\end{proof}

\subsection{Limit shape in the natural scale}

The next lemmas upgrade Theorem~\ref{l61} to Theorem~\ref{thm:weak-limit-shape-beta}.  They
are the analogue, in the present column-coordinate setting, of the standard moment-determinacy
step which turns exponential moment asymptotics into a weak limit-shape theorem.

\begin{lemma}[Positive slope measures and slope bound]\label{lem:natural-positive-slope}
For every $\epsilon>0$ and every marked $L$-column used in Theorem~\ref{l61}, the function
$H_\epsilon(\kappa)$, defined in (\ref{dhe}), is nondecreasing in $\kappa$.  Its distributional derivative is a
positive locally finite measure.  Moreover, after linear interpolation on the microscopic
vertical mesh, the density is bounded by $2$; equivalently, for every interval $I\subset\mathbb R$,
\[
        dH_\epsilon(I)\le 2|I|+O(\epsilon),
\]
where the error is uniform on compact $I$.
\end{lemma}

\begin{proof}
In the $L$-chart, the height formula \eqref{hm1} counts occupied horizontal and diagonal
edges below the observation height and multiplies the count by $2$.  Passing from
$y$ to $\kappa=\epsilon y$ gives jumps of size $2\epsilon$ at mesh spacing $\epsilon$; hence
the distributional derivative is positive and has mesh-scale density at most $2$ after
linear interpolation.  The possible error comes only from the two endpoints of the interval.
\end{proof}

\begin{lemma}[Natural moments]\label{lem:natural-moments}
For every $k\in\mathbb Z_{>0}$,
\[
\int_0^\infty x^{k-1}\nu_\chi^{(\epsilon)}(dx)
=
n^2\beta^2k
\int_{-\infty}^{\infty}e^{-n\beta k\kappa}H_\epsilon(\kappa)\,d\kappa.
\]
Consequently the left-hand side converges in probability to the moment displayed in
\eqref{nu-moments}.
\end{lemma}

\begin{proof}
By the definition \eqref{dnu-eps} and the change of variables $x=e^{-n\beta\kappa}$,
\[
\int_0^\infty x^{k-1}\nu_\chi^{(\epsilon)}(dx)
=
n\beta\int_{-\infty}^{\infty}e^{-n\beta k\kappa}\,dH_\epsilon(\kappa).
\]
For each realization, the normalization in the definition of \(h_M\) implies
that \(H_\epsilon(\kappa)=0\) for all sufficiently negative \(\kappa\).
Moreover, \(H_\epsilon\) grows at most linearly as
\(\kappa\to+\infty\).  Hence
\[
e^{-n\beta k\kappa}H_\epsilon(\kappa)\longrightarrow0
\qquad\text{as }\kappa\to\pm\infty.
\]
Integration by parts therefore gives
\[
\int_{\mathbb R}e^{-n\beta k\kappa}\,dH_\epsilon(\kappa)
=
n\beta k
\int_{\mathbb R}e^{-n\beta k\kappa}H_\epsilon(\kappa)\,d\kappa.
\]
Theorem~\ref{l61}, with $\alpha=\beta k$, then gives \eqref{nu-moments}.
\end{proof}

\begin{lemma}[Compact moment determinacy and convergence]
\label{lem:natural-moment-determinacy}
Let \(\mathscr M\) be the space of positive finite Borel measures on
\([0,\infty)\), equipped with the topology of weak convergence against
bounded continuous functions.  Let \(\mu_\epsilon\) be
\(\mathscr M\)-valued random variables, and write
\[
\mathsf P_\epsilon:=\operatorname{Law}(\mu_\epsilon).
\]
Suppose that, for every \(k\ge0\),
\[
M_{\epsilon,k}
:=
\int_0^\infty x^k\,\mu_\epsilon(dx)
\xrightarrow{\mathbb P}
m_k,
\qquad
m_k\in[0,\infty),
\]
and that there exist constants \(C_0<\infty\) and \(C\ge0\) such that
\[
m_k\le C_0C^k,
\qquad k\ge0.
\]
Assume moreover that the laws \(\mathsf P_\epsilon\) are asymptotically
tight on \(\mathscr M\): for every \(\eta>0\), there exist
\(\epsilon_0>0\) and a compact set
\(\mathcal K_\eta\subset\mathscr M\) such that
\[
\sup_{0<\epsilon<\epsilon_0}
\mathsf P_\epsilon(\mathcal K_\eta^{\,c})
\le\eta.
\]

Then there exists a unique finite Borel measure \(\mu\) on
\([0,\infty)\) with moments \((m_k)_{k\ge0}\).  Moreover,
\[
\operatorname{supp}\mu\subseteq[0,C],
\]
Then \(\mu_\epsilon\) converges to \(\mu\) in probability in the weak
topology on \(\mathscr M\).
\end{lemma}

\begin{proof}
We first prove determinacy.  Let \(\rho\) be any finite measure with moments
\((m_k)_{k\ge0}\).  For \(R>C\) and \(k\ge1\),
\[
\rho([R,\infty))
\le
R^{-k}\int_0^\infty x^k\,\rho(dx)
=
R^{-k}m_k
\le
C_0\left(\frac{C}{R}\right)^k.
\]
Letting \(k\to\infty\) gives
\[
\operatorname{supp}\rho\subseteq[0,C].
\]
Thus any two finite measures with moments \((m_k)_{k\ge0}\) have the same
integrals against all polynomials on \([0,C]\).  By the Weierstrass
approximation theorem and the uniqueness part of the
Riesz--Markov--Kakutani representation theorem, they coincide.  Hence there
is at most one finite measure with the prescribed moments.

We next prove existence and convergence.  Let \(\epsilon_j\to0\) be arbitrary.
By asymptotic tightness and Prokhorov's theorem, after passing to a
subsequence,
\[
\mu_{\epsilon_j}\Longrightarrow\widetilde\mu
\]
for some random finite measure \(\widetilde\mu\).

Fix \(k\ge0\) and \(R\ge1\), and set
\[
f_{k,R}(x):=x^k\wedge R^k.
\]
Since \(f_{k,R}\) is bounded and continuous, the map
\[
\nu\longmapsto\int_0^\infty f_{k,R}(x)\,\nu(dx)
\]
is continuous on \(\mathscr M\).  Moreover,
\[
0
\le
M_{\epsilon,k}
-
\int_0^\infty f_{k,R}(x)\,\mu_\epsilon(dx)
\le
\frac1R M_{\epsilon,k+1}.
\]
By Slutsky's theorem,
\[
\bigl(
\mu_{\epsilon_j},
M_{\epsilon_j,k},
M_{\epsilon_j,k+1}
\bigr)
\Longrightarrow
\bigl(
\widetilde\mu,m_k,m_{k+1}
\bigr).
\]
The set
\[
\left\{
(\nu,a,b)\in\mathscr M\times\mathbb R^2:
0\le
a-\int_0^\infty f_{k,R}(x)\,\nu(dx)
\le\frac bR
\right\}
\]
is closed.  Since the prelimit triples belong to this set almost surely,
the Portmanteau theorem gives
\[
0
\le
m_k-\int_0^\infty f_{k,R}(x)\,\widetilde\mu(dx)
\le
\frac{m_{k+1}}R
\qquad\text{almost surely}.
\]
Letting \(R\to\infty\) through the integers and using monotone convergence,
we obtain
\[
\int_0^\infty x^k\,\widetilde\mu(dx)=m_k
\qquad\text{almost surely}.
\]
Since \(k\ge0\) is arbitrary, this holds simultaneously for all \(k\) on an
event of probability one.  By the determinacy proved above,
\(\widetilde\mu\) is almost surely equal to a single deterministic measure
\(\mu\).  This also proves the existence of \(\mu\).

Every subsequential distributional limit is therefore equal to the same
deterministic measure \(\mu\).  Hence
\[
\mu_\epsilon\Longrightarrow\mu
\]
as \(\mathscr M\)-valued random variables. Since \(\mu\) is deterministic, the Portmanteau theorem
implies that, for any metric \(d_{\mathrm w}\) inducing the weak topology,
\[
\mathbb P\!\left(
d_{\mathrm w}(\mu_\epsilon,\mu)>\delta
\right)\longrightarrow0,
\qquad \delta>0.
\]
Thus \(\mu_\epsilon\) converges to \(\mu\) in probability in the weak
topology.
\end{proof}

\begin{lemma}[Tightness of the measure-valued laws]
\label{lem:natural-tight-residue}
With \(\mathscr M\) as in
Lemma~\ref{lem:natural-moment-determinacy}, let
\[
\mathsf P_\epsilon^\chi
:=
\operatorname{Law}\bigl(\nu_\chi^{(\epsilon)}\bigr).
\]
Then the family \(\{\mathsf P_\epsilon^\chi\}_{\epsilon>0}\) is
asymptotically tight on \(\mathscr M\).  Equivalently, for every
\(\eta>0\), there exist \(\epsilon_0>0\) and a compact set
\(\mathcal K_\eta\subset\mathscr M\) such that
\[
\sup_{0<\epsilon<\epsilon_0}
\mathbb P\left(
\nu_\chi^{(\epsilon)}\notin\mathcal K_\eta
\right)
\le\eta.
\]
\end{lemma}

\begin{proof}
For \(j=0,1\), set
\[
M_{\epsilon,j}
:=
\int_0^\infty x^j\,\nu_\chi^{(\epsilon)}(dx).
\]
By Lemma~\ref{lem:natural-moments},
\[
M_{\epsilon,j}
\xrightarrow{\mathbb P}
m_j(\chi),
\qquad j=0,1.
\]
Consequently,
\[
Y_\epsilon
:=
\int_0^\infty(1+x)\,\nu_\chi^{(\epsilon)}(dx)
=
M_{\epsilon,0}+M_{\epsilon,1}
\xrightarrow{\mathbb P}
m_0(\chi)+m_1(\chi).
\]

Fix \(\eta>0\), and choose
\[
B>m_0(\chi)+m_1(\chi).
\]
Define
\[
\mathcal K_B
:=
\left\{
\mu\in\mathscr M:
\int_0^\infty(1+x)\,\mu(dx)\le B
\right\}.
\]
For every \(\mu\in\mathcal K_B\) and \(R>0\),
\[
\mu([0,\infty))\le B,
\qquad
\mu([R,\infty))
\le
\frac{B}{1+R}.
\]
Thus \(\mathcal K_B\) has uniformly bounded total mass and is uniformly
tight.  The finite-measure form of Prokhorov's theorem therefore implies
that \(\mathcal K_B\) is relatively compact in \(\mathscr M\).

The map
\[
\mu\longmapsto
\int_0^\infty(1+x)\,\mu(dx)
\]
is lower semicontinuous under weak convergence.  Hence
\(\mathcal K_B\) is closed and therefore compact.  Finally, for all
sufficiently small \(\epsilon>0\),
\[
\mathbb P\left(
\nu_\chi^{(\epsilon)}\notin\mathcal K_B
\right)
=
\mathbb P(Y_\epsilon>B)
\le\eta.
\]
This proves asymptotic tightness.
\end{proof}

\begin{lemma}[Recovery of the height profile]
\label{lem:natural-height-recovery}
Suppose that
\(\nu_\chi^{(\epsilon)}\) converges weakly in probability to a
deterministic finite measure \(\nu_\chi\).  Then the restriction of
\(\nu_\chi\) to \((0,\infty)\) is absolutely continuous.  Write
\[
\nu_\chi(dx)=f_\chi(x)\,dx,
\qquad x>0.
\]
Then
\[
0\le f_\chi(x)\le2
\qquad\text{for a.e. }x>0.
\]

Moreover, the function
\[
\mathcal H(\chi,\kappa)
:=
\frac1{n\beta}
\int_{(e^{-n\beta\kappa},\infty)}
\frac1x\,\nu_\chi(dx)
\]
is locally absolutely continuous and satisfies
\[
0\le
\partial_\kappa\mathcal H(\chi,\kappa)
\le2
\qquad\text{for a.e. }\kappa.
\]
If \(x=e^{-n\beta\kappa}\) is a Lebesgue point of \(f_\chi\), then
\[
\partial_\kappa\mathcal H(\chi,\kappa)
=
f_\chi(e^{-n\beta\kappa}).
\]
\end{lemma}

\begin{proof}
For \(x>0\), set
\[
\kappa(x):=-\frac{\log x}{n\beta}.
\]
Define a positive locally finite measure \(\sigma_\chi\) on \(\mathbb R\) by
\[
\int_{\mathbb R}\varphi(\kappa)\,\sigma_\chi(d\kappa)
:=
\frac1{n\beta}
\int_{(0,\infty)}
\varphi\bigl(\kappa(x)\bigr)\frac1x\,\nu_\chi(dx),
\qquad
\varphi\in C_c(\mathbb R).
\]

For \(\varphi\in C_c(\mathbb R)\), define
\[
g_\varphi(x):=
\begin{cases}
\dfrac1{n\beta x}\,
\varphi\!\left(-\dfrac{\log x}{n\beta}\right),
& x>0,\\[3mm]
0,&x=0.
\end{cases}
\]
Since \(\varphi\) has compact support, \(g_\varphi\) vanishes outside a
compact subinterval of \((0,\infty)\); in particular,
\(g_\varphi\in C_b([0,\infty))\).  By \eqref{dnu-eps},
\[
\int_{\mathbb R}\varphi(\kappa)\,dH_\epsilon(\kappa)
=
\int_0^\infty g_\varphi(x)\,
\nu_\chi^{(\epsilon)}(dx).
\]
The assumed weak convergence in probability of
\(\nu_\chi^{(\epsilon)}\) therefore gives
\[
\int_{\mathbb R}\varphi(\kappa)\,dH_\epsilon(\kappa)
\xrightarrow{\mathbb P}
\int_{\mathbb R}\varphi(\kappa)\,\sigma_\chi(d\kappa).
\]
Equivalently,
\[
dH_\epsilon\longrightarrow\sigma_\chi
\]
in probability in the vague topology on locally finite measures on
\(\mathbb R\).

Let \(I=(a,b)\) be a bounded interval such that
\(\sigma_\chi(\{a,b\})=0\).  Since \(I\) is a continuity set of
\(\sigma_\chi\), the continuity-set characterization of vague convergence
implies
\[
dH_\epsilon(I)\xrightarrow{\mathbb P}\sigma_\chi(I).
\]
On the other hand, Lemma~\ref{lem:natural-positive-slope} gives
\[
dH_\epsilon(I)\le 2|I|+O(\epsilon).
\]
Hence
\[
\sigma_\chi(I)\le2|I|.
\]
A locally finite measure has at most countably many atoms, so every bounded
open interval can be approximated from outside by intervals whose endpoints
are continuity points of \(\sigma_\chi\).  It follows that the same bound
holds for every bounded open interval.  By regularity,
\[
\sigma_\chi\le 2\,d\kappa.
\]
Consequently,
\[
\sigma_\chi(d\kappa)=s_\chi(\kappa)\,d\kappa
\]
for some measurable \(s_\chi\) satisfying
\[
0\le s_\chi(\kappa)\le2
\qquad\text{for a.e. }\kappa.
\]

Now let \(\psi\in C_c((0,\infty))\).  Applying the definition of
\(\sigma_\chi\) to
\[
\varphi(\kappa)
:=
n\beta e^{-n\beta\kappa}
\psi(e^{-n\beta\kappa})
\]
and then changing variables \(x=e^{-n\beta\kappa}\), we obtain
\[
\begin{aligned}
\int_0^\infty\psi(x)\,\nu_\chi(dx)
&=
\int_{\mathbb R}
n\beta e^{-n\beta\kappa}
\psi(e^{-n\beta\kappa})\,
\sigma_\chi(d\kappa)\\
&=
\int_0^\infty
\psi(x)\,
s_\chi\!\left(-\frac{\log x}{n\beta}\right)dx.
\end{aligned}
\]
Thus the restriction of \(\nu_\chi\) to \((0,\infty)\) is absolutely
continuous, with
\[
f_\chi(x)
=
s_\chi\!\left(-\frac{\log x}{n\beta}\right)
\qquad\text{for a.e. }x>0.
\]
In particular,
\[
0\le f_\chi(x)\le2
\qquad\text{for a.e. }x>0.
\]

Finally, by the definition of \(\sigma_\chi\),
\[
\begin{aligned}
\mathcal H(\chi,\kappa)
&=
\frac1{n\beta}
\int_{(e^{-n\beta\kappa},\infty)}
\frac1x\,\nu_\chi(dx)\\
&=
\sigma_\chi((-\infty,\kappa))
=
\int_{-\infty}^{\kappa}s_\chi(u)\,du.
\end{aligned}
\]
Hence \(\mathcal H(\chi,\cdot)\) is locally absolutely continuous and
\[
\partial_\kappa\mathcal H(\chi,\kappa)
=
s_\chi(\kappa)
\]
for almost every \(\kappa\), so in particular
\[
0\le\partial_\kappa\mathcal H(\chi,\kappa)\le2.
\]

Equivalently,
\[
\mathcal H(\chi,\kappa)
=
\frac1{n\beta}
\int_{e^{-n\beta\kappa}}^\infty
\frac{f_\chi(x)}{x}\,dx.
\]
If \(x_\kappa=e^{-n\beta\kappa}\) is a Lebesgue point of \(f_\chi\),
then it is also a Lebesgue point of \(f_\chi(x)/x\).  The fundamental
theorem for Lebesgue integrals and
\[
\frac{d}{d\kappa}x_\kappa=-n\beta x_\kappa
\]
therefore give
\[
\partial_\kappa\mathcal H(\chi,\kappa)
=
f_\chi(x_\kappa)
=
f_\chi(e^{-n\beta\kappa}),
\]
as claimed.
\end{proof}

\subsection{Interlacing assumptions on zeros and poles}\label{s53}

We now formulate conditions under which the equation
\[
\mathcal{G}_{\chi}(w)\prod_{k\ge1}\mathcal{F}_{u,v,k}(w)=s^n
\]
has at most one pair of nonreal conjugate roots. Informally, the assumptions below
say that the poles and zeros of the left-hand side are simple and strictly interlaced on
the real line. We use the zero--pole location sets
\(R_{\chi,i,j}\) and \(R_{i,j,k_1,k_2}\) introduced in
Definition~\ref{def:R-sets}.

Throughout the remainder of this section, for \(K\geq1\), set
\begin{align}
S_{\chi,K}(w)
:=
G_\chi(w)\prod_{r=1}^{K}F_{u,v,r}(w),
\qquad
C_{\chi,K}
:=
\lim_{w\to\infty}S_{\chi,K}(w).\label{dsck}
\end{align}
We reserve \(S_\chi\) for the corresponding infinite product.

\begin{assumption}[Zero--pole interlacing and separation]
\label{ap62}
Fix
\[
\chi\in(V_0,V_m)\setminus\{V_0,\ldots,V_m\}.
\]
For each \(K\ge1\), define the inward pole and numerator-zero sets by
\begin{align*}
D_{1,K}
&:=
\bigl(
\mathcal R_{\chi,1,1}\setminus\mathcal R_{\chi,1,2}
\bigr)
\cup
\bigl(
\mathcal R_{\chi,3,1}\setminus\mathcal R_{\chi,3,2}
\bigr)
\\
&\quad\cup
\bigcup_{k\in[K]}
\Bigl[
\bigl(
\mathcal R_{5,1,k,k}\setminus\mathcal R_{5,2,k,k}
\bigr)
\cup
\bigl(
\mathcal R_{7,1,k,k}\setminus\mathcal R_{7,2,k,k}
\bigr)
\\
&\hspace{31mm}\cup
\bigl(
\mathcal R_{5,1,k-1,k}\setminus\mathcal R_{5,2,k-1,k}
\bigr)
\cup
\bigl(
\mathcal R_{7,1,k-1,k}\setminus\mathcal R_{7,2,k-1,k}
\bigr)
\Bigr],
\\[1mm]
D_{2,K}
&:=
\bigl(
\mathcal R_{\chi,1,2}\setminus\mathcal R_{\chi,1,1}
\bigr)
\cup
\bigl(
\mathcal R_{\chi,3,2}\setminus\mathcal R_{\chi,3,1}
\bigr)
\\
&\quad\cup
\bigcup_{k\in[K]}
\Bigl[
\bigl(
\mathcal R_{5,2,k,k}\setminus\mathcal R_{5,1,k,k}
\bigr)
\cup
\bigl(
\mathcal R_{7,2,k,k}\setminus\mathcal R_{7,1,k,k}
\bigr)
\\
&\hspace{31mm}\cup
\bigl(
\mathcal R_{5,2,k-1,k}\setminus\mathcal R_{5,1,k-1,k}
\bigr)
\cup
\bigl(
\mathcal R_{7,2,k-1,k}\setminus\mathcal R_{7,1,k-1,k}
\bigr)
\Bigr].
\end{align*}
Define the outward pole and numerator-zero sets by
\begin{align*}
D_{3,K}
&:=
\bigl(
\mathcal R_{\chi,2,1}\setminus\mathcal R_{\chi,2,2}
\bigr)
\cup
\bigl(
\mathcal R_{\chi,4,1}\setminus\mathcal R_{\chi,4,2}
\bigr)
\\
&\quad\cup
\bigcup_{k\in[K]}
\Bigl[
\bigl(
\mathcal R_{6,1,k,k}\setminus\mathcal R_{6,2,k,k}
\bigr)
\cup
\bigl(
\mathcal R_{6,1,k,k-1}\setminus\mathcal R_{6,2,k,k-1}
\bigr)
\\
&\hspace{31mm}\cup
\bigl(
\mathcal R_{8,1,k,k}\setminus\mathcal R_{8,2,k,k}
\bigr)
\cup
\bigl(
\mathcal R_{8,1,k,k-1}\setminus\mathcal R_{8,2,k,k-1}
\bigr)
\Bigr],
\\[1mm]
D_{4,K}
&:=
\bigl(
\mathcal R_{\chi,2,2}\setminus\mathcal R_{\chi,2,1}
\bigr)
\cup
\bigl(
\mathcal R_{\chi,4,2}\setminus\mathcal R_{\chi,4,1}
\bigr)
\\
&\quad\cup
\bigcup_{k\in[K]}
\Bigl[
\bigl(
\mathcal R_{6,2,k,k}\setminus\mathcal R_{6,1,k,k}
\bigr)
\cup
\bigl(
\mathcal R_{6,2,k,k-1}\setminus\mathcal R_{6,1,k,k-1}
\bigr)
\\
&\hspace{31mm}\cup
\bigl(
\mathcal R_{8,2,k,k}\setminus\mathcal R_{8,1,k,k}
\bigr)
\cup
\bigl(
\mathcal R_{8,2,k,k-1}\setminus\mathcal R_{8,1,k,k-1}
\bigr)
\Bigr].
\end{align*}

Assume the following.

\begin{enumerate}
\item
For every \(i\in[4]\) and \(\ell\in[2]\), distinct active labels
\((r,p)\in[n]\times[m]\) give distinct points of
\(\mathcal R_{\chi,i,\ell}\).  Likewise, for every
\[
i\in\{5,6,7,8\},
\qquad
\ell\in[2],
\qquad
k_1,k_2\in\mathbb Z_{\ge0},
\qquad
k_1+k_2\ge1,
\]
distinct active labels give distinct points of
\(\mathcal R_{i,\ell,k_1,k_2}\).

\item
The inward pole and zero sets have the same cardinality:
\[
|D_{1,K}|=|D_{2,K}|=:N_K.
\]
If \(N_K>0\), write
\[
D_{1,K}
=
\{p_{1,K}<\cdots<p_{N_K,K}\},
\qquad
D_{2,K}
=
\{z_{1,K}<\cdots<z_{N_K,K}\}.
\]
Then
\[
p_{1,K}<z_{1,K}<p_{2,K}<z_{2,K}
<\cdots<p_{N_K,K}<z_{N_K,K}.
\]
When \(N_K=0\), this condition is void.

\item
Set
\[
\mathbb R_-:=(-\infty,0),
\qquad
\mathbb R_+:=(0,\infty),
\]
and, for \(\sigma\in\{-,+\}\), define
\[
D_{j,K}^{\sigma}
:=
D_{j,K}\cap\mathbb R_\sigma,
\qquad j\in\{3,4\}.
\]
On each half-axis, the outward pole and zero sets have the same
cardinality:
\[
|D_{3,K}^{\sigma}|
=
|D_{4,K}^{\sigma}|
=:M_K^\sigma.
\]
If \(M_K^\sigma>0\), write
\[
D_{3,K}^{\sigma}
=
\{p_{1,K}^{\sigma}<\cdots<
  p_{M_K^\sigma,K}^{\sigma}\},
\qquad
D_{4,K}^{\sigma}
=
\{z_{1,K}^{\sigma}<\cdots<
  z_{M_K^\sigma,K}^{\sigma}\}.
\]
Then
\[
z_{1,K}^{\sigma}<p_{1,K}^{\sigma}
<z_{2,K}^{\sigma}<p_{2,K}^{\sigma}
<\cdots
<z_{M_K^\sigma,K}^{\sigma}
<p_{M_K^\sigma,K}^{\sigma}.
\]
When \(M_K^\sigma=0\), this condition is void.

\item
Define the inward and outward sets on the two half-axes by
\[
I_K^\sigma
:=
(D_{1,K}\cup D_{2,K})\cap\mathbb R_\sigma,
\qquad
O_K^\sigma
:=
(D_{3,K}\cup D_{4,K})\cap\mathbb R_\sigma,
\qquad
\sigma\in\{-,+\}.
\]
There exist constants
\[
a_-<b_-<0<b_+<a_+,
\]
which may depend on \(\chi\) but not on \(K\), such that, for every
\(K\ge1\),
\[
O_K^-\subset(-\infty,a_-],
\qquad
I_K^-\subset[b_-,0),
\]
and
\[
I_K^+\subset(0,b_+],
\qquad
O_K^+\subset[a_+,\infty).
\]
An inclusion involving an empty set is understood to be vacuous.
\end{enumerate}
\end{assumption}

See Figure~\ref{fig:zp} for an illustration.

\begin{figure}
\centering
\includegraphics{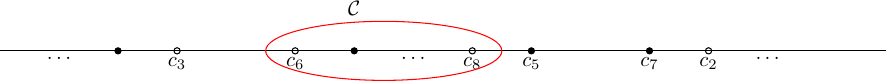}
\caption{A schematic picture of the zeros and poles of
$\mathcal{G}_{\chi}(w)\prod_{k\ge1}\mathcal{F}_{u,v,k}(w)$ under Assumption~\ref{ap62}.
Dots represent zeros and circles represent poles.
The red curve indicates a contour of the type used in Proposition~\ref{p57}.}
\label{fig:zp}
\end{figure}

In the next two assumptions we give explicit conditions on the graph parameters which
guarantee Assumption~\ref{ap62}.

\begin{assumption}\label{ap64}
Let $i,j$ be integers such that
\[
\epsilon i\in(V_{p_1-1},V_{p_1}),
\qquad
\epsilon j\in(V_{p_2-1},V_{p_2}),
\]
and let $i_*,j_*\in[n]$ satisfy
\[
(i-i_*)\equiv 0\pmod n,
\qquad
(j-j_*)\equiv 0\pmod n.
\]
Assume the following.
\begin{enumerate}
\item If $a_i^{(\epsilon)}=a_j^{(\epsilon)}$, $b_i^{(\epsilon)}=b_j^{(\epsilon)}$,
$i_*\neq j_*$, and $p_1=p_2$, then $\tau_{i_*}\neq \tau_{j_*}$.

\item If $a_i^{(\epsilon)}=a_j^{(\epsilon)}$, $b_i^{(\epsilon)}=-$, $b_j^{(\epsilon)}=+$,
and $p_1\ge p_2$, then
\[
\tau_{i_*}^{-1}\tau_{j_*}<e^{V_{p_2-1}-V_{p_1}}.
\]

\item If $a_i^{(\epsilon)}=a_j^{(\epsilon)}$, $b_i^{(\epsilon)}=b_j^{(\epsilon)}$,
and $\tau_{i_*}>\tau_{j_*}$, then
\[
\tau_{i_*}^{-1}\tau_{j_*}<e^{V_{p_2-1}-V_{p_1}}.
\]
\end{enumerate}
\end{assumption}

\begin{assumption}\label{ap65}
Assume the following.
\begin{enumerate}
\item For any $(j_1,p_1),(j_2,p_2)$ such that
$\mathbf 1_{\mathcal E_{j_1,p_1,>,1}}=\mathbf 1_{\mathcal E_{j_2,p_2,>,1}}=1$, one has
\[
\max\{u,v\}<e^{V_{p_2-1}-V_{p_1}}\tau_{j_2}^{-1}\tau_{j_1}.
\]

\item For any $(j_1,p_1),(j_2,p_2)$ such that
$\mathbf 1_{\mathcal E_{j_1,p_1,<,1}}=\mathbf 1_{\mathcal E_{j_2,p_2,<,1}}=1$, one has
\[
\max\{u,v\}<e^{V_{p_2-1}-V_{p_1}}\tau_{j_2}^{-1}\tau_{j_1}.
\]

\item For any $(j_1,p_1),(j_2,p_2)$ such that
$\mathbf 1_{\mathcal E_{j_1,p_1,>,0}}=\mathbf 1_{\mathcal E_{j_2,p_2,>,0}}=1$, one has
\[
\max\{u,v\}<e^{V_{p_2}-V_{p_1}}\tau_{j_2}^{-1}\tau_{j_1}.
\]

\item For any $(j_1,p_1),(j_2,p_2)$ such that
$\mathbf 1_{\mathcal E_{j_1,p_1,<,0}}=\mathbf 1_{\mathcal E_{j_2,p_2,<,0}}=1$, one has
\[
\max\{u,v\}<e^{V_{p_2-1}-V_{p_1-1}}\tau_{j_2}^{-1}\tau_{j_1}.
\]
\end{enumerate}
\end{assumption}

\begin{lemma}\label{l64}
Assumptions~\ref{ap64} and~\ref{ap65} imply
Assumption~\ref{ap62}.
\end{lemma}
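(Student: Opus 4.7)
The plan is to verify each of the three requirements of Assumption \ref{ap62} in turn, deducing them from Assumptions \ref{ap64} and \ref{ap65}. First I would handle the distinctness part: every point in $\mathcal{R}_{\chi,i,j}$ or $\mathcal{R}_{i,j,k_1,k_2}$ has the form $\pm u^{2k_1}v^{2k_2}e^{W}\tau_{j_*}^{-1}$ where $W\in\{V_{p-1},V_p,\max(V_{p-1},\chi),\min(V_p,\chi)\}$. Two such points can coincide only if both the exponential factor and the $\tau$-factor match (after accounting for the $\pm$ sign and $u,v$ monomial, which are fixed within one set). For $p_1=p_2$ and coinciding $(a,b)$-conditions, Assumption \ref{ap64}(1) gives $\tau_{i_*}\neq\tau_{j_*}$, so distinctness follows. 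For $p_1\neq p_2$, the exponential factor differs because $V_0<V_1<\ldots<V_m$, and Assumption \ref{ap64}(2)--(3) prevent the $\tau$-ratios from cancelling these exponential separations.

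Next I would establish the interlacing of the poles and zeros. The key observation is that for fixed $j$ and fixed sign, as $p$ ranges over those indices with $\mathbf{1}_{\mathcal{E}_{j,p,>,1}}=1$, the pole $e^{\max(V_{p-1},\chi)}\tau_j^{-1}$ of one interval coincides with the zero $e^{V_p}\tau_j^{-1}$ of the adjacent interval. These coincident pairs cancel in $D_{1,K}\triangle D_{2,K}$. What survives after the set-difference is a strictly alternating pattern along each $\tau_j$-ray: the leftmost remaining pole (at $e^{\max(V_{p_0-1},\chi)}\tau_j^{-1}$) lies immediately to the left of the corresponding zero (at $e^{V_{p_0}}\tau_j^{-1}$) for each maximal block of active $p$'s. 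Interlacing across different $\tau_j$'s is then reduced to a pairwise comparison of the ratios $e^{V_{p_2-1}-V_{p_1}}\tau_{j_2}^{-1}\tau_{j_1}$ versus $1$, which is precisely what Assumption \ref{ap64}(3) (and the $u,v$-analogue in Assumption \ref{ap65}) regulates. The same argument applies mutatis mutandis to $D_{3,K}$ vs.\ $D_{4,K}$ using the negative side and to the $k$-shifted sets $\mathcal{R}_{5,\cdot,k,k}, \mathcal{R}_{5,\cdot,k-1,k}$, etc., where the additional monomial $u^{2k_1}v^{2k_2}$ is controlled by Assumption \ref{ap65}.

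For the third requirement, the explicit ordering $c_{3,K}<c_6<0<c_5<c_{2,K}$ and $c_8<c_7$, I would compare the extremal points coming from the base sets $\mathcal{R}_{\chi,\cdot,\cdot}$ against those from the shifted sets $\mathcal{R}_{\cdot,\cdot,k,k}, \mathcal{R}_{\cdot,\cdot,k-1,k}$ for all $k$. Each $c_\bullet$ is the max or min of a family of products $\pm u^{2k_1}v^{2k_2}e^W\tau_{j_*}^{-1}$. The four sub-items of Assumption \ref{ap65} are engineered to give exactly the inequalities $u^{2k_1}v^{2k_2}e^{V_{p_1}}\tau_{j_1}^{-1}<e^{V_{p_2-1}}\tau_{j_2}^{-1}$ (and analogues for the other three cases), which translate directly into the desired ordering of the $c_\bullet$'s. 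The "unique point in $D_{4,K}\cap\{(-\infty,c_{4,K})\cup(c_{1,K},\infty)\}$" clause is then a consequence of the interlacing established in the previous step, once one checks that the exceptional gap $(c_{3,K},c_{2,K})$ is the unique gap crossed by the origin.

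The main obstacle will be the bookkeeping in the interlacing step: the sets $D_{1,K},\ldots,D_{4,K}$ are built out of many pieces indexed by $(j,p)\in[n]\times[m]$, by $k\in[K]$, and by sign, and one has to confirm that after the cancellations in the set-differences, the surviving points on each $\tau_j$-ray form a strictly alternating sequence, and that zero-pole alternation is preserved when merging across different $\tau_j$'s and across different $k$-shifts. A cleaner organization is to group contributions by the $\tau_j$-ray, handle each ray by the telescoping cancellation described above, and then invoke the quantitative bounds of Assumptions \ref{ap64}--\ref{ap65} to merge rays without violating interlacing. Once this organization is in place, each requirement of Assumption \ref{ap62} reduces to a specific inequality that appears verbatim, or after a one-line rearrangement, in Assumption \ref{ap64} or \ref{ap65}.
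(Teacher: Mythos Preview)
Your proposal follows essentially the same route as the paper's proof. Both arguments (i) establish a global block separation $y_8<y_4<y_3<y_7<0<y_5<y_1<y_2<y_6$ between the various $\mathcal{R}$-sets using the $u,v$-bounds of Assumption~\ref{ap65} together with Assumption~\ref{ap64}(2), which immediately yields the ordering $c_{3,K}<c_6<0<c_5<c_{2,K}$; and (ii) verify the pole--zero interlacing \emph{within} each block by grouping along a fixed $\tau_{j_*}$-ray, where the points form an alternating sequence $\alpha_1<\beta_1<\alpha_2<\beta_2<\cdots$, and then use Assumption~\ref{ap64}(3) to merge rays. Your exposition of the telescoping cancellation between adjacent $p$'s is more explicit than the paper's, but the underlying organization and the inequalities invoked are the same.
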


\begin{proof}
Fix \(K\ge1\), we first regard the zeros and poles of $S_{\chi, K}$ as multisets and then cancel common
numerator and denominator points, as in the set differences defining
\(D_{i,K}\).

A \emph{family} is either one of the four unreflected pairs
\[
(\mathcal R_{\chi,i,1},\mathcal R_{\chi,i,2}),
\qquad i\in[4],
\]
or a reflected pair
\[
(\mathcal R_{i,1,k_1,k_2},\mathcal R_{i,2,k_1,k_2}),
\qquad i\in\{5,6,7,8\},
\]
at one of the scales occurring in the \(K\)-truncation.  In every family,
the first set is the raw pole set and the second is the raw numerator-zero
set.  Each factor contributes one pole and one zero.  A family is allowed
to be empty; in that case both lists are empty and the family contributes
nothing to the ordered concatenations below.

Fix a family and one of its two sides \(\ell\in\{1,2\}\).
By Definition~\ref{def:R-sets}, the points on this side are indexed by
the active pairs \((r,p)\in[n]\times[m]\).
Assumption~\ref{ap64}(1),(3) implies that distinct active pairs give
distinct locations in each unreflected set
\(\mathcal R_{\chi,i,\ell}\).
Each reflected set \(\mathcal R_{i,\ell,k_1,k_2}\) is obtained from the
corresponding base locations by multiplication by a fixed positive
\(u,v\)-scale, so the same conclusion holds for the reflected families.
Hence every raw pole list and every raw numerator-zero list is simple.
This proves Assumption~\ref{ap62}(1).

We next determine the order of the families.  For subsets
\(A,B\subset\mathbb R\), write \(A\prec B\) if
\(a<b\) for every \(a\in A\) and \(b\in B\).  Set
\[
\mathscr R_{\chi,i}
:=
\mathcal R_{\chi,i,1}\cup\mathcal R_{\chi,i,2},
\qquad i\in[4],
\]
and
\[
\mathscr R_{i,K}
:=
\begin{cases}
\displaystyle
\bigcup_{k=1}^{K}\bigcup_{j=1}^{2}
\left(
\mathcal R_{i,j,k,k}\cup\mathcal R_{i,j,k-1,k}
\right),
& i\in\{5,7\},\\[4mm]
\displaystyle
\bigcup_{k=1}^{K}\bigcup_{j=1}^{2}
\left(
\mathcal R_{i,j,k,k}\cup\mathcal R_{i,j,k,k-1}
\right),
& i\in\{6,8\}.
\end{cases}
\]

For each \(i\in[4]\) and each label \((j,p)\) active in the
corresponding family, the associated pole and numerator zero are ordered
as follows:
\[
\begin{array}{c|c}
i & \text{left-to-right order}\\ \hline
1,3 & \mathrm p<\mathrm z,\\
2,4 & \mathrm z<\mathrm p.
\end{array}
\]
This follows immediately from Definition~\ref{def:R-sets} and the
inequalities defining an active label.  The reflected families \(5,7\)
inherit the first ordering, while \(6,8\) inherit the second, because their
locations are obtained by multiplying both endpoints by the same positive
\(u,v\)-scale.

It remains to establish the alternating order within each fixed family.
Fix either an unreflected pair
\[
(\mathcal R_{\chi,i,1},\mathcal R_{\chi,i,2}),
\qquad i\in[4],
\]
or a reflected pair
\[
(\mathcal R_{i,1,k_1,k_2},\mathcal R_{i,2,k_1,k_2}),
\qquad i\in\{5,6,7,8\},
\]
with \((k_1,k_2)\) fixed.  For each residue \(j\), the explicit endpoint
formulas and \(V_0<\cdots<V_m\) order the intervals indexed by the active
macroscopic blocks \(p\).  Consecutive intervals may share only an endpoint,
which is removed upon cancellation.  Assumption~\ref{ap64}(1),(3) separates
the chains corresponding to distinct residues.  Positive \(u,v\)-rescaling
preserves this order for the reflected families.

Consequently, every nonempty reduced family contains an equal positive
number of poles and numerator zeros, and its ordered lists have the form
\begin{equation}\label{eq:inward-family-order}
p_1<z_1<p_2<z_2<\cdots<p_q<z_q,
\qquad i\in\{1,3,5,7\},
\end{equation}
or
\begin{equation}\label{eq:outward-family-order}
z_1<p_1<z_2<p_2<\cdots<z_q<p_q,
\qquad i\in\{2,4,6,8\}.
\end{equation}
For an empty reduced family, both cardinalities are zero and these
statements are understood to be void.

Assumption~\ref{ap64}(2) separates the two unreflected families on each
half-axis, while Assumption~\ref{ap65} places the inward reflected families
between the unreflected families and \(0\), places the outward reflected
families beyond the unreflected families, and orders their successive
copies.  Hence, after empty blocks are omitted,
\begin{equation}\label{eq:global-family-order}
\mathscr R_{8,K}
\prec
\mathscr R_{\chi,4}
\prec
\mathscr R_{\chi,3}
\prec
\mathscr R_{7,K}
\prec
\{0\}
\prec
\mathscr R_{5,K}
\prec
\mathscr R_{\chi,1}
\prec
\mathscr R_{\chi,2}
\prec
\mathscr R_{6,K}.
\end{equation}

The sets \(D_{1,K}\) and \(D_{2,K}\) are the pole and zero lists of the
inward families
\[
3,\quad 7,\quad 5,\quad 1,
\]
in the order displayed in \eqref{eq:global-family-order}.  Every nonempty
such family starts with a pole and ends with a zero by
\eqref{eq:inward-family-order}, whereas an empty family contributes no
points.  Concatenating the lists therefore gives
\[
|D_{1,K}|=|D_{2,K}|=:N_K.
\]
If \(N_K>0\), writing
\[
D_{1,K}
=
\{p_{1,K}<\cdots<p_{N_K,K}\},
\qquad
D_{2,K}
=
\{z_{1,K}<\cdots<z_{N_K,K}\},
\]
we obtain
\[
p_{1,K}<z_{1,K}<p_{2,K}<z_{2,K}
<\cdots<p_{N_K,K}<z_{N_K,K}.
\]
When \(N_K=0\), the assertion is void.  This proves
Assumption~\ref{ap62}(2).

Likewise, \(D_{3,K}\) and \(D_{4,K}\) are the pole and zero lists of the
outward families
\[
8,\quad 4,\quad 2,\quad 6.
\]
On the negative half-axis the relevant families are \(8,4\), and on the
positive half-axis they are \(2,6\).  Every nonempty outward family starts
with a zero and ends with a pole by
\eqref{eq:outward-family-order}, while an empty family contributes no
points.  Consequently, for each \(\sigma\in\{-,+\}\),
\[
|D_{3,K}^{\sigma}|=|D_{4,K}^{\sigma}|=:M_K^\sigma.
\]
If \(M_K^\sigma>0\), writing
\[
D_{3,K}^{\sigma}
=
\{p_{1,K}^{\sigma}<\cdots<
  p_{M_K^\sigma,K}^{\sigma}\},
\qquad
D_{4,K}^{\sigma}
=
\{z_{1,K}^{\sigma}<\cdots<
  z_{M_K^\sigma,K}^{\sigma}\},
\]
we obtain
\[
z_{1,K}^{\sigma}<p_{1,K}^{\sigma}
<z_{2,K}^{\sigma}<p_{2,K}^{\sigma}
<\cdots
<z_{M_K^\sigma,K}^{\sigma}
<p_{M_K^\sigma,K}^{\sigma}.
\]
When \(M_K^\sigma=0\), the assertion is void.  This proves
Assumption~\ref{ap62}(3).

Finally, \eqref{eq:global-family-order} separates the outward and inward
families on each half-axis.  Increasing \(K\) adds new type-\(5\) and
type-\(7\) copies only closer to \(0\), while the new type-\(6\) and
type-\(8\) copies lie only farther from \(0\).  Hence the two separating
gaps may be chosen independently of \(K\).  Thus there exist constants
\[
a_-<b_-<0<b_+<a_+
\]
such that, for every \(K\ge1\),
\[
O_K^-\subset(-\infty,a_-],
\qquad
I_K^-\subset[b_-,0),
\]
and
\[
I_K^+\subset(0,b_+],
\qquad
O_K^+\subset[a_+,\infty).
\]
If one of these sets is empty, the corresponding inclusion is vacuous.
This proves Assumption~\ref{ap62}(4).
\end{proof}

\begin{lemma}\label{lem:annulus-implies-LL}
Let \(\mathcal C_z\) and \(\mathcal C_w\) be disjoint one-point contours
contained in
\[
\mathcal A_{u,v}:=\{w\in\mathbb C:v<|w|<u^{-1}\}.
\]
Then \((\mathcal C_z,\mathcal C_w)\) is \(LL\)-admissible in the sense of
Definition~\ref{def:KLL}.
\end{lemma}

\begin{proof}
Recall from Definition~\ref{def:KLL} that
\[
c_r=(u^2v^2)^r,
\qquad
a_r=v^2(u^2v^2)^{r-1},
\qquad
b_r=u^2(u^2v^2)^{r-1}.
\]
Fix \(z\in\mathcal C_z\), \(w\in\mathcal C_w\), and \(r\ge1\).
Since the two contours are disjoint, \(z\ne w\).

Because
\[
v<|z|,|w|<u^{-1},
\]
we have
\[
|c_rw|
<
\frac{c_r}{u}
=
v(uv)^{2r-1}
<
v
<
|z|.
\]
Hence \(z\ne c_rw\).  Interchanging \(z\) and \(w\) gives
\(w\ne c_rz\).

Moreover,
\[
|zw|>v^2\ge v^2(u^2v^2)^{r-1}=a_r,
\]
so \(zw\ne a_r\).  Finally,
\[
|b_rzw|
<
b_ru^{-2}
=
(u^2v^2)^{r-1}
\le1,
\]
and the first inequality is strict; hence
\[
|b_rzw|<1,
\]
so \(b_rzw\ne1\).

Thus none of the interaction-pole equations
\[
z=w,\qquad
z=c_rw,\qquad
w=c_rz,\qquad
zw=a_r,\qquad
b_rzw=1
\]
can occur on
\(\mathcal C_z\times\mathcal C_w\).  Since
\(\mathcal C_z\) and \(\mathcal C_w\) are already one-point contours, they
satisfy the remaining contour requirements in
Definition~\ref{def:KLL}.  Therefore
\((\mathcal C_z,\mathcal C_w)\) is \(LL\)-admissible.
\end{proof}

\begin{corollary}[Automatic branch and pairwise \(LL\)-admissibility]
\label{cor:automatic-branch}
Assume the hypotheses of Lemma~\ref{l64}.  For \(j\in[4]\), set
\[
D_{j,\infty}:=\bigcup_{K\ge1}D_{j,K}.
\]
Then there exist a positively oriented simple closed contour
\(\mathcal C_\chi\) and a connected annular neighbourhood
\(U_\chi\Subset\mathbb C^*\) of \(\mathcal C_\chi\) such that
\[
\{0\}\cup D_{1,\infty}\cup D_{2,\infty}
\subset\operatorname{Int}(\mathcal C_\chi),
\qquad
D_{3,\infty}\cup D_{4,\infty}
\subset\operatorname{Ext}(\mathcal C_\chi),
\]
and \(S_\chi\) is holomorphic and non-vanishing on \(U_\chi\) and
admits a holomorphic logarithm there.

Consequently, every finite nested contour family contained in \(U_\chi\),
together with all contour deformations performed inside \(U_\chi\), is
branch-admissible in the sense of
Definition~\ref{def:branch-admissible}.

If, in addition, Assumption~\ref{ass:admissible} holds, then, for every
finite collection of marked locations and every prescribed finite number
of nested contour copies at each location, the copies may be chosen
pairwise \(LL\)-admissibly.
\end{corollary}

\begin{proof}
The global ordering in the proof of Lemma~\ref{l64}, together with the
monotonicity of the reflected copies, gives a contour
\(\mathcal C_\chi\), independent of \(K\), separating the inward and
outward zero--pole families.

The zeros and poles of \(S_{\chi,K}\) inside \(\mathcal C_\chi\) are
\(D_{2,K}\) and \(D_{1,K}\), respectively.  By
Assumption~\ref{ap62}(2),
\[
|D_{1,K}|=|D_{2,K}|.
\]
Hence the argument principle gives
\[
\frac{1}{2\pi\mathbf i}
\oint_{\mathcal C_\chi}
\frac{S_{\chi,K}'(w)}{S_{\chi,K}(w)}\,dw
=0.
\]

Choose a connected annular neighbourhood \(U_\chi\) of
\(\mathcal C_\chi\) whose closure is disjoint from all zero and pole
locations.  The explicit formulas for \(\mathcal F_{u,v,r}\), together
with \(u,v\in(0,1)\), give normal convergence of
\[
\prod_{r\ge1}\mathcal F_{u,v,r}
\]
on \(U_\chi\).  Thus \(S_{\chi,K}\to S_\chi\) locally uniformly there,
and the limit is holomorphic and non-vanishing.  Passing to the limit gives
\[
\frac{1}{2\pi\mathbf i}
\oint_{\mathcal C_\chi}
\frac{S_\chi'(w)}{S_\chi(w)}\,dw
=0.
\]
Since \(U_\chi\) is annular, every closed curve in \(U_\chi\) is homologous
to an integer multiple of \(\mathcal C_\chi\).  Therefore \(S_\chi\) has
zero winding along every closed curve in \(U_\chi\), and hence admits a
holomorphic logarithm there.  This proves the branch-admissibility
assertion.

It remains to prove the final assertion.  Order the marked locations as
\[
\chi_1\le\cdots\le\chi_s.
\]
We claim that the corresponding separating Jordan domains may be chosen
so that
\[
\overline{\Omega_{\chi_s}}
\subset
\Omega_{\chi_{s-1}}\subset \overline{\Omega_{\chi_{s-1}}}
\subset\cdots\subset
\Omega_{\chi_1}.
\]

Indeed, fix \(d<h\).  Let \((j_-,p_-)\) be active in a
\(>\)-family at \(\chi_h\), and let \((j_+,p_+)\) be active in the
corresponding \(<\)-family at \(\chi_d\).  Since
\[
V_{p_-}>\chi_h\ge\chi_d>V_{p_+-1},
\]
we have \(p_-\ge p_+\).  Assumption~\ref{ap64}(2) therefore gives
\[
e^{V_{p_-}}\tau_{j_-}^{-1}
<
e^{V_{p_+-1}}\tau_{j_+}^{-1}.
\]
Hence, on the positive half-axis, every inward interval associated with
\(\chi_h\) lies strictly before every outward interval associated with
\(\chi_d\).  The corresponding negative-axis statement follows after
multiplication by \(-1\).  Assumption~\ref{ap65} places the reflected
inward families closer to \(0\) and the reflected outward families farther
from \(0\), so the same cross-separation holds for the complete families.
The separating domains can therefore be chosen with the stated reverse
nesting.  Repeated copies at the same marked location are taken as
sufficiently close nested parallel copies.

We next show that these contours may be chosen inside
\[
A_{u,v}:=\{w\in\mathbb C:v<|w|<u^{-1}\}.
\]
The \(k=1\) one-body bounds in
Assumption~\ref{ass:admissible} give
\[
u x_i^{(\epsilon)}\le1-\delta
\quad\text{if }b_i^{(\epsilon)}=-,
\qquad
v x_i^{(\epsilon)}\le1-\delta
\quad\text{if }b_i^{(\epsilon)}=+.
\]
Passing to the scaling limit shows that every unreflected inward
zero--pole location has modulus at most
\[
\frac{1-\delta}{u}<u^{-1},
\]
whereas every unreflected outward location has modulus at least
\[
\frac{v}{1-\delta}>v.
\]
The reflected inward locations are obtained by multiplying by factors
strictly smaller than \(1\), while the reflected outward locations are
obtained by multiplying by factors strictly larger than \(1\).  Hence the
same bounds hold for all reflected locations.

Together with the inward--outward separation in
Assumption~\ref{ap62}(4), these bounds show that, on each half-axis, the
separating gap meets the radial interval \((v,u^{-1})\).  We may therefore
choose all the strictly nested separating contours inside \(A_{u,v}\);
after shrinking their annular neighbourhoods if necessary, all required
nested copies remain there.

The contours are thus pairwise disjoint one-point contours contained in
\(A_{u,v}\).  Lemma~\ref{lem:annulus-implies-LL} implies that every two
distinct contour copies are \(LL\)-admissible.
\end{proof}

\subsection{Stieltjes transform in the natural variable and the slope}

Let \(\nu_\chi\) be the limiting slope measure of
Theorem~\ref{thm:weak-limit-shape-beta}.  Recall the functions
\(S_\chi\) and \(T_\chi\) defined in \eqref{dsc} and
\eqref{eq:Tchi}, respectively.
By \eqref{nu-moments}, the Stieltjes transform
\[
        \mathrm{St}_{\nu_\chi}(\zeta):=\int_0^\infty \frac{\nu_\chi(dx)}{\zeta-x}
\]
has, for $|\zeta|$ large, the expansion
\begin{align}
\mathrm{St}_{\nu_\chi}(\zeta)
&=
\sum_{k\ge1}\frac{1}{\zeta^k}\int_0^\infty x^{k-1}\nu_\chi(dx)\notag\\
&=
\frac{1}{\pi\mathbf i}
\oint_{\mathcal C}\sum_{k\ge1}\frac{T_\chi(w)^k}{k\zeta^k}\frac{dw}{w}
\notag\\
&=
-\frac{1}{\pi\mathbf i}
\oint_{\mathcal C}\log\left(1-\frac{T_\chi(w)}{\zeta}\right)\frac{dw}{w}.
\label{ctx}
\end{align}
Since $\nu_\chi$ is compactly supported by Theorem~\ref{thm:weak-limit-shape-beta}, the
expansion above is valid for $|\zeta|$ larger than the support radius.  Both sides are
analytic on the connected component of $\mathbb C\setminus\operatorname{supp}(\nu_\chi)$
containing infinity.  The identity on the large-$\zeta$ region therefore extends to that
component, with the logarithm \(L_\chi\) supplied by
Corollary~\ref{cor:automatic-branch}.  Stieltjes inversion gives
\begin{equation}\label{dsm2}
\frac{1}{\pi}
\lim_{\delta\downarrow0}
\Im\operatorname{St}_{\nu_\chi}(x+\mathbf i\delta)
=
-\frac{d\nu_\chi}{dx}(x)
\end{equation}
at every Lebesgue point \(x\) of the density.  Hence, by
\eqref{slope-from-nu-density},
\[
\partial_\kappa\mathcal H(\chi,\kappa)
=
-\frac{1}{\pi}
\lim_{\delta\downarrow0}
\Im\operatorname{St}_{\nu_\chi}
\bigl(e^{-n\beta\kappa}+\mathbf i\delta\bigr)
\]
whenever \(e^{-n\beta\kappa}\) is a Lebesgue point of the density.

To analyze the upper boundary limit of \eqref{ctx} at a real point
\(x>0\), set
\[
\zeta=x+\mathbf i\delta,
\qquad \delta>0,
\]
and let \(\delta\downarrow0\).  For fixed \(\delta>0\), the points \(w\) at which the logarithm in
\eqref{ctx} becomes singular are precisely the zeros of the quantity
inside the logarithm:
\[
1-\frac{T_\chi(w)}{x+\mathbf i\delta}=0.
\]
Equivalently,
\[
T_\chi(w)=x+\mathbf i\delta.
\]
Letting \(\delta\downarrow0\) leads to the real root equation
\[
T_\chi(w)=x,
\qquad
x=e^{-n\beta\kappa}.
\]
On the positive real branch, this is equivalent to
\begin{equation}\label{eq:natural-S-equivalent}
S_\chi(w)=e^{-n\kappa}.
\end{equation}
The remaining root analysis is conditional on the interlacing and no-escape hypotheses.  All root-count assertions for the infinite product are obtained through the
finite truncations below, followed by the no-escape and Hurwitz passage.

\subsection{Uniqueness of the nonreal root pair}

We first study the truncated equation.

\begin{lemma}\label{l65}
Suppose Assumption~\ref{ap62} holds.  Fix
\(s\in\mathbb R\) and \(K\in\mathbb Z_{>0}\), and assume that
\[
S_{\chi,K}(w)
\not\equiv s^n.
\]
Then the equation
\begin{equation}\label{ceqK}
S_{\chi,K}(w)
=
s^n
\end{equation}
has at most one pair of nonreal conjugate roots, counted with
multiplicity.
\end{lemma}

\begin{proof}
Set
$c:=s^n$. After cancelling common numerator and denominator factors, write
\begin{align}
S_{\chi,K}(w)
=
C_{\chi,K}
\frac{\prod_{z\in Z_K}(w-z)}
     {\prod_{p\in P_K}(w-p)},\label{dck}
\end{align}
where
\[
Z_K:=D_{2,K}\cup D_{4,K},
\qquad
P_K:=D_{1,K}\cup D_{3,K}.
\]
By Assumption~\ref{ap62}, all points of \(Z_K\) and \(P_K\) are
real and simple, and
\[
|Z_K|
=
|P_K|
=
d_K,
\qquad
d_K:=N_K+M_K^-+M_K^+.
\]
Thus \eqref{ceqK} is equivalent to
\begin{equation}
H_{K,c}(w)
:=
C_{\chi,K}\prod_{z\in Z_K}(w-z)
-
c\prod_{p\in P_K}(w-p)
=
0.
\label{eq:cleared-root-polynomial}
\end{equation}
No pole of \(S_{\chi,K}\) is a zero of \(H_{K,c}\), since the
fraction above is reduced.

Partition the pole set into the three ordered blocks
\[
P_K^-
:=
D_{3,K}^-,
\qquad
P_K^0
:=
D_{1,K},
\qquad
P_K^+
:=
D_{3,K}^+,
\]
with corresponding numerator-zero blocks
\[
Z_K^-:=D_{4,K}^-,
\qquad
Z_K^0:=D_{2,K},
\qquad
Z_K^+:=D_{4,K}^+.
\]
Assumption~\ref{ap62}(4) places these blocks, with empty blocks omitted,
in the order
\[
P_K^-\cup Z_K^-
\prec
P_K^0\cup Z_K^0
\prec
P_K^+\cup Z_K^+.
\]

Within the inward block, Assumption~\ref{ap62}(2) gives
\[
p_1<z_1<p_2<z_2<\cdots<p_{N_K}<z_{N_K}.
\]
Within either outward block, Assumption~\ref{ap62}(3) gives
\[
z_1<p_1<z_2<p_2<\cdots<z_{M_K^\sigma}<p_{M_K^\sigma},
\qquad
\sigma\in\{-,+\}.
\]
Consequently, between any two consecutive poles belonging to the same
nonempty block there is exactly one simple numerator zero and no other
pole.  Hence the two one-sided limits of \(S_{\chi,K}\) at these poles
have opposite signs:
\[
\lim_{w\downarrow p_i}S_{\chi,K}(w)
=
-\lim_{w\uparrow p_{i+1}}S_{\chi,K}(w)
\in\{+\infty,-\infty\}.
\]
By continuity, \(S_{\chi,K}\) maps each such interval onto
\(\mathbb R\).  Therefore \eqref{ceqK} has at least one real root in
each interval between consecutive poles of the same block.

Let \(r_K\le3\) be the number of nonempty pole blocks among
\[
P_K^-,
\qquad
P_K^0,
\qquad
P_K^+.
\]
If a block contains \(m\) poles, the preceding argument gives one
distinct real root in each of the \(m-1\) intervals between its
consecutive poles.  Summing over the nonempty blocks, we obtain at least
\[
d_K-r_K
\]
distinct real roots of \(H_{K,c}\).

Since
\[
\deg H_{K,c}\le d_K,
\]
the total number of nonreal roots of \(H_{K,c}\), counted with
multiplicity, is at most
\[
d_K-(d_K-r_K)=r_K\le3.
\]
The polynomial \(H_{K,c}\) has real coefficients, so its nonreal roots
occur in complex-conjugate pairs, with equal multiplicities.  Their total
multiplicity is therefore even and hence at most \(2\).  Thus
\eqref{ceqK} has at most one pair of nonreal conjugate roots.
\end{proof}

The next lemma is the finite-$K$ sign criterion used to determine whether the unmatched
real root lies on the negative axis.

\begin{lemma}[Root-labelled Stieltjes representation for \(\beta=1\)]
\label{lem:root-labelled-stieltjes}
Assume \(\beta=1\) and Assumption~\ref{ap62}.  Let
\(\mathcal C_\chi\) be the one-point contour supplied by
Corollary~\ref{cor:automatic-branch}, and let $S_{\chi,K}(w)$ be defined as in (\ref{dsck}).
Since $\beta=1$,  \(T_\chi=S_\chi\).

For \(|\zeta|\) sufficiently large and each \(\xi\in D_{1,K}\), let
\(w_{\xi,K}(\zeta)\) be the unique root of
\[
S_{\chi,K}(w)=\zeta
\]
near \(\xi\), labelled by
\[
w_{\xi,K}(\zeta)\longrightarrow\xi
\qquad\text{as }\zeta\to\infty.
\]
Choose the logarithms so that
\[
\log\left(1-\frac{S_{\chi,K}(0)}{\zeta}\right)\longrightarrow0,
\qquad
\log w_{\xi,K}(\zeta)-\log\xi\longrightarrow0
\]
as \(\zeta\to\infty\).  Then
\begin{align}
&-\frac{1}{\pi\mathbf i}
\oint_{\mathcal C_\chi}
\log\left(
1-\frac{S_{\chi,K}(w)}{\zeta}
\right)\frac{dw}{w}
\notag\\
&\qquad=
-2\log\left(
1-\frac{S_{\chi,K}(0)}{\zeta}
\right)
+
2\sum_{\xi\in D_{1,K}}
\left(
\log w_{\xi,K}(\zeta)-\log\xi
\right).
\label{eq:finite-root-labelled-stieltjes}
\end{align}
Consequently,
\begin{equation}\label{eq511}
\operatorname{St}_{\nu_\chi}(\zeta)
=
-2\log\left(
1-\frac{S_\chi(0)}{\zeta}
\right)
+
2\lim_{K\to\infty}
\sum_{\xi\in D_{1,K}}
\left(
\log w_{\xi,K}(\zeta)-\log\xi
\right).
\end{equation}
The identities are initially understood for \(|\zeta|\) sufficiently
large.  They remain valid under simultaneous analytic continuation from
\(\zeta=\infty\), with the root labels and logarithms continued along the
same path.
\end{lemma}

\begin{proof}
Fix \(K\), and put
\[
F_{K,\zeta}(w)
:=
1-\frac{S_{\chi,K}(w)}{\zeta}.
\]
For \(|\zeta|\) sufficiently large,
\(F_{K,\zeta}\) is uniformly close to \(1\) on
\(\mathcal C_\chi\).  Hence its winding number around \(0\) along
\(\mathcal C_\chi\) is zero.  Its poles inside
\(\mathcal C_\chi\) are precisely the points of \(D_{1,K}\), so the
argument principle shows that \(F_{K,\zeta}\) has exactly
\(|D_{1,K}|\) zeros inside \(\mathcal C_\chi\), counted with
multiplicity.

Every \(\xi\in D_{1,K}\) is a simple pole of \(S_{\chi,K}\).  Therefore,
for \(|\zeta|\) sufficiently large, the equation
\(S_{\chi,K}(w)=\zeta\) has a unique simple root
\(w_{\xi,K}(\zeta)\) near \(\xi\), and
\[
w_{\xi,K}(\zeta)\longrightarrow\xi.
\]
These roots account for all zeros of \(F_{K,\zeta}\) inside
\(\mathcal C_\chi\).

Choose a small positively oriented circle \(\mathcal C_0\) around \(0\).
For each \(\xi\in D_{1,K}\), choose a positively oriented contour
\(\mathcal C_\xi\) enclosing precisely the pole \(\xi\) and the
corresponding root \(w_{\xi,K}(\zeta)\), but not \(0\) or any other
zero or pole.  Joining each \(w_{\xi,K}(\zeta)\) to \(\xi\) by a cut
inside \(\mathcal C_\xi\), we may deform \(\mathcal C_\chi\) to
\[
\mathcal C_0
\cup
\bigcup_{\xi\in D_{1,K}}\mathcal C_\xi
\]
without crossing a zero, pole, or logarithmic cut of
\(F_{K,\zeta}\).

Let
\[
G_{K,\zeta}(w):=\log F_{K,\zeta}(w).
\]
Since \(G_{K,\zeta}\) is holomorphic near \(0\), Cauchy's formula gives
\[
-\frac{1}{\pi\mathbf i}
\oint_{\mathcal C_0}
G_{K,\zeta}(w)\frac{dw}{w}
=
-2G_{K,\zeta}(0)
=
-2\log\left(
1-\frac{S_{\chi,K}(0)}{\zeta}
\right).
\]

Fix \(\xi\in D_{1,K}\).  Since \(0\) lies outside
\(\mathcal C_\xi\), a single-valued holomorphic logarithm
\(\log w\) may be chosen near \(\mathcal C_\xi\).  Moreover,
\(F_{K,\zeta}\) has one zero and one pole inside
\(\mathcal C_\xi\), so its winding number along
\(\mathcal C_\xi\) is zero and \(G_{K,\zeta}\) is single-valued near
that contour.  Integration by parts gives
\begin{align*}
-\frac{1}{\pi\mathbf i}
\oint_{\mathcal C_\xi}
G_{K,\zeta}(w)\frac{dw}{w}
&=
\frac{1}{\pi\mathbf i}
\oint_{\mathcal C_\xi}
\log w\,G_{K,\zeta}'(w)\,dw.
\end{align*}
Since
\[
G_{K,\zeta}'(w)
=
\frac{S_{\chi,K}'(w)}
     {S_{\chi,K}(w)-\zeta},
\]
this logarithmic derivative has simple poles at
\(w_{\xi,K}(\zeta)\) and \(\xi\), with residues \(1\) and \(-1\),
respectively.  The residue theorem therefore yields
\[
-\frac{1}{\pi\mathbf i}
\oint_{\mathcal C_\xi}
G_{K,\zeta}(w)\frac{dw}{w}
=
2\left(
\log w_{\xi,K}(\zeta)-\log\xi
\right).
\]
Summing over \(\xi\in D_{1,K}\) proves
\eqref{eq:finite-root-labelled-stieltjes}.

Finally, normal convergence of the reflected product gives
\[
S_{\chi,K}\longrightarrow S_\chi
\]
uniformly on \(\mathcal C_\chi\), while the same geometric estimates at
the origin give
\[
S_{\chi,K}(0)\longrightarrow S_\chi(0).
\]
Hence, for \(|\zeta|\) sufficiently large,
\[
-\frac{1}{\pi\mathbf i}
\oint_{\mathcal C_\chi}
\log\left(
1-\frac{S_{\chi,K}(w)}{\zeta}
\right)\frac{dw}{w}
\longrightarrow
-\frac{1}{\pi\mathbf i}
\oint_{\mathcal C_\chi}
\log\left(
1-\frac{S_\chi(w)}{\zeta}
\right)\frac{dw}{w}.
\]
By \eqref{ctx}, the last expression equals
\(\operatorname{St}_{\nu_\chi}(\zeta)\).  Taking \(K\to\infty\) in
\eqref{eq:finite-root-labelled-stieltjes} proves
\eqref{eq511}.  The continuation statement follows
from uniqueness of analytic continuation from \(\zeta=\infty\).
\end{proof}

\begin{lemma}[Finite argument balance]
\label{lem:finite-argument-balance}
Assume \(\beta=1\) and Assumption~\ref{ap62}. Fix \(K\geq1\)
such that \(D_{1,K}\neq\varnothing\), and write
\[
D_{1,K}
=
\{p_{1,K}<\cdots<p_{N_K,K}\}.
\]
Let \(x>0\) satisfy
\[
S_{\chi,K}(0)\neq x,
\]
and suppose that the equation
\[
S_{\chi,K}(w)=x
\]
has a unique nonreal conjugate pair of roots, of total multiplicity two.
Denote this pair by
\[
w_{+,K},\ \overline{w_{+,K}},
\qquad
\operatorname{Im}w_{+,K}>0.
\]

For \(\delta>0\), let
\[
\mathcal W_{1,K}(\delta)
:=
\left\{
w_{\xi,K}(x+\mathbf i\delta):
\xi\in D_{1,K}
\right\}
\]
be the unordered multiset obtained from the pole-labelled roots of
Lemma~\ref{lem:root-labelled-stieltjes}.

Then as \(\delta\downarrow0\),
\[
\mathcal W_{1,K}(\delta)
\longrightarrow
\{w_{+,K},u_{1,K},\ldots,u_{N_K-1,K}\}
\]
as multisets, where, for each \(1\leq j<N_K\),
\(u_{j,K}\) is the unique root of \(S_{\chi,K}(w)=x\) in
\[
(p_{j,K},p_{j+1,K}),
\]
and this root is simple.

The negative real roots satisfy
\[
\#\{1\leq j<N_K:u_{j,K}<0\}
=
\#\bigl(D_{1,K}\cap(-\infty,0)\bigr)
+
\mathbf 1_{\{S_{\chi,K}(0)>x\}}
-1.
\]

For \(t\in\mathbb R\setminus\{0\}\), define the upper boundary
argument by
\[
\operatorname{Arg}_{+}t
:=
\lim_{\delta\downarrow0}
\arg(t+\mathbf i\delta)
=
\pi\mathbf 1_{\{t<0\}},
\]
where \(\arg\in(-\pi,\pi)\) is the principal argument. Then
\[
\mathfrak m_K
:=
\mathbf 1_{\{S_{\chi,K}(0)>x\}}
-\frac{1}{\pi}
\left(
\sum_{j=1}^{N_K-1}\operatorname{Arg}_{+}u_{j,K}
-
\sum_{\xi\in D_{1,K}}\operatorname{Arg}_{+}\xi
\right)
=1,
\]
with the empty sum understood as zero.
\end{lemma}

\begin{proof}
Write
\[
N:=N_K=|D_{1,K}|=|D_{2,K}|,\qquad
P_K:=D_{1,K}\cup D_{3,K},
\qquad
Z_K:=D_{2,K}\cup D_{4,K}.
\]
After cancellation,
\[
S_{\chi,K}(w)
=
C_{\chi,K}
\frac{\displaystyle\prod_{z\in Z_K}(w-z)}
     {\displaystyle\prod_{p\in P_K}(w-p)},
\qquad C_K>0.
\]

\medskip
\noindent
\emph{Identification of the pole-labelled roots.}
For \(p\in P_K\), let
\[
a_p:=\operatorname*{Res}_{w=p}S_{\chi,K}(w).
\]
Since \(C_K>0\),
\[
\operatorname{sgn}a_p
=
(-1)^{
\#(Z_K\cap(p,\infty))
+
\#((P_K\setminus\{p\})\cap(p,\infty))
}.
\]
Each zero--pole block other than the block containing \(p\)
contributes an even number to the exponent. Within the inward
block,
\[
p_{1,K}<z_{1,K}<\cdots<p_{N,K}<z_{N,K},
\]
so the exponent is odd; within either outward block it is even.
Consequently,
\[
a_p<0\quad\text{for }p\in D_{1,K},
\qquad
a_p>0\quad\text{for }p\in D_{3,K}.
\tag{1}
\label{eq:residue-signs-argument-balance}
\]

For \(\zeta=\mathbf iR\), \(R\to\infty\), the root issuing from
the pole \(p\) satisfies
\[
w_p(\mathbf iR)
=
p+\frac{a_p}{\mathbf iR}+O(R^{-2}).
\]
It therefore lies in the upper half-plane when \(p\in D_{1,K}\)
and in the lower half-plane when \(p\in D_{3,K}\).

For every \(\zeta\in\mathbb C_+:=\{w\in \mathbb C:\Im w>0\}\), the cleared equation
\[
C_{\chi,K}\prod_{z\in Z_K}(w-z)
-
\zeta\prod_{p\in P_K}(w-p)
=0
\]
has degree \(|P_K|=|Z_K|\), because
\(C_{\chi,K}-\zeta\neq0\). Moreover, none of its roots can lie on the
real axis, since \(S_{\chi,K}\) is real-valued there away from its poles.
Thus, as \(\zeta\) varies in the connected set \(\mathbb C_+\),
the number of roots in either half-plane is constant. It follows
from the preceding large-\(R\) analysis that
\[
\#\{w\in\mathbb C_+:S_{\chi,K}(w)=\zeta\}=N,
\]
counting multiplicity, and that these roots are precisely those
continued from the poles in \(D_{1,K}\). In particular,
\[
\mathcal W_{1,K}(\delta)
=
\{w\in\mathbb C_+:S_{\chi,K}(w)=x+\mathbf i\delta\},
\qquad \delta>0,
\tag{2}
\label{eq:upper-root-identification}
\]
as multisets.

\medskip
\noindent
\emph{The upper boundary roots.}
Put
\[
F_{K}(w):=S_{\chi,K}(w)-x.
\]
We first record an oriented real-root count. Let
\(J=(a,b)\subset\mathbb R\setminus P_K\) satisfy
\[
F_K(a+)<0<F_K(b-).
\]
For sufficiently small \(\delta>0\), let \(n_+(J)\), respectively
\(n_-(J)\), denote the number of roots of
\[
F_K(w)=\mathbf i\delta
\]
which converge, as \(\delta\downarrow0\), to zeros of \(F_K\) in
\(J\) from the upper, respectively lower, half-plane.

Let \(y_1<\cdots<y_\ell\) be the distinct zeros of \(F_K\) in
\(J\), and let \(\sigma_j\in\{-1,1\}\) be the sign of \(F_K\) on
the component immediately to the right of \(y_j\), with
\(\sigma_0\) denoting its sign immediately to the right of \(a\).
Then
\[
\sigma_0=-1,
\qquad
\sigma_\ell=1.
\]
If \(y_j\) has multiplicity \(m_j\), write
\[
F_K(w)
=
c_j(w-y_j)^{m_j}
+
O\bigl((w-y_j)^{m_j+1}\bigr),
\qquad c_j\in\mathbb R^*=\mathbb R\setminus\{0\}.
\]
After the rescaling \(w-y_j=\delta^{1/m_j}v\), Rouché's theorem
shows that the local roots are governed by
\[
c_jv^{m_j}=\mathbf i.
\]
Hence the difference between the numbers approaching \(y_j\) from
the upper and lower half-planes is
\begin{align}\label{dpm}
n_+(y_j)-n_-(y_j)
=
\begin{cases}
0&\mathrm{If}\ m_j\ \mathrm{is\ even}\\
\mathrm{sgn}(c_j)&\mathrm{If}\ m_j\ \mathrm{is\ odd}
\end{cases}
\end{align}
The right hand side of (\ref{dpm}) is exactly $\frac{\sigma_j-\sigma_{j-1}}{2}$; hence we have
\[
n_+(y_j)-n_-(y_j)
=
\frac{\sigma_j-\sigma_{j-1}}{2}.
\]
Summing over \(j\) gives the telescoping identity
\[
n_+(J)-n_-(J)
=
\frac12\sum_{j=1}^{\ell}
(\sigma_j-\sigma_{j-1})
=
\frac{\sigma_\ell-\sigma_0}{2}
=1.
\tag{3}
\label{eq:oriented-root-count}
\]

For \(1\leq j<N\), set
\[
J_j:=(p_{j,K},p_{j+1,K}).
\]
The separation in Assumption~\ref{ap62} ensures that \(J_j\)
contains no pole other than its endpoints. By
\eqref{eq:residue-signs-argument-balance},
\[
\lim_{w\downarrow p_{j,K}}F_K(w)=-\infty,
\qquad
\lim_{w\uparrow p_{j+1,K}}F_K(w)=+\infty.
\]
Therefore
\[
n_+(J_j)-n_-(J_j)=1,
\qquad 1\leq j<N,
\]
and in particular \(n_+(J_j)\geq1\).

The root \(w_{+,K}\) is simple, since the unique nonreal conjugate
pair has total multiplicity two. Hence, for all sufficiently small
\(\delta>0\), exactly one root of
\[
S_{\chi,K}(w)=x+\mathbf i\delta
\]
lies near \(w_{+,K}\), and this root lies in the upper half-plane.
Together with the roots approaching the intervals \(J_j\), we have
therefore accounted for at least
\[
1+\sum_{j=1}^{N-1}n_+(J_j)\geq N
\]
upper-half-plane roots. By
\eqref{eq:upper-root-identification}, there are exactly \(N\) such
roots. Consequently,
\[
n_+(J_j)=1,
\qquad
n_-(J_j)=0,
\qquad
1\leq j<N.
\tag{4}
\label{eq:interval-root-count}
\]

The total multiplicity of the real zeros of \(F_K\) in \(J_j\)
equals \(n_+(J_j)+n_-(J_j)\). Thus
\eqref{eq:interval-root-count} implies that \(J_j\) contains a
unique zero \(u_{j,K}\), and that this zero is simple. It also
shows that no other upper-half-plane root can have a finite
boundary limit. Hence
\[
\mathcal W_{1,K}(\delta)
\longrightarrow
\{w_{+,K},u_{1,K},\ldots,u_{N-1,K}\}
\]
as \(\delta\downarrow0\), in the sense of multisets.

\medskip
\noindent
\emph{The negative-root count.}
Set
\[
r_K:=\#\{j:p_{j,K}<0\},
\qquad
q_K:=\#\{j:u_{j,K}<0\}.
\]
We claim that
\[
q_K
=
r_K+\mathbf 1_{\{S_K(0)>x\}}-1.
\tag{5}
\label{eq:negative-root-balance}
\]

Suppose first that \(r_K=0\). Then every interval \(J_j\) is
contained in \((0,\infty)\), so \(q_K=0\). If \(F_K(0)<0\), recall also that
\[
\lim_{w\uparrow p_{1,K}}F_K(w)=+\infty.
\]
Applying \eqref{eq:oriented-root-count} to
\((0,p_{1,K})\) would produce an additional upper-boundary root,
contradicting the exhaustion of the \(N\) roots established above.
Thus \(F_K(0)>0\), and
\[
q_K=0=r_K+1-1.
\]

Suppose next that \(r_K=N\). Then all \(N-1\) interval roots are
negative, and hence \(q_K=N-1\). If \(F_K(0)>0\), recall also that 
\[
\lim_{w\downarrow p_{N,K}}F_K(w)=-\infty.
\]
Applying \eqref{eq:oriented-root-count} to
\((p_{N,K},0)\) would again give an additional upper-boundary
root. Therefore \(F_K(0)<0\), and
\[
q_K=N-1=r_K-1.
\]

Finally, assume \(0<r_K<N\). The intervals \(J_j\) with
\(j<r_K\) contribute \(r_K-1\) negative roots, while those with
\(j>r_K\) contribute only positive roots. The remaining root
\(u_{r_K,K}\) lies in
\[
(p_{r_K,K},p_{r_K+1,K}),
\]
which contains \(0\). Since this root is unique and
\[
F_K(p_{r_K,K}+)<0,
\qquad
F_K(p_{r_K+1,K}-)>0,
\]
it lies to the left of \(0\) precisely when \(F_K(0)>0\).
Consequently,
\[
q_K
=
r_K-1+\mathbf 1_{\{F_K(0)>0\}},
\]
which proves \eqref{eq:negative-root-balance} in all cases.

Since \(F_K(0)>0\) is equivalent to \(S_{\chi,K}(0)>x\), and since
\[
\sum_{j=1}^{N-1}\operatorname{Arg}_+u_{j,K}
=
\pi q_K,
\qquad
\sum_{\xi\in D_{1,K}}\operatorname{Arg}_+\xi
=
\pi r_K,
\]
we obtain
\[
\begin{aligned}
\mathfrak m_K
&=
\mathbf 1_{\{S_{\chi,K}(0)>x\}}
-\frac1\pi
\left(
\sum_{j=1}^{N-1}\operatorname{Arg}_+u_{j,K}
-
\sum_{\xi\in D_{1,K}}\operatorname{Arg}_+\xi
\right)\\
&=
\mathbf 1_{\{S_{\chi,K}(0)>x\}}-q_K+r_K\\
&=1,
\end{aligned}
\]
as claimed.
\end{proof}

We can now express the slope in terms of the argument of the upper-half-plane root.

\begin{proposition}[Truncated-root slope formula]
\label{p67}
Assume \(\beta=1\) and Assumption~\ref{ap62}. Let \(H\) be the
limit shape of Theorem~\ref{thm:weak-limit-shape-beta}, let
\((\chi,\kappa)\) be a regular slope point, and set
\[
x:=e^{-n\kappa}.
\]
Suppose that
\[
S_\chi(0)\neq x,
\]
and that, for all sufficiently large \(K\),
\[
D_{1,K}\neq\varnothing,
\qquad
C_{\chi,K}\neq x.
\]

If, for all sufficiently large \(K\), the equation
\[
S_{\chi,K}(w)=x
\]
has a nonreal conjugate pair, denote its upper-half-plane member
by \(w_{+,K}\). Then this pair is unique and simple, and
\begin{equation}
\frac{\partial H(\chi,\kappa)}{\partial\kappa}
=
2-\frac{2}{\pi}
\lim_{K\to\infty}\arg w_{+,K}.
\label{eq:conditional-truncated-slope}
\end{equation}

If, in addition, \((\chi,\kappa)\) is liquid, then the truncated
equation has such a pair for all sufficiently large \(K\), and
\[
\arg w_{+,K}
\longrightarrow
\pi\left(
1-\frac12
\frac{\partial H(\chi,\kappa)}{\partial\kappa}
\right)
\in(0,\pi).
\]
\end{proposition}

\begin{proof}
Write
\[
h:=
\frac{\partial H(\chi,\kappa)}{\partial\kappa},
\qquad
I:=
\mathbf 1_{\{S_\chi(0)>x\}}.
\]
For \(\xi\in D_{1,K}\), let \(w_{\xi,K}(\zeta)\), together with
the corresponding logarithm, be continued from \(\zeta=\infty\)
as in Lemma~\ref{lem:root-labelled-stieltjes}, and set
\[
A_K
:=
\operatorname{Im}
\sum_{\xi\in D_{1,K}}
\left(
\log w_{\xi,K}(x+\mathbf i0)-\log\xi
\right).
\]
Taking the upper boundary value in \eqref{eq511} and using
\eqref{dsm2} together with \eqref{slope-from-nu-density} gives
\begin{equation}
h
=
2I-\frac{2}{\pi}\lim_{K\to\infty}A_K.
\label{eq:conditional-slope-before-balance}
\end{equation}

Moreover, the convergence at the origin established in the proof
of Lemma~\ref{lem:root-labelled-stieltjes} gives
\[
S_{\chi,K}(0)\longrightarrow S_\chi(0).
\]
Since \(S_\chi(0)\neq x\), for all sufficiently large \(K\),
\begin{equation}
I_K
:=
\mathbf 1_{\{S_{\chi,K}(0)>x\}}
=
I,
\qquad
S_{\chi,K}(0)\neq x.
\label{eq:conditional-indicator-stability}
\end{equation}

Assume first that, for all sufficiently large \(K\), the equation
\[
S_{\chi,K}(w)=x
\]
has a nonreal conjugate pair. By Lemma~\ref{l65}, this pair is
unique and has total multiplicity two; hence both of its members
are simple. Write it as
\[
w_{+,K},\ \overline{w_{+,K}},
\qquad
\operatorname{Im}w_{+,K}>0,
\]
and put
\[
\theta_K:=\arg w_{+,K}\in(0,\pi).
\]

For all sufficiently large \(K\),
Lemma~\ref{lem:finite-argument-balance} gives
\[
A_K
=
\theta_K
+
\sum_{j=1}^{N_K-1}\operatorname{Arg}_+u_{j,K}
-
\sum_{\xi\in D_{1,K}}\operatorname{Arg}_+\xi
\]
and
\[
I_K
-\frac1\pi
\left(
\sum_{j=1}^{N_K-1}\operatorname{Arg}_+u_{j,K}
-
\sum_{\xi\in D_{1,K}}\operatorname{Arg}_+\xi
\right)
=1.
\]
Combining these identities with
\eqref{eq:conditional-indicator-stability}, we obtain
\[
\begin{aligned}
2I-\frac{2}{\pi}A_K
&=
2I_K-\frac{2}{\pi}\theta_K
-\frac{2}{\pi}
\left(
\sum_{j=1}^{N_K-1}\operatorname{Arg}_+u_{j,K}
-
\sum_{\xi\in D_{1,K}}\operatorname{Arg}_+\xi
\right)\\
&=
2-\frac{2}{\pi}\theta_K.
\end{aligned}
\]
Since the left-hand side converges to \(h\) by
\eqref{eq:conditional-slope-before-balance}, it follows that
\[
\theta_K
\longrightarrow
\pi\left(1-\frac{h}{2}\right)
\in[0,\pi].
\]
Equivalently,
\[
\frac{\partial H(\chi,\kappa)}{\partial\kappa}
=
2-\frac{2}{\pi}
\lim_{K\to\infty}\arg w_{+,K}.
\]

It remains to prove the final assertion. Suppose that
\((\chi,\kappa)\) is liquid, so that
\[
h\in(0,2).
\]
If \(S_{\chi,K}(w)=x\) had no nonreal roots along an infinite
subsequence, then, because \(C_{\chi,K}\neq x\), no root would lie at
infinity, and all pole-labelled upper boundary roots would be real
along that subsequence. Consequently,
\[
A_K\in\pi\mathbb Z
\]
and hence
\[
2I-\frac{2}{\pi}A_K\in2\mathbb Z
\]
along the same subsequence. This contradicts
\eqref{eq:conditional-slope-before-balance}, because its
left-hand side converges to \(h\in(0,2)\).

Thus the truncated equation has a nonreal conjugate pair for all
sufficiently large \(K\). Applying the first part of the proof
gives
\[
\arg w_{+,K}
\longrightarrow
\pi\left(
1-\frac12
\frac{\partial H(\chi,\kappa)}{\partial\kappa}
\right)
\in(0,\pi),
\]
as claimed.
\end{proof}

We now pass from the truncated equation to the infinite product.

\begin{equation}\label{ceqnl}
\mathcal{G}_\chi(w)\prod_{k\ge1}\mathcal{F}_{u,v,k}(w)=s^n.
\end{equation}

\begin{lemma}[Local uniform compactness of truncated spectral roots]
\label{lem:local-spectral-root-compactness}
Assume Assumptions~\ref{ap64}--\ref{ap65}. Fix
\[
\chi_0\in
(V_0,V_m)\setminus\{V_0,\ldots,V_m\},
\qquad
x_0>0.
\]
Suppose that, for some \(K_*\geq1\),
\begin{equation}
S_{\chi_0,K}(0)\neq x_0,
\qquad
C_{\chi_0,K}\neq x_0,
\qquad
K\geq K_*.
\label{eq:local-endpoint-nonexceptional}
\end{equation}

Then there exist open intervals
\[
I\ni\chi_0,
\qquad
X\ni x_0,
\qquad
X\Subset(0,\infty),
\]
constants
\[
0<r<R<\infty,
\]
and \(K_0\geq K_*\) such that, for every
\[
\chi\in I,\qquad x\in X,\qquad K\geq K_0,
\]
every solution \(w\in\mathbb C_+\) of
\[
S_{\chi,K}(w)=x
\]
satisfies
\begin{equation}
r\leq |w|\leq R.
\label{eq:local-uniform-radial-bound}
\end{equation}
Here
\[
\mathbb C_+
:=
\{w\in\mathbb C:\operatorname{Im}w>0\}.
\]
\end{lemma}

\begin{proof}
Set
\[
q:=(uv)^2\in(0,1).
\]
Choose compact intervals \(I_0\) and \(X_0\) such that, for some
\(p\in[m]\),
\[
\chi_0\in\operatorname{int}(I_0)
\Subset(V_{p-1},V_p),
\qquad
x_0\in\operatorname{int}(X_0)
\Subset(0,\infty).
\]
We shall shrink \(I_0\) and \(X_0\) finitely many times, and at
the end set
\[
I:=\operatorname{int}(I_0),
\qquad
X:=\operatorname{int}(X_0).
\]

By Lemma~\ref{l64}, Assumptions~\ref{ap64}--\ref{ap65} imply
Assumption~\ref{ap62}. Since all inequalities in
Assumptions~\ref{ap64}--\ref{ap65} are strict, after shrinking
\(I_0\) the active zero--pole labels, the cancellation pattern in
the reduced rational function \(S_{\chi,K}\), and the relative
order of the reduced zeros and poles are independent of
\(\chi\in I_0\). The unreflected zero--pole locations depend
continuously on \(\chi\), whereas the reflected zero--pole
locations are independent of \(\chi\) and are obtained from
finitely many base locations by the fixed dilations
\[
q^k,\qquad v^2q^{k-1},\qquad
q^{-k},\qquad u^{-2}q^{-(k-1)}.
\]
All constants below may therefore be chosen uniformly for
\(\chi\in I_0\).

\medskip
\noindent
\emph{Endpoint separation.}
Evaluation at \(0\) and at infinity is unchanged under positive
dilation of the argument. Hence there exist constants
\(a_0,a_\infty>0\), independent of \(K\), and positive continuous
functions \(c_0,c_\infty\) on \(I_0\) such that
\begin{equation}
S_{\chi,K}(0)=c_0(\chi)a_0^K,
\qquad
C_{\chi,K}=c_\infty(\chi)a_\infty^K.
\label{eq:local-endpoint-geometric-form}
\end{equation}
Condition~\eqref{eq:local-endpoint-nonexceptional}, continuity,
and the elementary alternatives for a positive geometric sequence
imply, after shrinking \(I_0\) and \(X_0\), that there exist
\(\delta>0\) and \(K_0\geq K_*\) such that
\begin{equation}
\left|
\frac{x}{S_{\chi,K}(0)}-1
\right|
\geq\delta,
\qquad
\left|
\frac{x}{C_{\chi,K}}-1
\right|
\geq\delta
\label{eq:local-endpoint-uniform-separation}
\end{equation}
for every
\[
\chi\in I_0,\qquad x\in X_0,\qquad K\geq K_0.
\]
Indeed, if \(a_0\neq1\), then \(c_0(\chi)a_0^K\) converges
uniformly on \(I_0\) either to \(0\) or to infinity; if \(a_0=1\),
the first inequality follows from
\[
c_0(\chi_0)\neq x_0
\]
and continuity. The argument at infinity is identical.

\medskip
\noindent
\emph{Interval families and endpoint scales.}
For fixed \(\chi\in I_0\) and \(K\geq1\), cancel all common
numerator and denominator factors of \(S_{\chi,K}\). Using the
ordered pole and zero lists of Assumption~\ref{ap62}(2)--(3),
and suppressing their dependence on \(\chi\), define
\[
\mathcal I_{\chi,K}
:=
\bigl\{
[p_{j,K},z_{j,K}]
:
1\leq j\leq N_K
\bigr\},
\]
and
\[
\mathcal O_{\chi,K}
:=
\bigcup_{\sigma\in\{-,+\}}
\bigl\{
[z_{j,K}^{\sigma},p_{j,K}^{\sigma}]
:
1\leq j\leq M_K^\sigma
\bigr\}.
\]
By Assumption~\ref{ap62}(2)--(3), the intervals in each family
are pairwise disjoint, and the reduced factorization is
\begin{equation}
S_{\chi,K}(w)
=
C_{\chi,K}
\prod_{[a,b]\in\mathcal I_{\chi,K}}
\frac{w-b}{w-a}
\prod_{[a,b]\in\mathcal O_{\chi,K}}
\frac{w-a}{w-b}.
\label{eq:local-interval-factorization}
\end{equation}
If both interval families are empty, then
\(S_{\chi,K}\equiv C_{\chi,K}\), and
\eqref{eq:local-endpoint-uniform-separation} excludes a solution.
We henceforth consider the nonconstant case.

The explicit reflected zero--pole locations imply that, after
shrinking \(I_0\) if necessary, there exist finite interval
families
\[
\mathcal I_{\chi}^{(0)},
\qquad
\mathcal O_{\chi}^{(0)},
\]
depending continuously on \(\chi\in I_0\), and finite interval
families
\[
\mathcal I^{(1)},\quad
\mathcal I^{(2)},
\qquad
\mathcal O^{(1)},\quad
\mathcal O^{(2)},
\]
independent of \(\chi\), such that
\begin{equation}
\mathcal I_{\chi,K}
=
\mathcal I_{\chi}^{(0)}
\sqcup
\bigcup_{k=1}^{K}
\left(
q^k\mathcal I^{(1)}
\sqcup
v^2q^{k-1}\mathcal I^{(2)}
\right),
\label{eq:local-inward-interval-decomposition}
\end{equation}
and
\begin{equation}
\mathcal O_{\chi,K}
=
\mathcal O_{\chi}^{(0)}
\sqcup
\bigcup_{k=1}^{K}
\left(
q^{-k}\mathcal O^{(1)}
\sqcup
u^{-2}q^{-(k-1)}\mathcal O^{(2)}
\right).
\label{eq:local-outward-interval-decomposition}
\end{equation}
Here, for \(a>0\) and for an interval family \(\mathcal A\),
\[
a\mathcal A:=\{aL:L\in\mathcal A\}.
\]
Each of the six interval families above is finite and consists of
nondegenerate real intervals; any of them may be empty, in which
case it contributes no intervals to the corresponding disjoint
union, product, or sum.

Define the reflected inward and reflected outward interval
families by
\[
\widehat{\mathcal I}^{\mathrm{ref}}
:=
\mathcal I^{(1)}
\cup
\frac{v^2}{q}\mathcal I^{(2)},
\qquad
\widehat{\mathcal O}^{\mathrm{ref}}
:=
\mathcal O^{(1)}
\cup
u^{-2}q\,\mathcal O^{(2)}.
\]
Then the reflected inward intervals are precisely
\[
\bigcup_{k=1}^{K}
q^k\widehat{\mathcal I}^{\mathrm{ref}},
\]
and the reflected outward intervals are precisely
\[
\bigcup_{k=1}^{K}
q^{-k}\widehat{\mathcal O}^{\mathrm{ref}}.
\]

For an interval family \(\mathcal A\), write
\[
\partial\mathcal A
:=
\{a,b:[a,b]\in\mathcal A\}
\]
for the set of its endpoints. Let \(\mathcal E_{\chi,K}\) be the
set of reduced zeros and poles of \(S_{\chi,K}\), and put
\[
\underline\rho_{\chi,K}
:=
\min_{\lambda\in\mathcal E_{\chi,K}}|\lambda|,
\qquad
\overline\rho_{\chi,K}
:=
\max_{\lambda\in\mathcal E_{\chi,K}}|\lambda|.
\]

If
\[
\widehat{\mathcal I}^{\mathrm{ref}}\neq\varnothing,
\]
define
\[
\underline\rho^{\mathrm{ref}}
:=
\min_{\lambda\in\partial
\widehat{\mathcal I}^{\mathrm{ref}}}
|\lambda|>0,
\qquad
\underline\rho_K^{\mathrm{ref}}
:=
q^K\underline\rho^{\mathrm{ref}}.
\]
Assumption~\ref{ap65} places the reflected inward families between
the unreflected inward families and \(0\), and orders their
successive copies. Hence, after increasing \(K_0\) if necessary,
\begin{equation}
\underline\rho_{\chi,K}
=
\underline\rho_K^{\mathrm{ref}},
\qquad
\chi\in I_0,\quad K\geq K_0.
\label{eq:local-min-reflected}
\end{equation}
In particular, in this case the minimum modulus is independent of
\(\chi\). If
\[
\widehat{\mathcal I}^{\mathrm{ref}}=\varnothing,
\]
then the endpoints closest to \(0\) come only from the unreflected
families; after shrinking \(I_0\), there exists
\(\underline\rho_0>0\) such that
\begin{equation}
\underline\rho_{\chi,K}\geq \underline\rho_0,
\qquad
\chi\in I_0,\quad K\geq K_0.
\label{eq:local-min-unreflected}
\end{equation}

Similarly, if
\[
\widehat{\mathcal O}^{\mathrm{ref}}\neq\varnothing,
\]
define
\[
\overline\rho^{\mathrm{ref}}
:=
\max_{\lambda\in\partial
\widehat{\mathcal O}^{\mathrm{ref}}}
|\lambda|<\infty,
\qquad
\overline\rho_K^{\mathrm{ref}}
:=
q^{-K}\overline\rho^{\mathrm{ref}}.
\]
Assumption~\ref{ap65} places the reflected outward families beyond
the unreflected outward families, and orders their successive
copies. Hence, after increasing \(K_0\) if necessary,
\begin{equation}
\overline\rho_{\chi,K}
=
\overline\rho_K^{\mathrm{ref}},
\qquad
\chi\in I_0,\quad K\geq K_0.
\label{eq:local-max-reflected}
\end{equation}
In particular, in this case the maximum modulus is independent of
\(\chi\). If
\[
\widehat{\mathcal O}^{\mathrm{ref}}=\varnothing,
\]
then the endpoints farthest from \(0\) come only from the
unreflected families; after shrinking \(I_0\), there exists
\(\overline\rho_0<\infty\) such that
\begin{equation}
\overline\rho_{\chi,K}\leq \overline\rho_0,
\qquad
\chi\in I_0,\quad K\geq K_0.
\label{eq:local-max-unreflected}
\end{equation}

For a real interval \(L=[a,b]\), \(a<b\), write
\[
|L|:=b-a
\]
for its Euclidean length. For
\(\lambda\in\mathcal E_{\chi,K}\), the notation \(|\lambda|\)
denotes the usual absolute value.

The interval decompositions
\eqref{eq:local-inward-interval-decomposition} and
\eqref{eq:local-outward-interval-decomposition} give constants
\(A,B<\infty\), independent of \(\chi\in I_0\) and \(K\), such
that
\begin{equation}
\sum_{L\in\mathcal I_{\chi,K}}|L|\leq A,
\qquad
\sum_{L\in\mathcal O_{\chi,K}}
\int_L\frac{|dt|}{t^2}\leq A,
\label{eq:local-geometric-tail-bounds}
\end{equation}
and
\begin{equation}
\sum_{\lambda\in\mathcal E_{\chi,K}}
\frac1{|\lambda|}
\leq
\frac{B}{\underline\rho_{\chi,K}},
\qquad
\sum_{\lambda\in\mathcal E_{\chi,K}}
|\lambda|
\leq
B\overline\rho_{\chi,K}.
\label{eq:local-endpoint-sums}
\end{equation}
Indeed, under a dilation \(L\mapsto aL\),
\[
|aL|=a|L|,
\qquad
\int_{aL}\frac{|dt|}{t^2}
=
a^{-1}\int_L\frac{|dt|}{t^2},
\]
and the series generated by
\eqref{eq:local-inward-interval-decomposition}--
\eqref{eq:local-outward-interval-decomposition} are geometric.

For all sufficiently small \(\varepsilon>0\),
\eqref{eq:local-endpoint-sums} gives
\begin{align}
\sup_{\substack{\chi\in I_0\\
|w|\leq\varepsilon\underline\rho_{\chi,K}}}
\left|
\log
\frac{S_{\chi,K}(w)}{S_{\chi,K}(0)}
\right|
&\leq C\varepsilon,
\label{eq:local-origin-approximation}\\
\sup_{\substack{\chi\in I_0\\
|w|\geq\varepsilon^{-1}\overline\rho_{\chi,K}}}
\left|
\log
\frac{S_{\chi,K}(w)}{C_{\chi,K}}
\right|
&\leq C\varepsilon.
\label{eq:local-infinity-approximation}
\end{align}
Near \(0\), this follows by summing
\[
\log
\left[
\frac{(w-z)/(w-p)}{z/p}
\right]
=
\log(1-w/z)-\log(1-w/p),
\]
while near infinity it follows by summing
\[
\log\frac{w-z}{w-p}
=
\log(1-z/w)-\log(1-p/w).
\]

Choose \(\varepsilon>0\) so small that
\[
e^{C\varepsilon}-1<\delta.
\]
Then
\eqref{eq:local-endpoint-uniform-separation}--
\eqref{eq:local-infinity-approximation} imply
\begin{equation}
S_{\chi,K}(w)\neq x
\quad\text{if}\quad
|w|\leq
\varepsilon\underline\rho_{\chi,K}
\quad\text{or}\quad
|w|\geq
\varepsilon^{-1}\overline\rho_{\chi,K},
\label{eq:local-extreme-regions}
\end{equation}
for every
\[
\chi\in I_0,\qquad x\in X_0,\qquad K\geq K_0.
\]

\medskip
\noindent
\emph{Phase balance.}
Let \(w\in\mathbb C_+\), and write
\[
\rho:=|w|,
\qquad
y:=\operatorname{Im}w>0.
\]
For a real interval \(L=[a,b]\), \(a<b\), set
\[
\omega_L(w)
:=
\int_L
\frac{y}{|w-t|^2}\,dt.
\]
Equivalently,
\[
\omega_L(w)
=
\arg\frac{w-b}{w-a},
\]
where \(\arg\) denotes the branch with values in
\((-\pi,\pi)\). The ratio belongs to \(\mathbb C_+\), so this
value lies in \((0,\pi)\).

Define
\[
\Phi_{\chi,K}(w)
:=
\sum_{L\in\mathcal I_{\chi,K}}\omega_L(w),
\qquad
\Psi_{\chi,K}(w)
:=
\sum_{L\in\mathcal O_{\chi,K}}\omega_L(w).
\]
Since the intervals in each family are pairwise disjoint,
\[
0\leq\Phi_{\chi,K}(w)<\pi,
\qquad
0\leq\Psi_{\chi,K}(w)<\pi.
\]
Equation~\eqref{eq:local-interval-factorization} gives
\[
\arg S_{\chi,K}(w)
\equiv
\Phi_{\chi,K}(w)-\Psi_{\chi,K}(w)
\pmod{2\pi}.
\]
Consequently, every upper-half-plane solution of
\[
S_{\chi,K}(w)=x>0
\]
satisfies the exact identity
\begin{equation}
\Phi_{\chi,K}(w)=\Psi_{\chi,K}(w).
\label{eq:local-phase-balance}
\end{equation}

\medskip
\noindent
\emph{Lower radial bound.}
The outward intervals are uniformly separated from \(0\). Hence,
for \(\rho\) sufficiently small,
\eqref{eq:local-geometric-tail-bounds} gives
\begin{equation}
\Psi_{\chi,K}(w)
\leq
4y
\sum_{L\in\mathcal O_{\chi,K}}
\int_L\frac{|dt|}{t^2}
\leq A_0y,
\label{eq:local-small-outward-bound}
\end{equation}
where \(A_0\) is independent of \(\chi\) and \(K\).

Suppose first that
\[
\widehat{\mathcal I}^{\mathrm{ref}}\neq\varnothing.
\]
Since the reflected inward intervals are the dilates
\[
q^kL,
\qquad
L\in\widehat{\mathcal I}^{\mathrm{ref}},
\qquad
1\leq k\leq K,
\]
and since \(\widehat{\mathcal I}^{\mathrm{ref}}\) is a finite
family of nondegenerate intervals separated from \(0\), there
exist constants
\[
\rho_0>0,
\qquad
c_1,c_2,c_3>0,
\]
independent of \(\chi\in I_0\) and \(K\), such that, whenever
\[
\varepsilon\underline\rho_{\chi,K}\leq \rho\leq\rho_0,
\]
one can find \(L\in\mathcal I_{\chi,K}\) satisfying
\begin{equation}
c_1\rho
\leq
\min_{t\in L}|t|
\leq
\max_{t\in L}|t|
\leq
c_2\rho,
\qquad
|L|\geq c_3\rho.
\label{eq:local-comparable-inward-interval}
\end{equation}
Indeed, by \eqref{eq:local-min-reflected}, the lower endpoint
\(\varepsilon\underline\rho_{\chi,K}\) is a fixed multiple of the
smallest reflected inward scale. Choosing the reflected level whose
scale is immediately below \(\rho\), and using the finiteness,
nondegeneracy, and separation from \(0\) of
\(\widehat{\mathcal I}^{\mathrm{ref}}\), gives the constants
uniformly.

It follows that
\begin{equation}
\Phi_{\chi,K}(w)
\geq
\int_L\frac{y}{|w-t|^2}\,dt
\geq
\frac{c_3}{(1+c_2)^2}\frac{y}{\rho}
=:b_0\frac{y}{\rho}.
\label{eq:local-small-inward-lower-bound}
\end{equation}
For a root, combining
\eqref{eq:local-phase-balance},
\eqref{eq:local-small-outward-bound}, and
\eqref{eq:local-small-inward-lower-bound} yields
\[
b_0\frac{y}{\rho}\leq A_0y.
\]
Since \(y>0\),
\[
\rho\geq\frac{b_0}{A_0}.
\]
Together with \eqref{eq:local-extreme-regions}, this excludes a
fixed neighborhood of \(0\).

If instead
\[
\widehat{\mathcal I}^{\mathrm{ref}}=\varnothing,
\]
then \eqref{eq:local-min-unreflected} and
\eqref{eq:local-extreme-regions} directly exclude a fixed
neighborhood of \(0\).

Thus there exists \(r_*>0\) such that every upper-half-plane
solution satisfies
\begin{equation}
|w|\geq r_*.
\label{eq:local-root-lower-bound}
\end{equation}

\medskip
\noindent
\emph{Upper radial bound.}
The inward intervals are uniformly bounded, and their total
length is uniformly bounded by
\eqref{eq:local-geometric-tail-bounds}. Hence, for \(\rho\)
sufficiently large,
\begin{equation}
\Phi_{\chi,K}(w)
\leq
A_\infty\frac{y}{\rho^2},
\label{eq:local-large-inward-bound}
\end{equation}
where \(A_\infty\) is independent of \(\chi\) and \(K\).

Suppose next that
\[
\widehat{\mathcal O}^{\mathrm{ref}}\neq\varnothing.
\]
Since the reflected outward intervals are the dilates
\[
q^{-k}L,
\qquad
L\in\widehat{\mathcal O}^{\mathrm{ref}},
\qquad
1\leq k\leq K,
\]
and since \(\widehat{\mathcal O}^{\mathrm{ref}}\) is a finite
family of nondegenerate intervals separated from \(0\), there
exist constants
\[
R_0>0,
\qquad
c_1',c_2',c_3'>0,
\]
independent of \(\chi\in I_0\) and \(K\), such that, whenever
\[
R_0\leq \rho\leq
\varepsilon^{-1}\overline\rho_{\chi,K},
\]
one can find \(L\in\mathcal O_{\chi,K}\) satisfying
\[
c_1'\rho
\leq
\min_{t\in L}|t|
\leq
\max_{t\in L}|t|
\leq
c_2'\rho,
\qquad
|L|\geq c_3'\rho.
\]
Indeed, by \eqref{eq:local-max-reflected}, the upper endpoint
\(\varepsilon^{-1}\overline\rho_{\chi,K}\) is a fixed multiple of
the largest reflected outward scale. Choosing the reflected level
whose scale is immediately above \(\rho\), and using the finiteness,
nondegeneracy, and separation from \(0\) of
\(\widehat{\mathcal O}^{\mathrm{ref}}\), gives the constants
uniformly.

Consequently,
\begin{equation}
\Psi_{\chi,K}(w)
\geq
\frac{c_3'}{(1+c_2')^2}\frac{y}{\rho}
=:b_\infty\frac{y}{\rho}.
\label{eq:local-large-outward-lower-bound}
\end{equation}
For a root,
\eqref{eq:local-phase-balance},
\eqref{eq:local-large-inward-bound}, and
\eqref{eq:local-large-outward-lower-bound} imply
\[
b_\infty\frac{y}{\rho}
\leq
A_\infty\frac{y}{\rho^2}.
\]
Cancelling \(y>0\), we get
\[
\rho\leq\frac{A_\infty}{b_\infty}.
\]
Together with \eqref{eq:local-extreme-regions}, this excludes the
complement of a fixed disk.

If instead
\[
\widehat{\mathcal O}^{\mathrm{ref}}=\varnothing,
\]
then \eqref{eq:local-max-unreflected} and
\eqref{eq:local-extreme-regions} directly exclude the complement
of a fixed disk.

Thus there exists \(R_*<\infty\) such that every upper-half-plane
solution satisfies
\begin{equation}
|w|\leq R_*.
\label{eq:local-root-upper-bound}
\end{equation}

Taking
\[
r:=r_*,
\qquad
R:=R_*,
\]
and setting
\[
I:=\operatorname{int}(I_0),
\qquad
X:=\operatorname{int}(X_0),
\]
we obtain
\[
r\leq |w|\leq R
\]
for every
\[
\chi\in I,\qquad x\in X,\qquad K\geq K_0
\]
and every \(w\in\mathbb C_+\) satisfying
\[
S_{\chi,K}(w)=x.
\]
This proves \eqref{eq:local-uniform-radial-bound}.
\end{proof}

\begin{lemma}[Infinite-product root criterion]
\label{l610}
Assume \(\beta=1\) and Assumptions~\ref{ap64}--\ref{ap65}.
Let \(H\) be the limit shape supplied by
Theorem~\ref{thm:weak-limit-shape-beta}, let
\((\chi,\kappa)\) be a regular slope point, and set
\[
x:=e^{-n\kappa}.
\]
Suppose that
\[
S_\chi(0)\neq x
\]
and that, for all sufficiently large \(K\),
\[
D_{1,K}\neq\varnothing,
\qquad
C_{\chi,K}\neq x.
\]

Then the equation
\[
S_\chi(w)=x
\]
has at most one root in
\[
\mathbb C_+
:=
\{w\in\mathbb C:\operatorname{Im}w>0\},
\]
counted with multiplicity. In particular, any such root is simple.

Moreover, the following are equivalent:
\begin{enumerate}
\item \((\chi,\kappa)\) is liquid;
\item the equation \(S_\chi(w)=x\) has a root
      \(w_+\in\mathbb C_+\).
\end{enumerate}

When these conditions hold, for all sufficiently large \(K\), the
truncated equation
\[
S_{\chi,K}(w)=x
\]
has a unique nonreal conjugate pair
\[
w_{+,K},\ \overline{w_{+,K}},
\qquad
\operatorname{Im}w_{+,K}>0,
\]
both roots are simple, and
\begin{equation}
w_{+,K}\longrightarrow w_+.
\label{eq:finite-to-infinite-upper-root}
\end{equation}
Furthermore,
\begin{equation}
\frac{\partial H(\chi,\kappa)}{\partial\kappa}
=
2-\frac{2}{\pi}\arg w_+,
\qquad
\arg w_+\in(0,\pi).
\label{eq:infinite-root-slope}
\end{equation}
Consequently, a regular slope point is frozen if and only if
\(S_\chi(w)=x\) has no nonreal roots.
\end{lemma}

\begin{proof}
Put
\[
F_K(w):=S_{\chi,K}(w)-x,
\qquad
F(w):=S_\chi(w)-x.
\]
By normal convergence,
\[
F_K\longrightarrow F
\]
locally uniformly on \(\mathbb C\setminus\mathbb R\).

If \(F\) had at least two zeros in \(\mathbb C_+\), counted with
multiplicity, Hurwitz's theorem applied in disjoint disks around
these zeros would imply that \(F_K\) has at least two zeros in
\(\mathbb C_+\) for all sufficiently large \(K\), contradicting
Lemma~\ref{l65}. Hence \(F\) has at most one zero in
\(\mathbb C_+\), counted with multiplicity; in particular, such a
zero is simple.

Suppose first that
\[
F(w_+)=0
\qquad\text{for some }w_+\in\mathbb C_+.
\]
Hurwitz's theorem gives, for all sufficiently large \(K\), a
unique zero \(w_{+,K}\in\mathbb C_+\) of \(F_K\) near \(w_+\), and
\[
w_{+,K}\longrightarrow w_+.
\]
Its conjugate is also a root, and Lemma~\ref{l65} shows that this
is the unique nonreal conjugate pair. Proposition~\ref{p67}
therefore yields
\[
\frac{\partial H(\chi,\kappa)}{\partial\kappa}
=
2-\frac{2}{\pi}\arg w_+.
\]
Since \(0<\arg w_+<\pi\), the slope belongs to \((0,2)\), so
\((\chi,\kappa)\) is liquid.

Conversely, suppose that \((\chi,\kappa)\) is liquid.
Proposition~\ref{p67} gives a unique simple conjugate pair
\(w_{+,K},\overline{w_{+,K}}\) for all sufficiently large \(K\),
with
\[
\arg w_{+,K}\longrightarrow
\theta:=
\pi\left(
1-\frac12
\frac{\partial H(\chi,\kappa)}{\partial\kappa}
\right)
\in(0,\pi).
\]
Choose
\[
0<\eta<
\frac12\min\{\theta,\pi-\theta\}.
\]
Then
\[
\eta\leq\arg w_{+,K}\leq\pi-\eta
\]
for all sufficiently large \(K\).

Apply
Lemma~\ref{lem:local-spectral-root-compactness}
with
\[
(\chi_0,x_0)=(\chi,x).
\]
There exist \(0<r<R<\infty\) such that
\[
r\leq|w_{+,K}|\leq R
\]
for all sufficiently large \(K\). Hence
\[
w_{+,K}\in
Q:=
\left\{
w\in\mathbb C:
r\leq|w|\leq R,\quad
\eta\leq\arg w\leq\pi-\eta
\right\}
\Subset\mathbb C_+.
\]

Every subsequence therefore has a further subsequence converging
to some \(w_*\in\mathbb C_+\). Local uniform convergence gives
\[
S_\chi(w_*)=x.
\]
By uniqueness, every such limit is the same root \(w_+\).
Consequently,
\[
w_{+,K}\longrightarrow w_+.
\]

Thus a regular slope point is liquid if and only if
\(S_\chi(w)=x\) has a root in \(\mathbb C_+\), and in that case
\[
\frac{\partial H(\chi,\kappa)}{\partial\kappa}
=
2-\frac{2}{\pi}\arg w_+.
\]
The frozen assertion follows by taking the complementary regular
slope regime.
\end{proof}

\begin{proof}[Proof of Theorem \ref{p412}]
For \(p=(\chi,\kappa)\), set
\[
F_p(w):=S_\chi(w)-e^{-n\kappa}.
\]

Since \(p_0\) lies on the frozen boundary, there exist sequences
of liquid and frozen regular slope points
\[
p_j^{\mathrm L}
=
(\chi_j^{\mathrm L},\kappa_j^{\mathrm L})
\longrightarrow p_0,
\qquad
p_j^{\mathrm F}
=
(\chi_j^{\mathrm F},\kappa_j^{\mathrm F})
\longrightarrow p_0.
\]
After discarding finitely many terms, both sequences lie in the
neighborhoods appearing in the hypotheses and in
Lemma~\ref{lem:local-spectral-root-compactness}.

By Lemma~\ref{l610}, for every \(j\) the equation
\[
F_{p_j^{\mathrm L}}(w)=0
\]
has a unique root
\[
w_j\in\mathbb C_+.
\]
If \(w_{j,K}\) denotes the upper-half-plane root of the
corresponding truncated equation, the same lemma gives
\[
w_{j,K}\longrightarrow w_j
\qquad\text{as }K\to\infty.
\]

Let \(0<r<R<\infty\) be the constants supplied by
Lemma~\ref{lem:local-spectral-root-compactness}. For every fixed
\(j\), that lemma gives
\[
r\leq |w_{j,K}|\leq R
\]
for all sufficiently large \(K\). Passing to the limit
\(K\to\infty\), we obtain
\begin{equation}
r\leq |w_j|\leq R,
\qquad j\geq1.
\label{eq:boundary-liquid-root-radial-bound}
\end{equation}
Hence, after passing to a subsequence,
\[
w_j\longrightarrow w_0
\]
for some
\[
w_0\in\mathbb C,
\qquad
r\leq |w_0|\leq R.
\]
In particular,
\[
w_0\neq0.
\]

We next show that \(w_0\) is not a pole of \(S_{\chi_0}\).
Only finitely many pole families of \(S_\chi\) meet the annulus
\[
A:=
\left\{
w\in\mathbb C:
\frac r2\leq |w|\leq2R
\right\},
\]
because the reflected poles accumulate only at \(0\) and at
infinity. After shrinking the parameter neighborhood $U_0$, these poles
may be written as finitely many continuous branches
\[
p_1(\chi),\ldots,p_M(\chi),
\]
with constant multiplicities.

If \(w_0=p_\ell(\chi_0)\) for some \(\ell\), then in a
neighborhood of this pole one has
\[
S_\chi(w)
=
\frac{H_\ell(\chi,w)}
     {w-p_\ell(\chi)},
\]
where \(H_\ell\) is jointly continuous, holomorphic in \(w\), and
bounded away from zero. Since
\[
\chi_j^{\mathrm L}\longrightarrow\chi_0,
\qquad
w_j\longrightarrow w_0,
\]
we would have
\[
|w_j-p_\ell(\chi_j^{\mathrm L})|
\longrightarrow0
\]
and hence
\[
|S_{\chi_j^{\mathrm L}}(w_j)|
\longrightarrow\infty.
\]
This contradicts
\[
S_{\chi_j^{\mathrm L}}(w_j)
=
e^{-n\kappa_j^{\mathrm L}}
\longrightarrow e^{-n\kappa_0}.
\]
Therefore \(w_0\) is not a pole of \(S_{\chi_0}\).

Choose a disk \(B_0\) centered at \(w_0\) which contains no pole
of \(S_\chi\) for \(\chi\) sufficiently close to \(\chi_0\).
On this disk the reflected product converges normally, and
\(G_\chi(w)\) depends jointly continuously on \((\chi,w)\).
Consequently,
\[
(\chi,w)\longmapsto S_\chi(w)
\]
is jointly continuous near \((\chi_0,w_0)\). It follows that
\[
\begin{aligned}
S_{\chi_0}(w_0)
&=
\lim_{j\to\infty}
S_{\chi_j^{\mathrm L}}(w_j)\\
&=
\lim_{j\to\infty}
e^{-n\kappa_j^{\mathrm L}}
=
e^{-n\kappa_0}.
\end{aligned}
\tag{2.20}
\label{eq:frozen-boundary-root}
\]

We now prove that \(w_0\) is real. Since \(w_j\in\mathbb C_+\),
\[
\operatorname{Im}w_0\geq0.
\]
Suppose that \(\operatorname{Im}w_0>0\). Choose a closed disk
\[
B\Subset\mathbb C_+
\]
centered at \(w_0\), containing no pole of $S_{\chi}$ when $\chi$ is close to $\chi_0$, such that
\[
F_{p_0}(w)\neq0,
\qquad w\in\partial B.
\]
The joint continuity established above gives
\[
F_p\longrightarrow F_{p_0}
\]
uniformly on \(\partial B\) as \(p\to p_0\). Rouché's theorem
therefore implies that \(F_p\) has a zero in \(B\) for every
\(p\) sufficiently close to \(p_0\).

In particular, \(F_{p_j^{\mathrm F}}\) has a zero in
\(\mathbb C_+\) for all sufficiently large \(j\). By
Lemma~\ref{l610}, this would imply that \(p_j^{\mathrm F}\) is
liquid, contradicting its choice. Hence
\[
w_0\in\mathbb R\setminus\{0\}.
\]

It remains to prove that \(w_0\) is a multiple root. Suppose, to
the contrary, that
\[
S_{\chi_0}'(w_0)\neq0.
\]
Then \(w_0\) is a simple zero of \(F_{p_0}\). Choose a
conjugation-invariant closed disk \(B\), centered at \(w_0\),
such that \(w_0\) is the only zero of \(F_{p_0}\) in \(B\), the
disk contains no pole of $S_{\chi}$ when $\chi$ is close to $\chi_0$, and
\[
F_{p_0}(w)\neq0,
\qquad w\in\partial B.
\]
Rouché's theorem implies that, for all sufficiently large \(j\),
the function \(F_{p_j^{\mathrm L}}\) has exactly one zero in
\(B\), counted with multiplicity.

On the other hand, the spectral product has real zero--pole data,
and hence
\[
S_\chi(\overline w)
=
\overline{S_\chi(w)}.
\]
Thus both
\[
w_j
\qquad\text{and}\qquad
\overline{w_j}
\]
are zeros of \(F_{p_j^{\mathrm L}}\). Since
\[
w_j\longrightarrow w_0
\]
and \(B\) is conjugation invariant, both zeros lie in \(B\) for
all sufficiently large \(j\). They are distinct because
\(w_j\in\mathbb C_+\). This contradicts the preceding zero count.

Therefore
\[
S_{\chi_0}'(w_0)=0.
\]
Together with \eqref{eq:frozen-boundary-root}, this gives
\[
S_{\chi_0}(w_0)=e^{-n\kappa_0},
\qquad
\frac{S_{\chi_0}'(w_0)}
     {S_{\chi_0}(w_0)}
=0,
\]
because \(S_{\chi_0}(w_0)=e^{-n\kappa_0}>0\).
This proves \eqref{fb}.
\end{proof}


\section{A sampler for finite truncations}
\label{sect:sampler}

This section constructs an exact sampler for the finite reflected truncations of the doubly
free-boundary rail-yard measure.  The sampler assigns partitions
\[
   \lambda^{(i,j)}\in\mathbb Y,
   \qquad (i,j)\in\mathbb Z^2,
\]
to lattice points in a finite coordinate domain.  Its output is the ordered sequence of
partition labels along the terminal lattice path associated with the original rail-yard data
\((a_i,b_i)_{i=l}^r\).  This terminal sequence is the partition-sequence encoding of the
corresponding dimer configuration on the original, unreflected rail-yard graph.

The construction is best read as a folded and reflected version of the growth-diagram
sampler for Schur processes.  In the SchurSample algorithm of
\cite{bbbccv14}, the deterministic input is an
interlacing word, together with a list of parameters.  The word is not the random sequence
of partitions.  Rather, it prescribes the horizontal, vertical, and dual interlacing relations
between consecutive partitions and determines the Young-diagram-shaped growth domain.
Empty partitions are placed on the coordinate axes, each box is filled from its three
southwest neighbours by one local Cauchy or dual-Cauchy bijection, and the sampled
Schur-process sequence
\[
   \Lambda=(\lambda^{(0)},\lambda^{(1)},\ldots,\lambda^{(T)})
\]
is read from the outer boundary of the filled diagram.  The proof of exactness is an
induction over boxes, and the boxes may be filled in any order compatible with their
partial order.

In the present rail-yard setting, the role of the Schur-process interlacing word is played by
the fixed data \((a_i,b_i)_{i=l}^r\), equivalently by the string of interlacing relations in
Lemma~\ref{lm24}.  The new feature is that the two free endpoints repeatedly reflect this
word.  At truncation level \(K\), only the first \(K\) reflected layers are kept, so there is
again a finite growth diagram.  A local stochastic square samples one Cauchy or dual-Cauchy
factor.  A local deterministic square only commutes two already present coordinate lines
and contributes no normalization.  The one-body updates at the folded free boundary are
diagonal updates: a line meets its reflected copy, so the two incoming partitions in the
local rule coincide.  The terminal path is the part of the boundary that corresponds to the
original, unreflected rail-yard graph.

For each fixed truncation level, the output law is exactly the corresponding truncated
free-boundary measure.  Lemmas~\ref{lem:terminal-boundary-weight-convergence} and
\ref{lem:terminal-mass-convergence} identify the limit of these truncated laws, and
Theorem~\ref{thm:truncated-sampler} gives total-variation convergence to the full doubly
free-boundary law.

\subsection{Elementary partition recursions}

We first record the four local recursions used by the sampler.  All partitions belong to
\(\mathbb Y\).  We write \(\operatorname{Geom}(\xi)\) for the random variable with
\[
   \mathbb P(\operatorname{Geom}(\xi)=g)=(1-\xi)\xi^g,
   \qquad g\in\mathbb Z_{\ge0},
\]
and \(\operatorname{Bern}(p)\) for the Bernoulli random variable with success probability
\(p\).

\begin{definition}[The \(HH\) recursion]
\label{def:HH}
Let \(\lambda,\mu,\kappa\in\mathbb Y\) satisfy
\[
   \kappa\prec\lambda,
   \qquad
   \kappa\prec\mu .
\]
For \(\xi\in(0,1)\), define \(HH_\xi(\lambda,\mu,\kappa)=\nu\) as follows.  Sample
\[
   G\sim\operatorname{Geom}(\xi),
\]
set
\[
   \nu_1=\max\{\lambda_1,\mu_1\}+G,
\]
and, for \(i\ge2\), set
\[
   \nu_i=\max\{\lambda_i,\mu_i\}+\min\{\lambda_{i-1},\mu_{i-1}\}-\kappa_{i-1}.
\]
Then
\[
   \lambda\prec\nu,
   \qquad
   \mu\prec\nu,
   \qquad
   |\nu|+|\kappa|=|\lambda|+|\mu|+G .
\]
For fixed \((\lambda,\mu)\), the map \((\kappa,G)\mapsto\nu\) is bijective.
\end{definition}

\begin{definition}[The \(HV\) recursion]
\label{def:HV}
Let \(\lambda,\mu,\kappa\in\mathbb Y\) satisfy
\[
   \kappa\prec'\lambda,
   \qquad
   \kappa\prec\mu .
\]
For \(\xi>0\), define \(HV_\xi(\lambda,\mu,\kappa)=\nu\) as follows.  Sample
\[
   B_0\sim \operatorname{Bern}\!\left(\frac{\xi}{1+\xi}\right).
\]
Use the conventions \(\lambda_0=\mu_0=+\infty\) and \(\lambda_i=\mu_i=0\) for all
sufficiently large \(i\).  Set \(B=B_0\).  For
\[
   1\le i\le \max\{\ell(\lambda),\ell(\mu)\}+1,
\]
define
\[
   \nu_i=
   \begin{cases}
   \max\{\lambda_i,\mu_i\}+B,
      & \lambda_i\le\mu_i<\lambda_{i-1},\\
   \max\{\lambda_i,\mu_i\},
      & \text{otherwise}.
   \end{cases}
\]
After assigning \(\nu_i\), if
\[
   \mu_{i+1}<\lambda_i\le\mu_i,
\]
update
\[
   B:=\min\{\lambda_i,\mu_i\}-\kappa_i .
\]
Then
\[
   \mu\prec'\nu,
   \qquad
   \lambda\prec\nu,
   \qquad
   |\nu|+|\kappa|=|\lambda|+|\mu|+B_0 .
\]
For fixed \((\lambda,\mu)\), the map \((\kappa,B_0)\mapsto\nu\) is bijective.
\end{definition}

\begin{definition}[The \(AA\) recursion]
\label{def:AA}
Let \(\lambda,\mu,\kappa\in\mathbb Y\) satisfy
\[
   \mu\prec\kappa\prec\lambda .
\]
Define \(AA(\lambda,\mu,\kappa)=\nu\) by
\[
   \nu_i=\min\{\lambda_i,\mu_{i-1}\}+\max\{\lambda_{i+1},\mu_i\}-\kappa_i,
   \qquad i\ge1,
\]
with \(\mu_0=+\infty\).  Then
\[
   \mu\prec\nu\prec\lambda,
   \qquad
   |\nu|+|\kappa|=|\lambda|+|\mu| .
\]
For fixed \((\lambda,\mu)\), the map \(\kappa\mapsto\nu\) is bijective.
\end{definition}

\begin{definition}[The \(AB\) recursion]
\label{def:AB}
Let \(\lambda,\mu,\kappa\in\mathbb Y\) satisfy
\[
   \mu\prec'\kappa\prec\lambda .
\]
Define \(AB(\lambda,\mu,\kappa)=\nu\) as follows.  For
\[
   1\le i\le \max\{\ell(\lambda),\ell(\mu)\}+1,
\]
set
\[
   \delta_i=
   \begin{cases}
   \kappa_i-\max\{\lambda_{i+1},\mu_i\},
      & \lambda_{i+1}\le\mu_i<\lambda_i,\\
   0,
      & \text{otherwise},
   \end{cases}
\]
and then define
\[
   \nu_i=
   \begin{cases}
   \min\{\lambda_i,\mu_{i-1}\}-\delta_i,
      & \mu_i<\lambda_i\le\mu_{i-1},\\
   \min\{\lambda_i,\mu_{i-1}\},
      & \text{otherwise}.
   \end{cases}
\]
Then
\[
   \mu\prec\nu\prec'\lambda,
   \qquad
   |\nu|+|\kappa|=|\lambda|+|\mu| .
\]
For fixed \((\lambda,\mu)\), the map \(\kappa\mapsto\nu\) is bijective.
\end{definition}

\paragraph{Verification of the local assertions.}
For \(HH_\xi\), the assumptions \(\kappa\prec\lambda\) and \(\kappa\prec\mu\) imply
\[
   \max\{\lambda_i,\mu_i\}\le \kappa_{i-1}
   \le \min\{\lambda_{i-1},\mu_{i-1}\},
   \qquad i\ge2 .
\]
Hence \(\lambda_i,\mu_i\le\nu_i\le\lambda_{i-1},\mu_{i-1}\), proving
\(\lambda\prec\nu\) and \(\mu\prec\nu\).  The size identity follows by telescoping:
\[
\begin{aligned}
 |\nu|
 &=G+\sum_{i\ge1}\max\{\lambda_i,\mu_i\}
      +\sum_{i\ge1}\min\{\lambda_i,\mu_i\}-|\kappa|  \\
 &=G+|\lambda|+|\mu|-|\kappa| .
\end{aligned}
\]
The inverse is explicit:
\[
   G=\nu_1-\max\{\lambda_1,\mu_1\},
   \qquad
   \kappa_i=\max\{\lambda_{i+1},\mu_{i+1}\}
      +\min\{\lambda_i,\mu_i\}-\nu_{i+1} .
\]
For \(AA\), put
\[
   A_i=\min\{\lambda_i,\mu_{i-1}\},
   \qquad
   B_i=\max\{\lambda_{i+1},\mu_i\}.
\]
The hypotheses give \(B_i\le\kappa_i\le A_i\), hence \(B_i\le\nu_i\le A_i\).  This is
exactly \(\mu\prec\nu\prec\lambda\).  Moreover
\[
   \sum_{i\ge1}A_i+\sum_{i\ge1}B_i=|\lambda|+|\mu|,
\]
because \(A_1=\lambda_1\) and, for \(i\ge2\),
\(A_i+B_{i-1}=\lambda_i+\mu_{i-1}\).  Thus
\(|\nu|+|\kappa|=|\lambda|+|\mu|\), and the inverse is
\[
   \kappa_i=A_i+B_i-\nu_i .
\]
For \(HV\) and \(AB\), the same verification is row-by-row.  In the \(HV\) rule the
carrier is always in \(\{0,1\}\), because \(\lambda/\kappa\) is a vertical strip.  The two
displayed inequalities are precisely the entry and exit conditions for this carrier.  A direct
inspection of the possible local orders of
\(\lambda_{i-1},\lambda_i,\mu_i,\mu_{i+1}\) gives \(\lambda\prec\nu\),
\(\mu\prec'\nu\), and the telescoping size identity.  Running the same carrier equations
backwards recovers \((\kappa,B_0)\) from \(\nu\), so the map is bijective.  The \(AB\) rule
is the deterministic inverse carrier move; the displayed \(\delta_i\)'s are again in
\(\{0,1\}\), the interlacing and size identities follow from the same row-by-row check, and
the backward carrier gives the inverse.  Conjugating partitions proves the assertions for the
conjugate rules below.

For any rule \(R\in\{HH_\xi,HV_\xi,AA,AB\}\), define its conjugate rule by
\[
   R^\vee(\lambda,\mu,\kappa)=\bigl(R(\lambda',\mu',\kappa')\bigr)' .
\]
For \(a,b\in\{L,R\}\), set
\[
S_{a,b,\xi}:=
\begin{cases}
HH_\xi,       & a=L,\ b=L,\\
HH^\vee_\xi, & a=R,\ b=R,\\
HV_\xi,       & a=L,\ b=R,\\
HV^\vee_\xi, & a=R,\ b=L,
\end{cases}
\qquad
C_{a,b}:=
\begin{cases}
AA,       & a=L,\ b=L,\\
AA^\vee, & a=R,\ b=R,\\
AB,       & a=L,\ b=R,\\
AB^\vee, & a=R,\ b=L.
\end{cases}
\]
The symbol \(S\) denotes a stochastic update, while \(C\) denotes a deterministic
commutation.

The terminology parallels the four atomic steps in the usual Schur-process growth
diagram.  The rules \(HH\) and \(HV\), together with their conjugates, are the local
sampling rules: \(HH\) uses one geometric random variable and realizes an ordinary Cauchy
factor, while \(HV\) uses one Bernoulli random variable and realizes a dual Cauchy factor.
The rules \(AA\) and \(AB\), together with their conjugates, are zero-entropy commutations.
They do not create a new Cauchy factor; they only rewrite the frontier after two coordinate
lines have been interchanged.

Thus every local move has the same three-corners-to-fourth-corner form
\[
   \begin{matrix}
      \kappa & \lambda \\
      \mu    & \nu
   \end{matrix}
\]
as in a Fomin growth diagram.  The stochastic moves choose \(\nu\) from the conditional
Cauchy or dual-Cauchy distribution.  The deterministic moves choose the unique \(\nu\)
forced by a weight-preserving commutation.  This is the local mechanism behind the
induction in Theorem~\ref{thm:truncated-sampler}.

\subsection{Coordinate domain and initialization}

Let
\[
   I_-:=\{i\in[l..r]: b_i=-\},
   \qquad
   I_+:=\{i\in[l..r]: b_i=+\},
\]
and put \(N_-:=|I_-|\), \(N_+:=|I_+|\), \(N:=N_-+N_+\).  Let
\[
   \psi_-:[N_-]\to I_- ,
   \qquad
   \psi_+:[N_+]\to I_+
\]
be the increasing bijections.  We use the convention \([0]=\varnothing\).  Equivalently,
write
\[
   I_- = \{i^-_1<\cdots<i^-_{N_-}\},
   \qquad
   I_+ = \{i^+_1<\cdots<i^+_{N_+}\},
\]
so that \(\psi_-(a)=i^-_a\) and \(\psi_+(a)=i^+_a\).  The maps \(\psi_\pm\) are only
bookkeeping devices: they translate the coordinate order in the growth diagram back into
the original rail-yard column labels.  If one of the sets \(I_\pm\) is empty, the
corresponding list and all corresponding stages are omitted.  In each reflected layer the minus-labelled columns and
the plus-labelled columns are inserted from right to left.  This choice of order is not
intrinsic; it is a convenient total order refining the partial order of the finite growth
diagram.

The coordinates \((i,j)\) in this section are not the embedded coordinates of the rail-yard
graph.  They are growth-diagram coordinates for a moving frontier.  At stage \((h,q)\), one
reflected copy of one original column label is inserted into the current frontier.  The integer
\(h\) records the reflected layer, and \(q\) records how far the sweep has advanced inside
that layer.

The terminal path is the monotone lattice path
\[
   D=(z_0,z_1,\ldots,z_N)
\]
from \(z_0=(-N_+,0)\) to \(z_N=(0,-N_-)\) defined by
\[
   z_m-z_{m-1}=\begin{cases}
   (1,0),  & b_{l+m-1}=+,\\
   (0,-1), & b_{l+m-1}=-,
   \end{cases}
   \qquad 1\le m\le N .
\]
The sampler outputs the ordered boundary sequence
\[
   (\lambda^{z_0},\lambda^{z_1},\ldots,\lambda^{z_N}).
\]

For later reference we name all stochastic parameters appearing in one reflected layer.
For \(h\ge1\), set
\begin{equation}
\label{eq:coordinate-parameters}
\begin{aligned}
\xi^{-\mathrm{in}}_{h}(i)&:=u^{2h}v^{2h-1}x_i,
&
\xi^{-\mathrm{out}}_{h}(i)&:=u^{2h-1}v^{2h-2}x_i,
&& i\in I_-,\\
\xi^{+\mathrm{in}}_{h}(i)&:=u^{2h-1}v^{2h}x_i,
&
\xi^{+\mathrm{out}}_{h}(i)&:=u^{2h-2}v^{2h-1}x_i,
&& i\in I_+,\\
\xi^{--}_{h,0}(i,j)&:=u^{4h-2}v^{4h-4}x_ix_j,
&
\xi^{--}_{h,1}(i,j)&:=u^{4h}v^{4h-2}x_ix_j,
&& i,j\in I_-,\\
\xi^{++}_{h,0}(i,j)&:=u^{4h-4}v^{4h-2}x_ix_j,
&
\xi^{++}_{h,1}(i,j)&:=u^{4h-2}v^{4h}x_ix_j,
&& i,j\in I_+,\\
\xi^{+-}_{h,0}(i,j)&:=u^{4h-4}v^{4h-4}x_ix_j,
&
\xi^{+-}_{h,1}(i,j)&:=u^{4h-2}v^{4h-2}x_ix_j,
&& i\in I_+,\ j\in I_- .
\end{aligned}
\end{equation}
Thus \(\xi^{+-}_{1,0}(i,j)=x_ix_j\) is the unreflected plus--minus interaction.  By
Assumption~\ref{ass:admissible}, all same-type parameters used in \(HH\) or
\(HH^\vee\) lie in \((0,1)\), and all opposite-type parameters used in \(HV\) or
\(HV^\vee\) are positive.

The parameters in \eqref{eq:coordinate-parameters} record which reflected factor is being
sampled.  The symbols \(\xi_h^{-\mathrm{in}}\) and \(\xi_h^{-\mathrm{out}}\) are the two
one-body factors encountered by a minus line at the two free boundaries; similarly
\(\xi_h^{+\mathrm{in}}\) and \(\xi_h^{+\mathrm{out}}\) are the one-body factors for a plus line.
The pair parameters \(\xi^{--}_{h,0},\xi^{--}_{h,1}\),
\(\xi^{++}_{h,0},\xi^{++}_{h,1}\), and
\(\xi^{+-}_{h,0},\xi^{+-}_{h,1}\) encode the two reflected positions of a pair of coordinate
lines.  The powers of \(u\) and \(v\) count how many times the corresponding factor has
been reflected at the two free endpoints.

Fix \(K\ge1\).  Sample an auxiliary partition \(\Lambda^{(K)}\) with law
\begin{equation}
\label{eq:LambdaK}
   \mathbb P\{\Lambda^{(K)}=\lambda\}
   =\left[\prod_{h=1}^K\bigl(1-(uv)^h\bigr)\right]
     (uv)^{|\lambda|}\mathbf 1_{\{\lambda_1\le K\}} .
\end{equation}
Equivalently, the multiplicities of parts \(1,2,\ldots,K\) are independent geometric random
variables with parameters \((uv)^1,(uv)^2,\ldots,(uv)^K\), and no part is larger than \(K\).

Initialize the southwest boundary by setting
\[
   \lambda^{(i,-N K)}=\Lambda^{(K)},
   \qquad i\in[-N_+-N K..-N K],
\]
and
\[
   \lambda^{(-N K,j)}=\Lambda^{(K)},
   \qquad j\in[-N_--N K..-N K].
\]

For
\[
   h=K,K-1,\ldots,1,
   \qquad
   q=0,1,\ldots,N-1,
\]
put
\[
   s=-Nh+q+1 .
\]
At the beginning of stage \((h,q)\), the old frontier
\[
   \lambda^{(i,s-1)},
   \qquad i\in[-N_++s-1..s-1],
\]
and
\[
   \lambda^{(s-1,j)},
   \qquad j\in[-N_-+s-1..s-1],
\]
has already been assigned.  The stage assigns the next frontier
\[
   \lambda^{(i,s)},
   \qquad i\in[-N_++s..s],
\]
and
\[
   \lambda^{(s,j)},
   \qquad j\in[-N_-+s..s].
\]
Empty integer intervals are skipped.  Whenever a temporary value is denoted by
\(\widetilde\lambda\), it is written back to \(\lambda\) after the indicated pass.

\subsection{The coordinate recursion}

Before giving the formulas, we describe the stage geometrically.  Each stage inserts one
new coordinate line \(i_0\) into the current frontier.  First, deterministic \(C\)-moves slide
\(i_0\) through the part of the old frontier whose coordinate order must be changed.  These
moves only rewrite partition labels.  Second, stochastic \(S\)-moves fill the new side of the
frontier and sample the Cauchy or dual-Cauchy interactions of \(i_0\) with the lines it
meets.  The first stochastic update on a new side is a folded one-body update; the two
incoming partitions coincide because the missing neighbour outside the folded domain is
identified with its reflected copy.  Third, additional \(C\)-moves restore the canonical order
of the frontier.  Only the \(S\)-moves use random variables and contribute normalization
factors; the \(C\)-moves are deterministic weight-preserving commutations.

The phrases \(x=s\) side and \(y=s\) side below refer to the two coordinate sides of the
growth diagram, not to embedded directions in the rail-yard graph.

\paragraph{Minus stages.}
Assume \(q<N_-\), and set
\[
   i_0:=\psi_-(N_- - q)=i^-_{N_- -q}.
\]
Thus \(i_0\) is the current minus-labelled column, inserted in right-to-left order.  The first
commutation pass slides this new line through the minus lines already present on the old
horizontal frontier.  This is the reflected analogue of changing the order of two adjacent
letters in the SchurSample growth diagram before adding the next box.

Put
\[
   \widetilde\lambda^{(s-1,-N_-+s-1+q)}
   :=\lambda^{(s-1,-N_-+s-1+q)} .
\]
For
\[
   i=-N_-+s-2+q,
     -N_-+s-3+q,
     \ldots,
     -N_-+s,
\]
let \(j=i+N_- -s\), and replace
\[
   \lambda^{(s-1,i)}
   \quad\text{by}\quad
   \widetilde\lambda^{(s-1,i)}
   :=C_{a_{i_0},a_{\psi_-(N_- -j)}}
   \bigl(
      \widetilde\lambda^{(s-1,i+1)},
      \lambda^{(s-1,i-1)},
      \lambda^{(s-1,i)}
   \bigr).
\]
Write these temporary values back to \(\lambda\).

We now begin the stochastic part of the minus stage.  The first update is the incoming
free-boundary interaction of \(i_0\) with its reflected copy.  This is why the first two
arguments of \(S_{a_{i_0},a_{i_0},\xi_h^{-\mathrm{in}}(i_0)}\) are the same partition: in the
folded diagram the missing neighbour outside the boundary is identified with the inside
neighbour.  This is the same mechanism as the diagonal update in the symmetric
SchurSample algorithm, where a diagonal box is filled with equal incoming partitions.

Assign the first point on the new \(x=s\) side:
\[
   \lambda^{(s,-N_-+s)}
   :=S_{a_{i_0},a_{i_0},\xi^{-\mathrm{in}}_h(i_0)}
   \bigl(
      \lambda^{(s-1,-N_-+s)},
      \lambda^{(s-1,-N_-+s)},
      \lambda^{(s-1,-N_-+s-1)}
   \bigr).
\]
The next two ranges of \(t\) sample the pair interactions between \(i_0\) and all other
minus lines.  The first range meets the minus lines to the right of \(i_0\), already inserted
in the present reflected layer, and therefore uses \(\xi^{--}_{h,0}\).  The second range
meets the minus lines to the left of \(i_0\) and uses \(\xi^{--}_{h,1}\).  The split into two
parameters is the split between the two reflected positions of the same pair in the finite
reflected product.

For
\[
   t\in[-N_-+s+1..-N_-+s+q],
\]
set \(j(t):=\psi_-(-t+s+1)\), and sample
\[
   \lambda^{(s,t)}
   :=S_{a_{j(t)},a_{i_0},\xi^{--}_{h,0}(j(t),i_0)}
   \bigl(
      \lambda^{(s-1,t)},
      \lambda^{(s,t-1)},
      \lambda^{(s-1,t-1)}
   \bigr).
\]
For
\[
   t\in[-N_-+s+q+1..s-1],
\]
set \(j(t):=\psi_-(-t+s)\), and sample
\[
   \lambda^{(s,t)}
   :=S_{a_{j(t)},a_{i_0},\xi^{--}_{h,1}(j(t),i_0)}
   \bigl(
      \lambda^{(s-1,t)},
      \lambda^{(s,t-1)},
      \lambda^{(s-1,t-1)}
   \bigr).
\]

After all same-sign interactions have been sampled, the line \(i_0\) must cross the plus
coordinates on the other side of the frontier.  The first pass through these plus coordinates
is deterministic: it puts the frontier in the order in which the plus--minus interactions can
be sampled by the usual three-corner local rule.  The subsequent stochastic updates then
sample precisely those plus--minus factors.

Put
\[
   \widetilde\lambda^{(s,s-1)}:=\lambda^{(s,s-1)} .
\]
For
\[
   i=s-2,s-3,\ldots,-N_++s,
\]
let \(j=i+N_+-s+1\), and replace
\[
   \lambda^{(i,s-1)}
   \quad\text{by}\quad
   \widetilde\lambda^{(i,s-1)}
   :=C_{a_{i_0},a_{\psi_+(j)}}
   \bigl(
      \widetilde\lambda^{(i+1,s-1)},
      \lambda^{(i-1,s-1)},
      \lambda^{(i,s-1)}
   \bigr).
\]
Write these temporary values back to \(\lambda\).

Assign the first point on the new \(y=s\) side:
\[
   \lambda^{(-N_++s,s)}
   :=S_{a_{i_0},a_{i_0},\xi^{-\mathrm{out}}_h(i_0)}
   \bigl(
      \lambda^{(-N_++s,s-1)},
      \lambda^{(-N_++s,s-1)},
      \lambda^{(-N_++s-1,s-1)}
   \bigr).
\]
For
\[
   t\in[-N_++s+1..s],
\]
set \(j(t):=\psi_+(t+N_+-s)\), and sample
\[
   \lambda^{(t,s)}
   :=S_{a_{j(t)},a_{i_0},\xi^{+-}_{h,1}(j(t),i_0)}
   \bigl(
      \lambda^{(t-1,s)},
      \lambda^{(t,s-1)},
      \lambda^{(t-1,s-1)}
   \bigr).
\]

Finally commute along the newly assigned \(x=s\) side.  Put
\[
   \widetilde\lambda^{(s,s)}:=\lambda^{(s,s)} .
\]
For
\[
   i=s-1,s-2,\ldots,-N_-+s+q+1,
\]
let \(j=s-i\), and replace
\[
   \lambda^{(s,i)}
   \quad\text{by}\quad
   \widetilde\lambda^{(s,i)}
   :=C_{a_{i_0},a_{\psi_-(j)}}
   \bigl(
      \widetilde\lambda^{(s,i+1)},
      \lambda^{(s,i-1)},
      \lambda^{(s,i)}
   \bigr).
\]
Write these temporary values back to \(\lambda\).

\paragraph{Plus stages away from the terminal path.}
Assume \(q\ge N_-\) and \(h>1\).  Set
\[
   i_0:=\psi_+(N-q)=i^+_{N-q}.
\]
The plus stages are the same line-insertion procedure with the roles of the two coordinate
sides interchanged.  A plus-labelled line \(i_0\) is first commuted through the old vertical
frontier; then its incoming one-body factor and its plus--plus pair factors are sampled.  It
is then commuted through the minus coordinates on the old \(x=s-1\) side; after that,
its outgoing one-body factor and plus--minus pair factors are sampled.  A final commutation
pass restores the canonical order of the new frontier.

First commute \(i_0\) through the plus coordinates on the old \(y=s-1\) side.  Put
\[
   \widetilde\lambda^{(-N_++s+q,s-1)}
   :=\lambda^{(-N_++s+q,s-1)} .
\]
For
\[
   i=-N_++s+q-1,
     -N_++s+q-2,
     \ldots,
     -N_++s,
\]
let \(j=i+N_+-s+1\), and replace
\[
   \lambda^{(i,s-1)}
   \quad\text{by}\quad
   \widetilde\lambda^{(i,s-1)}
   :=C_{a_{i_0},a_{\psi_+(j)}}
   \bigl(
      \widetilde\lambda^{(i+1,s-1)},
      \lambda^{(i-1,s-1)},
      \lambda^{(i,s-1)}
   \bigr).
\]
Write these temporary values back to \(\lambda\).

Assign
\[
   \lambda^{(-N_++s,s)}
   :=S_{a_{i_0},a_{i_0},\xi^{+\mathrm{in}}_h(i_0)}
   \bigl(
      \lambda^{(-N_++s,s-1)},
      \lambda^{(-N_++s,s-1)},
      \lambda^{(-N_++s-1,s-1)}
   \bigr).
\]
For
\[
   t\in[-N_++s+1..s+N_- -q-1],
\]
set \(j(t):=\psi_+(t-s+N_+)\), and sample
\[
   \lambda^{(t,s)}
   :=S_{a_{j(t)},a_{i_0},\xi^{++}_{h,0}(j(t),i_0)}
   \bigl(
      \lambda^{(t-1,s)},
      \lambda^{(t,s-1)},
      \lambda^{(t-1,s-1)}
   \bigr).
\]
For
\[
   t\in[s+N_- -q..s-1],
\]
set \(j(t):=\psi_+(t-s+N_+ +1)\), and sample
\[
   \lambda^{(t,s)}
   :=S_{a_{j(t)},a_{i_0},\xi^{++}_{h,1}(j(t),i_0)}
   \bigl(
      \lambda^{(t-1,s)},
      \lambda^{(t,s-1)},
      \lambda^{(t-1,s-1)}
   \bigr).
\]

Next commute \(i_0\) through the minus coordinates on the old \(x=s-1\) side.  Put
\[
   \widetilde\lambda^{(s-1,s)}:=\lambda^{(s-1,s)} .
\]
For
\[
   i=s-1,s-2,\ldots,-N_-+s,
\]
let \(j=s-i\), and replace
\[
   \lambda^{(s-1,i)}
   \quad\text{by}\quad
   \widetilde\lambda^{(s-1,i)}
   :=C_{a_{i_0},a_{\psi_-(j)}}
   \bigl(
      \widetilde\lambda^{(s-1,i+1)},
      \lambda^{(s-1,i-1)},
      \lambda^{(s-1,i)}
   \bigr).
\]
Write these temporary values back to \(\lambda\).

Assign
\[
   \lambda^{(s,-N_-+s)}
   :=S_{a_{i_0},a_{i_0},\xi^{+\mathrm{out}}_h(i_0)}
   \bigl(
      \lambda^{(s-1,-N_-+s)},
      \lambda^{(s-1,-N_-+s)},
      \lambda^{(s-1,-N_-+s-1)}
   \bigr).
\]
For
\[
   t\in[-N_-+s+1..s],
\]
set \(j(t):=\psi_-(s+1-t)\), and sample
\[
   \lambda^{(s,t)}
   :=S_{a_{j(t)},a_{i_0},\xi^{+-}_{h,0}(i_0,j(t))}
   \bigl(
      \lambda^{(s,t-1)},
      \lambda^{(s-1,t)},
      \lambda^{(s-1,t-1)}
   \bigr).
\]

Finally commute along the newly assigned \(y=s\) side.  Put
\[
   \widetilde\lambda^{(s,s)}:=\lambda^{(s,s)} .
\]
For
\[
   i=s-1,s-2,\ldots,s-q+N_-,
\]
let \(j=N_+-s+i+1\), and replace
\[
   \lambda^{(i,s)}
   \quad\text{by}\quad
   \widetilde\lambda^{(i,s)}
   :=C_{a_{i_0},a_{\psi_+(j)}}
   \bigl(
      \widetilde\lambda^{(i+1,s)},
      \lambda^{(i-1,s)},
      \lambda^{(i,s)}
   \bigr).
\]
Write these temporary values back to \(\lambda\).

\paragraph{Final plus stages.}
Assume \(q\ge N_-\) and \(h=1\), and set again
\[
   i_0:=\psi_+(N-q).
\]
Up to the moment when the next full coordinate frontier would cross the terminal path
\(D\), use the same plus-stage rules with \(h=1\).  Once the sweep reaches \(D\), continue
only in the finite region weakly southwest of \(D\).  This restriction is the same
partial-order principle used in SchurSample: any order is valid as long as a point is assigned
only after its three southwest neighbours in the relevant unit square have already been
assigned.

At each boundary step, if the three neighbouring partitions of a unit square are assigned,
assign the fourth one by the stochastic rule \(S_{a_j,a_{i_0},\xi}\), where \(\xi\) is the
parameter in \eqref{eq:coordinate-parameters} associated with the corresponding crossing.
Deterministic boundary crossings use \(C_{a_{i_0},a_j}\).  In particular, the unreflected
plus--minus crossing in this last layer uses
\[
   \xi^{+-}_{1,0}(i_0,j)=x_{i_0}x_j .
\]
Stop when every point of \(D\) has been assigned.  The output is
\[
   (\lambda^{z_0},\lambda^{z_1},\ldots,\lambda^{z_N}).
\]

\subsection{Exactness and convergence}

Let \(\mathcal A_K\) be the set of all partition arrays that can be produced by the above
coordinate recursion.  For an array \(\Omega\in\mathcal A_K\), define its unnormalized
weight \(W_K(\Omega)\) as follows.  The initial partition contributes
\[
   (uv)^{|\Lambda^{(K)}|}\mathbf 1_{\{\Lambda^{(K)}_1\le K\}}.
\]
Each stochastic update \(HH_\xi\) or \(HH^\vee_\xi\) contributes \(\xi^G\), where
\[
   G=|\nu|+|\kappa|-|\lambda|-|\mu|.
\]
Each stochastic update \(HV_\xi\) or \(HV^\vee_\xi\) contributes \(\xi^B\), where
\[
   B=|\nu|+|\kappa|-|\lambda|-|\mu|\in\{0,1\}.
\]
Each deterministic update contributes the indicator that the corresponding \(AA\),
\(AA^\vee\), \(AB\), or \(AB^\vee\) relation is satisfied.  Let
\[
   Z^{(K)}_{f,f}:=\sum_{\Omega\in\mathcal A_K}W_K(\Omega),
   \qquad
   P_K(\Omega):=\frac{W_K(\Omega)}{Z^{(K)}_{f,f}} .
\]
By definition, \(Z^{(K)}_{f,f}\) is the partition function of the level-\(K\) reflected
truncation.  Let \(\operatorname{Pr}^{(K)}_{f,f}\) be the marginal of \(P_K\) on the terminal
boundary sequence \((\lambda^{z_0},\ldots,\lambda^{z_N})\).

\begin{lemma}[Local normalizations]
\label{lem:coordinate-local-normalizations}
For fixed incoming partitions, the four local recursions have the following normalizing
factors:
\[
\begin{array}{c|c}
\text{recursion} & \text{normalizer} \\
\hline
HH,\ HH^\vee & (1-\xi)^{-1} \\
HV,\ HV^\vee & 1+\xi \\
AA,\ AA^\vee & 1 \\
AB,\ AB^\vee & 1 .
\end{array}
\]
\end{lemma}

\begin{proof}
For \(HH_\xi\), the bijection in Definition~\ref{def:HH} turns the sum over outputs into
\(\sum_{G\ge0}\xi^G=(1-\xi)^{-1}\).  The conjugate rule has the same normalizer.  For
\(HV_\xi\), the bijection in Definition~\ref{def:HV} turns the sum over outputs into
\(\sum_{B=0}^1\xi^B=1+\xi\), and again conjugation does not change the normalizer.  The
\(AA\) and \(AB\) rules, together with their conjugates, are deterministic bijections, so
their normalizer is one.
\end{proof}

\begin{lemma}[Bookkeeping]
\label{lem:coordinate-bookkeeping}
For fixed \(K\), the normalizing factors generated by the coordinate recursion are exactly
the factors of \(Z^{(K)}_{f,f}\).  More explicitly:
\[
\begin{array}{c|c}
\text{source in the recursion} & \text{partition-function factor} \\
\hline
\Lambda^{(K)} & \displaystyle \prod_{h=1}^{K}\bigl(1-(uv)^h\bigr)^{-1} \\
\text{one-body boundary updates} & (1-\xi)^{-1} \\
\text{same-type pair interactions} & (1-\xi)^{-1} \\
\text{opposite-type pair interactions} & 1+\xi \\
\text{deterministic commutations} & 1 .
\end{array}
\]
Every finite reflected factor at level \(K\) occurs exactly once.
\end{lemma}

\begin{proof}
The initial law \eqref{eq:LambdaK} contributes
\(\prod_{h=1}^{K}(1-(uv)^h)^{-1}\).  In a minus stage at height \(h\), the two one-body
updates have parameters \(\xi^{-\mathrm{in}}_h(i)\) and
\(\xi^{-\mathrm{out}}_h(i)\).  The same stage also produces the two minus--minus reflected
families \(\xi^{--}_{h,0}\), \(\xi^{--}_{h,1}\), and the odd plus--minus family
\(\xi^{+-}_{h,1}\).  In a plus stage at height \(h\), the two one-body updates have
parameters \(\xi^{+\mathrm{in}}_h(i)\) and \(\xi^{+\mathrm{out}}_h(i)\).  The same stage
produces the two plus--plus reflected families \(\xi^{++}_{h,0}\), \(\xi^{++}_{h,1}\),
and the even plus--minus family \(\xi^{+-}_{h,0}\); for \(h=1\), this includes the
unreflected factor \(x_ix_j\).

The monotone order of \(\psi_-\) and \(\psi_+\) makes each allowed one-body factor and
each allowed ordered pair of coordinates appear in exactly one stage.  If the two L/R labels
agree, the local rule is \(HH\) or \(HH^\vee\), and
Lemma~\ref{lem:coordinate-local-normalizations} gives the Cauchy factor
\((1-\xi)^{-1}\).  If the labels differ, the local rule is \(HV\) or \(HV^\vee\), and the
normalizer is the dual-Cauchy factor \(1+\xi\).  The deterministic passes are exactly
commutations of already present labels and contribute no normalization.  Therefore the
product of all local normalizers is precisely the level-\(K\) reflected partition function.
\end{proof}

For an array \(\Omega\in\mathcal A_K\), write
\[
   \partial\Omega := (\lambda^{z_0},\lambda^{z_1},\ldots,\lambda^{z_N})
\]
for its terminal boundary sequence.  Let \(\mathcal B\) be the countable set of admissible
terminal boundary sequences.  For
\[
   \eta=(\eta_0,\eta_1,\ldots,\eta_N)\in\mathcal B
\]
define the level-\(K\) terminal-boundary weight by
\[
   W_K^\partial(\eta)
   :=
   \sum_{\Omega\in\mathcal A_K:\,\partial\Omega=\eta} W_K(\Omega).
\]
Let \(i_m:=l+m-1\), \(1\le m\le N\), and define
\[
   \Phi_i(\alpha,\beta):=
   \begin{cases}
   s_{\alpha/\beta}(x_i), & (a_i,b_i)=(L,-),\\
   s_{\beta/\alpha}(x_i), & (a_i,b_i)=(L,+),\\
   s_{\alpha'/\beta'}(x_i), & (a_i,b_i)=(R,-),\\
   s_{\beta'/\alpha'}(x_i), & (a_i,b_i)=(R,+).
   \end{cases}
\]
The full doubly free-boundary terminal weight is
\[
   W^\partial(\eta)
   :=
   u^{|\eta_0|}v^{|\eta_N|}
   \prod_{m=1}^{N}\Phi_{i_m}(\eta_{m-1},\eta_m).
\]

\begin{lemma}[Convergence of terminal-boundary weights]
\label{lem:terminal-boundary-weight-convergence}
Assume Assumption~\ref{ass:admissible}.  For every \(\eta\in\mathcal B\),
\[
   W_K^\partial(\eta)\longrightarrow W^\partial(\eta),
   \qquad K\to\infty .
\]
\end{lemma}

\begin{proof}
Fix \(\eta\in\mathcal B\).  Sum the level-\(K\) array weight \(W_K(\Omega)\) over all
internal labels with \(\partial\Omega=\eta\).  Reverse the coordinate recursion.  At each
stochastic square the sum over the fourth partition is exactly the corresponding skew
Cauchy or dual skew Cauchy summation; at each deterministic square the \(C\)-move is a
weight-preserving bijection and contributes no scalar factor.  Thus \(W_K^\partial(\eta)\)
is precisely the finite \(K\)-step reflected Schur expansion obtained by applying the two
free-boundary identities only through level \(K\).

After these \(K\) reflections, the remaining tail contains only specializations multiplied by
factors of size
\[
   u^K v^K,
   \qquad
   u^{K+1}v^K,
   \qquad
   u^K v^{K+1},
\]
or smaller powers coming from the same reflected scale.  Since \(u,v\in(0,1)\), these tail
specializations converge to the empty specialization in the completed symmetric-function
topology.  Equivalently, for each \(n\ge1\), their \(n\)-th power sums tend to zero
geometrically.

Skew Schur functions are homogeneous polynomials in the power sums of the specialization.
Hence, for fixed external partitions, every residual skew factor in the tail converges to its
value at the empty specialization,
\[
   s_{\alpha/\beta}(\rho_\varnothing)=\mathbf 1_{\{\alpha=\beta\}}.
\]
The cutoff \(\Lambda^{(K)}_1\le K\) also disappears in the limit: the closed partition
remaining after the \(K\)-th reflection is weighted by \((uv)^{|\theta|}\), and the sums over
\(\theta_1\le K\) converge normally to the unrestricted sum because \(uv<1\).

Therefore the \(K\)-step reflected contraction converges to the contraction in which the
tail specialization is empty.  The Kronecker factors produced by the empty specialization
collapse the remaining auxiliary labels, leaving exactly the original Schur-process numerator
\[
   u^{|\eta_0|}v^{|\eta_N|}
   \prod_{m=1}^{N}\Phi_{i_m}(\eta_{m-1},\eta_m)
   =W^\partial(\eta).
\]
This proves the claim.
\end{proof}

\begin{lemma}[Normal convergence of masses]
\label{lem:terminal-mass-convergence}
Assume Assumption~\ref{ass:admissible}.  Set
\[
   Z_K:=\sum_{\eta\in\mathcal B}W_K^\partial(\eta)
   =Z^{(K)}_{f,f},
   \qquad
   Z:=\sum_{\eta\in\mathcal B}W^\partial(\eta)=Z_{f,f}.
\]
Then
\[
   0<Z<\infty,
   \qquad
   Z_K\longrightarrow Z.
\]
Moreover,
\[
   \sum_{\eta\in\mathcal B}
   \bigl|W_K^\partial(\eta)-W^\partial(\eta)\bigr|
   \longrightarrow0 .
\]
\end{lemma}

\begin{proof}
By Lemma~\ref{lem:coordinate-bookkeeping}, \(Z_K\) is the product of the local normalizing
factors produced by the coordinate recursion through reflected level \(K\).  Thus \(Z_K\) is
exactly the product obtained from the full reflected partition-function formula of
Proposition~\ref{p26} by truncating every infinite reflected product at level \(K\).

We prove that these finite products converge normally.  Let \(q_0:=uv\in(0,1)\).  There
are only finitely many original columns.  Each one-body reflected factor has the form
\[
   (1-cq_0^h)^{-1}
\]
up to replacing \(h\) by \(h-1\), with \(c\) depending only on \(u,v\) and the finite set of
weights \(x_i\).  Each reflected two-body factor has the form
\[
   1+cq_0^{2h},
   \qquad
   (1-cq_0^{2h})^{-1},
\]
again up to shifting \(h\) by a bounded amount.  The auxiliary closed-boundary factor is
\[
   \prod_{h\ge1}(1-q_0^h)^{-1}.
\]
Assumption~\ref{ass:admissible} keeps every denominator uniformly away from zero.  Hence,
for all sufficiently large \(h\), the logarithm of the product of all level-\(h\) reflected
factors is bounded in absolute value by
\[
   Cq_0^h
\]
for a constant \(C<\infty\) independent of \(h\).  Since \(\sum_{h\ge1}q_0^h<\infty\), the
reflected infinite product converges normally.  Therefore
\[
   Z_K=Z^{(K)}_{f,f}\longrightarrow Z_{f,f}=Z,
\]
and the limit is finite and strictly positive.

It remains to upgrade pointwise convergence to \(\ell^1\)-convergence on the countable
boundary space.  By Lemma~\ref{lem:terminal-boundary-weight-convergence},
\[
   W_K^\partial(\eta)\to W^\partial(\eta),
   \qquad \eta\in\mathcal B.
\]
Since all weights are nonnegative,
\[
\begin{aligned}
\sum_{\eta\in\mathcal B}
\bigl|W_K^\partial(\eta)-W^\partial(\eta)\bigr|
&=
Z_K+Z
-2\sum_{\eta\in\mathcal B}
   \min\{W_K^\partial(\eta),W^\partial(\eta)\}.
\end{aligned}
\]
For each fixed \(\eta\),
\[
   \min\{W_K^\partial(\eta),W^\partial(\eta)\}
   \longrightarrow W^\partial(\eta),
\]
and the summand is bounded by \(W^\partial(\eta)\).  Since
\[
   \sum_{\eta\in\mathcal B}W^\partial(\eta)=Z<\infty,
\]
dominated convergence on the countable set \(\mathcal B\) gives
\[
   \sum_{\eta\in\mathcal B}
   \min\{W_K^\partial(\eta),W^\partial(\eta)\}
   \longrightarrow Z.
\]
Together with \(Z_K\to Z\), this proves
\[
   \sum_{\eta\in\mathcal B}
   \bigl|W_K^\partial(\eta)-W^\partial(\eta)\bigr|
   \longrightarrow0 .
\]
\end{proof}

\begin{theorem}[Correctness and convergence of the coordinate sampler]
\label{thm:truncated-sampler}
Assume Assumption~\ref{ass:admissible}.  For every fixed \(K\ge1\), the coordinate sampler
outputs a terminal outer-boundary sequence with law
\[
   \operatorname{Pr}^{(K)}_{f,f}.
\]
Moreover, as \(K\to\infty\),
\[
   \operatorname{Pr}^{(K)}_{f,f}
   \longrightarrow
   \operatorname{Pr}_{f,f}
\]
in total variation on the countable space of admissible terminal outer-boundary sequences.
\end{theorem}

\begin{proof}
Let \(P_K^{\mathrm{samp}}\) denote the law of the complete coordinate array produced by the
sampler.  We first identify this law.  The initial boundary partition satisfies
\[
   \mathbb P\{\Lambda^{(K)}=\lambda\}
   =
   \frac{(uv)^{|\lambda|}\mathbf 1_{\{\lambda_1\le K\}}}
        {\prod_{h=1}^{K}(1-(uv)^h)^{-1}},
\]
so its normalizing factor is exactly the auxiliary factor
\[
   \prod_{h=1}^{K}(1-(uv)^h)^{-1}.
\]

Now proceed in the update order of the coordinate recursion.  Conditional on the previously
assigned partitions, each stochastic update is one of the local rules
\(HH,HH^\vee,HV,HV^\vee\).  By the bijectivity assertions in the local recursions and by
Lemma~\ref{lem:coordinate-local-normalizations}, the conditional probability of the
realized output partition is its local Schur weight divided by the corresponding local
normalizer.  Each deterministic update \(AA,AA^\vee,AB,AB^\vee\) contributes the
indicator that the prescribed commutation is satisfied, and has normalizer \(1\).

Multiplying the conditional probabilities over the whole sweep, including the initial
boundary factor, gives, for every \(\Omega\in\mathcal A_K\),
\[
   P_K^{\mathrm{samp}}\{\Omega\}
   =
   \frac{W_K(\Omega)}{\mathcal N_K},
\]
where \(\mathcal N_K\) is the product of all local normalizers produced by the sweep.  By
Lemma~\ref{lem:coordinate-bookkeeping},
\[
   \mathcal N_K=Z^{(K)}_{f,f}.
\]
Hence
\[
   P_K^{\mathrm{samp}}\{\Omega\}
   =
   \frac{W_K(\Omega)}{Z^{(K)}_{f,f}}
   =P_K(\Omega).
\]
Taking the marginal on the terminal outer-boundary sequence gives
\[
   \operatorname{Law}_{P_K^{\mathrm{samp}}}(\partial\Omega)
   =
   \operatorname{Pr}^{(K)}_{f,f}.
\]
This proves the fixed-\(K\) exactness.

It remains to prove convergence to the full doubly free-boundary law.  Let \(\mathcal B\)
be the countable set of admissible terminal outer-boundary sequences.  For \(\eta\in
\mathcal B\), write
\[
   W_K^\partial(\eta)
   :=
   \sum_{\Omega\in\mathcal A_K:\,\partial\Omega=\eta} W_K(\Omega)
\]
for the level-\(K\) unnormalized terminal-boundary weight, and let \(W^\partial(\eta)\) be
the corresponding full doubly free-boundary terminal-boundary weight.  Set
\[
   Z_K:=\sum_{\eta\in\mathcal B}W_K^\partial(\eta)
   =Z^{(K)}_{f,f},
   \qquad
   Z:=\sum_{\eta\in\mathcal B}W^\partial(\eta)=Z_{f,f}.
\]
By Lemmas~\ref{lem:terminal-boundary-weight-convergence} and
\ref{lem:terminal-mass-convergence},
\[
   Z_K\longrightarrow Z\in(0,\infty),
   \qquad
   \sum_{\eta\in\mathcal B}
   \bigl|W_K^\partial(\eta)-W^\partial(\eta)\bigr|
   \longrightarrow0 .
\]
Moreover,
\[
   \operatorname{Pr}^{(K)}_{f,f}\{\eta\}
   =
   \frac{W_K^\partial(\eta)}{Z_K},
   \qquad
   \operatorname{Pr}_{f,f}\{\eta\}
   =
   \frac{W^\partial(\eta)}{Z}.
\]
Therefore
\[
\begin{aligned}
\left\|
   \operatorname{Pr}^{(K)}_{f,f}
   -
   \operatorname{Pr}_{f,f}
\right\|_{\mathrm{TV}}
&=
\frac12
\sum_{\eta\in\mathcal B}
\left|
   \frac{W_K^\partial(\eta)}{Z_K}
   -
   \frac{W^\partial(\eta)}{Z}
\right|                                                \\
&\le
\frac{1}{2Z_K}
\sum_{\eta\in\mathcal B}
\bigl|W_K^\partial(\eta)-W^\partial(\eta)\bigr|
+
\frac{|Z_K-Z|}{2Z_K}.
\end{aligned}
\]
Since \(Z_K\to Z>0\), the right-hand side tends to \(0\).  This proves total-variation
convergence.
\end{proof}

\section{An explicit example}\label{sect:ex}

\begin{example}\label{ex81}\normalfont
We illustrate the frozen-boundary equation \eqref{fb} and the truncated
coordinate sampler of Section~\ref{sect:sampler} in one explicit periodic
case.  Throughout this example we take \(\beta=1\).  Let
\[
   V_0=0,\qquad V_1=1,\qquad m=1,\qquad n=4,
\]
\[
   (a_1,b_1)=(L,-),\qquad
   (a_2,b_2)=(L,+),\qquad
   (a_3,b_3)=(R,+),\qquad
   (a_4,b_4)=(R,-),
\]
and
\[
   \tau_1=\tau_4=1,\qquad
   \tau_2=\tau_3=0.3,\qquad
   u=v=0.1.
\]
For the asymptotic sequence, take
\[
   v^{(\epsilon)}_0=0,\qquad
   v^{(\epsilon)}_1\in 4\mathbb Z,\qquad
   \epsilon v^{(\epsilon)}_1\longrightarrow 1,
\]
put \(l^{(\epsilon)}=v^{(\epsilon)}_0\), \(r^{(\epsilon)}=v^{(\epsilon)}_1\), and extend the
four-letter word above periodically on
\((v^{(\epsilon)}_0,v^{(\epsilon)}_1)\).  The endpoint values of the sign word may be
chosen arbitrarily, as in Assumption~\ref{ap5}.

Specializing \eqref{dgc} and \eqref{dfuvk}, we obtain
\begin{align*}
G_{\chi}(w)
&=
\left(\frac{w-e}{w-e^{\chi}}\right)
\left(\frac{1-0.3\,w}{1-0.3\,e^{-\chi}w}\right)
\left(\frac{w+e^{\chi}}{w+e}\right)
\left(\frac{1+0.3\,e^{-\chi}w}{1+0.3\,w}\right),
\\
F_{u,v,k}(w)
&=
\left(\frac{w-(0.01)^{2k}e}{w-(0.01)^{2k}}\right)
\left(\frac{w-(0.1)^{4k-2}e}{w-(0.1)^{4k-2}}\right)
\left(\frac{1-0.3\,w(0.01)^{2k}}
           {1-0.3\,e^{-1}w(0.01)^{2k}}\right)
\left(\frac{1-0.3\,w(0.1)^{4k-2}}
           {1-0.3\,e^{-1}w(0.1)^{4k-2}}\right)
\\
&\qquad\times
\left(\frac{w+(0.01)^{2k}}{w+e(0.01)^{2k}}\right)
\left(\frac{w+(0.1)^{4k-2}}{w+e(0.1)^{4k-2}}\right)
\left(\frac{1+0.3\,e^{-1}(0.01)^{2k}w}
           {1+0.3\,(0.01)^{2k}w}\right)
\left(\frac{1+0.3\,e^{-1}(0.1)^{4k-2}w}
           {1+0.3\,(0.1)^{4k-2}w}\right).
\end{align*}
Thus
\[
   S_\chi(w):=G_\chi(w)\prod_{k\ge1}F_{u,v,k}(w).
\]

We now verify the hypotheses used in the frozen-boundary theorem.  Assumption~\ref{ap5}
is built into the periodic construction above.  For \(b_i=+\), the \(q\)-volume weights
satisfy \(x_i^{(\epsilon)}\le 0.3\), while for \(b_i=-\) they satisfy
\(x_i^{(\epsilon)}\le e\).  Hence every finite same-type Cauchy denominator that occurs
has
\[
   x_i^{(\epsilon)}x_j^{(\epsilon)}\le 0.3e<1.
\]
Moreover,
\[
   u x_i^{(\epsilon)},\ v x_i^{(\epsilon)}\le 0.1e<1,
   \qquad
   u^2\bigl(x_i^{(\epsilon)}x_j^{(\epsilon)}\bigr),
   \ v^2\bigl(x_i^{(\epsilon)}x_j^{(\epsilon)}\bigr)
   \le 0.01e^2<1.
\]
The remaining reflected factors carry still smaller powers of \(u\) and \(v\).  Therefore
the admissibility condition Assumption~\ref{ass:admissible} holds uniformly.

Assumption~\ref{ap64}(1) and Assumption~\ref{ap64}(3) are vacuous, since no two
distinct residues have the same pair \((a,b)\).  Assumption~\ref{ap64}(2) reduces to the
two inequalities
\[
   \tau_1^{-1}\tau_2=0.3<e^{-1},
   \qquad
   \tau_4^{-1}\tau_3=0.3<e^{-1},
\]
so Assumption~\ref{ap64} holds.

For \(0<\chi<1\), the active labels in the four zero--pole families are
\[
   E_{1,1,>,1}=1,\qquad
   E_{2,1,<,1}=1,\qquad
   E_{4,1,>,0}=1,\qquad
   E_{3,1,<,0}=1,
\]
and all other labels are inactive.  Hence Assumption~\ref{ap65} reduces to
\[
   0.1<e^{-1},\qquad
   0.1<e^{-1},\qquad
   0.1<1,\qquad
   0.1<1,
\]
which are all true.  Therefore Assumption~\ref{ap62} follows from
Lemma~\ref{l64}, and the contour and logarithmic branch required in the asymptotic
formulas are supplied by Corollary~\ref{cor:automatic-branch}.

Finally, direct evaluation of the displayed factors gives
\[
   S_\chi(0)=1,\qquad
   C_{\chi,K}:=\lim_{w\to\infty}S_{\chi,K}(w)=1
   \quad\text{for every }K\ge1.
\]
Also \(e^\chi\in D_{1,K}(\chi)\) for \(0<\chi<1\), so \(D_{1,K}(\chi)\neq\varnothing\).
Thus the nonexceptional endpoint conditions in Theorem~\ref{p412} are satisfied exactly
when
\[
   e^{-4\kappa}\neq 1,
   \qquad\text{equivalently}\qquad
   \kappa\neq0.
\]
Consequently, Theorem~\ref{p412} applies to every regular frozen-boundary point of this
example with
\[
   0<\chi<1,\qquad \kappa\neq0.
\]
The line \(\kappa=0\) is exceptional for the theorem as stated.

Since \(n=4\) and \(\beta=1\), the first equation in \eqref{fb} becomes
\begin{equation}\label{fbex0}
   S_\chi(w)=
   G_{\chi}(w)\prod_{k=1}^{\infty}F_{u,v,k}(w)
   =
   e^{-4\kappa}.
\end{equation}
For the second equation in \eqref{fb}, write
\[
   \frac{d}{dw}\log F_{u,v,k}(w)=g_k(w),
\]
where
\begin{align}
g_k(w)
&:=
\frac{1}{w-(0.01)^{2k}e}-\frac{1}{w-(0.01)^{2k}}
+\frac{1}{w-(0.1)^{4k-2}e}-\frac{1}{w-(0.1)^{4k-2}}
\notag\\
&\quad
+\frac{1}{w-(0.01)^{-2k}(0.3)^{-1}}
-\frac{1}{w-e(0.01)^{-2k}(0.3)^{-1}}
+\frac{1}{w-(0.1)^{2-4k}(0.3)^{-1}}
-\frac{1}{w-e(0.1)^{2-4k}(0.3)^{-1}}
\notag\\
&\quad
+\frac{1}{w+(0.01)^{2k}}
-\frac{1}{w+e(0.01)^{2k}}
+\frac{1}{w+(0.1)^{4k-2}}
-\frac{1}{w+e(0.1)^{4k-2}}
\notag\\
&\quad
+\frac{1}{w+e(0.01)^{-2k}(0.3)^{-1}}
-\frac{1}{w+(0.01)^{-2k}(0.3)^{-1}}
+\frac{1}{w+e(0.1)^{2-4k}(0.3)^{-1}}
-\frac{1}{w+(0.1)^{2-4k}(0.3)^{-1}}.
\label{dgk}
\end{align}
Therefore the second equation in \eqref{fb} is
\begin{align}
0
&=
\frac{1}{w-e}-\frac{1}{w-e^{\chi}}
+\frac{1}{w-(0.3)^{-1}}-\frac{1}{w-e^{\chi}(0.3)^{-1}}
\notag\\
&\quad
+\frac{1}{w+e^{\chi}}-\frac{1}{w+e}
+\frac{1}{w+e^{\chi}(0.3)^{-1}}-\frac{1}{w+(0.3)^{-1}}
+\sum_{k=1}^{\infty}g_k(w).
\label{fb1}
\end{align}

For numerical plotting we replace \(S_\chi\) by
\[
   S_{\chi,K}(w):=G_\chi(w)\prod_{k=1}^{K}F_{u,v,k}(w)
\]
and truncate the sum in \eqref{fb1} at the same level \(K\).  Figure~\ref{fig:fb}
shows the finite-\(K\) double-root locus obtained from \eqref{fbex0}--\eqref{fb1} with
\(K=10\).  Figure~\ref{fig:simu} shows a sample from the truncated doubly
free-boundary model in this example, generated by the coordinate sampler of
Section~\ref{sect:sampler} with truncation level \(K=3\) and size parameter \(N=70\).

\begin{figure}[htbp]
\centering
\includegraphics[width=.6\textwidth]{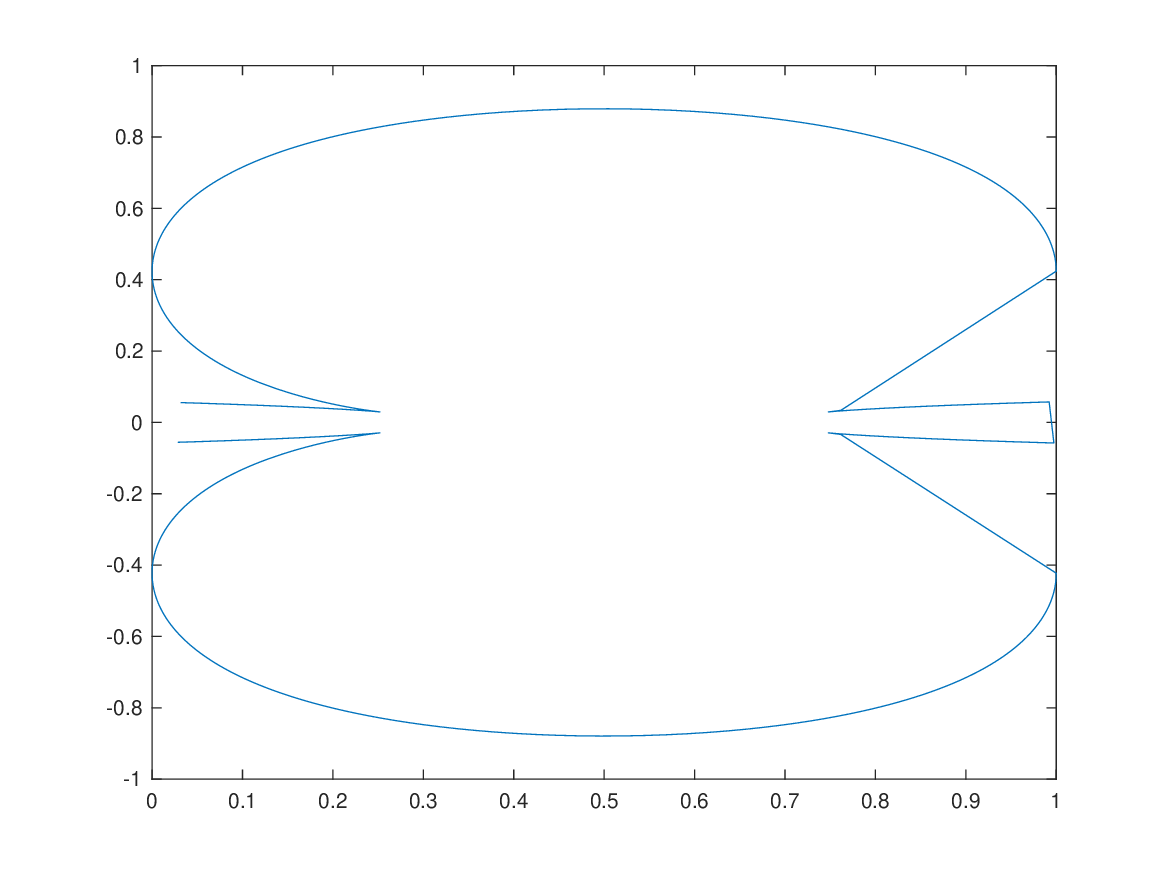}
\caption{Approximation of the frozen boundary in Example~\ref{ex81}, obtained by
truncating the system \eqref{fbex0}--\eqref{fb1} at level \(K=10\).}
\label{fig:fb}
\end{figure}

\begin{figure}[htbp]
\centering
\includegraphics[width=.7\textwidth]{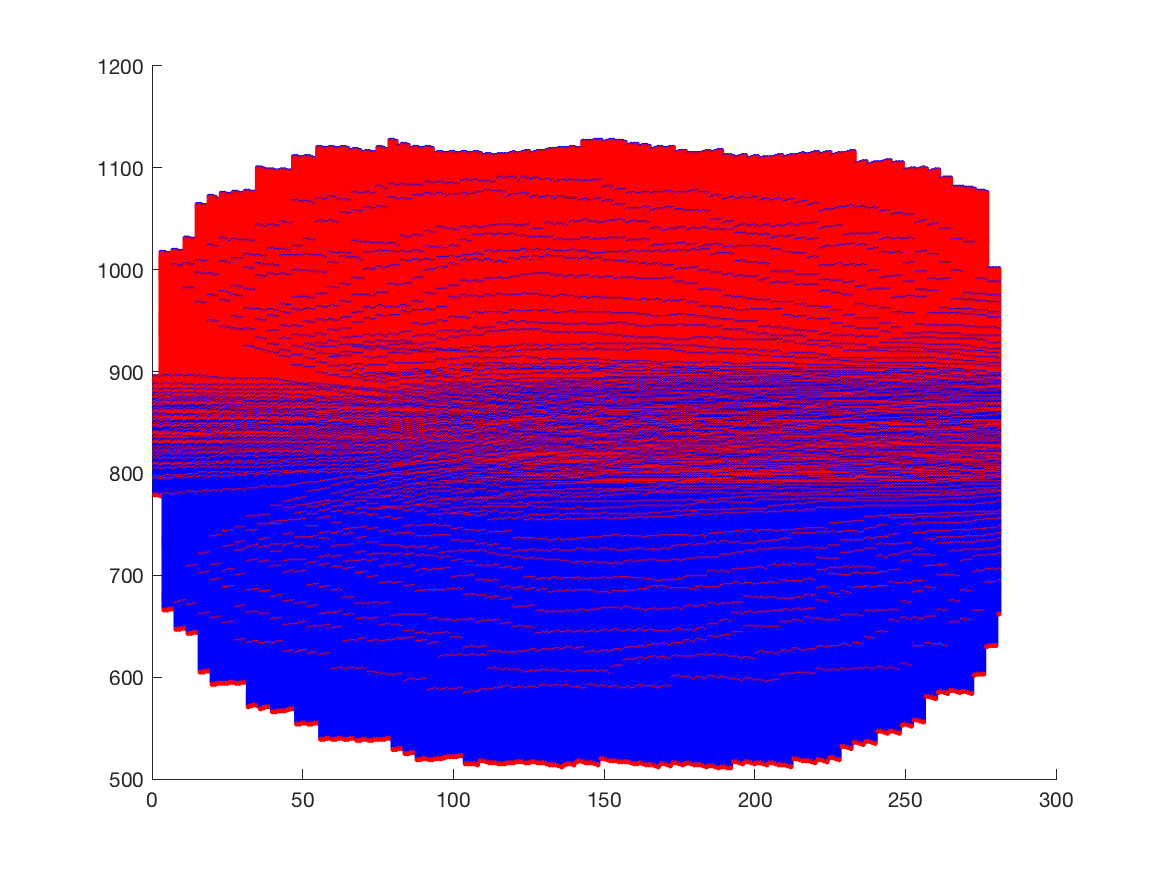}
\caption{A sample from the truncated doubly free-boundary model in Example~\ref{ex81},
generated with truncation level \(K=3\) and size parameter \(N=70\).}
\label{fig:simu}
\end{figure}
\end{example}

\appendix

\section{Technical Results}\label{sc:dmp}

\subsection{Macdonald polynomials}

Let $\mathcal A$ be a $\ZZ_{\ge 0}$-graded algebra,
\[
\mathcal A=\bigoplus_{k\ge 0}\mathcal A_k.
\]
Its topological completion $\overline{\mathcal A}$ is the algebra of formal series
\[
a=\sum_{k=0}^{\infty} a_k,
\qquad a_k\in \mathcal A_k.
\]
For $a\in\overline{\mathcal A}$, define the lower degree by
\begin{equation}\label{dldeg}
\operatorname{ldeg}(a):=\inf\{k\ge 0:a_k\neq 0\},
\end{equation}
with the convention $\operatorname{ldeg}(0)=+\infty$.

\begin{definition}\label{dsp1}
Let $\Lambda_X$ be the algebra of symmetric functions in a countable set of variables
$X=(x_1,x_2,\ldots)$. A \emph{specialization} is an algebra homomorphism
\[
\rho:\Lambda_X\to \mathcal B,
\]
where $\mathcal B$ is a commutative $\CC$-algebra. For $f\in \Lambda_X$, we write
$f(\rho):=\rho(f)$.
\end{definition}

Define the power-sum symmetric functions by
\begin{equation}\label{dpn}
p_n(X):=
\begin{cases}
1, & n=0,\\[1mm]
\displaystyle\sum_{x_i\in X} x_i^n, & n\ge 1.
\end{cases}
\end{equation}
Then $\Lambda_X$ is generated by $\{p_n(X)\}_{n\ge 1}$, so a specialization is uniquely
determined by the values $\{p_n(\rho)\}_{n\ge 1}$; see \cite{bc13}.

The generating function of a specialization $\rho$ is
\[
H(\rho;z):=\exp\left(\sum_{n\ge 1}\frac{p_n(\rho)z^n}{n}\right).
\]
If $\rho_1,\rho_2$ are specializations and $c\in\CC$, define the specializations
$\rho_1\cup\rho_2$, $c\rho$, and $c*\rho$ by
\[
H(\rho_1\cup\rho_2;z):=H(\rho_1;z)H(\rho_2;z),
\qquad
H(c\rho;z):=H(\rho;cz),
\qquad
H(c*\rho;z):=[H(\rho;z)]^c.
\]
Equivalently, for every $n\ge 1$,
\begin{align}
p_n(\rho_1\cup\rho_2)=p_n(\rho_1)+p_n(\rho_2),
\qquad
p_n(c\rho)=c^n p_n(\rho),
\qquad
p_n(c*\rho)=c\,p_n(\rho).\label{dru}
\end{align}

Define
\begin{equation}\label{dh1}
H(\rho_1;\rho_2)
:=
\sum_{\lambda\in\YY}s_\lambda(\rho_1)s_\lambda(\rho_2)
=
\exp\left(\sum_{n\ge 1}\frac{p_n(\rho_1)p_n(\rho_2)}{n}\right),
\end{equation}
and
\begin{equation}\label{dht}
\widetilde{H}(\rho)
:=
\sum_{\lambda\in\YY}s_\lambda(\rho)
=
\exp\left(
\sum_{n\ge 1}\frac{p_{2n-1}(\rho)}{2n-1}
+
\sum_{n\ge 1}\frac{p_n(\rho)^2}{2n}
\right).
\end{equation}
In particular, if $\rho_z$ is the specialization of a single variable $z$, then
$H(\rho;\rho_z)=H(\rho;z)$. Moreover,
\[
H(\rho;\rho_1\cup\rho_2)=H(\rho;\rho_1)H(\rho;\rho_2),
\]
and
\[
\widetilde{H}(\rho_1\cup\rho_2)
=
\widetilde{H}(\rho_1)\widetilde{H}(\rho_2)H(\rho_1;\rho_2).
\]

For $q,t\in(0,1)$ and $\lambda\in\YY$, let $P_\lambda(X;q,t)$ denote the normalized
Macdonald symmetric polynomial characterized by the triangular expansion
\[
P_\lambda(X;q,t)
=
m_\lambda(X)+\sum_{\mu<\lambda}u_{\lambda\mu}(q,t)\,m_\mu(X),
\]
where $m_\mu$ is the monomial symmetric function; see \cite{IGM15}. The family
$\{P_\lambda(X;q,t)\}_{\lambda\in\YY}$ forms a basis of $\Lambda_X$.

For a partition $\lambda=(\lambda_1,\lambda_2,\ldots)$, define
\[
p_\lambda(\rho):=\prod_{i\ge 1}p_{\lambda_i}(\rho).
\]
Then $\{p_\lambda(X)\}_{\lambda\in\YY}$ is also a basis of $\Lambda_X$.

For partitions $\lambda,\mu\in\YY$, define the Macdonald scalar product by
\begin{equation}\label{dsp}
\langle p_\lambda,p_\mu\rangle
=
\delta_{\lambda\mu}\,
z_\lambda
\prod_{i=1}^{\ell(\lambda)}\frac{1-q^{\lambda_i}}{1-t^{\lambda_i}},
\end{equation}
where
\[
z_\lambda:=\prod_{j\ge 1}j^{m_j(\lambda)}m_j(\lambda)!,
\]
and $m_j(\lambda)$ is the multiplicity of the part $j$ in $\lambda$.
When $q=t$, the right-hand side does not depend on either $q$ or $t$.

Let $Q_\lambda(X;q,t)$ be the scalar multiple of $P_\lambda(X;q,t)$ determined by
\[
\langle P_\lambda(X;q,t),Q_\lambda(X;q,t)\rangle=1.
\]

For $\lambda\in\YY$, define skew Macdonald symmetric functions by
\[
P_\lambda(X,Y;q,t)=\sum_{\mu\in\YY}P_{\lambda/\mu}(X;q,t)P_\mu(Y;q,t),
\]
and
\[
Q_\lambda(X,Y;q,t)=\sum_{\mu\in\YY}Q_{\lambda/\mu}(X;q,t)Q_\mu(Y;q,t).
\]
For a single variable $x$, one has
\[
P_{\lambda/\mu}(x;q,t)
=
\mathbf 1_{\mu\prec\lambda}\,\psi_{\lambda/\mu}(q,t)\,x^{|\lambda|-|\mu|},
\]
and
\[
Q_{\lambda/\mu}(x;q,t)
=
\mathbf 1_{\mu\prec\lambda}\,\phi_{\lambda/\mu}(q,t)\,x^{|\lambda|-|\mu|},
\]
where $\psi_{\lambda/\mu}(q,t)$ and $\phi_{\lambda/\mu}(q,t)$ are independent of $x$.
Moreover,
\[
\left.\psi_{\lambda/\mu}(q,t)\right|_{q=t}
=
\left.\phi_{\lambda/\mu}(q,t)\right|_{q=t}
=
1;
\]
see Remarks 1 on p.~346 of \cite{IGM15}.

\begin{lemma}\label{la1}
Let $X$ be a countable set of variables. Then
\[
P_{\lambda/\mu}(X;q,t)=Q_{\lambda/\mu}(X;q,t)=0
\]
unless $\mu\subseteq\lambda$. If $\mu\subseteq\lambda$, then both
$P_{\lambda/\mu}(X;q,t)$ and $Q_{\lambda/\mu}(X;q,t)$ are homogeneous in $X$ of degree
$|\lambda|-|\mu|$.
\end{lemma}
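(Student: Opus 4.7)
The statement is classical (see Macdonald, \emph{Symmetric Functions and Hall Polynomials}, Chapter VI, \S 7), and I would argue it in two essentially independent steps.

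\emph{Homogeneity and the bound $|\mu|\le|\lambda|$.} By definition
\[
P_\lambda(X,Y;q,t)=\sum_{\mu\in\YY}P_{\lambda/\mu}(X;q,t)\,P_\mu(Y;q,t).
\]
The full polynomial $P_\lambda$ is homogeneous of total degree $|\lambda|$, and $\{P_\mu(Y;q,t)\}_{\mu\in\YY}$ is a basis of $\Lambda_Y$ consisting of homogeneous elements with $\deg_Y P_\mu=|\mu|$. Projecting the identity above onto the bidegree $(|\lambda|-k,k)$ component in $(X,Y)$ and using linear independence of $\{P_\mu(Y;q,t):|\mu|=k\}$, I would read off that $P_{\lambda/\mu}(X;q,t)$ is homogeneous in $X$ of degree $|\lambda|-|\mu|$; in particular it vanishes whenever $|\mu|>|\lambda|$. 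The same argument, applied to the dual expansion $Q_\lambda(X,Y;q,t)=\sum_\mu Q_{\lambda/\mu}(X;q,t)Q_\mu(Y;q,t)$, handles $Q_{\lambda/\mu}$.

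\emph{Vanishing unless $\mu\subseteq\lambda$.} This is the substantive part. My plan is to invoke Macdonald's explicit tableau formula (Chapter VI.7 of \cite{IGM15}):
\[
P_{\lambda/\mu}(X;q,t)=\sum_{T}\psi_T(q,t)\,x^T,\qquad Q_{\lambda/\mu}(X;q,t)=\sum_T\varphi_T(q,t)\,x^T,
\]
where in both sums $T$ ranges over semistandard skew tableaux of shape $\lambda/\mu$ filled with entries from $X$, and the coefficients $\psi_T,\varphi_T$ depend only on the shape and the parameters $(q,t)$. If $\mu\not\subseteq\lambda$ the skew shape $\lambda/\mu$ is not well defined, there are no such tableaux, and both sums are empty. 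This simultaneously recovers the homogeneity of degree $|\lambda/\mu|=|\lambda|-|\mu|$ when $\mu\subseteq\lambda$.

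\emph{Main obstacle.} There is essentially no obstacle: both statements are direct consequences of standard material in Macdonald's book, which I will simply cite. The one judgement call is whether to invoke the combinatorial tableau formula (clean but heavy to set up) or to argue by induction on $|\mu|$ using the dual Pieri rule together with the scalar product pairing $\langle P_{\lambda/\mu},Q_\nu\rangle=\langle P_\lambda,Q_\mu Q_\nu\rangle$ and the fact that $P_\lambda$ has nonzero monomial coefficients only on $m_\nu$ with $\nu\le\lambda$ in dominance order. Since this lemma is used purely as a bookkeeping device in the derivation of the moment formula of Lemma~\ref{l31}, I would take the first route and cite the tableau formula.
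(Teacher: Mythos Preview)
Your proposal is correct and, in essence, matches the paper's own treatment: the paper simply cites (7.7) on page 344 of \cite{IGM15}, which is exactly the tableau formula you invoke. You have just unpacked the citation a bit more than the paper does.
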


\begin{proof}
See \cite[(7.7), p.~344]{IGM15}.
\end{proof}

In particular, when $q=t$,
\begin{equation}\label{csm}
P_\lambda(X;t,t)=Q_\lambda(X;t,t)=s_\lambda(X),
\qquad
P_{\lambda/\mu}(X;t,t)=Q_{\lambda/\mu}(X;t,t)=s_{\lambda/\mu}(X).
\end{equation}
See \cite[(4.14), p.~324]{IGM15}.

\begin{definition}
Let $k\in\ZZ_{>0}$ and let $q,t>0$. For any analytic symmetric function
\[
F(X)=\sum_{\lambda\in\YY}c_\lambda P_\lambda(X;q,t),
\]
define the Macdonald difference operator $D_{-k,X;q,t}$ by
\begin{equation}\label{ngt}
D_{-k,X;q,t}F(X)
:=
\sum_{\lambda\in\YY}
c_\lambda
\left[
(1-t^{-k})\sum_{i=1}^{\ell(\lambda)}(q^{\lambda_i}t^{-i+1})^k+t^{-k\ell(\lambda)}
\right]
P_\lambda(X;q,t).
\end{equation}
\end{definition}

Let $W=(w_1,\ldots,w_k)$ be an ordered set of variables. Define
\begin{equation}\label{ddf}
D(W;q,t)
:=
\frac{(-1)^{k-1}}{(2\pi\mathbf{i})^k}
\frac{
\sum_{i=1}^{k}\frac{w_k t^{k-i}}{w_i q^{k-i}}
}{
\prod_{i=1}^{k-1}\left(1-\frac{tw_{i+1}}{qw_i}\right)
}
\prod_{1\le i<j\le k}
\frac{\left(1-\frac{w_i}{w_j}\right)\left(1-\frac{q w_i}{t w_j}\right)}
{\left(1-\frac{w_i}{t w_j}\right)\left(1-\frac{q w_i}{w_j}\right)}
\prod_{i=1}^{k}\frac{dw_i}{w_i}.
\end{equation}
Also define
\begin{equation}\label{dh}
L(W,X;q,t):=
\prod_{i=1}^{k}\prod_{x\in X}\frac{w_i-\frac{q}{t}x}{w_i-qx}.
\end{equation}

The following proposition is a slightly more general form of \cite[Proposition~4.10]{GZ16}.

\begin{proposition}\label{pa2}
Assume one of the following conditions holds:
\begin{enumerate}
\item $q,t\in(0,1)$;
\item $q,t\in(1,\infty)$.
\end{enumerate}
Let $f:\CC\to\CC$ be analytic in a neighborhood of $0$, with $f(0)\neq 0$.
Let $g:\CC\to\CC$ be analytic in a neighborhood of $0$ and satisfy
\[
g(z)=\frac{f(z)}{f(q^{-1}z)}
\]
there. Then
\begin{equation}\label{ss}
D_{-k,X;q,t}\left(\prod_{x\in X}f(x)\right)
=
\left(\prod_{x\in X}f(x)\right)
\oint\cdots\oint
D(W;q,t)\,L(W,X;q,t)\,\prod_{i=1}^{k}g(w_i),
\end{equation}
where the contours satisfy:
\begin{itemize}
\item all contours lie in a neighborhood of $0$ where both $f$ and $g$ are analytic;
\item each contour encloses $0$ and $\{qx:x\in X\}$;
\item in case \emph{(1)}, one has $|w_i|\le |t w_{i+1}|$ for all $i\in[k-1]$;
\item in case \emph{(2)}, one has $|w_i|\le |q^{-1} w_{i+1}|$ for all $i\in[k-1]$.
\end{itemize}
\end{proposition}

\begin{proof}
See \cite[Proposition~A.2]{LV21}.
\end{proof}

\begin{lemma}\label{la3}
Let
\[
(a;q)_\infty:=\prod_{r=0}^{\infty}(1-aq^r),
\]
and define
\begin{equation}\label{defPPprime}
\Pi(X,Y;q,t)
:=
\prod_{x\in X}\prod_{y\in Y}\frac{(txy;q)_\infty}{(xy;q)_\infty},
\qquad
\Pi'(X,Y):=
\prod_{x\in X}\prod_{y\in Y}(1+xy).
\end{equation}
Then
\[
\sum_{\lambda\in\YY}P_\lambda(X;q,t)Q_\lambda(Y;q,t)
=
\sum_{\lambda\in\YY}P_{\lambda'}(X;q,t)Q_{\lambda'}(Y;q,t)
=
\Pi(X,Y;q,t),
\]
and
\[
\sum_{\lambda\in\YY}P_\lambda(X;q,t)P_{\lambda'}(Y;t,q)
=
\sum_{\lambda\in\YY}Q_\lambda(X;q,t)Q_{\lambda'}(Y;t,q)
=
\Pi'(X,Y).
\]
In particular, when $q=t$,
\[
\sum_{\lambda\in\YY}s_\lambda(X)s_\lambda(Y)
=
\prod_{x\in X}\prod_{y\in Y}\frac{1}{1-xy},
\]
and
\[
\sum_{\lambda\in\YY}s_\lambda(X)s_{\lambda'}(Y)
=
\prod_{x\in X}\prod_{y\in Y}(1+xy).
\]
\end{lemma}

\begin{proof}
See \cite[(2.5), (4.13), (5.4), Section~VI]{IGM15}.
\end{proof}

\begin{lemma}\label{la6}
Let $\Pi$, $\Pi'$, and $L$ be as in \eqref{defPPprime} and \eqref{dh}. Then
\[
\Pi(X,Y;q,t)
=
\exp\left(
\sum_{n=1}^{\infty}
\frac{1-t^n}{1-q^n}\frac{p_n(X)p_n(Y)}{n}
\right),
\]
\[
\Pi'(X,Y)
=
\exp\left(
\sum_{n=1}^{\infty}\frac{(-1)^{n+1}}{n}p_n(X)p_n(Y)
\right),
\]
and
\[
L(X,Y;q,t)
=
\exp\left(
\sum_{n=1}^{\infty}\frac{1-t^{-n}}{n}p_n(qX^{-1})p_n(Y)
\right).
\]
\end{lemma}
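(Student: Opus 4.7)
The proof strategy is direct computation: for each of the three identities, I would take the logarithm of the right-hand product formula and expand using the Taylor series $\log(1-z)=-\sum_{n\geq 1}z^n/n$ (resp.\ $\log(1+z)=\sum_{n\geq 1}(-1)^{n+1}z^n/n$), then interchange the orders of summation. The interchange is legitimate since we can regard all three expressions as formal power series in the underlying monomial variables of $X$ and $Y$ (equivalently in the power sums $p_n(X)$, $p_n(Y)$), and each coefficient appearing in the triple sum is absolutely convergent whenever the numerical series defining the logarithms converge.

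For $\Pi(X,Y;q,t) = \prod_{i,j}\prod_{r\geq 0}\frac{1-tx_iy_jq^r}{1-x_iy_jq^r}$, taking logs gives
\begin{align*}
\log \Pi(X,Y;q,t)
&= \sum_{i,j}\sum_{r\geq 0}\left[-\sum_{n\geq 1}\frac{(tx_iy_jq^r)^n}{n} + \sum_{n\geq 1}\frac{(x_iy_jq^r)^n}{n}\right]\\
&= \sum_{n\geq 1}\frac{1-t^n}{n}\Bigl(\sum_{r\geq 0}q^{rn}\Bigr)\sum_{i,j}(x_iy_j)^n
= \sum_{n\geq 1}\frac{1-t^n}{n(1-q^n)}p_n(X)p_n(Y),
\end{align*}
which is the first identity. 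For $\Pi'(X,Y)=\prod_{i,j}(1+x_iy_j)$, the same method gives $\log\Pi'(X,Y)=\sum_{i,j}\sum_{n\geq 1}\frac{(-1)^{n+1}}{n}(x_iy_j)^n = \sum_{n\geq 1}\frac{(-1)^{n+1}}{n}p_n(X)p_n(Y)$.

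For $L$, I rewrite $L(X,Y;q,t)=\prod_{i,j}\frac{x_i-qy_j/t}{x_i-qy_j}=\prod_{i,j}\frac{1-qy_j/(tx_i)}{1-qy_j/x_i}$ (assuming we are in the regime where $|qy_j/x_i|<1$, which is where the contour integral formulas in Proposition \ref{pa2} are applied; otherwise one treats this as a formal identity). Then
\begin{align*}
\log L(X,Y;q,t)
&= \sum_{i,j}\left[-\sum_{n\geq 1}\frac{1}{n}\Bigl(\frac{qy_j}{tx_i}\Bigr)^n + \sum_{n\geq 1}\frac{1}{n}\Bigl(\frac{qy_j}{x_i}\Bigr)^n\right]\\
&= \sum_{n\geq 1}\frac{1-t^{-n}}{n}q^n\,p_n(X^{-1})p_n(Y)
= \sum_{n\geq 1}\frac{1-t^{-n}}{n}p_n(qX^{-1})p_n(Y),
\end{align*}
using $p_n(qX^{-1})=q^n\sum_i x_i^{-n}=q^n p_n(X^{-1})$ in the last step.

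There is no substantial obstacle here; the lemma is a formal bookkeeping exercise combining the logarithmic Taylor series with the geometric series identity $\sum_{r\geq 0}q^{rn}=1/(1-q^n)$. The only point requiring minor care is justifying the interchange of the sums over $n$, $r$, and the index pairs $(i,j)$, which is standard either by absolute convergence in an appropriate neighborhood or by working in the completed graded algebra of symmetric functions where everything makes sense as a formal series in the power sums $p_n$.
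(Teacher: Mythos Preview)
Your proof is correct. The paper takes a slightly different route: it cites Macdonald's book (Page 310 of \cite{IGM15}) for the first identity, and then derives the other two by observing the substitutions
\[
\Pi'(X,Y)=\bigl[\Pi(-X,Y;0,0)\bigr]^{-1},\qquad L(X,Y;q,t)=\Pi\bigl(qX^{-1},Y;0,t^{-1}\bigr),
\]
which immediately specialize the first exponential formula to the remaining two. Your approach instead treats all three cases uniformly by direct logarithmic expansion, which is more self-contained and avoids the external citation; the paper's approach is terser and highlights that $\Pi'$ and $L$ are really specializations of the single Macdonald kernel $\Pi$. Both are equally valid for this bookkeeping lemma.
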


\begin{proof}
The first identity follows from \cite[p.~310]{IGM15}. The other two follow from
\[
\Pi'(X,Y)=\bigl[\Pi(-X,Y;0,0)\bigr]^{-1},
\qquad
L(X,Y;q,t)=\Pi(qX^{-1},Y;0,t^{-1}).
\]
\end{proof}

\begin{lemma}\label{al51}
(\cite[Lemma~5.7]{Ah18})
Let $\theta\in(0,\pi)$ and $\xi>0$. Define
\[
R_{\epsilon,\theta,\xi}
:=
\{w\in\CC:\operatorname{dist}(w,[1,\infty))\le \xi\}
\cap
\{w\in\CC:|\arg(w-(1-\epsilon))|\le \theta\}.
\]
Let $\alpha>0$, and let $N(\epsilon)\in\ZZ_{>0}$ satisfy
\[
\limsup_{\epsilon\to0}\epsilon N(\epsilon)>0.
\]
Then, for fixed $\theta\in(0,\pi)$ and $\xi>0$,
\[
\frac{(z;e^{-\epsilon})_{N(\epsilon)}}{(e^{-\epsilon\alpha}z;e^{-\epsilon})_{N(\epsilon)}}
=
\left(
\frac{1-z}{1-e^{-\epsilon N(\epsilon)}z}
\right)^{\alpha}
\exp\left(
O\!\left(
\frac{\epsilon\min\{|z|,|z|^2\}}{|1-z|}
\right)
\right)
\]
uniformly for $z\in\CC\setminus R_{\epsilon,\theta,\xi}$ and all sufficiently small $\epsilon$.
\end{lemma}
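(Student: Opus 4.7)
The strategy is to compute the logarithm of the ratio, extract the leading term explicitly, and bound the remainder uniformly on $\CC\setminus R_{\epsilon,\theta,\xi}$. First, using the identity $-\log(w;q)_\infty = \sum_{n\geq 1} w^n/[n(1-q^n)]$ (valid for $|w|<1$, extended analytically in $w$) together with $(z;q)_N = (z;q)_\infty/(q^Nz;q)_\infty$, I write, with $q=e^{-\epsilon}$,
\[
\log\frac{(z;q)_N}{(q^\alpha z;q)_N} = -\sum_{n=1}^\infty \frac{z^n\,(1-q^{\alpha n})(1-q^{Nn})}{n\,(1-q^n)}.
\]
I then split $(1-q^{\alpha n})/(1-q^n)$ into its $\epsilon\to 0$ limit $\alpha$ plus a correction of size $O(\epsilon n)$, uniformly for $\epsilon n$ in a compact interval. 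The leading part sums telescopically to $\alpha\log((1-z)/(1-q^Nz))$, which, upon exponentiating, produces the advertised main factor $((1-z)/(1-e^{-\epsilon N(\epsilon)}z))^\alpha$.

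\textbf{Main obstacle and uniform estimate.} The technical core is a uniform bound on the remainder for $z\in\CC\setminus R_{\epsilon,\theta,\xi}$, where $|z|$ is allowed to be large. For $|z|<1$ the series above converges absolutely and a term-by-term estimate works, but for the full admissible region I would switch to an Euler--Maclaurin style comparison: setting $F(u)=\log(1-ze^{-u})$, I write
\[
\log\frac{(z;q)_N}{(q^\alpha z;q)_N}=\sum_{k=0}^{N-1}\bigl[F(\epsilon k)-F(\epsilon(k+\alpha))\bigr]
\]
and compare each sum to $\frac{1}{\epsilon}\int F(u)\,du$ with boundary and Euler--Maclaurin remainder terms of order $\epsilon\!\int|F''(u)|\,du$. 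The geometric hypothesis $z\notin R_{\epsilon,\theta,\xi}$ is precisely what guarantees $|1-ze^{-u}|\geq c\,|1-z|$ for all $u\in[0,\epsilon N+\epsilon\alpha]$, with $c>0$ depending only on $(\theta,\xi)$; this is the crucial uniform lower bound that produces the factor $1/|1-z|$ in the final estimate. The improvement from $|z|$ to $\min\{|z|,|z|^2\}$ in the numerator of the error is obtained by Taylor-expanding $F$ to second order around $z=0$ (equivalently, expanding the series above to second order in $z$), which reveals that the linear-in-$z$ piece of the correction is in fact absorbed into the main factor, leaving a quadratic-in-$z$ residual for $|z|$ small.

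\textbf{Summary and where the difficulty lies.} Both routes reduce the lemma to two ingredients: (i) the elementary expansion of $(1-q^{\alpha n})/(1-q^n)$ near $q=1$, which handles the $\alpha$-shift, and (ii) a geometric estimate preventing $ze^{-u}$ from approaching $1$ uniformly in $u\in[0,\epsilon N(\epsilon)+\epsilon\alpha]$. Ingredient (i) is routine. The hard part is (ii): one must verify that the cusp $R_{\epsilon,\theta,\xi}$, which shrinks toward the point $1$ at rate $\epsilon$, is chosen wide enough that $\{ze^{-u}:u\in[0,\epsilon N+\epsilon\alpha]\}$ stays uniformly bounded away from $1$ (in a scale-invariant sense relative to $|1-z|$) for every $z$ outside the cusp. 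Once this uniform separation is established, the constants in the $O(\cdot)$ are absolute and the claimed asymptotic follows.
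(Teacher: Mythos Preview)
The paper does not prove this lemma: it is stated in the appendix with the attribution ``(Lemma~5.7 of \cite{Ah18})'' and no proof is supplied. There is therefore no ``paper's own proof'' to compare your proposal against; the result is imported wholesale from Ahn's work.

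That said, your sketch is a reasonable outline of how such a statement is typically established. A few remarks on the plan itself. Your series identity
\[
\log\frac{(z;q)_N}{(q^\alpha z;q)_N} = -\sum_{n\geq 1}\frac{z^n(1-q^{\alpha n})(1-q^{Nn})}{n(1-q^n)}
\]
is correct for $|z|<1$, and the splitting $(1-q^{\alpha n})/(1-q^n)=\alpha+O(\epsilon n)$ is valid only while $\epsilon n$ stays bounded; for large $n$ the ratio tends to $1$, not $\alpha$. You implicitly handle this by switching to the Euler--Maclaurin route for $|z|\geq 1$, but even for $|z|<1$ you should be careful that the tail of the series (where $\epsilon n$ is large) is controlled by the decay of $|z|^n$ and not by the approximation $(1-q^{\alpha n})/(1-q^n)\approx\alpha$. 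The second route---writing the log-ratio as a Riemann sum of $F(u)=\log(1-ze^{-u})$ and applying Euler--Maclaurin---is the more robust one and is essentially what is done in the original reference; the geometric input $z\notin R_{\epsilon,\theta,\xi}$ is exactly what keeps $|1-ze^{-u}|$ bounded below by a multiple of $|1-z|$ uniformly in $u$, as you identify.
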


\begin{lemma}\label{lb2}
(\cite[Corollary~A.2]{GZ16})
Let $d,h,k\in\ZZ_{>0}$, and let $f,g_1,\ldots,g_d$ be meromorphic functions whose possible
poles are contained in $\{z_1,\ldots,z_h\}$. Then for $k\ge 2$,
\[
\frac{1}{(2\pi\mathbf{i})^k}
\oint\cdots\oint
\frac{1}{(v_2-v_1)\cdots(v_k-v_{k-1})}
\prod_{j=1}^{d}\left(\sum_{i=1}^{k}g_j(v_i)\right)
\prod_{i=1}^{k}f(v_i)\,dv_i
=
\frac{k^{d-1}}{2\pi\mathbf{i}}
\oint f(v)^k\prod_{j=1}^{d}g_j(v)\,dv,
\]
where all contours contain $\{z_1,\ldots,z_h\}$, and on the left-hand side the $v_i$-contour
is contained in the $v_j$-contour whenever $i<j$.
\end{lemma}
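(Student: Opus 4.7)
The plan is to prove Lemma \ref{lb2} by iterative residue calculus, collapsing the $k$-fold nested contour integral one variable at a time starting from the innermost contour. The key analytical input is the Cauchy-kernel identity: for any meromorphic $F$ whose poles lie inside a positively oriented contour $C$ and with $F - F_\infty$ decaying at infinity (where $F_\infty$ denotes the polynomial part of $F$), and any $w$ outside $C$,
\begin{align*}
\frac{1}{2\pi i}\oint_C \frac{F(v)}{w-v}\,dv = F(w) - F_\infty(w),
\end{align*}
proved by closing the $v$-contour at infinity. This identity lets me ``substitute $v_1 \mapsto v_2$'' after integrating out $v_1$, up to a polynomial-part correction.

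Writing $\prod_{j=1}^d \sum_{i=1}^k g_j(v_i) = \prod_j(g_j(v_1) + G_j^{(1)})$ with $G_j^{(1)} := \sum_{i=2}^k g_j(v_i)$ independent of $v_1$, I would expand the product over subsets $S \subseteq [d]$ and apply the Cauchy identity term-by-term to the innermost $v_1$-integral. Setting $F_S := f\prod_{j \in S} g_j$, the innermost integral becomes $\sum_S (\prod_{j \notin S} G_j^{(1)}) F_S(v_2)$ up to polynomial corrections, which resums to $f(v_2)\prod_j(2g_j(v_2) + g_j(v_3) + \cdots + g_j(v_k))$. The effect is to ``collapse'' $v_1$ onto $v_2$, combining the $f(v_1)f(v_2)$ factors into $f(v_2)^2$ and doubling the weight of $v_2$ in the $g_j$-sum. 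Iterating for $v_2, \ldots, v_{k-1}$, each collapse raises the weight of the next variable by one, so after $k-1$ collapses the remaining $v_k$-integrand is $f(v_k)^k \prod_j (k\,g_j(v_k))$, which integrates to (naively) $k^d \cdot \frac{1}{2\pi i}\oint f^k \prod_j g_j\,dv$.

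The cleanest way to account for the missing factor of $1/k$ is to first establish the equivalent reformulation for an arbitrary symmetric meromorphic function $\Psi(v_1, \ldots, v_k)$:
\begin{align*}
\frac{1}{(2\pi i)^k}\oint\cdots\oint \frac{\prod_i f(v_i)\,\Psi(v_1,\ldots,v_k)}{\prod_{i=1}^{k-1}(v_{i+1}-v_i)}\,dv \;=\; \frac{1}{k}\cdot \frac{1}{2\pi i}\oint f^k(v)\,\Psi(v,v,\ldots,v)\,dv,
\end{align*}
and then specialize to $\Psi = \prod_j \sum_i g_j(v_i)$, for which $\Psi(v,v,\ldots,v) = \prod_j(k\,g_j(v)) = k^d \prod_j g_j(v)$; this immediately yields the stated factor $k^{d-1}$. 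The factor $1/k$ in the reformulation reflects the symmetry between the $k$ variables, only one of which survives as the final integration variable, and is verified by combining the telescoping reduction above with the symmetrized-denominator identity $\sum_{\sigma \in S_k} \prod_{i=1}^{k-1} (v_{\sigma(i+1)} - v_{\sigma(i)})^{-1} = 0$ (a classical partial-fractions identity).

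The main obstacle is the bookkeeping of the polynomial-part corrections produced by the Cauchy identity at each of the $k-1$ iterations: each iteration introduces a term $F_\infty(v_{m+1})$ that must be tracked through all subsequent integrations. The symmetric reformulation above isolates these corrections cleanly into the single factor $1/k$, but a direct combinatorial verification is delicate; this is exactly the approach of Corollary~A.2 of \cite{GZ16}, which we follow.
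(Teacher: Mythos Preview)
The paper does not prove this lemma at all; it is stated in the appendix as a quotation of Corollary~A.2 of \cite{GZ16} with no argument given. So there is no ``paper's own proof'' to compare against---the paper simply cites the external reference, exactly as you do in your closing sentence.

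On the substance of your sketch: the iterative residue-collapse strategy is indeed the right idea and is the method used in \cite{GZ16}. Your computation that the naive collapse (ignoring the $F_\infty$ corrections) produces $k^d$ rather than $k^{d-1}$ is correct, and you correctly identify the missing factor of $1/k$ as the crux of the matter. However, your proposed resolution has a genuine gap. The ``symmetric reformulation''
\[
\frac{1}{(2\pi i)^k}\oint\cdots\oint \frac{\prod_i f(v_i)\,\Psi(v_1,\ldots,v_k)}{\prod_{i=1}^{k-1}(v_{i+1}-v_i)}\,dv \;=\; \frac{1}{k}\cdot \frac{1}{2\pi i}\oint f(v)^k\,\Psi(v,\ldots,v)\,dv
\]
is asserted but not established; and the justification you offer---``combining the telescoping reduction with the symmetrized-denominator identity $\sum_{\sigma}\prod_i(v_{\sigma(i+1)}-v_{\sigma(i)})^{-1}=0$''---does not go through as stated. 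The nested-contour integral has a fixed ordering $C_1\subset C_2\subset\cdots\subset C_k$, so one cannot symmetrize over $S_k$: permuting the variables also permutes the contours, and the resulting integrals are not equal to one another (nor to the original) because the pole at $v_i=v_{i+1}$ sits on different sides of the contours after permutation. The $1/k$ does not come from symmetry; it comes precisely from the accumulated $F_\infty$ corrections that you set aside. In the actual proof these corrections are tracked inductively and shown to cancel against part of the main term, which is the ``delicate bookkeeping'' you mention but do not carry out.

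In short: your outline is on the right track and matches the cited source in spirit, but the step that converts $k^d$ to $k^{d-1}$ is the entire content of the lemma, and your symmetry heuristic does not supply it.
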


\begin{lemma}\label{la9}
(Hurwitz's theorem)
Let $\{f_k\}$ be a sequence of holomorphic functions on a connected open set $G$ that
converges uniformly on compact subsets of $G$ to a holomorphic function $f$, and assume
that $f$ is not identically zero on $G$. If $f$ has a zero of order $m$ at $z_0$, then for every
sufficiently small $\rho>0$, and for all sufficiently large $k$, the function $f_k$ has exactly
$m$ zeros in the disk
\[
|z-z_0|<\rho,
\]
counted with multiplicity. Moreover, these zeros converge to $z_0$ as $k\to\infty$.
\end{lemma}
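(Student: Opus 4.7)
The plan is to prove Hurwitz's theorem by the classical argument-principle approach, which is cleanly suited to a statement about counting zeros of a limit of holomorphic functions. First I would use the hypothesis that $f$ is holomorphic and not identically zero on the connected open set $G$ to invoke the identity principle: the zero set of $f$ is discrete in $G$. Consequently, for every sufficiently small $\rho>0$ the closed disk $\overline{D(z_0,\rho)}$ is contained in $G$ and $f$ vanishes on this disk only at $z_0$ itself. In particular the circle $\gamma_\rho=\{|z-z_0|=\rho\}$ is disjoint from the zero set of $f$, so by compactness $\delta:=\min_{z\in\gamma_\rho}|f(z)|>0$.

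Next I would exploit uniform convergence on compact subsets of $G$. Since $f_k\to f$ uniformly on $\gamma_\rho$, there exists $K=K(\rho)$ such that $\sup_{z\in\gamma_\rho}|f_k(z)-f(z)|<\delta\leq |f(z)|$ for all $k\geq K$. By Rouch\'e's theorem applied on $\gamma_\rho$, this inequality forces $f_k$ and $f$ to have the same number of zeros inside $D(z_0,\rho)$, counted with multiplicity. Because $f$ has a zero of order exactly $m$ at $z_0$ and no other zeros inside $\overline{D(z_0,\rho)}$, I conclude that $f_k$ has precisely $m$ zeros in $D(z_0,\rho)$ (with multiplicity) for all $k\geq K(\rho)$. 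Equivalently, one could use the argument principle
\begin{equation*}
N_k(\rho)=\frac{1}{2\pi\mathbf{i}}\oint_{\gamma_\rho}\frac{f_k'(z)}{f_k(z)}\,dz,
\end{equation*}
combined with the Weierstrass theorem (so $f_k'\to f'$ uniformly on $\gamma_\rho$) and the fact that $|f_k|$ is bounded away from zero on $\gamma_\rho$ for large $k$, to deduce that $N_k(\rho)\to N(\rho)=m$; as the $N_k(\rho)$ are nonnegative integers, they must equal $m$ for all large $k$.

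Finally, the convergence of the zeros to $z_0$ follows by applying the previous conclusion with $\rho$ replaced by an arbitrary $\rho'\in(0,\rho)$: for each such $\rho'$ there is a threshold $K(\rho')$ beyond which all $m$ zeros of $f_k$ lie in $D(z_0,\rho')$. Taking $\rho'\downarrow 0$ yields that every accumulation point of the sequence of zeros coincides with $z_0$, which is precisely the claim that these zeros converge to $z_0$ as $k\to\infty$.

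There is no serious obstacle here; the only technical care needed is to choose $\rho$ small enough that $\overline{D(z_0,\rho)}\subset G$ and $f$ has no zeros in $\overline{D(z_0,\rho)}\setminus\{z_0\}$, which is guaranteed by the isolation of zeros of a nonzero holomorphic function on a connected domain. The rest is Rouch\'e's theorem (or equivalently the continuity of the integer-valued zero-counting integral under uniform convergence) applied on each such $\gamma_\rho$.
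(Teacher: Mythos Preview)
Your proof is correct and follows the classical argument-principle/Rouch\'e approach. The paper does not give its own proof of this lemma but simply cites a textbook (Page 178 of \cite{AF79}), where the standard proof is exactly the one you have written.
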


\begin{proof}
See \cite[p.~178, proof of Theorem~2]{AF79}.
\end{proof}

\noindent\textbf{Acknowledgements.}  The author acknowledges support from National Science Foundation grant DMS1608896 and Simons Foundation grant 638143.  The author thanks Mirjana Vuleti\'c for helpful discussions.

\bibliographystyle{plain}
\bibliography{fpmm}

\begin{thebibliography}{10}

\bibitem{AF79}
L.~Ahlfors.
\newblock {\em Complex Analysis: An introduction to the theory of analytic
  functions of one complex variable}, volume~19.
\newblock 1978.

\bibitem{Ah18}
A.~Ahn.
\newblock Global universality of {M}acdonald plane partitions.
\newblock {\em Ann. Inst. H. Poincaré Probab. Statist.}, 56:1641--1705, 2020.

\bibitem{bbbccv14}
D.~Betea, C.~Boutillier, J.~Bouttier, G.~Chapuy, S.~Corteel, and M.~Vuleti\'c.
\newblock Perfect sampling algorithm for schur processes.
\newblock {\em Markov Processes and Related Fields}, 24, 2018.

\bibitem{BBNV}
D.~Betea, J.~Bouttier, P.~Nejjar, and M.~Vuletić.
\newblock The free boundary schur process and applications i.
\newblock {\em Ann. Henri Poincaré}, 19:3663--3742, 2018.

\bibitem{bc13}
A.~Borodin and I.~Corwin.
\newblock Macdonald processes.
\newblock {\em Probability Theory and Related Fields}, 158:225--400, 2014.

\bibitem{bcgs13}
A.~Borodin, I.~Corwin, V.~Gorin, and S.~Shakirov.
\newblock Observables of macdonald processes.
\newblock {\em Trans. Amer. Math. Soc.}, 368:1517--1558, 2016.

\bibitem{BF15}
A.~Borodin and P.~Ferrari.
\newblock Random tilings and markov chains for interlacing particles.
\newblock {\em Markov Process. Related Fields}, 24:419--451, 2018.

\bibitem{bbccr}
C.~Boutillier, J.~Bouttier, G.~Chapuy, S.~Corteel, and S.~Ramassamy.
\newblock Dimers on rail yard graphs.
\newblock {\em Annales de l'Institut Henri Poincar\'e D}, 4:479--539, 2017.

\bibitem{BL17}
C.~Boutillier and Z.~Li.
\newblock Limit shape and height fluctuations of random perfect matchings on
  square-hexagon lattices.
\newblock {\em Annales de l'Institut Fourier}, 71:2305--2386, 2021.

\bibitem{BCC17}
J.~Bouttier, G.~Chapuy, and S.~Corteel.
\newblock From aztec diamonds to pyramids: Steep tilings.
\newblock {\em Trans. Amer. Math. Soc.}, 369:5921--5959, 2017.

\bibitem{bk}
A.~Bufetov and A.~Knizel.
\newblock Asymptotics of random domino tilings of rectangular {A}ztec diamond.
\newblock {\em Ann. Inst. H. Poincaré Probab. Statist.}, 54:1250--1290, 2018.

\bibitem{EKLP92a}
N.~Elkies, G.~Kuperberg, M.~Larsen, and J.~Propp.
\newblock Alternating-sign matrices and domino tilings, i.
\newblock {\em J. Algebraic Combin.}, 1:111--132, 1992.

\bibitem{EKLP92b}
N.~Elkies, G.~Kuperberg, M.~Larsen, and J.~Propp.
\newblock Alternating-sign matrices and domino tilings, ii.
\newblock {\em J. Algebraic Combin.}, 1:111--132, 1992.

\bibitem{GZ16}
V.~Gorin and L.~Zhang.
\newblock Interlacing adjacent levels of $\beta$-jacobi corners processes.
\newblock {\em Probability Theory and Related Fields}, 272:915--981, 2018.

\bibitem{KJ05}
K.~Johansson.
\newblock The arctic circle boundary and the airy process.
\newblock {\em Ann. Probab.}, 33:1--30, 2005.

\bibitem{Ken01}
R.~Kenyon.
\newblock Local statistics of lattice dimers.
\newblock {\em Ann. Inst. H. Poincar\'e, Probabilit\'es}, 33:591--618, 1997.

\bibitem{KO07}
R.~Kenyon and A.~Okounkov.
\newblock Limit shapes and the complex {B}urgers equation.
\newblock {\em Acta Math.}, 199:263--302, 2007.

\bibitem{KOS}
R.~Kenyon, A.~Okounkov, and S.~Sheffield.
\newblock Dimers and amoebae.
\newblock {\em Ann. Math.}, 163:1019--1056, 2006.

\bibitem{Li182}
Z.~Li.
\newblock Fluctuations of dimer heights on contracting square-hexagon lattices.
\newblock \href{https://arxiv.org/abs/1809.08727}{arXiv:1809.08727}.

\bibitem{ZL20}
Z.~Li.
\newblock Asymptotics of schur functions on almost staircase partitions.
\newblock {\em Electronic Communications in Probability}, 25:13pp, 2020.

\bibitem{ZL18}
Z.~Li.
\newblock Schur function at generic points and limit shape of perfect matchings
  on contracting square hexagon lattices with piecewise boundary conditions.
\newblock {\em International Mathematics Research Notices}, 2021.
\newblock \href{http://arxiv.org/abs/1807.06175}{arXiv:1807.06175}.

\bibitem{ZL202}
Z.~Li.
\newblock Limit shape of perfect matchings on contracting bipartite graphs.
\newblock {\em International Mathematics Research Notices}, 2022
  (12):9173–9208, 2022.

\bibitem{LV21}
Z.~Li and M.~Vuleti\'c.
\newblock Asymptotics of pure dimer coverings on rail-yard graphs.
\newblock {\em Forum of Mathematics, Sigma}, 11:1--61, 2023.
\newblock arXiv:2110.11393.

\bibitem{IGM15}
I.~G. Macdonald.
\newblock {\em Symmetric Functions and Hall Polynomials}.
\newblock Oxford University Press, 1998.

\bibitem{BY09}
B.~Young.
\newblock Computing a pyramid partition generating function with dimer
  shuffling.
\newblock {\em J. Combin. Theory Ser. A}, 116:334--350, 2009.

\end{thebibliography}

\end{document}